\newif\iflclip
\newif\ifbclip
\newif\ifrclip
\newif\iftclip
\def\CLIP{\dimexpr\fboxrule+.2pt\relax}
\def\nulclip{0pt}
\newcommand\partbox[2]{%
  \lclipfalse\bclipfalse\rclipfalse\tclipfalse%
  \let\lkern\relax\let\rkern\relax%
  \let\lclip\nulclip\let\bclip\nulclip\let\rclip\nulclip\let\tclip\nulclip%
  \parseclip#1\relax\relax%
  \iflclip\def\lkern{\kern\CLIP}\def\lclip{\CLIP}\fi
  \ifbclip\def\bclip{\CLIP}\fi
  \ifrclip\def\rkern{\kern\CLIP}\def\rclip{\CLIP}\fi
  \iftclip\def\tclip{\CLIP}\fi
  \lkern\clipbox{\lclip{} \bclip{} \rclip{} \tclip}{\fbox{#2}}\rkern%
}
\def\parseclip#1#2\relax{%
  \ifx l#1\lcliptrue\else
  \ifx b#1\bcliptrue\else
  \ifx r#1\rcliptrue\else
  \ifx t#1\tcliptrue\else
  \fi\fi\fi\fi
  \ifx\relax#2\relax\else\parseclip#2\relax\fi
}
\theoremstyle{definition}
\theoremstyle{remark}
\numberwithin{equation}{section}
\newcommand\reallywidehat[1]{%
\savestack{\tmpbox}{\stretchto{%
  \scaleto{%
    \scalerel*[\widthof{\ensuremath{#1}}]{\kern-.6pt\bigwedge\kern-.6pt}%
    {\rule[-\textheight/2]{1ex}{\textheight}}
  }{\textheight}%
}{0.5ex}}%
\stackon[1pt]{#1}{\tmpbox}%
}
\newcommand{\R}{\mathbb{R}}
\newcommand{\Z}{\mathbb{Z}}
\newcommand{\N}{\mathbb{N}}
\newcommand{\C}{\mathbb{C}}
\renewcommand{\Re}{\operatorname{Re}}
\newcommand{\br}{\langle}
\newcommand{\kt}{\rangle}
\newcommand{\app}{\mathrm{app}}
\theoremstyle{plain}
\newtheorem{thm}{Theorem}[section]
\newtheorem{prop}[thm]{Proposition}
\newtheorem{lm}[thm]{Lemma}
\newtheorem{cor}[thm]{Corollary}
\theoremstyle{definition}
\newtheorem{defn}[thm]{Definition}
\newtheorem{rem}[thm]{Remark}
\newtheorem{conj}[thm]{Conjecture}
\newtheorem*{hyp1*}{Hypothesis (I)}
\newtheorem*{hyp2*}{Hypothesis (II)}
\newtheorem*{hyp3*}{Hypothesis (III)}
\newtheoremstyle{exercise}{}{}{\itshape}{}{\bfseries}{:}{.5em}{\thmname{#1} \thmnumber{#2}\thmnote{(#3)}}
\theoremstyle{exercise}
\DeclareFontFamily{U}{wncy}{}
\DeclareFontShape{U}{wncy}{m}{n}{<->wncyr10}{}
\DeclareSymbolFont{mcy}{U}{wncy}{m}{n}
\DeclareMathSymbol{\sha}{\mathord}{mcy}{"58}
\renewcommand{\d}[2]
{\frac{d#1}{d#2}}
\newcommand{\p}[2]
{\frac{\partial#1}{\partial#2}}
\newcommand{\nc}{\newcommand}
\nc {\Prod}{(\underset{I}\ph\, r_I^{n_I})}
\nc {\wlw}{\wedge\ldots\wedge}
\nc {\olo}{\otimes\ldots\otimes}
\nc{\bea}{\begin{eqnarray*}}
\nc{\eea}{\end{eqnarray*}}
\nc {\e}{\varepsilon}
\nc{\de}{\delta}
\nc{\A}{\hat a}
\nc{\Ad}{\hat a^\dag}
\nc{\grad}{\nabla}
\nc{\nlim}{\underset{n\to\infty}\lim}
\nc{\ilim}{\underset{i\to\infty}\lim}
\nc{\isum}{\sum\limits_{i=1}^N}
\nc{\xx}{\underset{x\to x_0}\lim}
\nc{\Bnrm}{\Big |\Big|}
\nc{\sym}{\text{Sym}}
\nc{\inte}{\overset{\circ}}
\nc{\del}{\partial}
\nc{\plp}{+\ldots+}
\nc{\lre}{\longrightarrow}
\nc{\be}{\begin{equation}}
\nc{\ee}{\end{equation}}
\nc{\CP}{\mathbb{CP}}
\nc{\End}{\text{End}}
\nc{\mf}{\mathfrak}
\nc{\Hom}{\text{Hom}}
\nc{\spec}{\text{Spec}}
\nc{\sub}{\subseteq}
\nc{\weakto}{\rightharpoonup}
\nc{\ph}{\varphi}
\nc{\leqc}{\lesssim}
\nc{\delbar}{\overline \del}
\nc{\Tr}{\text{Tr}}
\nc{\Lhe}{\mathcal L_{(\Phi^{h_\e}, A^{h_\e})}}
\nc{\SL}{\mathrm{SL}}
\begin{document}


\title{$\Z_2$-Harmonic Spinors and 1-forms on Connected sums and Torus sums of 3-manifolds}
\author{Siqi He and Gregory J. Parker}
\address{Department of Mathematics, Stanford University}
\email{gjparker@stanford.edu}
\address{Morningside Center of Mathematics, CAS}
\email{sqhe@amss.ac.cn}

\begin{abstract}
Given a pair of $\mathbb{Z}_2$-harmonic spinors (resp. 1-forms) on closed Riemannian 3-manifolds $(Y_1, g_1)$  and $(Y_2,g_2)$, we construct $\mathbb{Z}_2$-harmonic spinors (resp. 1-forms) on the connected sum $Y_1 \# Y_2$ and the torus sum $Y_1 \cup_{T^2} Y_2$ using a gluing argument. The main tool in the proof is a parameterized version of the Nash-Moser implicit function theorem established by Donaldson \cite{DonaldsonMultivalued} and the second author \cite{PartII}.

We use these results to construct an abundance of new examples of $\Z_2$-harmonic spinors and 1-forms. In particular, we prove that for every closed 3-manifold $Y$, there exist infinitely many $\mathbb{Z}_2$-harmonic spinors with singular sets representing infinitely many distinct isotopy classes of embedded links, strengthening an existence theorem of Doan-Walpuski \cite{DWExistence}. Moreover, combining this with the results of \cite{PartIII}, our construction implies that if $b_1(Y) > 0$, there exist infinitely many $\mathrm{Spin}^c$ structures on $Y$ such that the moduli space of solutions to the two-spinor Seiberg-Witten equations is non-empty and non-compact.

\end{abstract}


\maketitle
\tableofcontents

\section{Introduction}
\label{Introduction}
$\mathbb{Z}_2$-harmonic spinors and 1-forms were introduced by C. Taubes to study the limits of degenerating sequences of solutions to gauge-theoretic equations \cite{Taubes3dSL2C, TaubesZeroLoci}. These objects now play a significant role in multiple areas of geometry and topology, where they arise as singular limiting solutions of various geometric PDEs.

In three dimensions, $\Z_2$-harmonic 1-forms are closely related to the geometry of the $\text{SL}(2,\C)$ representation variety. Taubes's work shows that on a compact 3-manifold, sequences of flat connections with diverging energy must converge after renormalization to a $\Z_2$-harmonic 1-form \cite{Taubes3dSL2C, TaubesZeroLoci}, suggesting that the latter should  provide a refinement of the classical Morgan-Shalen compactification \cite{MorganShalen1984}, and generalizing work on the ends of the Hitchin moduli space to dimension 3 \cite{MWWW, LauraFredrickson18}. Moreover, the role of $\mathbb{Z}_2$-harmonic 1-forms is one of the essential puzzles in Witten's conjecture giving a gauge-theoretic interpretation of the Jones polynomial \cite{TaubesKW1, TaubesKW2, Sun2022z2, sun2023extended, dimakis2024moduli}.  

Subsequent work of Taubes \cite{TaubesVW, TaubesU1SW}, Haydys and Walpuski \cite{HWCompactness}, and Walpuski and Zhang \cite{WalpuskiZhangCompactness} has shown that various types of $\mathbb{Z}_2$-harmonic spinors  also appear as degenerate limits of many other equations. In each case, the existence of $\Z_2$-harmonic spinors leads to non-compactness of the moduli space that must be addressed to study the geometric consequences of the equations \cite{WalpuskiNotes}. Additionally, $\mathbb{Z}_2$-harmonic spinors play an essential role in proposals for constructing enumerative invariants of manifolds with special holonomy  \cite{DonaldsonSegal, DWAssociatives, JoyceAssociatives, Haydys2019G2instanton,Bera2022associative}, where they arise as deformation models for calibrated submanifolds \cite{SiqiSlag}.

More abstractly, $\Z_2$-harmonic spinors are the simplest type of singular Fueter section. Fueter sections are solutions of a non-linear Dirac equation valued in a bundle whose fiber is a hyperk\"ahler orbifold \cite{DoanThesis, TaubesNonlinearDirac, HaydysFSCategory}; $\Z_2$-harmonic spinors are the special case where the orbifold is $\mathbb H/\Z_2$. More general Fueter sections arise in gauge theory \cite{DonaldsonSegal, DWAssociatives, JoyceAssociatives, Haydys2019G2instanton}, and in recent proposals for generalizing Lagrangian Floer theory to the hyperk\"ahler setting \cite{DoanRezchikov, kontsevich2008stability, wang2022complex}.

Despite their importance, many questions about $\mathbb{Z}_2$-harmonic 1-forms and spinors remain unresolved, including general criteria for their existence, their relationship to global geometry, and their local behavior. One barrier to addressing such questions is the lack of explicit examples. A general existence result for $\mathbb{Z}_2$-harmonic spinors was established by Doan and Walpuski \cite{DWExistence} for 3-manifolds $Y$ with $b_1(Y)> 1$, but their proof is non-constructive. Some explicit examples have been constructed using symmetries \cite{SiqiZ3, MazzeoHaydysTakahashiExamples, TaubesWu, TaubesWu2, Siqi2024existence}. The purpose of this article is to use gluing methods to construct an abundance of new, explicit examples of $\Z_2$-harmonic spinors and 1-forms on general compact 3-manifolds.

\subsection{$\Z_2$-Harmonic Spinors and 1-forms on 3-manifolds}
Let $(Y,g)$ be a closed, oriented Riemannian 3-manifold equipped with a Clifford module $(S,\gamma, \nabla)$, where $S\to Y$ is a real vector bundle of rank $4k$ endowed with a Euclidean inner product, $\gamma: T^*Y\to \text{End}(S)$ is a Clifford multiplication, and $\nabla$ is a compatible connection. Next, let $\mathcal Z\subset Y$ be a submanifold of codimension $2$, and choose a real Euclidean line bundle $\ell\to Y\mathrm{-}\mathcal Z$. Associated to each such line bundle, there is a unique flat connection $A_\ell$ with holonomy contained in $\Z_2$. The bundle $S\otimes_\R \ell$ carries a twisted Dirac operator $D$ formed using the connection $\nabla$ on $S$ and $A_\ell$ on $\ell$. A generalized $\Z_2$-harmonic spinor is a triple $(\mathcal Z,\ell, \Phi)$ where $\Phi\in \Gamma(S\otimes_\R \ell)$ satisfies
\be D\Phi=0, \hspace{1cm} \hspace{2cm} \nabla \Phi \in L^2(Y\mathrm{-}\mathcal Z)\label{Z2harmonicequation}\ee

\noindent on $Y\mathrm{-}\mathcal Z$. The submanifold $\mathcal Z$ is called the {\bf singular set}. If $\mathcal Z$ has sufficient regularity, the second requirement of (\refeq{Z2harmonicequation}) implies that $|\Phi|$ extends continuously to $Y$ with $\mathcal Z\subseteq |\Phi|^{-1}(0)$. For fixed $\mathcal Z, \ell$ (\refeq{Z2harmonicequation}) is linear, and solutions are considered up to the scaling action of $\R^{>0}$ on $\Phi$ and the action of $\Z_2$ by $\Phi\mapsto -\Phi$. 

An equivalent viewpoint is to regard a generalized \(\mathbb Z_2\)-harmonic spinor as a section of the fiberwise quotient \(S/\{\pm1\}\), or equivalently as a two-valued section of \(S\). The equivalence is as follows: away from its zero set, such a two-valued section can be lifted locally to an ordinary section of \(S\). There are two such choices, which differ by multiplication by $-1$. On overlaps of two regions with a single chosen local lifts, the two choices differ by multiplication by \(\pm1\), and these transition signs define a principal \(\mathbb Z_2\)-bundle, equivalently a real Euclidean line bundle \(\ell Y\setminus \mathcal Z\). After twisting by \(\ell\), the local lifts glue to a single-valued section of \(S\otimes\ell\). Conversely, a section of \(S\otimes\ell\) determines a section of \(S/\{\pm1\}\) after forgetting the signs. Thus the twisting line bundle \(\ell\) records the monodromy of the two-valued section. A third, equivalent, viewpoint is to consider anti-invariant sections on the double cover \(Y_{\mathcal Z}\to Y\) branched along \(\mathcal Z\), with monodromy determined by \(\ell\), endowed with the pullback metric of cone angle \(4\pi\).

In dimension 3, there are two Clifford modules that are of particular interest: 

\begin{enumerate}
\item[(1)] $S=\slashed S$ is the spinor bundle of a spin or $\mathrm{Spin}^c$ structure $\mathfrak s_0$, and $\nabla=\nabla^\text{Spin}+B$ is a real-linear perturbation of the spin connection by $B\in \Omega^1(\mathfrak{so}(\slashed S))$. In this case, 

\be  D=\slashed D_B\label{Diracoperatorone}\ee
\noindent is a perturbation of the spin Dirac operator. 
\item[(2)] $S=\Omega^0(\R)\oplus \Omega^1(\R)$, and $\nabla$ is the Levi-Civita connection. In this case, \(D\) is the Hodge--de Rham Dirac operator
\be
D_{\mathrm{dR}}
=
\begin{pmatrix}
	0 & -d^\star \\
	-d & \star d
\end{pmatrix}
\label{Diracoperatortwo}
\ee
acting on \(\ell\)-valued forms \(\Omega^0(\ell)\oplus\Omega^1(\ell)\). Here \(d\) denotes the exterior derivative. 
\end{enumerate}

\noindent For the remainder of the article, we continue to write \(D\) for either the spin Dirac operator or \( D_{\mathrm{dR}}\), depending on the case under consideration.

Each choices of $D$ implicitly depends on parameters $p=(g,B)$ where $g$ is the Riemannian metric, and $B$ a perturbation to the connection in case (1). In the second case, applying $d^\star$ to the $\Omega^1$-components and integrating by parts shows that a solution $\Phi=(\nu_0, \nu_1)\in \Omega^0\oplus \Omega^1$ of (\refeq{Z2harmonicequation}) has $\nu_0=0$ when $\ell$ is non-trivial. Solutions in case (2) are therefore also called $\Z_2${\bf -harmonic 1-forms}. The term {\bf generalized $\Z_2$-harmonic spinors} is used to refer to the general case of either (1) or (2), while (true) $\Z_2$-harmonic spinors refers to case (1).    

 We construct examples of generalized $\Z_2$-harmonic spinors by proving gluing results for how solutions behave under connected sum and torus sum operations, and applying these with explicit solutions on some ``standard' 3-manifold (e.g. $S^3, S^1\times S^2$). Thus, beginning with a pair of 3-manifolds $(Y_i, g_i)$ for $i=1,2$ and a pair of generalized $\Z_2$-harmonic spinors $(\mathcal Z_i, \ell_i, \Phi_i)$, we construct solutions of (\refeq{Z2harmonicequation}) on $Y=Y_1\#Y_2$ and on $Y_K=Y \cup_{K_1=K_2} Y_2$ where $K_i \in Y_i$ are knots. These results also emphasize that there is an appreciable disparity between the two cases (\refeq{Diracoperatorone}--\refeq{Diracoperatortwo}), which mimics that of the classical case of harmonic spinors and 1-forms: spinors are well-behaved under generic perturbations of the metric, whereas 1-forms are beholden to constraints coming from $L^2$-Hodge theory. 

The main technical difficulty in the construction is that the singular Dirac operator $D$ fails to be Fredholm on any natural function spaces, thus an approximate solution $\Phi=\Phi_1 \# \Phi_2$ cannot be corrected to a true solution by an application of the implicit function theorem on Banach spaces. In fact, on function spaces such that the second condition of (\refeq{Z2harmonicequation}) is satisfied, $D$ has an infinite-dimensional obstruction to solving. Previous work of Donaldson \cite{DonaldsonMultivalued} and of the second author \cite{PartII}, see also \cite{HeParkerWalpuski26Deformation,BeraWalpuski2025}, has shown that deformations of the singular set may be used to cancel this obstruction, provided one works in the category of tame Fr\'echet manifolds. This consideration leads to a version of the Nash-Moser implicit function theorem suitable for correcting approximate solutions. 

The remainder of Section \ref{Introduction} summarizes our main results. Sections \ref{section2}--\ref{section3} introduce the relevant Nash-Moser theory and prove the gluing results, and Section \ref{section4} is devoted to applications.  

\begin{rem}\label{regularityremark}
When generalized $\Z_2$-harmonic spinors arise as limiting objects, there is no assurance that the singular set $\mathcal Z$ is a smooth submanifold. In most situations, $\mathcal Z$ is known to be a closed, rectifiable set of Hausdorff codimension 2 \cite{TaubesZeroLoci, ZhangRectifiability}. Here, we focus on the case that $\mathcal Z$ is a smooth,  embedded submanifold, which is expected to be true for generic parameters (this was originally conjectured by Taubes, and is supported by \cite{PartII}). Some results about the regularity and structure of the singular set appear in \cite{TaubesWu, Siqi2024existence, MazzeoHaydysTakahashi}, and suggest many new and intriguing directions.
\end{rem}

\subsection{Main Results}
Let $(Y,g)$ be a closed, oriented Riemannian 3-manifold as above, and denote by $D$ one of the twisted Dirac operators (\refeq{Diracoperatorone}--\refeq{Diracoperatortwo}). 

We consider generalized $\Z_2$-harmonic spinors satisfying the following criteria. These temper the potentially wild behavior at the singular set, and are expected to be generic (cf. Remark \ref{regularityremark} and \cite{SiqiZ3}).

\begin{defn} \label{regulardef}A generalized $\Z_2$-harmonic spinor $(\mathcal{Z}, \ell, \Phi)$ with respect to parameters $p=(g, B)$ is said to be
\medskip 


\begin{enumerate}
\item[(i)] { {({{{Smooth}}})}} if the singular set $\mathcal Z\subset Y$ is a smooth, embedded link, and $\ell$ restricts to the M\"obius bundle on every disk normal to $\mathcal Z$.
\smallskip 

\item[(ii)]{({{{Non-degenerate}})}} if $\Phi$ has non-vanishing leading-order, i.e. there is a constant $c>0$ such that \be |\Phi| \geq c\cdot \text{dist}(-,\mathcal Z)^{1/2}.\label{nondegenerate}\ee
\noindent Additionally, we say that $\Phi$ is {\it weakly} non-degenerate if there exists a tubular neighborhood of $\mathcal Z$ on which (\refeq{nondegenerate}) holds. 
\end{enumerate}

\end{defn}

\noindent Note that non-degeneracy implies that $\mathcal Z=|\Phi_0|^{-1}(0)$, whereas a weakly non-degenerate generalized $\Z_2$-harmonic spinor may have additional zeros away from $\mathcal Z$ which are non-singular in the sense that $\ell$ extends over these. 
\medskip

Our first theorem constructs $\Z_2$-harmonic spinors and 1-forms on connected sums, given one on each of the summands. 

\begin{thm}
\label{maina}
Suppose that for $i=1,2$, $(Y_i, g_i)$ are closed, oriented Riemannian manifolds, and that $(\mathcal Z_i, \ell_i, \Phi_i)$ are smooth, weakly non-degenerate generalized $\Z_2$-harmonic spinors with respect to the parameters $p_i=(g_i, B_i)$. 

Choose points \(y_i\in Y_i\setminus \mathcal Z_i\), and form the connected sum
\(Y=Y_1\#Y_2\) by deleting sufficiently small geodesic balls
\(U_i\Subset Y_i\setminus \mathcal Z_i\) centered at \(y_i\). Let $\ell$ be the flat $\mathbb{Z}_2$ bundle on this connected sum, whose first Steifel--Whitney class is given by $w_1(\ell) = w_1(\ell_1) + w_1(\ell_2)$ under the natural Mayer--Vietoris identification $H^1(Y\setminus Z;\mathbb Z_2)
\simeq
H^1(Y_1\setminus \mathcal Z_1;\mathbb Z_2)
\oplus
H^1(Y_2\setminus \mathcal Z_2;\mathbb Z_2),$
obtained from the decomposition of \(Y\setminus \mathcal Z\) along the gluing sphere. 

Then, for each pair $\alpha = (a, b) \in S^1 \subseteq \mathbb{R}^2$ with both components non-zero, $Y$ admits $\mathbb{Z}_2$-harmonic spinors $(\mathcal{Z}_\alpha, \ell, \Phi_\alpha)$, which are small perturbations of
\begin{equation}
	\mathcal{Z} = \mathcal{Z}_1 \sqcup \mathcal{Z}_2, \hspace{2cm}  \Phi = a\Phi_1 + b\Phi_2, \label{approximateZ2harmonic}
\end{equation}
respectively, with respect to the parameters $p_\alpha = (g', B_\alpha)$ that coincide with $p_i$ on the complement of small open balls $U_i \subseteq Y_i - \mathcal{Z}_i$. Moreover, each $(\mathcal{Z}_\alpha, \ell, \Phi_\alpha)$ is smooth and weakly non-degenerate. 

\end{thm}

\begin{rem}
Since $\Z_2$-harmonic spinors are considered up to sign, the set of equivalence classes of spinors constructed above is parameterized by $[\alpha]\in \R\mathbb P^1$. A similar result holds for multi-connected sums $Y=\#_i^n Y_i$, where $[\alpha] \in \R \mathbb P^{n-1}$ is chosen from the Zariski open subset where no coordinate is zero. 
\end{rem}

Our next theorem proves a similar gluing formula for the spinor case $S=\slashed S$ (\refeq{Diracoperatorone}) now joining the manifolds by associating a knot neighborhood. With $(Y_i,g_i)$ as before, now assume additionally that $K_i\subseteq Y_i$ are oriented knots such that $K_i \cap \mathcal Z_i=\emptyset$. Let \be N(K_i)\simeq  S^1 \times D_R\label{solidtorusnbhd}\ee

\noindent be a tubular neighborhood of each $K_i$ with radius $R$ such that $N(K_i)\subset Y_i-\mathcal Z_i$, where $K_i$ is given by $\{0\}\times S^1$. Since \(Y_i\) is oriented and \(K_i\) is an oriented knot, the normal bundle of \(K_i\) is an oriented real rank-two bundle over \(S^1\), hence is trivial. Thus the choice of an identification \(N(K_i)\simeq S^1\times D_R\) is simply a choice of framing. $N(K_i)$ may be endowed with coordinates $(x,y,t)$ such that $t$ is an arclength coordinate along $K_i$, and $x,y$ are normal coordinates such that $g_i |_{N(K_i)}=dx^2 + dy^2 + dt^2 + O(r)$ where $r^2={x^2 + y^2}$. Assume that the \(g_i\)-lengths of the knots agree: $\operatorname{Length}_{g_1}(K_1)=\operatorname{Length}_{g_2}(K_2).$ This can always be achieved by rescaling one of the metrics. Let $\ph: N(K_1)\to N(K_2)$ be a diffeomorphism given by the identity on the $S^1$ factor, and by a (possible $t$-dependent) orientation-reversing linear isometry to first order on the $D_R$ factor. The torus sum is defined to be $$Y_K= Y_1 \cup_\ph Y_2,$$

\noindent where the neighborhoods $N(K_i)$ are associated via $\ph$.  

\begin{thm} \label{mainb}
Suppose that for $i=1,2$, $(Y_i, g_i)$ are closed, oriented Riemannian manifolds, and that $(\mathcal Z_i, \ell_i, \Phi_i)$ are smooth, weakly non-degenerate (true) $\Z_2$-harmonic spinors with respect to parameters $p_i=(g_i, B_i)$. Assume additionally that $\slashed S_i\otimes \ell_i$ are induced by spin structures $\mathfrak s_i$ $($defined over $Y_i-\mathcal Z_i$$)$ with $\ph^*(\mathfrak s_2 |_{N(K_2)})\simeq \mathfrak s_1 |_{N(K_1)}$. 

Let $\ell$ be the flat line bundle on $Y_K$ defined by $\ell_1,\ell_2$ and $\ph$.  Then, for each $\alpha=(a,b)\in S^1\subseteq \R^2$ with both non-zero, $Y_K$ admits $\Z_2$-harmonic spinors $(\mathcal Z_\alpha, \ell, \Phi_\alpha)$ which are small perturbations of 
\be \mathcal Z=\mathcal Z_1 \sqcup \mathcal Z_2, \hspace{2cm} \Phi= a\Phi_1 + b\Phi_2 \label{approximateZ2harmonic2}  \ee

\noindent respectively, with respect to parameters $p_\alpha=(g',B_\alpha)$ such that $p_\alpha$ agrees with $p_i$ on the complement of $N(K_i)\subseteq Y_i$, possibly up to a constant scaling of the metric. Moreover, each $(\mathcal{Z}_\alpha, \ell, \Phi_\alpha)$ is smooth and weakly non-degenerate. 
\end{thm}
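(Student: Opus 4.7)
The plan is to imitate the Nash-Moser gluing scheme used for the connected sum case (Theorem \ref{maina}), now adapted to the torus neighborhood $N(K) \subset Y_K$. The argument proceeds in four steps: build an approximate solution by cutting and pasting along the neck, estimate the error, apply the parameterized Nash-Moser implicit function theorem of \cite{DonaldsonMultivalued, PartII}, and check that the corrected solution retains smoothness and weak non-degeneracy.

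To begin, hypothesis $(*)$ ensures that the spin structures $\mathfrak s_i$ (defined on $Y_i - \mathcal Z_i$) glue across $\ph$ to a spin structure on $Y_K - (\mathcal Z_1 \sqcup \mathcal Z_2)$, and hence yield a global twisted spinor bundle $\slashed S \otimes \ell$ over $Y_K$ identified with $\slashed S_i \otimes \ell_i$ outside the neck. Choosing cutoffs $\chi_1, \chi_2$ equal to $1$ outside $N(K)$ and interpolating smoothly inside it, I form
$$\Phi_\alpha^{\mathrm{app}} = a\,\chi_1 \Phi_1 + b\,\chi_2 \Phi_2,$$
and interpolate the parameters to obtain $p_\alpha = (g', B_\alpha)$, after a preliminary constant rescaling so that $\mathrm{length}(K_1) = \mathrm{length}(K_2)$. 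Because $K_i \cap \mathcal Z_i = \emptyset$, both $\Phi_i$ and the local data $(g_i, B_i)$ are smooth on $N(K_i)$, so $\Phi_\alpha^{\mathrm{app}}$ is smooth on the neck and its singular set is exactly $\mathcal Z = \mathcal Z_1 \sqcup \mathcal Z_2$.

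The error $\slashed D_{B_\alpha} \Phi_\alpha^{\mathrm{app}}$ is supported in the neck, where both spinors are smooth, so it can be made arbitrarily small in the norms adapted to the Nash-Moser framework by shrinking $N(K)$ or sharpening the cutoffs. The correction step then runs exactly as in the scheme of \cite{DonaldsonMultivalued, PartII}, with deformation parameters given by variations of $\mathcal Z$, of the spinor $\Phi$, and of the metric/connection data supported inside $N(K)$. The three hypotheses of the parameterized Nash-Moser theorem --- tameness of the relevant maps between Fr\'echet spaces, a quadratic remainder estimate, and surjectivity of the linearization modulo variations of $\mathcal Z$ --- are verified componentwise near each $\mathcal Z_i$ using the weak non-degeneracy of $\Phi_i$ as in the connected sum case, while the contributions from the smooth tube add only uniformly bounded terms that do not disturb the estimates.

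The main obstacle, exactly as in Theorem \ref{maina}, is the uniform surjectivity of the linearized map after gluing: one must rule out spurious small eigenvalues of the singular Dirac operator on $Y_K$ coming from the torus neck that would fail to be cancelled by a deformation of $\mathcal Z$. Here it is essential that $N(K)$ lies entirely in the smooth locus of both spinors. Standard Fredholm estimates for $\slashed D_{B_\alpha}$ on the smooth tube $S^1 \times D_R$, combined with the localization of the obstructions of each $\slashed D_{B_i}$ near $\mathcal Z_i$, decouple the two pieces to leading order and transfer the surjectivity-modulo-deformations of each piece to $Y_K$ with a uniform constant. Once Nash-Moser delivers an exact solution $(\mathcal Z_\alpha, \ell, \Phi_\alpha)$, smoothness of $\mathcal Z_\alpha$ and the leading-order bound $|\Phi_\alpha| \geq c\, \mathrm{dist}(-,\mathcal Z_\alpha)^{1/2}$ on a tubular neighborhood of each component are inherited from $(\mathcal Z_i, \ell_i, \Phi_i)$, since the correction is small in the tame Fr\'echet topology.
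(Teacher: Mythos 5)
There is a genuine gap: your proposal treats the neck as a harmless smooth region whose ``contributions add only uniformly bounded terms,'' but the entire difficulty of Theorem \ref{mainb} lives on the neck. First, there is no small parameter in your construction. With a fixed metric on $N(K)$ and cutoffs interpolating across it, the error $\gamma(d\chi_i)\Phi_i$ is $O(1)$ in $L^2$; shrinking $N(K)$ does not help, since $|d\chi|\sim \rho_0^{-1}$ while the volume of the support is only $O(\rho_0^{2})$, so the error does not tend to zero and no implicit function theorem applies. Unlike the connected sum case, one cannot use conformal invariance to stretch a solid torus into an infinite cylinder, which is why the paper instead \emph{pinches} the neck, replacing the metric on $N(K)$ by $g_\delta = dt^2+d\rho^2+(\rho^2+\delta^2)d\theta^2$ and using logarithmic cutoffs; even then the error only decays for weights $\mu\le 0$, forcing the whole analysis to be carried out at a negative weight ($\mu=-\tfrac18$).

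Second, and more seriously, the linearized operator on the degenerating neck is \emph{not} uniformly surjective modulo deformations of $\mathcal Z$: spectral flow across the neck (Lemma \ref{infiniteneck} and its three-dimensional analogue) produces a cokernel of dimension $4L+2$ with $L=\lfloor \delta^{-1}\rfloor$, spanned by modified Bessel modes $\Psi_\ell,\overline\Psi_\ell$ concentrated on the neck. Deformations of $\mathcal Z$ cannot cancel this obstruction because $\mathcal Z$ is disjoint from $N(K)$, and it cannot be absorbed into a finite-rank compact error because its dimension grows with $\delta^{-1}$. The paper cancels it with a new $\delta$-dependent family of zeroth-order perturbations $B(\xi(t))\Phi_\circ$ supported in the neck (Lemma \ref{cancellationlem}, Proposition \ref{obstructioncancellation}), which is precisely why one must first isotope $K_1$ so that $\Phi_1$ is nowhere vanishing along it, and why the theorem fails for $1$-forms (Remark \ref{thmmainbobstruction}), where no such perturbations are available and the weight restriction $|\mu|<\tfrac14$ is incompatible with making the error small. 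None of this appears in your argument, so the central step --- uniform invertibility of the (extended) linearization --- is unsupported.
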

\medskip 

\begin{rem}\label{Riemannsurfacegluing}
Theorem \ref{mainb} does not apply in the case of 1-forms in general (see Remark \ref{thmmainbobstruction}). It does, however, apply to 1-forms in the product case $Y_i = S^1 \times \Sigma_i$ and $K_i = S^1 \times \{x_i\}$, then $Y_K = S^1 \times (\Sigma_1 \# \Sigma_2)$. This implies the analogue of Theorem \ref{maina} on Riemann surfaces (proved in  Section \ref{section3.3.3}).
\end{rem}

\subsection{Examples and Applications: Spinors}
\label{section1.3}
Theorem \ref{maina} enables us to construct many new examples of $\mathbb{Z}_2$-harmonic spinors for the spin Dirac operator $D=\slashed D$ (i.e. case \refeq{Diracoperatorone}) on compact manifolds. 

The main class of examples uses solutions on Seifert--fibered $3$-manifolds as building blocks. Recall that a $3$-manifold $Y$ is called Seifert--fibered if it is the total space of an orbifold fiber bundle $\pi: Y\to \Sigma$ with fiber $S^1$ over a closed 2-dimensional orbifold $\Sigma$. Using the structure results for Seifert--fibered spaces and orbifold theory, we obtain

\begin{thm} \label{seifertfiberedspinors}
Let $\pi : Y \to \Sigma$ be a Seifert--fibered $3$-manifold. Then for each $k \geq 1$, there exist metrics $g_k$ that admit smooth, non-degenerate $\mathbb{Z}_2$-harmonic spinors $(\mathcal{Z}_k, \ell_k, \Phi_k)$, where $\mathcal{Z}_k \subseteq Y$ is the union of disjoint fibers of $\pi$.
\end{thm}

In particular, this implies the existence of $\mathbb{Z}_2$-harmonic spinors in the following cases, all of which are smooth and non-degenerate (see Corollary \ref{cor_examples_Z_2_harmonic_spinors} for details):

\begin{cor} \label{example1.8} The following three-manifolds admit $\Z_2$-harmonic spinors: 
\begin{enumerate}
    \item[(a)] $Y = S^3$ admits $\mathbb{Z}_2$-harmonic spinors $(\mathcal{Z}_k, \ell_k, \Phi_k)$ with respect to the Berger metrics $g_{B,V}$ such that $\mathcal{Z}_k$ is a Hopf link with $2k$ components.
    \item[(b)] $Y = S^1 \times S^2$ admits $\mathbb{Z}_2$-harmonic spinors $(\mathcal{Z}_k, \ell_k, \Phi_k)$ with respect to metrics $g_k = dt^2 + V_k \cdot g_{S^2}$ for $V_k \in \mathbb{R}$, such that $\mathcal{Z}_k = S^1 \times \mathcal{Z}_{S^2}$ where $\mathcal{Z}_{S^2} \subseteq S^2$ is a collection of $2k$ points.
    \item[(c)] $Y = \Sigma(2,3,5)$, the Poincar\'e homology sphere, admits a $\mathbb{Z}_2$-harmonic spinor $(\mathcal{Z}, \ell, \Phi)$ with a connected singular set $\mathcal{Z} = \pi^{-1}(p_0)$ for some $p_0 \in \Sigma$.
\end{enumerate}
\end{cor}

Examples (1.8a) and (1.8b) may be used in conjunction with Theorems \ref{maina} and \ref{mainb} respectively to generate examples on general compact 3-manifolds. First, we recall the following result of C. B\"ar. 

\begin{thm}[\cite{Bar1996}] \label{Bar}Every closed oriented 3-manifold admits metrics with harmonic spinors. 
\end{thm}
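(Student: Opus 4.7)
The plan is a spectral flow argument. Since $Y$ is orientable of dimension $3$, $w_2(TY)=0$ and $Y$ admits a spin structure; fix one. The spin Dirac operator $\slashed D_g$ is self-adjoint with discrete real spectrum depending continuously on $g$ in the space of Riemannian metrics, and the goal is to produce $g$ with $\ker\slashed D_g\neq 0$.

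First I would construct a smooth one-parameter family $\{g_t\}_{t\in[0,1]}$ of metrics on $Y$ along which the spectral flow of $\slashed D_{g_t}$ is nonzero. By continuity of the spectrum, a nonzero spectral flow forces some eigenvalue to cross zero at an intermediate $t^\ast\in(0,1)$, and the corresponding eigenspinor is a harmonic spinor for $g_{t^\ast}$. To produce such a family I would use a local modification supported in a small ball $B\subset Y$: install inside $B$ a one-parameter family of Berger-type metrics modeled on squashed round $S^3$'s, and interpolate smoothly to a fixed background metric on $Y\setminus B$. Hitchin's classical computation on Berger spheres, recalled in Example \ref{example1.8}(a), exhibits squashing parameters at which explicit harmonic spinors appear on the model $S^3$, with nearby parameter values producing eigenvalues of opposite signs on the model. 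Cutting these model spinors off to have support in $B$ and using them as test sections in the Rayleigh quotient for $\slashed D_{g_t}$ yields local eigenvalue estimates that pass through zero as $t$ varies.

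The main obstacle is globalizing this local spectral flow: in odd dimensions $\mathrm{ind}(\slashed D)=0$, so harmonic spinors enjoy no topological protection, and the zero eigenvalue observed in the model could be destroyed by cutoff-induced error terms outside $B$ or absorbed by other eigenvalues of the global operator. To control this, I would shrink $B$ while simultaneously tuning the background metric on $Y\setminus B$ so that the eigenvalues of $\slashed D_{g_t}$ on spinors supported in $Y\setminus B$ are bounded away from zero uniformly in $t$. A min-max / Weyl-type comparison of quadratic forms then allows the local sign change in $B$ to be inherited by the global spectrum, forcing a zero eigenvalue of $\slashed D_{g_{t^\ast}}$ for some $t^\ast\in(0,1)$. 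Conceptually, this localization parallels the ``bubbling in a small ball'' strategy that underlies the authors' connected-sum gluing in Theorem \ref{maina}, though here the output is an ordinary harmonic spinor rather than a $\Z_2$-harmonic one.
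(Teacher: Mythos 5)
The paper does not prove this statement: it is quoted verbatim from B\"ar's 1996 theorem \cite{Bar1996} and used as a black box (its only role here is to supply a classical harmonic spinor, i.e.\ a $\Z_2$-harmonic spinor with $\mathcal Z=\emptyset$, as input to Theorem \ref{maina}). Your sketch is, in essence, a correct outline of B\"ar's own argument: a path of metrics obtained by grafting a family of Berger metrics onto a small ball (equivalently, forming $Y\#S^3$ with a long neck), Hitchin's explicit eigenvalue computation on Berger spheres to produce a zero-crossing on the model, and a spectral-convergence statement to transfer that crossing to the global operator. The one place where your outline is looser than the actual proof is the globalization step: since $\slashed D_g$ is unbounded below, a naive Rayleigh-quotient/min-max comparison does not directly order its eigenvalues, and one cannot simply ``tune the background metric so that spinors supported in $Y\setminus B$ have eigenvalues bounded away from zero'' --- the relevant operator does not decouple that way. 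B\"ar instead proves convergence of the Dirac spectrum of the connected sum (with shrinking gluing region) to the union of the spectra of the two pieces, together with control of the endpoints of the path, so that the nonzero spectral flow of the model family is inherited by the global family; with that substitution your argument is the standard one.
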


\noindent Of course, such a classical harmonic spinor is a particular instance of a $\Z_2$-harmonic spinor with $\mathcal Z=\emptyset$ and $\ell$ being the trivial line bundle; thus Theorem \ref{maina} applies. We conclude: 

\begin{cor} \label{spinorsa}Every closed, oriented 3-manifold admits infinitely many parameters $p=(g,B)$ with smooth, non-degenerate $\Z_2$-harmonic spinors, and the singular sets of these represent infinitely many distinct isotopy classes of embedded links. 
\end{cor}
\begin{proof}
Write $Y\simeq Y \# S^3$, where the first factor is endowed with a metric admitting a harmonic spinor via Theorem \ref{Bar}, and $S^3$ with one of the metrics from Corollary (\ref{example1.8}a). The result then follows from Theorem \ref{maina}. 
\end{proof}

Corollary \ref{spinorsa} strengthens the existence result of Doan--Walpuski \cite{DWExistence}, which requires that $b_1(Y) > 1$. Moreover, it shows that the collection of isotopy classes of links that may arise includes at least the $2k$-Hopf link on an open ball for every $k$. The examples constructed by Corollary \ref{spinorsa} are also reasonably explicit: they have a metric equal to a metric admitting a harmonic spinor on the complement of a small ball in $Y$ and to Berger metric on the complement of a small ball in $S^3$; the spinors themselves are a small perturbation of the solutions on each summand. Moreover, the existence of a single such parameter implies the existence of an infinite-dimensional family of such parameters, because the set of such parameters is an open neighborhood in a submanifold of finite codimension \cite[Thm. 1.4]{PartII}.


We can deduce an even stronger existence result by applying Theorem \ref{mainb}. 

\begin{cor}\label{spinorsb}
Let $K\subseteq Y$ be a knot in a closed oriented 3-manifold. Then for each $k\geq 1$, there exist parameters $(g_k, B_k)$ on $Y$ that admit smooth, weakly non-degenerate $\Z_2$-harmonic spinors whose singular set is isotopic to $2k$ disjoint copies of $K$, which is the $(2k,0)$ cable link of the knot $K$.
\end{cor}

\begin{proof}
Let $K=K_1$ and $K_2=S^1 \times \{p_0\}\subseteq S^1 \times S^2$. Write $Y$ as $Y\simeq Y\cup_K (S^1\times S^2)$, where the first factor is endowed with a metric admitting a harmonic spinor, and $S^1\times S^2$ has the metric of Corollary (\ref{example1.8}b). The result then follows from Theorem \ref{mainb}. 
\end{proof}

\noindent Corollary \ref{spinorsb} strengthens Corollary \ref{spinorsa} by providing examples where $[\mathcal Z]\in H_1(Y;\Z)$ is non-trivial. Repeated applications of Corollary~\ref{spinorsb} implies the same statement for multi-component links. In contrast to Corollary \ref{spinorsa}, the examples of Corollary \ref{spinorsb} may have $\mathcal Z$ non-trivial in $H_1(Y;\Z)$ (note that smoothness implies $[\mathcal Z]\in H_1(Y;\Z)$ is even). 

Corollary \ref{spinorsb} has a rather surprising implication in gauge theory. Recall that for the standard Seiberg-Witten equations, the moduli space of solutions with negative formal dimension is only non-empty for finitely many $\text{Spin}^c$ structures . The following corollary shows that this classic fact fails rather dramatically for the two-spinor Seiberg-Witten equations, a similar phenomenon first observed by Doan \cite{Doan19seibergwitten} in the case that $Y = S^1 \times \Sigma$. 

\begin{cor}
\label{nonempty}
Let $Y$ be a closed, oriented 3-manifold with $b_1(Y) > 0$ and let $\mathcal{M}_{\text{SW}^2}$ be the moduli space of two-spinor Seiberg-Witten solutions. Then there exist infinitely many $\text{Spin}^c$ structures on $Y$ such that there are parameters $p = (g, B)$ for which the set of regular points in the moduli space $\mathcal{M}_{\text{SW}^2}$ is non-empty and non-compact.
\end{cor}

\noindent As with Corollaries \ref{spinorsa} and \ref{spinorsb}, the existence of a single parameter for which this result holds implies the existence of infinitely many such parameters. Corollary \ref{nonempty} follows directly from Corollary \ref{spinorsb} and the gluing result of the second author \cite{PartIII}, which constructs Seiberg--Witten solutions from a given $\Z_2$-harmonic spinor in the $\text{Spin}^c$ structure $\slashed{S}$ satisfying $\text{det}(\slashed{S}) = -2\text{PD}[\mathcal{Z}]$ (see Section \ref{section2.3}). 
\subsection{Examples and Applications: 1-Forms}
\label{subsection_examplesandapplications1-forms}
The behavior of $\Z_2$-harmonic 1-forms on $3$-manifolds has a rather different flavor than the theory for spinors, because such harmonic forms are linked to the $L^2$-cohomology of the double branched cover via Hodge theory. Furthermore, the compactness theorem of Taubes \cite{Taubes3dSL2C}
suggests that $\Z_2$-harmonic 1-forms should be regarded as an ideal boundary for the irreducible component of the $\mathrm{SL}(2,\C)$ representation variety $\mathcal R(Y)$ of the 3-manifold. The fact that the geometry of the representation variety can reflect deep aspects 3-manifold topology hints that $\Z_2$-harmonic 1-forms might also be subject to other, more subtle topological restrictions. 

To elaborate on the connection to $L^2$-cohomology, let $(\mathcal{Z}, \ell, \nu)$ be a $\Z_2$ harmonic 1-form defined on $Y$. Let $p: Y_\mathcal{Z} \to Y$ be the double branched cover map branched along $\mathcal{Z}$ whose monodromy is given by that of $\ell$, with $\sigma$ being the involution over $Y_\mathcal{Z}$. By \cite[Lemma 1.5]{wang93modulispaces}, the space of $L^2$-harmonic forms
$$
\{\alpha \in \Omega^1(Y_{\mathcal{Z}}) \mid \alpha \in L^2, \; d\alpha = d^\star\alpha = 0 \; \mathrm{over} \; Y_{\mathcal{Z}} - p^{-1}(\mathcal{Z}) \}
$$
is isomorphic to the singular cohomology $H^1(Y_{\mathcal{Z}}; \mathbb{R})$, where $d^\star$ is formed using pullback metric $p^*g$. This group carries additional structure: the involution $\sigma: Y_\mathcal{Z} \to Y_\mathcal{Z}$ induces a decomposition 
$$
H^i(Y_\mathcal{Z}; \mathbb{R}) = H^i_+(Y_\mathcal{Z}; \mathbb{R}) \oplus H^i_-(Y_\mathcal{Z}; \mathbb{R}),
$$ 
into the $\pm 1$ eigenspaces of $\sigma^*$. The pullback $p^*\nu$ of a $\Z_2$-harmonic 1-form $(\mathcal{Z}, \ell, \nu)$ is an $L^2$ harmonic 1-form, i.e., $d(p^*\nu) = d\star_{p^*g} (p^*\nu) = 0$ with respect to the singular cone metric $p^*g$ in the $-1$ eigenspace, that additionally satisfies $\nabla \nu \in L^2$ (and thus $\nu'=0$ on $p^{-1}(\mathcal Z)$).

In this context, we say that a cohomology class $[\alpha] \in H^1_-(Y_\mathcal{Z}; \mathbb{R})$ is represented by a $\Z_2$ harmonic 1-form if there exists a $\Z_2$ harmonic 1-form $(\mathcal{Z}', \ell', \nu')$ on $Y$ such that there exists a diffeomorphism $\phi: Y \to Y$ with $\phi^* \ell' = \ell$ and $[p^* \phi^* \nu'] = [\alpha]$.  

The following result, proved in Section \ref{subsec_Z2Seifert} as a consequence of Proposition \ref{prop_1_form_existence_Seifert_fibered}, gives examples of \(\mathbb Z_2\)-harmonic 1-forms on Seifert--fibered spaces.
\begin{prop}
\label{seifertfiberedoneforms}
    Let $Y$ be a Seifert--fibered space with Seifert invariant $(\gamma, b,  (\alpha_1, \beta_1), \cdots, (\alpha_n, \beta_n))$, where $b$ is the fiber degree, $\gamma$ is the orbifold genus, and $(\alpha_i, \beta_i)$ are local orbifold invariants. Suppose either 
    \begin{enumerate}
        \item $\gamma = 0$ and $n \geq 4$,
        \item $\gamma = 1$ and $n \geq 2$, or
        \item $\gamma \geq 2$,
    \end{enumerate}
    then there exist non-degenerate $\Z_2$ harmonic 1-forms on $Y$.
\end{prop}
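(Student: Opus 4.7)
The plan is to reduce the construction of a $\Z_2$-harmonic 1-form on $Y$ to a 2-dimensional problem on the base orbifold $\Sigma$, then pull back via the Seifert projection $\pi\colon Y \to \Sigma$. First, equip $Y$ with a \emph{Seifert metric} $g_Y = \eta\otimes \eta + \pi^{\ast}g_\Sigma$, where $\eta$ is a constant-length connection 1-form for $\pi$ and $g_\Sigma$ is a smooth orbifold Hermitian metric on $\Sigma$. With this choice, $\pi$ becomes a Riemannian submersion with minimal fibers, so that pullback of $\mathbf{d}$-harmonic $\ell_\Sigma$-valued 1-forms on $\Sigma$ yields $\mathbf{d}$-harmonic $\pi^{\ast}\ell_\Sigma$-valued 1-forms on $Y$ whose singular set is a union of regular fibers.

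Next, produce the required 1-form on $\Sigma$. A smooth $\Z_2$-harmonic 1-form on the orbifold $\Sigma$ with singular set $S = \{p_1,\ldots,p_m\}$ disjoint from the cone points is given by $\beta = \operatorname{Re}(\sqrt{q})$, where $q \in H^0(\Sigma, K_\Sigma^{\otimes 2})$ is a holomorphic orbifold quadratic differential whose simple zeros are precisely $S$; the two-valuedness of $\sqrt{q}$ encodes the flat $\Z_2$-bundle $\ell_\Sigma$, and the simple zeros produce the $r^{1/2}$ leading-order behavior required for non-degeneracy. Existence of $q$ reduces to a standard orbifold Riemann--Roch computation; in the hyperbolic regime (where $H^1$ vanishes by Serre duality) this gives
\[ \dim H^0(\Sigma, K_\Sigma^{\otimes 2}) \;=\; 3\gamma - 3 + n, \]
which is positive in exactly the three regimes $(\gamma = 0, n\geq 4)$, $(\gamma = 1, n\geq 2)$, $(\gamma \geq 2)$. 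A generic element $q$ of this space has only simple zeros, defining $S$. The borderline case $\gamma = 1$, $n = 1$ (where the formula also gives $1$) is excluded because the unique section is nowhere vanishing: on an elliptic curve, $K^{\otimes 2}$ twisted by a single orbifold contribution is a degree-one line bundle whose unique section has only its prescribed behavior at $p_1$.

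Setting $\nu = \pi^{\ast}\beta$ and $\ell = \pi^{\ast}\ell_\Sigma$ then yields a $\Z_2$-harmonic 1-form on $Y$ whose singular set $\mathcal{Z} = \pi^{-1}(S)$ is a smooth embedded link consisting of $m$ disjoint regular fibers. Because $\pi$ is a Riemannian submersion and $S$ avoids the cone points, the distance function satisfies $\operatorname{dist}_{g_Y}(x, \mathcal{Z}) \asymp \operatorname{dist}_{g_\Sigma}(\pi(x), S)$ in a tubular neighborhood; hence the non-degeneracy $|\beta| \gtrsim r_\Sigma^{1/2}$ on $\Sigma$ lifts to $|\nu| \gtrsim \operatorname{dist}_{g_Y}(-,\mathcal{Z})^{1/2}$ on $Y$.

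The main technical obstacle is to verify that pullback genuinely intertwines the operator $\mathbf{d}$ in the presence of the exceptional fibers of the Seifert fibration, where the cone structure of $\Sigma$ renders $\pi^{\ast}g_\Sigma$ conical along those fibers. This should be checked in the local uniformizing cover near each cone point, where the fibration is a finite quotient of a trivial $S^1$-bundle and $\beta$ lifts to a smooth cyclic-isotropy-equivariant 1-form; the verification then reduces to the analogous intertwining on a smooth product, which is standard. A secondary but essential task is carrying out the orbifold Riemann--Roch computation precisely in the borderline regimes (especially the cases $\gamma = 1, n = 1$ and $\gamma = 0, n = 3$) in order to confirm that the stated hypotheses on $(\gamma, n)$ are genuinely sharp.
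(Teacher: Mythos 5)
Your proposal is essentially the paper's own argument: equip $Y$ with the Seifert metric $\eta^2+\pi^{*}g_\Sigma$, produce an orbifold holomorphic quadratic differential $q$ with simple zeros via Kawasaki--Riemann--Roch ($\dim_\C H^0(K_\Sigma^2)=3\gamma-3+n$), set $\nu=\Re(\sqrt q)$ on $\Sigma$, and pull back by $\pi$ (the paper verifies co-closedness upstairs by the direct computation $d\star_{g_Y}\pi^*\nu = -d\eta\wedge\star_\Sigma\nu+\eta\wedge d\star_\Sigma\nu=0$, the first term vanishing for degree reasons, rather than by appealing to minimal fibers). Two small corrections to your side remarks: the genericity of simple zeros is guaranteed by the condition $\deg|K_\Sigma^2|=4\gamma-4+n\ge 2\gamma$ used in the paper (which holds in exactly the three stated regimes), not merely by $h^0>0$; and in the excluded case $\gamma=1$, $n=1$ the unique section of the degree-one desingularized bundle on the elliptic curve \emph{does} vanish at one point --- the obstruction there is that a single simple zero cannot serve as the branch locus of a $\Z_2$-cover (the loop around one puncture is trivial in $H_1(\Sigma\setminus\{p\};\Z_2)$), together with the failure of the genericity condition.
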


To emphasize the distinction between this and the spinor case, we make the following conjecture.
\begin{conj}\label{conj_Z21form_nonexistence} Suppose $\mathcal R(Y)$ is zero-dimensional. Then there exist no $\mathbb{Z}_2$ harmonic 1-forms on $Y$ with $\mathcal{Z} \neq \emptyset$ with respect to any metric. In particular, there exist no $\mathbb{Z}_2$-harmonic 1-forms on $S^3$, and no $\mathbb{Z}_2$-harmonic 1-forms on $S^1 \times S^2$ and $T^3$ except for the classical harmonic forms with $\mathcal{Z} = \emptyset$.
\end{conj}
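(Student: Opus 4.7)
The plan is to establish the contrapositive: the existence of a $\Z_2$-harmonic 1-form $(\mathcal Z, \ell, \nu)$ with $\mathcal Z\neq \emptyset$ on $Y$ should force $\mathcal R(Y)$ to be positive-dimensional. This would invert Taubes's compactness theorem \cite{Taubes3dSL2C}: instead of producing $\Z_2$-harmonic 1-forms as the ideal limits of divergent sequences of flat $\SL(2,\C)$-connections, one would construct a 1-parameter family of such flat connections whose rescaled Taubes limit is the given $\Z_2$-harmonic 1-form. The specializations to $S^3$, $S^1\times S^2$, and $T^3$ would then be immediate, since each has abelian or trivial fundamental group and hence no irreducible $\SL(2,\C)$-representations.

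The central technical point is that $\nu$ encodes, via the $L^2$-Hodge correspondence recalled in Section \ref{subsection_examplesandapplications1-forms}, an infinitesimal deformation of the $\pm I$-valued holonomy on $\pi_1(Y-\mathcal Z)$ determined by the monodromy of $\ell$. This reducible ``representation'' does not extend to $\pi_1(Y)$, since meridians of $\mathcal Z$ are trivial in $\pi_1(Y)$ but carry holonomy $-I$. Consequently, the desired family of flat $\SL(2,\C)$-connections on $Y$ cannot simply be exponentiations of $\nu$; they must have meridional holonomy close to but distinct from $-I$, so that the singular set $\mathcal Z$ is detected only in the rescaled limit. Constructing them requires: (a) building model flat $\SL(2,\C)$-connections on a tubular neighborhood of $\mathcal Z$ whose rescaled holonomy asymptotically captures $\nu$ (3D analogues of Nahm-pole or Higgs-bundle-end models); (b) gluing these to $\nu$-bent perturbations of the trivial connection on the complement; (c) correcting the resulting approximate flat connections to true flat connections by a Nash-Moser iteration that absorbs the cokernel of the linearized flatness operator through simultaneous deformation of $\mathcal Z$, in the spirit of the main theorems of this paper and of \cite{PartII}. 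The resulting 1-parameter family would be irreducible for nonzero parameter, since the $\nu$-bending breaks the reducible structure, and would contradict $\dim \mathcal R(Y) = 0$.

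The main obstacle is precisely this gluing scheme for the nonlinear $\SL(2,\C)$-flatness equation, which has not been developed. The 2D analogue --- construction of Hitchin moduli solutions with prescribed limiting quadratic differential --- was carried out in \cite{MWWW, LauraFredrickson18}, but the 3D case involves substantial new difficulties: the model solutions near $\mathcal Z$ are not presently known, the linearization of the flatness equation has mapping properties different from the Dirac operator studied here, and the deformation-of-$\mathcal Z$ obstruction-cancellation mechanism central to \cite{PartII} must be reformulated for the flatness equation. These open problems are why the statement is framed as a conjecture here rather than a theorem.
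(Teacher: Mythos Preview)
This statement is a \emph{conjecture}, and the paper does not prove it. Your proposal correctly recognizes this and, rather than attempting a proof, lays out a strategy and explains why it cannot currently be completed. That is the appropriate response.

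Your proposed strategy---invert Taubes's compactness by gluing a family of flat $\SL(2,\C)$-connections from a given $\Z_2$-harmonic 1-form---matches the paper's stated motivation. The paper says the conjecture ``is motivated by the relation of $\Z_2$-harmonic 1-forms to the $\text{SL}(2,\C)$ representation variety and the gluing result of the second author \cite{PartIII, PartIV},'' which is exactly the construction you sketch in step (c). The paper's forward reference to \cite{PartIV} as ``forthcoming work'' on compactifying $\mathcal R(Y)$ via $\Z_2$-harmonic 1-forms confirms that this gluing program is the intended eventual route.

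One point of comparison: the paper notes that Taubes has already proved the special case of the round metric on $S^3$ via a Weitzenb\"ock argument \cite{Taubes3dSL2C, TaubesKW1}. You do not mention this, and your strategy would not recover it either---the Weitzenb\"ock approach is curvature-based and metric-specific, orthogonal to the representation-variety gluing you outline. The paper also cites Proposition~\ref{seifertfiberedoneforms} as circumstantial evidence (the Seifert--fibered construction produces $\Z_2$-harmonic 1-forms exactly when the Brieskorn character variety becomes positive-dimensional), which is worth noting as independent support for the conjecture beyond the gluing heuristic.
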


\noindent Taubes used a Weitzenb\"ock formula to prove this non-existence result for the round metric on $S^3$ \cite{Taubes3dSL2C,TaubesKW1}. Conjecture \ref{conj_Z21form_nonexistence} extends this statement to arbitrary metrics. It is motivated by the relationship between $\Z_2$-harmonic 1-forms and the $\mathrm{SL}(2,\C)$ representation variety, as well as by the gluing results of the second author \cite{PartIII, PartIV}. We also refer to \cite{HWZ24} for recent progress toward this conjecture.

 In particular, given the conjecture, it seems unlikely to the authors that there is any analogue of Theorems \ref{spinorsa} and \ref{spinorsb} in the case of 1-forms.  

Proposition \ref{seifertfiberedoneforms} also provides evidence for Conjecture \ref{conj_Z21form_nonexistence}.  For example, the irreducible character variety of the Brieskorn homology spheres $\Sigma(a_1, \dots, a_n)$ is zero-dimensional if and only if $n=3$ (cf. \cite{nasatyr1995orbifold}), while Proposition \ref{seifertfiberedoneforms} shows that there exist $\Z_2$ harmonic 1-forms on $\Sigma(a_1, \dots, a_n)$ for each $n\ge 4$.

Theorem \ref{maina} can be reinterpreted in the context of $\Z_2$-harmonic 1-forms as a statement about $L^2$-cohomology.
The operations of connected summing and taking branched double covers do not commute. With $Y=Y_1\#Y_2$, $\mathcal Z=\mathcal Z_1\sqcup \mathcal Z_2$ and $w_1(\ell) = w_1(\ell_1) + w_1(\ell_2)$, the connected sum $Y_{\mathcal Z_1}\# Y_{\mathcal Z_2}$ differs from $Y_\mathcal Z$ by a surgery operation. Topologically, $Y_\mathcal Z$  is the double connected sum, with topological type given by  $Y_\mathcal Z\simeq Y_{\mathcal Z_1}\# Y_{\mathcal Z_2}\#(S^1\times S^2)$. Regarding the anti-invariant part of the first cohomology of the double branched covering, we ascertain that
\begin{equation}
    H^1_-(Y_\mathcal{Z}; \mathbb{R}) \cong H^1_-(Y_{\mathcal{Z}_1}; \mathbb{R}) \oplus H^1_-(Y_{\mathcal{Z}_2}; \mathbb{R}) \oplus \mathbb{R}.\label{branchedcovercohomology}
\end{equation}

\noindent This yields the following connected sum theorem:
\begin{thm}
    \label{thm_non_degenerate_gluing}
    Assuming for $i = 1, 2$, $(Y_i, g_i)$ are closed, oriented Riemannian manifolds and $(\mathcal{Z}_i, \ell_i, \nu_i)$ are $\Z_2$-harmonic 1-forms representing $[\alpha_i] \in H^1_-(Y_{\mathcal{Z}_i}; \mathbb{R})$. Assuming $\mathcal{Z}_1$ is non-empty and $(\mathcal{Z}_1, \ell_1, \nu_1)$ is smooth and non-degenerate, then for any $(a, b) \in \mathbb{R}^2$ with both non-zero, any $[\alpha]\in H^1_-(Y_{\mathcal{Z}}; \mathbb{R})  $ closely approximating $a[\alpha_1] + b[\alpha_2]$ can be represented by a non-degenerate $\Z_2$-harmonic 1-form.
\end{thm}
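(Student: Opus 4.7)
The plan is to combine the gluing result of Theorem \ref{maina} with the linearity of $L^2$-Hodge theory on the branched cover to deduce representability of all nearby cohomology classes from a single one.

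First, apply Theorem \ref{maina} in the 1-form case \eqref{Diracoperatortwo} to the pair $(\nu_1, \nu_2)$ with weights $(a,b)$. This produces a smooth, non-degenerate $\Z_2$-harmonic 1-form $(\mathcal Z, \ell, \nu_0)$ on $Y = Y_1 \# Y_2$, where $\nu_0$ is a small perturbation of $a\nu_1 + b\nu_2$ and $\mathcal Z$ is a small perturbation of $\mathcal Z_1 \sqcup \mathcal Z_2$. (When $\mathcal Z_2 = \emptyset$, $\nu_2$ is a classical harmonic 1-form and the hypotheses of Theorem \ref{maina} hold vacuously on the second factor.) Pulling back to the double branched cover $Y_\mathcal Z$ gives a cohomology class $[p^*\nu_0] \in H^1_-(Y_\mathcal Z;\R)$ lying close to the image of $a[\alpha_1] + b[\alpha_2]$ under the decomposition \eqref{branchedcovercohomology}.

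Next, exploit the linearity of the $\Z_2$-harmonic 1-form equation. For the fixed triple $(\mathcal Z, \ell, g')$, Wang's lemma \cite{wang93modulispaces} identifies the finite-dimensional space of $\Z_2$-harmonic 1-forms with singular set contained in $\mathcal Z$ with the full cohomology group $H^1_-(Y_\mathcal Z;\R)$, via the pullback-and-cohomology-class map. This yields, for each class $[\alpha]$, a unique representative $\nu_{[\alpha]}$ depending linearly on $[\alpha]$, with singular set contained in $\mathcal Z$.

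Finally, argue that non-degeneracy is an open condition on the cohomology parameter. Since $\nu_0 = \nu_{[\nu_0]}$ satisfies $|\nu_0| \geq c \cdot \mathrm{dist}(-, \mathcal Z)^{1/2}$ and is uniformly bounded below away from $\mathcal Z$, and since the map $[\alpha] \mapsto \nu_{[\alpha]}$ takes values in a finite-dimensional space where all norms are equivalent, the same bounds persist with constant $c/2$ for every $[\alpha]$ sufficiently close to $[\nu_0]$; in particular this includes every $[\alpha]$ closely approximating $a[\alpha_1] + b[\alpha_2]$. Such a $\nu_{[\alpha]}$ then has zero set exactly $\mathcal Z$ and satisfies \eqref{nondegenerate}, giving the desired non-degenerate $\Z_2$-harmonic 1-form representing $[\alpha]$.

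The main obstacle I anticipate is making sense of the weighted pointwise bound underlying \eqref{nondegenerate} uniformly on the linear family $\{\nu_{[\alpha]}\}$. This reduces to showing that every member of this family exhibits the expected leading-order $\mathrm{dist}^{1/2}$ behavior along $\mathcal Z$, which is guaranteed by the edge-operator theory underlying Theorem \ref{maina}; once this is established, equivalence of norms on the finite-dimensional space makes the openness argument essentially automatic.
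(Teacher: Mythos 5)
Your step (1) is fine and matches the paper's starting point, but step (2) contains a genuine gap that the rest of the argument cannot survive. Wang's lemma identifies the space of anti-invariant \emph{$L^2$-harmonic} 1-forms on $(Y_{\mathcal Z}, p^*g')$ with $H^1_-(Y_{\mathcal Z};\R)$, but an $L^2$-harmonic form is not the same thing as a $\Z_2$-harmonic 1-form in the sense of \eqref{Z2harmonicequation}: the latter additionally requires $\nabla\nu\in L^2$, equivalently the vanishing of the $r^{-1/2}$ leading coefficient in the edge expansion (see the discussion after Lemma \ref{semiFredholm} and Proposition \ref{cokernel}). For a \emph{fixed} singular set $\mathcal Z$, the classes represented by genuine $\Z_2$-harmonic 1-forms form only the subspace $\ker(\mathbf d|_{rH^1_e})$, which is generically a proper (often much smaller) subspace of $H^1_-(Y_{\mathcal Z};\R)$; the discrepancy between the $L^2$-kernel and the $rH^1_e$-kernel is precisely the obstruction that forces the entire Nash--Moser machinery with deformations of $\mathcal Z$. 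So there is no linear family $[\alpha]\mapsto\nu_{[\alpha]}$ of $\Z_2$-harmonic 1-forms with singular set contained in the fixed $\mathcal Z$, and your step (3) has nothing to act on. Note also that the theorem's own definition of ``represented'' allows the singular set to change with the class, which is a signal that a fixed-$\mathcal Z$ linear argument cannot work.

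The route the paper takes is different in exactly this respect: in the 1-form version of the Nash--Moser theorem (part (iii) of the proof of Theorem \ref{NMIFT}), the cohomology class $[\nu]\in H^1_-(Y_{\mathcal Z})$ appears as an additional \emph{finite-dimensional parameter} of the iteration. One runs the correction scheme orthogonal to the spaces $\mathcal H_0,\mathcal H_1$ spanned by representatives of $H^1_-(Y_{\mathcal Z};\R)$, prescribes the $\mathcal H_1$-component to be any class close to $a[\alpha_1]+b[\alpha_2]$ (including classes with a component in the extra $\R$ summand of \eqref{branchedcovercohomology}), and for each such choice the deformation of the singular set cancels the $r^{-1/2}$ obstruction, producing a $\Z_2$-harmonic 1-form with its own nearby singular set $\mathcal Z_{[\alpha]}$. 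Non-degeneracy then follows from the weak non-degeneracy statement in Theorem \ref{NMIFT} together with non-degeneracy of $\nu_1$, not from an openness argument on a fixed finite-dimensional space. If you want to salvage your outline, replace step (2) by tracking the cohomology class as a parameter through the gluing/Nash--Moser construction rather than appealing to Hodge theory at fixed $\mathcal Z$.
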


\noindent In particular, classes with a component in the $\R$ summand in (\refeq{branchedcovercohomology}) are also represented by $\Z_2$-harmonic 1-forms. Further implications of $\Z_2$-harmonic 1-forms for the $\text{SL}(2,\C)$ representation variety are discussed in Section \ref{section4}.

\begin{rem}
Conjecture \ref{conj_Z21form_nonexistence} and Theorem \ref{thm_non_degenerate_gluing} refer to the case of the unperturbed Hodge-de Rham operator in (\refeq{Diracoperatortwo}). If this operator is perturbed, then the results of Section \ref{section1.3} hold just as in the spinor case, but any relationship to Hodge theory is destroyed.  This is true because in dimension 3, the bundles $\slashed S$ and $\Omega^0\oplus \Omega^1$ are isomorphic as real Clifford modules, and under this isomorphism the operators $\slashed D$ and $D_{\mathrm{dR}}$ differ by zeroth order terms, so perturbed 1-forms can be viewed as a special case of perturbed spinors. 
\end{rem}

\bigskip 
\noindent \textbf{Acknowledgements.} This project began at the Simons-Laufer Mathematical Sciences Institute Semester program ``Analytic and Geometric Aspects of Gauge Theory'' (NSF Grant DMS-192893) in Fall 2022 and the authors wish to thank SLMath for its hospitality. This work benefited from the interest and expertise of a great many people to whom the authors express their gratitude, including Chris Gerig, Jianfeng Lin, Rafe Mazzeo, Clifford Taubes, Thomas Walpuski and Boyu Zhang. G.P. is supported by NSF Mathematical Sciences Postdoctoral Research Fellowship Award No. 2303102. 
		
\section{Nash-Moser Theory}
\label{section2}
This section establishes a suitable implicit function theorem for generalized $\Z_2$-spinors; this will be used later to correct approximate solutions of the singular Dirac equation to true solutions. This implicit function theorem is a version of the Nash-Moser implicit function theorem for tame Fr\'echet manifolds, which includes the deformations of the singular set of generalized $\Z_2$-spinors. Our approach generalizes the work of \cite{PartII,DonaldsonMultivalued} to 1-parameter families, and unifies these two results in a single statement about generalized $\Z_2$-spinors. 

\subsection{Elliptic Edge Theory}
\label{section2.1}

This section reviews the elliptic theory for $D$ established in \cite{MazzeoHaydysTakahashi}, \cite[Sections 2--4]{PartII}. For the entirety of Section \ref{section2}, $D$ denotes either of the Dirac operators in (\refeq{Diracoperatorone}--\refeq{Diracoperatortwo}).

With $(Y,g)$ as above, let $r: Y\mathrm{-}\mathcal Z_0\to \R$ be a weight function such that $r=\text{dist}(-,\mathcal Z_0)$ on a tubular neighborhood of $\mathcal Z_0$, and $r=r_0$ is constant on the complementary neighborhood. Let $w: Y \to \R$ denote a second weight function such that $w=1$ where $r\neq r_0$. 
 
Define the spaces of ``boundary'' and ``edge'' vector fields respectively by 
\bea
\mathcal V^\text{b}&=&\{V \in C^\infty(Y;TY) \ | \ V |_{\mathcal Z_0}\in C^\infty(\mathcal Z_0; T\mathcal Z_0)\},\\
\mathcal V^e &=& \{V \in C^\infty(Y;TY) \ | \ \  \hspace{.8cm}V |_{\mathcal Z_0}=0 \hspace{.8cm} \ \}.
\eea

\noindent Denote by $\nabla^\text{b}, \nabla^e$ the covariant derivatives with respect to vector fields in these spaces, so that in local coordinates $(t,x,y)$ where $t$ is the tangential coordinate to $\mathcal Z_0$ and $(x,y)$ are normal coordinates, these are given by 
\begin{eqnarray}\label{bderiv}
\nabla^\text{b}&=&dx \otimes r\nabla_x + dy \otimes r\nabla_y + dt \otimes \nabla_t, \\  
\nabla^e&=&dx \otimes r\nabla_x + dy \otimes r\nabla_y + dt \otimes  r\nabla_t. \label{edgederiv}
\end{eqnarray}

\noindent Note that $|\nabla^e \ph|\leq |\nabla^\text{b}\ph|$ holds pointwise. 
\begin{defn}\label{sobolevspaces}
The mixed boundary and edge Sobolev spaces of regularity $(m,m+n)$ for $m,n\in \N$ with weight $\nu$ are defined by

\be r^\nu H^{m,n}_{\text{b},e,w}(Y\mathrm{-}\mathcal Z_0; S):=\left\{ \psi \in L^2(Y;S)  \ \Big | \ \int_{Y\mathrm{-}\mathcal Z_0} \sum_{|\alpha|\leq n, |\alpha|+|\beta|\leq m} |(\nabla^e)^\alpha (\nabla^\text{b})^\beta \psi|^2  \ r^{-2\nu} w^2  \ dV  \ < \ \infty \  \right\},\ee

\noindent where $\alpha,\beta$ are multi-indices and $dV$ is the volume form. These are Hilbert spaces with norm given by the (square root of the) integral required to be finite, and inner product given by its polarization. When $n=0$ or $m=0$, the spaces are denoted simply by $r^\nu H^m_{\text{b},w}$ or $r^\nu H^n_{e,w}$ respectively, and when $m=n=0$ by $r^\nu L^2_w$.  

 \end{defn}

 The Dirac operator extends to a bounded operator 
 \be D: r^{1+\nu} H^{m,1}_{\text{b},e,w}(Y\mathrm{-}\mathcal Z_0; S)\lre r^{\nu} H^{m}_{\text{b},w}(Y\mathrm{-}\mathcal Z_0; S)\label{diracoperator}\ee
 
 \noindent for every $\nu,m$. The fundamental consequences of the elliptic edge theory of this operator are the following: 
 
 \begin{lm} [\cite{MazzeoEdgeOperators, MazzeoHaydysTakahashi, PartII}]
For $-\tfrac12<\nu<\tfrac12$, the operator (\ref{diracoperator}) is left semi-Fredholm, i.e. has finite-dimensional kernel, and closed range. Moreover, for each $m$, there is a constant $C_{m,\nu}$ such that for every $\ph\in r H^{m,1}_{\text{b},e,w}$ the following estimates hold:
  \be \label{semielliptictobeuniform}\|\ph\|_{r^{1+\nu}H^{m,1}_{\text{b},e,w}} \leq C_{m,\nu} \left( \|D\ph\|_{r^\nu H^m_{\text{b},w}} \ + \ \|\ph \|_{r^\nu H^m_{\text{b},w}}\right).\ee

 \noindent  A similar estimate holds replacing the $\|\ph\|_{r^\nu H^m_{b,w}}$ term with the projection to a finite-rank subspace. 
  \label{semiFredholm}\qed
 \end{lm}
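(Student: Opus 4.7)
The plan is to view $D$ as an elliptic edge operator in the sense of Mazzeo \cite{MazzeoEdgeOperators}, with edge singularity along the codimension-two submanifold $\mathcal Z_0$, and to deduce the stated estimate and semi-Fredholm property from the standard edge parametrix construction. The only input particular to the $\Z_2$-harmonic setting is the indicial data determined by the Möbius twist, which forces the indicial roots to be half-integers and hence separates any weight $r^{1+\nu}$ with $|\nu|<\tfrac12$ from the spectrum of the normal operator.

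First I would work in Fermi coordinates $(t,x,y)$ near $\mathcal Z_0$, replace $(x,y)$ by polar $(r,\theta)$, and trivialize $S\otimes_\R \ell$ by parallel transport in the normal disks. Because $\ell$ restricts to the Möbius bundle on each normal disk, sections decompose in an angular Fourier series with half-integer frequencies, $\varphi(t,r,\theta)=\sum_{k\in\Z+\tfrac12}\varphi_k(t,r)e^{ik\theta}$. After multiplication by $r$, the leading part of $D$ can be written as a first order polynomial in the edge vector fields $r\partial_r,\ \partial_\theta,\ r\partial_t$ of (\ref{bderiv}--\ref{edgederiv}) with smooth coefficients, so $rD$ is an edge operator of order one. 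Substituting the ansatz $r^\lambda e^{ik\theta}$ and reading off the $r$-independent part produces the indicial family, whose spectrum can be computed mode-by-mode. The half-integer frequencies together with the Clifford structure give indicial roots exactly at $\pm\tfrac12,\pm\tfrac32,\pm\tfrac52,\dots$, so the weight $1+\nu\in(\tfrac12,\tfrac32)$ lies strictly between two consecutive indicial roots whenever $|\nu|<\tfrac12$.

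With this non-indicial weight, the edge calculus yields a left parametrix $Q:r^\nu H^m_{\text{b},w}\to r^{1+\nu}H^{m,1}_{\text{b},e,w}$ satisfying
\[
QD \;=\; \mathrm{Id} - K,
\]
where $K$ is compact (indeed, smoothing away from $\mathcal Z_0$ and with strictly improved weight at $\mathcal Z_0$, hence compact on the edge Sobolev spaces by Rellich-type compactness). Applying $Q$ to $D\varphi$ gives the a priori estimate (\ref{semielliptictobeuniform}) after absorbing $K\varphi$ into the lower-order term, and standard abstract functional analysis converts the one-sided parametrix into the statement that $\ker D$ is finite-dimensional and the range of $D$ is closed. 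The sharpened estimate with the error term replaced by a projection onto a finite-rank subspace follows by orthogonally projecting off $\ker D$ and running a contradiction-compactness argument along a minimizing sequence, again using the compactness of the embedding $r^{1+\nu}H^{m,1}_{\text{b},e,w}\hookrightarrow r^\nu H^m_{\text{b},w}$.

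The main obstacle is the indicial analysis and the requirement that it be \emph{uniform} along $\mathcal Z_0$: Mazzeo's construction needs the indicial roots to be bounded away from the chosen weight along the entire edge, not merely pointwise, and this is precisely what restricts $\nu$ to the open range $|\nu|<\tfrac12$. At the endpoints one hits the first indicial root and both the estimate and the semi-Fredholm conclusion fail. To handle both operators (\ref{Diracoperatorone}) and (\ref{Diracoperatortwo}) uniformly, one also checks that the perturbation $B$ in (\ref{Diracoperatorone}) is zeroth-order and so does not change the indicial data; the same indicial computation then applies to $\mathbf d$ in (\ref{Diracoperatortwo}) since the two bundles are isomorphic as real Clifford modules in dimension three.
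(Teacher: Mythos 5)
Your overall architecture is the right one and is essentially what the cited sources (\cite{MazzeoEdgeOperators}, \cite{MazzeoHaydysTakahashi}, \cite[Sec.~2--4]{PartII}) do: the paper itself gives no proof of Lemma \ref{semiFredholm}, deferring entirely to those references, and they proceed exactly by writing $rD$ as an elliptic edge operator in Fermi coordinates, exploiting the half-integer angular Fourier modes forced by the M\"obius twist, and invoking the edge parametrix construction to get a left parametrix with compact remainder, from which the a priori estimate, finite-dimensional kernel, and closed range all follow by the standard abstract argument you describe. Your remarks about the perturbation $B$ being zeroth order and about $\mathbf d$ being treated via the Clifford-module isomorphism are also consistent with the paper.

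There is, however, one genuine gap in the middle of your argument. You pass directly from ``the weight $1+\nu$ is non-indicial'' to ``the edge calculus yields a left parametrix $Q$ with $QD=\mathrm{Id}-K$, $K$ compact.'' In Mazzeo's theory the indicial roots only govern the conormal asymptotics (this is what underlies Lemma \ref{expansions}); the existence of a left parametrix with compact error additionally requires that the \emph{normal operator} --- the model Bessel-type operator obtained by freezing coefficients at a point of $\mathcal Z_0$ and acting on the model edge $\mathbb{R}^+_r\times S^1_\theta\times\mathbb{R}_t$ --- be injective on the corresponding weighted $L^2$ space. That injectivity is a separate, nontrivial computation (carried out in \cite{MazzeoHaydysTakahashi} and \cite[Sec.~3--4]{PartII} via explicit Bessel-function analysis of the Fourier modes), and it cannot be inferred from the indicial data alone. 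Indeed, for these very weights the normal operator is injective but \emph{not} surjective, which is precisely why $D$ is only left semi-Fredholm and acquires the infinite-dimensional obstruction $\text{\bf Ob}(\mathcal Z_0)$ of Proposition \ref{cokernel}; an argument that used only the indicial roots could not distinguish left semi-Fredholm from fully Fredholm. To complete your proof you would need to add the verification that the normal operator of $rD$ is injective on $r^{1+\nu}L^2$ of the model edge for $|\nu|<\tfrac12$, and only then cite the edge parametrix theorem.
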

 
 \medskip
 
 \noindent 
The estimate (\refeq{semielliptictobeuniform}) is an {\it a priori} estimate on the chosen finite-energy domain \(r^{1+\nu}H^{m,1}_{b,e,w}\). Just as in standard elliptic theory, it implies that $D$ has finite-dimensional kernel and closed range. It differs from the standard elliptic estimates, however, insofar as the usual elliptic regularity statement for the maximal \(L^2\)-domain fails. In other words, if \(\phi\in L^2\) and \(D\phi\in r^\nu H^m_{b,w}\), one cannot in general conclude that \(\phi\in r^{1+\nu}H^{m,1}_{b,e,w}\). In the language of standard unbounded operator theory,  \(r^{1+\nu}H^{m,1}_{b,e,w}=\mathcal D_\text{min}(D)\) is the $L^2$-{\it minimal} domain given by the completion of compactly supported smooth sections, while the $L^2$-{\it maximal} domain $\mathcal D_\text{max}(D)$ is strictly larger and the quotient $\mathcal D_{max}(D)/ \mathcal D_{min}(D)$ is infinite-dimensional. In contrast, in standard uniformly elliptic theory on compact manifolds, one manifestation of elliptic bootstrapping is that the minimal and maximal domains coincide.

 
 The larger maximal domain is reflected in the asymptotic expansion near \(\mathcal Z\). More precisely, the general theory of \cite{MazzeoEdgeOperators} implies the following regularity result, which gives regular asymptotic expansions in local cylindrical coordinates $(t,r,\theta)$ around $\mathcal Z$, where $t$ is tangential to $\mathcal Z$ and $(r,\theta)$ are polar coordinates on the normal plane: 
 
 \begin{lm} \label{expansions} If $\Phi \in r^{1+\nu} H^{m,1}_{\text{b},e,w}$ for $-\tfrac12<\nu<\tfrac12$ and $D\Phi=0$, then
 
 \be \Phi \sim B(t,\theta)r^{1/2} + C_0(t,\theta)r^{3/2} + \sum_{k\geq 2}\sum_{j=0}^{k-1}  C_{jk}(t,\theta)\log(r)^j r^{k+1/2},\label{polyhomogeneous}\ee
 \noindent where $B,C_0,C_{jk}\in C^{\infty}$ are smooth sections, and $\sim$ means convergence in the sense that the partial sums $\Phi_N$ truncating \eqref{polyhomogeneous} at $k=N$ satisfy 
 $$| \nabla_t^\alpha \nabla_\theta^\beta \nabla_r^\gamma(\Phi-\Phi_N)|< C_{N,\alpha,\beta,\gamma} r^{N+1+\tfrac14-|\gamma|}$$
 \noindent for some constants $C_{N,\alpha,\beta,\gamma}$.\qed 
 \end{lm}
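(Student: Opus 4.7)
The plan is to derive Lemma \ref{expansions} from the general polyhomogeneous regularity theorem for elliptic edge operators of Mazzeo \cite{MazzeoEdgeOperators}, since $D$ falls in this class when viewed on the blow-up of $\mathcal Z_0$. The statement then reduces to computing the indicial roots of $D$ at $\mathcal Z_0$ and verifying they lie outside the critical strip associated to the weight $\nu \in (-\tfrac12,\tfrac12)$.

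First I would work in Fermi coordinates $(t, r, \theta)$ on a tubular neighborhood of $\mathcal Z_0$, with $t$ tangential and $(r,\theta)$ polar on the normal disk, and trivialize $S \otimes_\R \ell$ so that the non-trivial $\Z_2$-monodromy becomes a half-integer Fourier expansion $\ph = \sum_{k\in\Z} \ph_k(t,r)\, e^{i(k+1/2)\theta}$. A direct computation then gives
\be
r D \;=\; A_0 + r A_1(t) + r^2 A_2(t) + \cdots,
\ee
where the indicial operator $A_0 = \gamma(\partial_r)(r\partial_r) + \gamma(\partial_\theta)\partial_\theta$ acts fiberwise on the cross-sectional circle. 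Diagonalizing mode by mode yields the indicial roots $\Lambda = \{\pm(k + \tfrac12) : k \in \Z_{\geq 0}\}$; in particular no root lies in $\{|\Re\lambda| \leq \tfrac12\}$, which covers the range of $\nu$ in the hypothesis.

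Mazzeo's theorem then implies that every solution of $D\Phi = 0$ in $r^{1+\nu}H^{m,1}_{b,e,w}$ is polyhomogeneous conormal with index set contained in the positive part of $\Lambda$ shifted by $\Z_{\geq 0}$, i.e.\ at exponents $k + \tfrac12$ for $k \geq 0$. To extract the explicit form \eqref{polyhomogeneous}, I would build the expansion inductively. The leading term $B(t,\theta)\,r^{1/2}$ solves $A_0(B r^{1/2}) = 0$ in the lowest half-integer mode, and smoothness of $B$ in $(t,\theta)$ follows by parametric elliptic regularity along $\mathcal Z_0$. The next coefficient $C_0(t,\theta)\,r^{3/2}$ solves $A_0(C_0 r^{3/2}) = -r A_1(Br^{1/2})$; a short computation in Fermi coordinates, where $g = dt^2 + dr^2 + r^2 d\theta^2 + O(r^2)$, shows that the source is transverse to the kernel of $A_0$ on the $r^{3/2}$ mode, so no logarithm is generated at this order. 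For $k \geq 2$, resonance between indicial roots already in $\Lambda$ and previously-constructed source terms forces $\log$ factors, and an induction on $k$ yields the multiplicity bound $j \leq k-1$.

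The main obstacle is verifying the precise $\log$-power bound. At each step the inversion of $A_0$ against a source $r^{k+1/2}\log(r)^j\cdot$(mode) produces at worst $r^{k+1/2}\log(r)^{j+1}$, and one must check that the subleading operators $A_j$ do not compound these factors faster than allowed by $j \leq k-1$. This can be handled by induction on $k$, tracking how $A_j$ acts on monomials $r^{\ell+1/2}\log(r)^i$ so that the total log count grows by at most one per recursion level. The remainder estimate $|\nabla_t^\alpha \nabla_\theta^\beta \nabla_r^\gamma(\Phi - \Phi_N)| < C_{N,\alpha,\beta,\gamma}\, r^{N + 1 + 1/4 - |\gamma|}$ then follows from Mazzeo's general conormal error estimates applied to the partial sums.
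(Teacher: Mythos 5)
Your proposal follows essentially the same route the paper itself relies on: Lemma \ref{expansions} is stated with a \qed{} and justified by appeal to the general polyhomogeneous regularity theory for elliptic edge operators of \cite{MazzeoEdgeOperators} (together with \cite{MazzeoHaydysTakahashi, PartII}), which is exactly the indicial-root computation plus inductive construction of the expansion that you outline. One small correction: the indicial roots $\pm\tfrac12$ \emph{do} lie in the closed strip $|\Re\lambda|\le\tfrac12$ — the relevant fact is that no root lies in the open weight interval $(-\tfrac12,\tfrac12)$, so the first admissible exponent above the integrability threshold is $+\tfrac12$, which is where the expansion begins; likewise the absence of a $\log$ at order $r^{3/2}$ and the multiplicity bound $j\le k-1$ are the genuinely model-specific computations, which you correctly identify but (like the paper) defer rather than carry out.
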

 
\noindent The non-degeneracy condition of Definition (\ref{regulardef}) is equivalent to the statement that $B(t,\theta)$ is nowhere-vanishing. Lemma \ref{expansions} shows that the kernel of (\refeq{diracoperator}) is independent of $\nu$ in the range $-\tfrac12<\nu<\tfrac12$. This kernel is, by definition, the set of $\Z_2$-harmonic spinors (resp. 1-forms), as this range includes the smallest weights for which the integrability condition of (\refeq{Z2harmonicequation}) holds. 

The failure of elliptic regularity also means solutions cannot be bootstrapped in the normal sense. In particular, an $L^2$-solution of $D\ph=0$ need not lie in $rH^1_e$. As a consequence, the kernel and cokernel of (\refeq{diracoperator}) need not coincide, despite the formal self-adjointness of $D$, as the cokernel may be associated with the (a priori larger) space of $L^2$-solutions. For $\nu$ in the same range as Lemma \ref{semiFredholm} this larger space consists of two pieces: a finite-dimensional summand and an infinite-dimensional summand. The finite-dimensional summand is the inclusion of the $rH^1_e$-kernel into the $L^2$-kernel. The infinite-dimensional summand consists of those $L^2$-solutions whose covariant derivative fails to be $L^2$. This space may be identified with the space of $L^2$-sections of a vector bundle on the singular set $\mathcal Z_0$, as the next proposition describes for $\nu=0$. 

Let $\mathcal C_0\subseteq S|_{\mathcal Z_0}$ denote the complex line bundle on $\mathcal Z_0$ whose fiber is the $+i$ eigenspace of $\gamma(dt)$. Note this vector bundle is canonically identified with the trivial bundle $\underline \C$. We use 

$$\text{\bf Ob}(\mathcal Z_0) := \text{Range}(D|_{rH^1_e})^\perp\cap L^2_{b,w}$$

\noindent to denote the orthogonal complement of the range (the ``obstruction''). 

\begin{prop}[{\cite[Sec. 4]{PartII}}] 
\label{cokernel}
There is a bounded linear isomorphism 
$$(\text{ob}, \iota): L^2(\mathcal Z_0;\mathcal C_0)\oplus \ker(D|_{rH^{1}_e}) \lre \text{\bf Ob}(\mathcal Z_0),$$
\noindent where $\iota$ is the inclusion. Moreover, $(\text{ob},\iota)$ respects regularity in the sense that its restriction to $H^m(\mathcal Z_0;\mathcal C_0)$ in the first summand has image equal to $\text{\bf Ob}(\mathcal Z_0)\cap H^m_{b,w}(Y\mathrm{-}\mathcal Z_0)$.\qed
\end{prop}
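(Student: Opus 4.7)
The plan is to identify $\text{\bf Ob}(\mathcal Z_0)$ with the $L^2_{b,w}$-kernel of $D$ and then use the edge asymptotic expansion to extract the leading-order data at $\mathcal Z_0$. Formal self-adjointness of $D$, combined with integration by parts (justified because sections of $rH^1_{b,e,w}$ decay fast enough at $\mathcal Z_0$ for boundary contributions to vanish and the weight $w$ controls behavior away from $\mathcal Z_0$), gives the characterization
$$
\text{\bf Ob}(\mathcal Z_0) = \bigl\{\Psi \in L^2_{b,w}(Y-\mathcal Z_0; S) : D\Psi = 0 \text{ distributionally}\bigr\}.
$$
This space strictly contains $\ker(D|_{rH^1_e})$: an $L^2$-solution whose covariant derivative fails to be square-integrable still lies in $\text{\bf Ob}(\mathcal Z_0)$ even though it is not in $rH^1_e$.

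Applying the edge regularity theorem of \cite{MazzeoEdgeOperators} in the $L^2$-setting, which admits one additional indicial exponent beyond those appearing in Lemma \ref{expansions}, every $\Psi \in \text{\bf Ob}(\mathcal Z_0)$ has a polyhomogeneous expansion
$$
\Psi \;\sim\; A(t,\theta)\, r^{-1/2} + B(t,\theta)\, r^{1/2} + \sum_{k \geq 1,\, j \geq 0} C_{jk}(t,\theta)\, \log(r)^j\, r^{k+1/2}
$$
near $\mathcal Z_0$, in which $-\tfrac12$ is the largest indicial root still compatible with $L^2$-integrability. Analyzing the normal-direction model operator (the flat Dirac operator on the $(-1)$-monodromy disk bundle) shows that the only admissible $r^{-1/2}$-homogeneous solutions factor as $A(t,\theta) = a(t)\, e^{-i\theta/2}$, with $a$ taking values in the $+i$-eigenspace of $\gamma(dt)$, i.e.\ $a \in \Gamma(\mathcal Z_0; \mathcal C_0)$. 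Define the obstruction map $\text{ob}(\Psi) := a$. A trace-type estimate---isolate $a$ by Fourier projection in $\theta$ and weighted radial integration---shows $\text{ob}$ is bounded into $L^2(\mathcal Z_0; \mathcal C_0)$; its kernel equals $\ker(D|_{rH^1_e})$, because $a \equiv 0$ forces the expansion to begin at $r^{1/2}$, and iterating the edge estimate \eqref{semielliptictobeuniform} places $\Psi$ in $rH^1_{b,e,w}$.

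The principal technical step is surjectivity of $\text{ob}$. Given smooth $a \in \Gamma(\mathcal Z_0; \mathcal C_0)$ (one reduces to a dense subset at the end), form a model section $\Psi_0 := \chi(r)\, a(t)\, r^{-1/2}\, e^{-i\theta/2}$ with $\chi$ a cutoff near $\mathcal Z_0$. Because $\Psi_0$ solves the normal-direction model equation exactly, the error $D\Psi_0$ reduces to tangential and cutoff-commutator contributions, which one removes iteratively term-by-term along the polyhomogeneous expansion. At the shifted weight $\nu = \tfrac12 + \e$, one has crossed the indicial root $\tfrac12$, and a relative-index computation in Mazzeo's calculus identifies the cokernel of $D : r^{3/2+\e}H^1_{b,e,w} \to r^{1/2+\e}L^2_{b,w}$ with the finite-dimensional space $\ker(D|_{rH^1_e})$; any residual finite-dimensional obstruction can therefore be absorbed into the kernel summand. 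The correction process produces $\Psi := \Psi_0 + \eta \in \text{\bf Ob}(\mathcal Z_0)$ with $\text{ob}(\Psi) = a$, and a uniform bound together with closedness of $\text{\bf Ob}(\mathcal Z_0)$ extends the construction to all $a \in L^2$ by density. The $H^m$-regularity assertion then follows by commuting the tangential $b$-derivatives $\nabla^b_t$ (which preserve the edge structure) past $D$ and iterating Lemma \ref{semiFredholm}. The chief obstacle throughout is the careful weight-bookkeeping across indicial roots and the relative-index identification of the shifted cokernel with $\ker(D|_{rH^1_e})$; both are standard features of Mazzeo's edge calculus but require care in the present $L^2$-on-$\mathcal Z_0$ setting.
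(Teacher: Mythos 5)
Your proposal follows essentially the same route as the paper, which itself only sketches the argument and defers to \cite[Sec.~4]{PartII}: identify $\text{\bf Ob}(\mathcal Z_0)$ with the $L^2$ distributional kernel of $D$ via formal self-adjointness, invoke the edge expansion's extra $r^{-1/2}$ leading term with only the $e^{\pm i\theta/2}$ modes surviving so that $A=a(t)e^{\pm i\theta/2}$ with $a\in\Gamma(\mathcal Z_0;\mathcal C_0)$, and read off $a$ as the obstruction map whose kernel is $\ker(D|_{rH^1_e})$. The one place you are thinner than the cited proof is surjectivity of $\text{ob}$ on all of $L^2(\mathcal Z_0;\mathcal C_0)$: the asserted ``uniform bound'' over the Fourier modes $e^{i\ell t}$ of $a$ is precisely where the concentration analysis of \cite[Prop.~4.3]{PartII} enters (the model obstruction elements localize at scale $|\ell|^{-1}$), and your weight count for the correction step is off as stated, since the tangential error $\gamma(dt)a'(t)r^{-1/2}e^{-i\theta/2}$ lies in $r^{\nu}L^2$ only for $\nu<\tfrac12$, so it cannot be absorbed directly at the shifted weight $\tfrac12+\epsilon$ without first removing it order by order and controlling the resulting constants uniformly in $\ell$.
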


\medskip 

\noindent A complete proof of Proposition \ref{cokernel} is given in \cite[Sec. 4]{PartII}. To elaborate briefly, the $L^2$-solutions have expansions similar to (\refeq{polyhomogeneous}), but with an additional leading term $A(t,\theta)r^{-1/2}$, whose covariant derivative fails to be $L^2$. Roughly speaking, the proof of the proposition consists of showing that only the $e^{\pm i \theta/2}$-Fourier modes contribute and we may write $A=a(t)e^{\pm i\theta/2}$, after which the obstruction may be identified with this space of possible leading coefficients $a(t)$. Geometrically, the obstruction elements have support increasingly concentrated near $\mathcal Z_0$ as the Fourier modes of $a(t)$ increases (see \cite[Prop. 4.3]{PartII} for a precise statement).

\subsection{Deformations of Singular Sets}
As explained in the introduction, the infinite-dimensional obstruction of Proposition \ref{cokernel} prevents the use of the standard implicit function theorem, and the deformations of the singular set must be used to cancel the obstruction components. This section reviews the deformation theory of the singular set developed in \cite{PartII} (see also \cite{PartIII}, \cite{DonaldsonMultivalued}). 

Let $(\mathcal Z_0, \ell_0, \Phi_0)$ be a smooth, non-degenerate generalized $\Z_2$-harmonic spinor. Let $\mathcal U_0\subseteq \text{Emb}^{2,2}(\mathcal Z_0;Y)$ denote an open neighborhood of $\mathcal Z_0$ in the space of embeddings of Sobolev regularity $(2,2)$. For each $\mathcal Z\in \mathcal U_0$, there is a line bundle $\ell_{\mathcal Z}$ which may be identified with $\ell_0$ up to homotopy in the obvious way.

Let $p_1:r\mathbb H^{1}\to \mathcal U_0$ and $p_0:\mathbb L^2\to \mathcal U_0$ denote the Banach vector bundles whose fibers over $\mathcal Z$ are respectively $rH^1_{e,w}(Y\mathrm{-}\mathcal Z,S\otimes \ell_\mathcal Z)$ and likewise for $L^2_w$. Define the {\bf universal Dirac operator} as the section (over the total space of $r\mathbb H^1$) 
\smallskip
\be  \mathbb D: r\mathbb H^1\to p_1^\star\mathbb L^2 \hspace{3cm}\mathbb D(\mathcal Z,\Phi):=D_\mathcal Z\Phi.\ee

\noindent where $D_\mathcal Z$ is the version of $D$ formed using the singular set $\mathcal Z$. $\mathbb D$ is linear in the second argument, but fully non-linear with respect to the embedding.

\cite[Sec. 5]{PartII} describes a local trivialization which induces a splitting of the tangent space at $(\mathcal Z_0,\Phi_0)$ as $T(r\mathbb H^1)\simeq L^{2,2}(\mathcal Z_0; N\mathcal Z_0)\oplus rH^1_{e,w}(Y\mathrm{-}\mathcal Z_0)$, where the former is the tangent space at $\mathcal Z_0$ of $\text{Emb}^{2,2}(\mathcal Z_0;Y)$ and the latter is the tangent space of the fibers of $r\mathbb H^1$. The (covariant) derivative of $\mathbb D$ may be written as 
$$(\text{d}\mathbb D)_{(\mathcal Z_0, \Phi_0)}(\eta,\ph)=\mathcal B_{\Phi_0}(\eta) + D\ph,$$

\noindent where $(\eta,\psi)\in T(r\mathbb H^1)$, $\mathcal B$ is the partial derivative with respect to deformations, and the unadorned $D$ means the operator at $\mathcal Z_0$. Since $D$ carries $rH^1_e$ to its own range by definition, splitting the codomain $L^2\simeq \text{\bf Ob}(\mathcal Z_0)\oplus \text{Range}(D)$ gives the derivative the block-diagonal form 
$$ (\text{d}\mathbb D)_{(\mathcal Z_0, \Phi_0)} (\eta,\psi)=\begin{pmatrix}\Pi_0 \mathcal B_{\Phi_0} & 0 \\ \Pi_0^\perp \mathcal B_{\Phi_0} & D\end{pmatrix}\begin{pmatrix} \eta \\ \psi \end{pmatrix},$$

\noindent where $\Pi_0$ denotes the $L^2$-orthogonal projection to $\text{\bf Ob}(\mathcal Z_0)$. To show that deformations of the singular set may be used to cancel the infinite-dimensional obstruction (up to a finite-dimensional space), it suffices to show that the top left block is Fredholm. 

The partial derivative $\mathcal B$ may be calculated using the following trick. Let $\mathcal V_0$ be an open ball around $0\in L^{2,2}(\mathcal Z_0; N\mathcal Z_0)$. Take a family of diffeomorphisms $F_\eta: Y\to Y$ parameterized by $\eta \in \mathcal V_0$ such that $F_0=\text{Id}$ and $X_{\eta}:=\d{}{s}|_{s=0} F_{s\eta}$ is a vector field extending $\eta$ to $Y$. For $\mathcal V_0$ sufficiently small, the map $\eta \mapsto F_\eta[\mathcal Z_0]$ is a coordinate chart on the space of embeddings (see \cite[Sec 5.1]{PartII}). By the diffeomorphism invariance of the Dirac operator, { differentiating with respect to the embedding while keeping the metric $g_0$ fixed is equivalent to differentiating with respect to the family of pullback metrics $g_\eta=F_\eta^*(g_0)$ while keeping $\mathcal Z_0$ fixed}. The formula of Bourguignon--Gauduchon for the derivative of the Dirac operator with respect to the metric gives the following expression.


\begin{lm} \label{BG}The partial derivative $\mathcal B_{\Phi_0}$ is given by 
$$B_{\Phi_0}(\eta)=\left[ -\tfrac{1}{2}(\dot g_\eta)_{ij}e^i.\nabla_j + \tfrac{1}{2}d\text{Tr}(\dot g_\eta). +\tfrac12 \text{div}(\dot g_\eta).\right]\Phi_0$$
\noindent where $e^i, \nabla, .$ are a coframe, the spin/Levi-Civita connection, and Clifford multiplication of the metric $g_0$, and $\dot g_\eta=\d{}{s}|_{s=0}g_{s\eta}$. \qed
\end{lm}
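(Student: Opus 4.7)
The plan is to follow precisely the outline given in the paragraph preceding the lemma: reduce the computation of $\mathcal{B}_{\Phi_0}$ to a metric-variation problem via diffeomorphism invariance, and then invoke the classical Bourguignon--Gauduchon formula for the derivative of the Dirac operator in the metric direction. The key structural point is that the universal Dirac operator $\mathbb{D}$ is diffeomorphism covariant in the sense that $F_\eta^\ast \circ D_{\mathcal{Z},g_0} = D_{F_\eta^{-1}(\mathcal{Z}),F_\eta^\ast g_0} \circ F_\eta^\ast$. Applied to $\mathcal{Z} = F_\eta[\mathcal{Z}_0]$, this identifies the fiber of $r\mathbb{H}^1$ over $F_\eta[\mathcal{Z}_0]$ with $rH^{1}_{e,w}(Y - \mathcal{Z}_0;S\otimes \ell_0)$, equipped now with the singular Dirac operator for the pullback metric $g_\eta := F_\eta^\ast g_0$.

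Under this identification, the restriction of $\mathbb{D}$ to the constant section $\Phi_0$ becomes $\eta \mapsto D_{\mathcal{Z}_0,g_\eta}\Phi_0$, and consequently
\[
\mathcal{B}_{\Phi_0}(\eta) \;=\; \left.\tfrac{d}{ds}\right|_{s=0} D_{\mathcal{Z}_0,g_{s\eta}}\Phi_0,
\]
with $\dot{g}_\eta = \partial_s|_{s=0}\, g_{s\eta} = L_{X_\eta} g_0$. At this point I would invoke \cite{Bourguignon}: after the canonical identification of the spinor bundles for $g_{s\eta}$ and $g_0$ via the positive-symmetric isomorphism of their orthonormal frame bundles, the metric derivative of the Dirac operator is exactly the sum of a symbol term $-\tfrac{1}{2}(\dot g)_{ij} e^i.\nabla_j$ and two connection/volume-form terms $\tfrac{1}{2} d\mathrm{Tr}(\dot g).$ and $\tfrac{1}{2}\mathrm{div}(\dot g).$ Each of these is constructed from the coframe, spin connection and Clifford action of the \emph{original} metric $g_0$, which matches the claim in the lemma.

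The main obstacle, and the only step not literally cited from the smooth closed-manifold setting, is checking that this formula extends to the singular/edge situation here, where $\Phi_0 \in rH^1_{e,w}$ and the base manifold is $Y - \mathcal{Z}_0$. Away from $\mathcal{Z}_0$, $F_\eta$ is smooth and the formula holds pointwise. Near $\mathcal{Z}_0$, the point is that any smooth extension $X_\eta$ of $\eta \in L^{2,2}(\mathcal{Z}_0;N\mathcal{Z}_0)$ gives a deformation tensor $\dot{g}_\eta$ whose $C^k$-norm is controlled by the Sobolev norm of $\eta$, so that each of the three right-hand-side terms takes an input in $rH^{1}_{e,w}$ to an output in the appropriate weighted space $r^\nu H^m_{b,w}$ forming the codomain of (\ref{diracoperator}). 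This both makes $\mathcal{B}_{\Phi_0}$ a bounded linear operator between the relevant Banach spaces and confirms that the Bourguignon--Gauduchon computation applies term-by-term, yielding the stated formula.
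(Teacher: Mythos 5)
Your proposal follows exactly the argument the paper intends: the diffeomorphism-invariance trick converting the deformation of $\mathcal Z_0$ into a variation of pullback metrics $g_{s\eta}=F_{s\eta}^*g_0$, followed by the Bourguignon--Gauduchon formula, which is precisely the justification given in the paragraph preceding the lemma (with details deferred to \cite[Sec.~5]{PartII}). Your additional check that the formula remains valid and bounded in the weighted edge spaces near $\mathcal Z_0$ is a sensible supplement but does not change the route.
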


\noindent The first term arises from differentiating the symbol of $D$, and the latter two from differentiating the Christoffel symbols. Note that this should be viewed as an equation in $\eta$ (thus the last two terms are actually leading order, as they contain second derivatives of $\eta$).

Pre-composing with the map from Proposition \ref{cokernel}, this partial derivative may be viewed as a map

\smallskip
$$\mathcal T_{\Phi_0}:= \text{ob}^{-1}  \Pi_0 \mathcal B: C^\infty(\mathcal Z_0; N\mathcal Z_0)\lre L^2(\mathcal Z_0 ;\mathcal C_0).$$

\smallskip

\noindent $\mathcal T_{\Phi_0}$ is a map on sections of vector bundles on $\mathcal Z_0$, and will be referred to as the {\bf deformation operator}. 

The main result that allows the cancellation of the infinite-dimensional obstruction is the following: 

\begin{thm}[{\cite[Thm. 6.1]{PartII}}] $\mathcal T_{\Phi_0}$ is an elliptic pseudodifferential operator of order $\tfrac12$ whose Fredholm extension has index 0. Moreover, there are constants $C_m$ such that the elliptic estimate
\be \|\eta\|_{H^{m+1/2}(\mathcal Z_0;N\mathcal Z_0)} \ \leq \ C_m \left( \| \mathcal T_{\Phi_0}(\eta)\|_{H^m(\mathcal Z_0;\mathcal C_0)} \ + \ \|\ph\|_{H^{m+1/4}(\mathcal Z_0;N\mathcal Z_0)}\right)\label{TPhiestimates}\ee
\noindent holds for all $m\geq 0$. \qed
\end{thm}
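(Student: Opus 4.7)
The plan is to analyze $\mathcal{T}_{\Phi_0}$ by computing its principal symbol through a model problem near the singular set. I would work in local cylindrical coordinates $(t, r, \theta)$ in which $t$ is arclength along $\mathcal Z_0$ and $(r, \theta)$ are polar normal coordinates, and decompose sections into Fourier modes $e^{ik\theta/2}$ in the angular variable. By Proposition \ref{cokernel}, the obstruction space is identified with $L^2(\mathcal Z_0; \mathcal C_0)$ by extracting the coefficient $a(t)$ of the leading asymptotic term $a(t)e^{\pm i\theta/2}r^{-1/2}$. Thus $\mathrm{ob}^{-1}\Pi_0$ simply reads off this coefficient, and understanding $\mathcal T_{\Phi_0}$ reduces to computing the $r^{-1/2}e^{\pm i\theta/2}$ coefficient of $\mathcal B_{\Phi_0}(\eta)$ as a function of $\eta$.

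The first concrete step is to substitute the expansion $\Phi_0 \sim B(t,\theta)r^{1/2} + O(r^{3/2})$ into the Bourguignon--Gauduchon formula of Lemma \ref{BG}. I would choose the diffeomorphism family $F_\eta$ so that its generating vector field $X_\eta$ extends $\eta$ parallelly in the normal direction; the corresponding variation $\dot g_\eta = \mathcal L_{X_\eta}g_0$ then has symmetric-derivative terms carrying first derivatives of $\eta$, while $d\,\mathrm{Tr}(\dot g_\eta)$ and $\mathrm{div}(\dot g_\eta)$ contribute second derivatives. Combining these with $\nabla\Phi_0$ and performing the $\theta$-Fourier decomposition produces a leading coefficient of the form
\begin{equation*}
  \mathcal T_{\Phi_0}(\eta)(t) = P(t,\partial_t)\eta,
\end{equation*}
where $P$ is a pseudodifferential expression on $\mathcal Z_0$. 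The effective order is $1/2$ rather than the naive $2$ because of cancellation between the singular $r^{-1/2}$ prefactor in the obstruction space and the differential orders from the Bourguignon--Gauduchon formula; I expect the principal symbol in $\xi$ (the covariable to $t$) to be linear, with matrix coefficient built from pointwise contractions of $B(t,\theta)$ against the fixed Clifford data. Ellipticity follows because this coefficient is invertible exactly when $B(t,\theta)$ is nowhere vanishing, which is the non-degeneracy hypothesis.

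For the index-$0$ statement, I would argue by either formal self-adjointness or homotopy. The cleaner route uses a pairing between $L^2(\mathcal Z_0;N\mathcal Z_0)$ and $L^2(\mathcal Z_0;\mathcal C_0)$ induced by the obstruction identification and the ambient $L^2$ inner product on $Y$: the operator $\mathcal B$ is the first variation of the self-adjoint expression $\langle D_\mathcal Z\Phi_0,\Phi_0\rangle$ in the singular-set deformation parameter, and this symmetry descends to make $\mathcal T_{\Phi_0}$ formally self-adjoint under the pairing up to a lower-order error, hence of index $0$. Alternatively, one can deform $B$ through non-degenerate leading coefficients to a constant-coefficient flat model ($\mathcal Z_0=S^1$ in a Euclidean neighborhood with $B$ constant), where the operator reduces to a scalar multiple of $|D_t|^{1/2}$ on the circle; homotopy invariance of the Fredholm index within the class of non-degenerate data concludes. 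The estimate \eqref{TPhiestimates} is then the standard elliptic estimate for an order-$1/2$ $\Psi$DO on the compact manifold $\mathcal Z_0$, with the $1/4$-gap on the error term reflecting the half-integer order.

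The main obstacle will be the second step: rigorously extracting the principal symbol requires careful bookkeeping of how the polyhomogeneous expansion of $\Phi_0$ interacts with the Bourguignon--Gauduchon formula and with the identification of the obstruction space in Proposition \ref{cokernel}. In particular, one must verify that the naively order-$2$ contributions from $\mathrm{div}(\dot g_\eta)$ collapse to order $1/2$ after projection onto the single relevant Fourier mode and extraction of the $r^{-1/2}$ coefficient---this cancellation is the entire content of why $\mathcal T_{\Phi_0}$ behaves well, and verifying it requires using the edge calculus of Mazzeo rather than the usual interior symbol calculus.
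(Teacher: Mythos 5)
Your overall architecture (compute the symbol near $\mathcal Z_0$, get ellipticity from non-degeneracy of $B$, get the index by homotopy to a flat model, read off the estimate from order-$1/2$ ellipticity) matches the strategy of the cited proof in \cite{PartII}, and the endgame — once the symbol is known — is fine. But the core analytic step is missing, and as literally stated your step 2 cannot work. First, there is an internal inconsistency: you assert the principal symbol is \emph{linear} in $\xi$, which would make $\mathcal T_{\Phi_0}$ an operator of order $1$, not $\tfrac12$; an elliptic $\Psi$DO of order $\tfrac12$ has symbol homogeneous of degree $\tfrac12$, i.e. $\sim |\xi|^{1/2}$ times an invertible endomorphism. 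This is not a cosmetic slip: a fractional, non-polynomial symbol forces $\mathcal T_{\Phi_0}$ to be genuinely \emph{nonlocal}, whereas the procedure you describe — substitute the expansion of $\Phi_0$ into the Bourguignon--Gauduchon formula and extract the $r^{-1/2}e^{\pm i\theta/2}$ coefficient pointwise in $t$ — can only ever produce a local differential operator in $\eta$ (naively of order $2$, from $d\,\mathrm{Tr}(\dot g_\eta)$ and $\mathrm{div}(\dot g_\eta)$). No pointwise coefficient extraction from a polyhomogeneous expansion yields $|\xi|^{1/2}$.

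The missing idea is that $\mathrm{ob}^{-1}\Pi_0$ is \emph{not} "read off the leading coefficient": by Proposition \ref{cokernel}, $\Pi_0$ is the $L^2$-orthogonal projection onto $\text{\bf Ob}(\mathcal Z_0)$, whose elements $\mathrm{ob}(a)$ are global $L^2$-solutions of $D\Psi=0$, and — as the paper notes — $\mathrm{ob}(e^{i\ell t})$ concentrates at scale $r\sim 1/|\ell|$ near $\mathcal Z_0$. The computation of $\mathcal T_{\Phi_0}$ therefore proceeds mode by mode through the pairings $\langle \mathcal B_{\Phi_0}(\eta), \mathrm{ob}(e^{i\ell t})\rangle_{L^2}$; using that $\mathcal B_{\Phi_0}(\eta)$ differs from $-D(\nabla_{X_\eta}\Phi_0)$ by diffeomorphism invariance (the same trick used to derive Lemma \ref{BG}) and that $\mathrm{ob}(a)\in\ker D|_{L^2}$, the pairing localizes by integration by parts to a boundary term at $r=\epsilon$ involving only the leading coefficients $B(t,\theta)$ of $\Phi_0$ and $a(t)$ of the obstruction element. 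It is the scaling of the concentrated obstruction element (not a cancellation inside the Bourguignon--Gauduchon formula) that converts the apparent order $2$ into $|\ell|^{1/2}$ and produces the $|\xi|^{1/2}$ symbol. You correctly identify this as "the main obstacle" and "the entire content," but you leave it entirely open and propose a mechanism (pointwise coefficient extraction) that is structurally incapable of producing it. Without this step neither the order, the ellipticity, nor the index computation gets off the ground.
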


As a consequence: 

\begin{cor} \label{fredholmddD} The derivative 

\be (\text{d}\mathbb D)_{(\mathcal Z_0, \Phi_0)}=\begin{pmatrix}\Pi_0 \mathcal B_{\Phi_0} & 0\bigskip \\ \Pi_0^\perp \mathcal B_{\Phi_0} & D\end{pmatrix}: \begin{matrix} L^{2,2}(\mathcal Z_0;N\mathcal Z_0) \\ \oplus  \\ rH^1_e(Y\mathrm{-}\mathcal Z_0)  \end{matrix}\lre \begin{matrix} \text{ \bf {Ob}}(\mathcal Z_0)\cap H^{3/2}_{b,w} \\ \oplus \\ \text{Range}(D|_{rH^1_e})\cap L^2_{w}\end{matrix}\label{universalderiv}\ee
\noindent is a Fredholm operator of Index 0. \qed
\end{cor}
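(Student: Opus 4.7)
The plan is to exploit the block lower-triangular structure of $(\text{d}\mathbb D)_{(\mathcal Z_0, \Phi_0)}$ and reduce the claim to separate analyses of the two diagonal blocks. A block lower-triangular operator with Fredholm diagonal blocks is itself Fredholm with index equal to the sum of the diagonal indices, by a standard short exact sequence argument: the domain and codomain split as $X_2 \hookrightarrow X_1\oplus X_2 \twoheadrightarrow X_1$ and $Y_2 \hookrightarrow Y_1\oplus Y_2 \twoheadrightarrow Y_1$, the operator restricts to the lower-right block on the sub and descends to the upper-left block on the quotient, and additivity of the index delivers the claim. The bottom-right block $D : rH^1_{e}(Y - \mathcal Z_0) \to \text{Range}(D|_{rH^1_e}) \cap L^2_w$ is then immediate: surjectivity is tautological by construction, and the kernel is finite-dimensional by Lemma \ref{semiFredholm}, so this block is Fredholm of index $\dim \ker(D|_{rH^1_e})$.

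The key computation is for the top-left block $\Pi_0 \mathcal B_{\Phi_0}$. Here I would apply the isomorphism $(\text{ob}, \iota)$ of Proposition \ref{cokernel}, which by the regularity statement in that proposition restricts to an isomorphism $H^{3/2}(\mathcal Z_0; \mathcal C_0) \oplus \ker(D|_{rH^1_e}) \cong \text{\bf Ob}(\mathcal Z_0) \cap H^{3/2}_{b,w}$. Under this identification, $\Pi_0 \mathcal B_{\Phi_0}$ decomposes as a pair $(\mathcal T_{\Phi_0}, K)$, where the first component is the deformation operator $\mathcal T_{\Phi_0} : L^{2,2}(\mathcal Z_0; N\mathcal Z_0) \to H^{3/2}(\mathcal Z_0; \mathcal C_0)$ (Fredholm of index $0$ by the preceding theorem of \cite{PartII}), and $K : L^{2,2}(\mathcal Z_0; N\mathcal Z_0) \to \ker(D|_{rH^1_e})$ is a bounded map into a finite-dimensional space. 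Since $K$ is finite rank, hence a compact perturbation, the top-left block is Fredholm; incorporating the finite-dimensional $\ker(D|_{rH^1_e})$ summand into the codomain reduces the index by $\dim \ker(D|_{rH^1_e})$, so the top-left block has index $-\dim \ker(D|_{rH^1_e})$.

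Adding the two diagonal indices gives $0$, which is the desired conclusion. The step requiring the most care is the index bookkeeping in the top-left block: one has to track how the embedded copy of $\ker(D|_{rH^1_e})$ inside $\text{\bf Ob}(\mathcal Z_0)$ coming from $\iota$ enters the codomain of $\Pi_0 \mathcal B_{\Phi_0}$, and observe that its contribution exactly cancels the positive index contribution of $D|_{rH^1_e}$. Beyond this accounting, no further analytic input is needed: all the hard analysis is packaged in Proposition \ref{cokernel}, Lemma \ref{semiFredholm}, and the deformation operator theorem cited just before the corollary.
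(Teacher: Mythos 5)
Your proposal is correct, and it supplies exactly the argument the paper leaves implicit behind the \qed: lower-triangular block structure with bounded off-diagonal entry, the tautological surjectivity and finite-dimensional kernel of $D$ onto its range (index $+\dim\ker(D|_{rH^1_e})$), and the identification of $\Pi_0\mathcal B_{\Phi_0}$ with $(\mathcal T_{\Phi_0},K)$ via Proposition \ref{cokernel} (index $-\dim\ker(D|_{rH^1_e})$, using that the index of the order-$\tfrac12$ elliptic operator $\mathcal T_{\Phi_0}:H^2\to H^{3/2}$ is $0$ independently of the Sobolev level). The index bookkeeping with the $\iota(\ker(D|_{rH^1_e}))$ summand is the one genuinely non-obvious point, and you have handled it correctly.
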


\noindent Note that the range component $\Pi_0^\perp\mathcal B_{\Phi_0}$ is only bounded into $L^2$ for $\eta \in L^{2,2}$, but $\mathcal T_{\Phi_0}$ is of order $\tfrac12$, which necessitates the different regularities on the summands of the codomain. The non-linear portion of $\mathbb D$, however, is not necessarily bounded into the higher regularity cokernel, thus $\mathbb D$ displays a {\bf loss of regularity}. 

Loss of regularity is a general phenomenon, intrinsic to some classes of non-linear PDEs \cite{HamiltonNashMoser}. It may be viewed, morally speaking, as a mismatch between the natural function spaces necessary for controlling the linear versus the non-linear portions of the theory. In applying a standard implicit function theorem on Banach spaces, it is necessary to have a Banach space for which there is simultaneously a Fredholm theory for the linearization, as well as sufficient control on non-linearities that the full non-linear map is $C^1$. In classical settings, this can often be achieved by developing elliptic/Schauder estimates in Sobolev or H\"older spaces above the borderline for multiplicative estimates, which allow control of large classes of non-linearities. In contrast, a loss of regularity often occurs when the Fredholm theory for the linearization demands the use of function spaces that are too restrictive to be closed under multiplication or other non-linearities. The result is that in each step of a fixed point iteration, the new error terms ``spill out'' into a larger function space, and must be mollified so that the linear theory applies again. 

In the specific setting of Corollary \ref{fredholmddD}, the demand that the lower left component $\Pi_0^\perp \mathcal B_{\Phi_0}$ be bounded into $L^2_w$ requires that the deformations be taken in $L^{2,2}(\mathcal Z_0; N\mathcal Z_0)$. The fact that $\mathcal T_{\Phi_0}$ is order $<2$, however, then shows that $\text{d}\mathbb D$ has image which is a proper subspace of $L^2_w$, consisting of higher regularity sections in the $\text{\bf Ob}(\mathcal Z_0)$ components. The non-linear terms, however, mix these components and will in general land only in $L^2_w$ in {\it both} components. This shift of regularity $3/2$ is precisely the loss under our conventions; it cannot be eliminated by simply working in higher regularity Sobolev spaces. Nash-Moser theory is the standard tool for overcoming such a loss of regularity. 
\subsection{An Implicit Function Theorem for Generalized $\Z_2$-Spinors}
\label{section2.3}
Nash-Moser theory provides a standard framework for dealing with operators that lose regularity by working in the category of tame Fr\'echet spaces \footnote{By changing the weight $\nu$, there are Fr\'echet spaces so that the loss of regularity here is of order $\delta$ for any $\delta>0$. It is an interesting question to ask if there is a setting where the use of Nash-Moser theory can be eliminated}. Versions of the Nash-Moser implicit function theorem suitable for $\Z_2$-harmonic spinors and 1-forms were developed in \cite[Thm. 1.4]{PartII} and \cite[Thm. 1]{DonaldsonMultivalued}. In this subsection, we unify these approaches and prove a slight extension applicable to the current setting. Here, $\mathcal P$ denotes the space of parameters $p=(g,B)$.

As with many version of the implicit function, the statement requires {\it uniform} control over the linear theory with respect to parameters. "

\begin{defn} \label{strictlyuniform}A family of tuples $\{(g_T, B_T, \mathcal Z_T) \ | \ T\in \Omega \subseteq \R^M\}$ is said to be {\bf strictly uniform} if there exists a smooth, finite-rank vector bundle $\mathcal K\to \Omega$ such that $K_T\subset \bigcap_{m\geq 0} r^{1+\nu}H^{m,1}_{\text{b},e,w} \ \forall T\in \Omega$, and there are constants $C_{m,\nu}$ uniform in $T$ such that the elliptic estimates (\refeq{semielliptictobeuniform}) reduces to 

  \be \label{strictlyuniform}\|\ph\|_{r^{1+\nu}H^{m,1}_{\text{b},e,w}} \leq C_{m,\nu}  \|D_T\ph\|_{r^\nu H^m_{\text{b},w}}\ee
  
  \noindent for $\ph \perp_{L^2_w} \mathcal K_T$, where $D_T$ denotes the Dirac operator on the complement of $\mathcal Z_T$ with parameters $(g_T, B_T)$.  
\end{defn}

\noindent This condition is stronger than simply requiring that (\refeq{semielliptictobeuniform}) holds uniformly, as it also uniformly controls the dimension of a finite-dimensional thickening of the kernel.

Our extension of the Nash-Moser theorems of \cite{PartII, DonaldsonMultivalued} is the following. 

\begin{thm}[{\cite[Thm. 1.4]{PartII} , \cite[Thm. 1]{DonaldsonMultivalued}}] 
\label{NMIFT}
Suppose that $Y$ is a closed, oriented Riemannian 3-manifold and $p_T=(g_T,B_T)$ are a 1-parameter family of metric and perturbation pairs parameterized by $T\in [T_0,\infty)$. If $(\mathcal Z_T, \ell_T, \Phi_T)$ are a corresponding family of smooth, weakly non-degenerate approximate (generalized) $\Z_2$-harmonic spinors such that
 
 \begin{itemize}
 \item[(I)] the tuples $(g_T, B_T, \mathcal Z_T)$ form a strictly uniform family as in Definition \ref{TPhiestimates},
 \item[(II)]$\Phi_T$ obey  

$$\|D_{Z_T}\Phi_T\|_{H^{m_1}_{\text{b},w}} \ \overset{T\to \infty}{\lre} \  0, \hspace{1.5cm} \text{and}\hspace{1.cm}\text{supp}(D_{Z_T}\Phi_T) \Subset Y\mathrm{-}\mathcal Z_T,$$ for $m_1$ sufficiently large, and
\item[(III)] $(p_T, \mathcal Z_T, \ell_T,\Phi_T)$ are constant on a tubular neighborhood of $\mathcal Z_{T_0}$, 
\end{itemize}
then there is a $T_1\geq T_0$ such that the following holds. 

 There is a finite-dimensional vector space $V$ with a linear inclusion $V\hookrightarrow \mathcal P$, and for $T \geq T_1$ there exist 
triples $(\mathcal Z_T', \ell_T', \Phi'_T)$ and parameters $b_T \in V$ all defined implicitly as smooth functions of $T$ such that 
\be D_{\mathcal Z'_T} \Phi'_T =0\ee
\noindent with respect to $p'_T=p_T + b_T$, i.e. $(\mathcal Z_T', \ell_T', \Phi'_T)$ are generalized $\Z_2$-harmonic spinors. Moreover, each of these is smooth and weakly non-degenerate. In fact, $b_T$ can be chosen to be supported on a small ball $B_\delta\Subset Y\mathrm{-}\mathcal Z_T'$ of radius $\delta<<1$, and can be taken to be identically zero in the case of $\Z_2$-harmonic 1-forms provided $\Phi_T$ are closed. 
\end{thm}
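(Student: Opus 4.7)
The plan is to run a Nash--Moser iteration for an augmented universal Dirac operator that incorporates the parameter variation. Define the map
$$\mathcal F:\ \mathcal U_0\times r\mathbb H^{1} \times V \ \lre \ \mathbb L^2, \qquad (\mathcal Z, \Phi, b)\longmapsto D^{p_T+b}_{\mathcal Z}\Phi,$$
viewed on tame Fr\'echet spaces obtained as inverse limits over $m\in \mathbb N$ of the mixed edge Sobolev spaces from Definition \ref{sobolevspaces}. The approximate solutions $(\mathcal Z_T,\Phi_T,0)$ satisfy $\mathcal F(\mathcal Z_T,\Phi_T,0)\to 0$ in $H^{m_1}_{\text{b},w}$ by hypothesis, and the support condition localizes the error away from $\mathcal Z_T$. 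The finite-dimensional space $V\hookrightarrow \mathcal P$ is chosen so that variation along $V$ spans a complement in $\mathbb L^2$ of the range of $(d\mathbb D)_{(\mathcal Z_T,\Phi_T)}$; since Corollary \ref{fredholmddD} furnishes a Fredholm operator of index zero, a generic $V$ of dimension equal to the cokernel suffices, and by perturbing inside an open ball $B_\delta\Subset Y-\mathcal Z_T'$ we may localize the support of $V$ as claimed.

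The crux is constructing a tame right inverse for the linearization
$$\tilde L = (d\mathcal F)_{(\mathcal Z_T,\Phi_T,0)}:\ L^{2,2}(N\mathcal Z_T)\oplus rH^1_{e,w} \oplus V \ \lre \ L^2_w.$$
Using the block form (\ref{universalderiv}), I would invert $\tilde L$ in two stages: first project to $\text{\bf Ob}(\mathcal Z_T)$ and solve $(\Pi_0\mathcal B_{\Phi_T})\eta + \Pi_0(\partial_b D)(b)=\Pi_0\xi$ via the elliptic estimate (\ref{TPhiestimates}) for $\mathcal T_{\Phi_T}$ together with the surjectivity onto the cokernel supplied by $V$; then solve $D_{\mathcal Z_T}\psi = \Pi_0^\perp\xi - \Pi_0^\perp\mathcal B_{\Phi_T}(\eta)$ using Lemma \ref{semiFredholm}. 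Composing gives a right inverse $R$ with a fixed loss of regularity of order $\tfrac12$ (inherited from $\mathcal T_{\Phi_T}$) and tame bounds $\|R\xi\|_{X^m}\leq C_m\|\xi\|_{X^{m+1/2}}$ by combining (\ref{semielliptictobeuniform}) and (\ref{TPhiestimates}).

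With a tame right inverse in hand, the Nash--Moser scheme follows the Hamilton/Zehnder template: introduce smoothing operators $S_\theta$ adapted to the edge Sobolev scale (as in \cite{PartII}), and iterate
$$u_{n+1} = u_n - S_{\theta_n} R(u_n)\,\mathcal F(u_n),$$
choosing $\theta_n\to\infty$ to balance the smoothing error against the quadratic Newton error. Convergence in every $C^k$-norm then follows once $\|\mathcal F(\mathcal Z_T,\Phi_T,0)\|_{H^{m_1}_{\text{b},w}}$ is small enough, which determines $T_1$. Smoothness and weak non-degeneracy persist under the limit because the correction is small in every $H^{m,n}_{b,e,w}$-norm, so in particular cannot flip the sign of the leading coefficient $B(t,\theta)$ in the expansion (\ref{polyhomogeneous}). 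For $\Z_2$-harmonic 1-forms with $\Phi_T$ closed, the image of the obstruction projection automatically lies in an exact complement of the Hodge decomposition, and hence can be absorbed by adjusting $\Phi$ within the same cohomology class without perturbing the operator, giving $b_T\equiv 0$.

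The main obstacle is obtaining every estimate above \emph{uniformly} in $T$. The constants in (\ref{semielliptictobeuniform}) and (\ref{TPhiestimates}), the operator norm of $R$, and the tame constants for the smoothing operators must all be bounded independently of $T$. This is where the hypotheses are essential: the uniform version of Lemma \ref{semiFredholm} controls $D_{\mathcal Z_T}$; weak non-degeneracy (\ref{nondegenerate}) gives a uniform lower bound on the principal symbol of $\mathcal T_{\Phi_T}$, preventing the deformation operator from degenerating; and the stipulation that $(p_T,\mathcal Z_T, A_T,\Phi_T)$ is constant on a tubular neighborhood of $\mathcal Z_{T_0}$ pins down the local edge geometry once and for all, so that weights, cutoffs, and smoothing operators can be chosen $T$-independently.
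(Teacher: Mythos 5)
Your outline of the spinor case matches the paper's proof in substance: the augmented operator $\overline{\slashed{\mathbb D}}(\mathcal Z,\Phi,\lambda)=\slashed{\mathbb D}(\mathcal Z,\Phi)+\sum\lambda_k b_k(\Phi)$, the block-triangular inversion of the linearization through $\text{\bf Ob}(\mathcal Z_0)$ using (\ref{TPhiestimates}) and Lemma \ref{semiFredholm}, and the observation that uniformity in $T$ reduces to the uniform constants in (\ref{semielliptictobeuniform}) together with the family being constant near $\mathcal Z_{T_0}$. One refinement: "a generic $V$ of dimension equal to the cokernel suffices" is weaker than what is needed, since the pairing $\langle b_j\Phi_T,\Psi_j\rangle$ must be bounded below \emph{uniformly in $T$}; the paper secures this by invoking unique continuation for $\slashed D$ to get $\Psi_j$ non-vanishing on the balls $U_j$ and by using the explicit class of perturbations $B=\sum(i\alpha_k+\beta_k J)e^j$.

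The genuine gap is the 1-form case. Your claim that the obstruction "automatically lies in an exact complement of the Hodge decomposition, and hence can be absorbed by adjusting $\Phi$ within the same cohomology class" skips the two points that actually carry the argument. First, $\mathbf d$ has a topologically mandated finite-dimensional kernel and cokernel coming from $H^1_-(Y_{\mathcal Z_{T_0}};\R)$, which cannot be cancelled by perturbing the operator (the whole point is to keep $\mathbf d$ unperturbed); one must restrict the universal operator to the $L^2$-orthogonal complements of the spans $\mathcal H_1,\mathcal H_0$ of (approximate) harmonic representatives, check that the iteration stays in these complements (this uses that $\nu_T$ is closed, so the error and the image of $\mathcal B_{\nu_T}$ lie in $\Omega^0(\ell)\perp\mathcal H_0\subseteq\Omega^1(\ell)$), and then prove the restricted linearization is an \emph{isomorphism}. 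Second, that injectivity is not automatic: a putative kernel element $(\eta,\psi)$ satisfies $d^{\dot\star_\eta}\nu_T+d^\star\psi=0$, and one needs the identity $d^{\dot\star_\eta}\nu_T=d^\star d(\iota_{X_\eta}\nu_T)$ (via the Lie derivative of the pullback metrics), the reduction to $\Delta(f_\psi+\iota_{X_\eta}\nu_T)=0$ with $\psi=df_\psi$, and finally the asymptotic incompatibility: non-degeneracy forces $\iota_{X_\eta}\nu_T$ to have a non-vanishing $r^{-1/2}$ leading coefficient, hence $\iota_{X_\eta}\nu_T\notin rL^2$ while $f_\psi\in rL^2$, so $\eta=0$. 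Without this argument the conclusion $b_T\equiv 0$ is unjustified, and your proposed mechanism (absorbing the obstruction by a cohomologically trivial adjustment of $\Phi$) does not address the $H^1_-$ summand at all.
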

\begin{proof}
	The result is a generalization of the version of the Nash--Moser implicit function theorem established in \cite[Secs. 7--8]{PartII}, with three minor extensions. Following \cite{PartII}, \((\mathcal Z_0,\ell_0,\Phi_0)\) is called {\it isolated} if, for the fixed pair \((\mathcal Z_0,\ell_0)\), the spinor \(\Phi_0\) is the unique \(\mathbb Z_2\)-harmonic spinor up to normalization and sign. The extensions are: (i) the theorem holds uniformly in one-parameter families provided the family is strictly uniform; (ii) the assumptions that \((\mathcal Z_0,\ell_0,\Phi_0)\) is isolated and that \(\mathcal T_{\Phi_0}\) is an isomorphism may be removed, at the cost of adding the perturbations \(b_T\); and (iii) the theorem also applies in the case of \(1\)-forms, thus subsuming the results of \cite{DonaldsonMultivalued} in this context.


{(i) Due to the strict uniformity assumption (I), \cite[Thm.~7.4(B)]{PartII} immediately implies the result for 1-parameter families, provided the relevant tame estimates all hold uniformly. We spell out the source of this uniformity. The construction of the tame right inverse in \cite[Sec.~8.5]{PartII} uses the semi-Fredholm estimate for \(D\), relying only on the existence of a finite-dimensional space of spinors on whose $L^2$-complement the elliptic estimates are uniform, together with the elliptic estimate for the deformation operator \(\mathcal T_{\Phi_T}\). The former is precisely (I). Because the deformations may be taken to lie inside the tubular neighborhood of \(Z_{T_0}\) on which the data is constant as in (III) thus $B_{\Phi_T}$ is independent of $T$. The theory of \cite[Sec. 4]{PartII} shows that strict uniformity of $D$ and constancy on a neighborhood of $\mathcal Z_T$ implies that the deformation operators $\mathcal T_{\Phi_T}$ are also strictly uniform, i.e. (\refeq{TPhiestimates}) holds uniformly, and the compact term may be replaced by the projection to a finite-dimensional smooth bundle $\mathcal K'_T\subseteq \text{\bf Ob}(\mathcal Z_\tau) $. With this uniformity, the same construction as \cite[Sec. 8]{PartII} gives a family of tame right inverses with tame estimates uniform in \(T\). The fact that the corrected solutions are smooth and weakly non-degenerate follows just as in \cite[Thm.~1.4]{PartII}.}

(ii) Consider the case of the spin Dirac operator $D=\slashed D$. Let $k_1=\text{rank}(\mathcal K_T)$ and $k_2=\text{rank}(\mathcal K'_T)$ be the ranks of the bundles giving the strict uniformity of $D,\mathcal T$. The cokernel of (\refeq{universalderiv}) restricted to $(\mathcal K_T \oplus K_T')^\perp$ is a subspace $\mathbb K\subseteq \text{Coker}(\slashed D|_{rH^1_e})=\ker(\slashed D|_{L^2})$ of dimension $K=k_1+k_2$. Let $\Psi_1, \ldots, \Psi_K$ denote an $L^2$-orthonormal basis of this space, and $U_1,..,U_K$ open balls around a collection of points $y_1,..,y_K$ so that $U_j\cap N(\mathcal Z_{T_0})=\emptyset$.  By the unique continuation property of $\slashed D$, each $\Psi_j$ is non-vanishing on each ball $U_j$. We consider the class of perturbations which take the form \footnote{Perturbations of this form are those that arise from background $SU(2)$-connections in the gauge theory setting (Item ii in the Introduction)} 
$$B=\sum_{j=1}^3 (i\alpha_k + \beta_k J)e^j$$
\noindent in a local orthonormal frame, where $\alpha_k\in C^\infty(Y;\R)$, $\beta_k\in C^\infty(Y;\C)$, and $J: S\to S$ is a complex anti-linear endomorphism with $J^2=-\text{Id}$. Writing $\Psi_k=\Psi_k(y_j) +O(\rho)$, it is straightforward to check that this class of perturbations is sufficiently large to choose $b_1,...,b_K$ supported on the respective balls $U_k$ so that $$\br b_j \Phi_T, \Psi_j\kt\neq 0$$

\noindent (and is bounded below uniformly in $T$). The augmented universal operator $$\overline{\slashed {\mathbb D}}(\mathcal Z,\Phi, \lambda_k)=\slashed{\mathbb D}(\mathcal Z,\Phi) + \sum\lambda_k b_k(\Phi)$$ for $(\lambda_1,..,\lambda_K)\in \R^K$ has surjective derivative by design, and the implicit function theorem applies as before to yield solutions, which now define $b=\sum \lambda_k b_k$ implicitly as smooth functions of $T$. 

(iii) We now deduce the theorem in the case of a family of 1-forms $\Phi_T=(0,\nu_T)$ and $D=D_{\mathrm{dR}}$ from the case for spinors. For this, we can take advantage of the fact that $\Omega^0\oplus \Omega^1$ and the spinor bundle $\slashed S$ on a closed 3-manifold are isomorphic as real Clifford modules. In fact, in a local orthonormal coframe $1, \omega_t, \omega_x, \omega_y$, the map $\Upsilon: \Omega^0\oplus \Omega^1 \to \slashed S$ defined by 
\bea \begin{pmatrix} a_0 \\ a_t \omega_t + a_x \omega_x + a_y\omega_y\end{pmatrix}&\mapsto& \begin{pmatrix} -a_y+ia_x \\ -a_t - a_0\end{pmatrix}  \hspace{1.0cm}\Rightarrow \hspace{1.0cm} \Upsilon D_{\mathrm{dR}} \Upsilon^{-1}= \slashed D+ \mathfrak a  \eea

\noindent is such an isomorphism, which carries $D_{\mathrm{dR}}$ to the spin Dirac operator with a zeroth order perturbation $\mathfrak a$. The setting of \cite{PartII} may therefore be applied to $D_{\mathrm{dR}}$, with the following distinction. In this case we consider the unperturbed operator $D_{\mathrm{dR}}$, so must show that a solution can be found without altering the perturbation $\mathfrak a$, making the approach of (ii) invalid here. Moreover, $D_{\mathrm{dR}}$ has a topologically mandated kernel coming from $L^2$-Hodge theory as explained in the introduction. This case therefore also carries an additional finite-dimensional obstruction from $L^2$-harmonic forms, and an additional finite-dimensional parameter given by the cohomology class $[\nu] \in H^1_-(Y_{\mathcal Z_{T_0}})$. 

Let $K=\dim H^1_-(Y_{\mathcal Z_{T_0}};\R)$ and choose closed 1-forms $\alpha_1,\ldots, \alpha_K \in rH^1_e(\Omega^1)$ such that $p^*\alpha_i$ span $H^1_-(Y_{\mathcal Z_{T_0}};\R)$. We may assume that the first $\dim(\ker(D_{\mathrm{dR}}|_{rH^1_e}))$ of the $\alpha_j$ coincide with the $\Z_2$-harmonic 1-forms $\alpha_j =\nu_j\in rH^1_e(\Omega^1)$. Next, let $\psi_1,...,\psi_K \in L^2(\Omega^1)$ denote the $L^2$-harmonic forms such that $p^*(\psi_j)$ span $H^1_-(Y_{\mathcal Z_{T_0}};\R)$. It may again be assumed that the first several are the $\Z_2$-harmonic 1-forms. Set 
\bea
\mathcal H_1:= \text{Span}\{\nu_1,...,\nu_k, \alpha_{k+1},\ldots, \alpha_K\},\\
\mathcal H_0:= \text{Span}\{\nu_1,...,\nu_k, \psi_{k+1},\ldots, \psi_K\}.
\eea
Note that $\psi_{k+1},...\psi_K\in \text{Im}(ob)$ are part of the infinite-dimensional piece of the obstruction from Proposition \ref{cokernel} (these are precisely the obstruction elements with no zero-form component). The same applies for nearby pairs $(p',\mathcal Z')\in \mathcal P\times \mathcal U_{0}$, thus $\mathcal H_1, \mathcal H_0$ form smooth vector bundles over this space.

Consider the restricted universal Dirac operator 

$$\underline{\mathbb D}: r\mathbb H^1_\perp \to p_1^*\mathbb L^2_\perp,$$

\noindent where $r\mathbb H^1_\perp, \mathbb L^2_\perp$ denote the $L^2$-orthogonal complements of $\mathcal H^1, \mathcal H^0$ respectively. Since these spaces are of the same finite dimension, Corollary \ref{fredholmddD} implies $\underline{\mathbb D}$ still has index $0$. We now make two claims: 

\begin{enumerate}\item[ (iiia):] In each stage of the Nash-Moser iteration, the error is orthogonal to $\mathcal H_0\subseteq p_1^*\mathbb L^2$. 

To see this, note that only the $-d^\star$ component of $D_{\mathrm{dR}}$ depends on the metric (and thus on $\mathcal Z$). Since $\nu_T$ is closed by assumption, it follows that the initial error $\mathfrak e_T\in \Omega^0(\ell)$ has no 1-form components, and that the partial derivative $\mathcal B_{\nu_T}(\eta) \in \Omega^0(\ell)$ for all $\eta$ as well. Because $\mathcal H_0\subseteq \Omega^1(\ell)$, the image of $\underline{\mathbb D}$ is automatically orthogonal at $(\mathcal Z_T,\nu_T)$. Moreover, 1-form component of the solution to the linearized equation is always in $d\Omega^0(\ell)\subseteq \Omega^1(\ell)$, thus the correction term may be assumed to preserve closedness of the approximate solution. Finally, it may easily be arranged that the smoothing operators preserve closedness and the properties of being orthogonal to $\mathcal H_1, \mathcal H_0$. Applying the same argument inductively shows that the entire iteration remains in $r\mathbb H^1_\perp, \mathbb L^2_\perp$ 

\item[(iiib):] In this case, $\text{d}\underline{\mathbb D}$ is automatically an isomorphism. 

Since the bottom right block of Corollary \ref{fredholmddD} is injective on the complement of $\mathcal{H}_1$ by construction, a kernel element would necessarily be of the form $(\eta, \psi)$, where $\eta \neq 0$, and would have to solve 
$$d^{ \dot \star_\eta} \nu_T + d^{\star} \psi=0, \quad d\psi=0, \quad \psi \in r\mathbb{H}^1_\perp,$$
where $\dot \star_\eta := \frac{d}{ds}|_{s=0} \star_{F_{s\eta}^* g}$ is the Hodge star operator of the metric $\dot g_\eta$, and $\star$ is that of $g_T$.

Take $X_{\eta}:=\d{}{s}|_{s=0} F_{s\eta}$ be an extension of the vector field $\eta$ as in Lemma \ref{BG} (see also \cite[Sec. 5]{DonaldsonMultivalued}). Observe that $F_{s\eta}^*(d^{\star}\nu_T) = (d^{\star_{ {s\eta}}}F_{s\eta}^*(\nu_T))$, where $g_{s\eta}:=F_{s\eta}^*g_T$. Taking the derivative at $s=0$, we obtain $$d^{ \dot \star_\eta} \nu_T = \mathcal{L}_{X_{\eta}} d^{\star} \nu_T + d^{\star} (\mathcal{L}_{X_{\eta}} \nu_T)=\iota_{X_{\eta}} d d^{\star} \nu_T + d^{\star} d (\iota_{X_{\eta}} \nu_T) = d^{\star} d (\iota_{X_{\eta}} \nu_T).$$ 

\noindent Because $\psi \perp \mathcal{H}_1$, we can express $\psi = d f_{\psi}$ for $f_{\psi} \in r H_e^1$. Thus, this would imply
\be 
\Delta (f_{\psi} + \iota_{X_{\eta}} \nu_T) = 0,
\ee
with $f_{\psi} + \iota_{X_{\eta}} \nu_T \in \Omega^0(\ell)\cap r H^1_e$. By \cite[Sec. 2]{DonaldsonMultivalued}, it follows that $f_{\psi} + \iota_{X_{\eta}} \nu_T = 0$.

However, since $\nu_T$ is non-degenerate, by \cite[Page 18]{DonaldsonMultivalued}, near $\mathcal{Z}$, we can locally write $\nu_T = \mathrm{Re}(B z^{\frac{1}{2}}dz) + O(r^{\frac{1}{2} + \epsilon})$ with $B$ nowhere vanishing. If $\eta$ is non-trivial, then $\iota_{X_{\eta}}\nu_T$ will have a non-vanishing $r^{-\frac{1}{2}}$ leading coefficient. Consequently, $\iota_{X_{\eta}}\nu_T \notin rL^2$, whereas $f_{\psi} \in rL^2$, leading to a contradiction if both are non-zero. Therefore, $\eta = f_{\psi} = 0$, which implies the claim.

\end{enumerate}

\noindent The two claims combine to show that an approximate solution may be corrected to a true solution without introducing perturbations $b_T$ in the 1-form case. 
\end{proof}

\begin{rem}Theorem \ref{NMIFT} extends to multi-parameter families with only minor modifications. 
\end{rem}

We conclude this section with a proof of Corollary \ref{nonempty}
\begin{proof}[Proof of Corollary \ref{nonempty}] This follows from a slight extension of  \cite[Thm. 1.6.]{PartIII}. A generalized $\Z_2$ harmonic spinor $(\mathcal{Z}, \ell, \Phi)$ is called isolated if $\Phi$ is the unique $\Z_2$-harmonic spinor for the pair $(\mathcal Z, \ell)$ with respect to $p=(g,B)$ up to normalization and sign. The proof of \cite[Thm. 1.6]{PartIII} assumes that the given $\Z_2$-harmonic spinor is isolated and (strongly) non-degenerate. It is not expected, however, that the solutions constructed by Corollary \ref{spinorsb} are isolated (as those of Corollary \ref{example1.8}b are not), and may be only weakly degenerate.

The isolated assumption in \cite[Thm. 1.6.]{PartIII} may be eliminated by adapting the argument of part (ii) in the proof of Theorem \ref{NMIFT} above, using perturbations to cancel the finite-dimensional obstruction arising from nearby $\Z_2$-harmonic spinors. 

To conclude, we show that a smooth weakly non-degenerate $\Z_2$-harmonic spinor may be perturbed to a (strongly) non-degenerate one. In the case that $\mathcal Z=\emptyset$, the set of parameters $p=(g,B)$ whose harmonic spinors are all nowhere-vanishing is residual in the space $\mathcal P$ of all parameters. Indeed, a similar argument to part (ii)  in the proof of Theorem \ref{NMIFT} shows the universal derivative of the section
\bea
\mathcal P \times \R \times Y \times \mathbb S H^1_e(Y;S)&\lre & S \times L^2(Y;S)\\
(p,\lambda, y, \ph)&\mapsto& (\ph(y) , (\slashed D_p -\lambda)\ph) 
\eea
\noindent is transverse to the zero-section, where $\mathbb S$ denotes the unit sphere in the $L^2$-norm. The genericity of nowhere-vanishing spinors then follows from applying the Sard-Smale theorem to the projection to $\mathcal P$ restricted to the pre-image of $0$, and then intersecting with the locus $\lambda^{-1}(0)$. The argument for $\mathcal Z\neq \emptyset$ is similar, now using the operator $\slashed {\mathbb D}-\lambda$ and invoking the version of the Sard-Smale Theorem for Fr\'echet manifolds \cite[Thm. 4.3]{SardSmaleFrechet}. 
\end{proof}

\section{Gluing Analysis}
\label{section3}
This section proves the analytic input needed to apply Theorem~\ref{NMIFT} to the connected sum
\(Y=Y_1\#Y_2\) and the torus sum \(Y=Y_1\cup_{K_1=K_2}Y_2\). More precisely, we construct a family of
metrics \(g_T\) on the glued manifold and approximate solutions modeled on
\eqref{approximateZ2harmonic} and \eqref{approximateZ2harmonic2}. We then show the hypotheses (I)--(III) of Theorem \ref{NMIFT} are satisfied. (III) holds by fiat, thus we first
the error by which these approximate solutions fail to satisfy the Dirac equation tends to zero as
\(T\to\infty\) which is (II); second, we show the degenerating Dirac operators are strictly uniform as in Definition \ref{TPhiestimates}, i.e. they obey the estimates needed to construct uniformly bounded tame right
inverses.

The proof has the following structure. We first construct approximate solutions on the glued manifold.
For spinors on connected sums, this is done using conformal invariance and a standard neck-stretching
construction. For \(1\)-forms, the Hodge-de Rham operator is not conformally invariant, and the
approximate solutions must instead be constructed from local primitives so that they remain closed.
For torus sums, one uses a pinched toroidal neck and a corresponding model problem. After constructing
the approximate solutions, we prove uniform estimates for the degenerating operators. This is done by
analyzing the model operator on the neck region and then patching parametrices from the two summands
and from the neck. Once the error tends to zero and these uniform estimates are established, the
Nash--Moser theorem applies to correct the approximate solutions to genuine
\(\mathbb Z_2\)-harmonic spinors or \(1\)-forms.

Subsection~\ref{subsection_connected_sum} treats the spin Dirac operator \(D=\slashed D\) for connected
sums via neck stretching. The remaining cases require neck-pinching arguments, for which the operator
on the neck becomes singular. Subsection~\ref{subsection2dneck} analyzes the relevant model neck
operators, while Subsections~\ref{subsectionneckpinchingsphere} and~\ref{subsectiontoruspinching}
complete the proof of Theorem~\ref{maina} in the \(1\)-form case and the proof of
Theorem~\ref{mainb}, respectively.

\subsection{Neck Stretching for the Dirac Operator} 
\label{subsection_connected_sum}
In this section we consider the spin Dirac operator $D=\slashed D$. 
In this case, conformal invariance of the operator can be utilized to establish uniform elliptic estimates on connected sums via neck-stretching arguments. Such arguments have been standard in gauge theory for several decades, and we provide only a brief summary here, referring the reader to \cite{KM, DonaldsonAlternating, TomsNotes} for similar arguments. 

Let $(\mathcal{Z}_i, \ell_i, \Phi_i)$ be $\Z_2$-harmonic spinors on $(Y_i,g_i)$ for $i=1,2$. The connected sum $Y = Y_1 \# Y_2$ at points $y_i \in Y_i - \mathcal{Z}_i$ can be endowed with a family of metrics $g_T$ for which the tubular neck has length $O(T)$, constructed as follows. In geodesic normal coordinates around each $y_i$, the metric $g_i$ can be written as
\begin{equation} 
\label{eq_metric_gi} 
g_i = d\rho^2 + \rho^2 g_{S^2} + h_i,
\end{equation}

\noindent where $\rho$ is the distance to $y_i$ and $h_i = O(\rho^2)$. Defining $s = -\log(\rho)$ so that $\rho \to 0$ as $s \to \infty$, the metric can now be written
\begin{equation} 
\label{eq_metric_gii}
g_i = e^{-2s} (ds^2 + g_{S^2}) + O(e^{-4s}).
\end{equation}

\noindent Next, for $\rho_0$ small, let $\chi_i$ be a cut-off function supported in $B_{\rho_0}(y_i)$ and equal to 1 on $B_{\rho_0/2}(y_i)$. The conformal transformation $e^{-u_i}$ for $u_i=\chi_i\cdot s_i$ induces a conformal equivalence between $(Y- y_i,g_i)$ and $(Y'_i, g_i'):=(Y-y_i, e^{-u}g_i)$. The primed version has an infinite cylindrical end diffeomorphic to $[t_0, \infty)\times S^2$ for some $t_0$, equipped with the metric $ds_i^2 + g_{S^2} + h_i',$ where $h_i'=O(e^{-2s})$. 

The connected sum may now be formed by simply patching the manifolds with truncated ends $[s_0, 3T] \times S^2$ along their common boundary at $3T$ for $T>>s_0$. Revise notation so that $s$ now denotes the centered coordinate on the cylindrical neck of $Y$, with $s \in [-3T + s_0, 3T - s_0]$. The metric is then defined by

\begin{equation}
g_T = ds^2 + g_{S^2} + \zeta_1 h_1' + \zeta_2 h_2',\label{gTdef}
\end{equation}

\noindent where $\zeta_i$ are a partition of unity with $d\zeta_i$ supported in $[-T, T] \times S^2$ and $|d\zeta_i|=O(T^{-1})$. Finally, define the weight $w_i = e^{u_i/4}$ on each end; these can be smoothly melded into a single weight given by

\begin{equation}
w_T = \frac{e^{3T/4}}{2\cosh(s/4)}\label{wdef}
\end{equation}

\noindent in the centered coordinate on the neck, and constant on the two ends.
\subsubsection{Conformal Changes of the Dirac Operator}
On both $Y_i$, the two conformally equivalent metrics $g_i, g_i'$ each give rise to a spinor bundles and Dirac operator, denoted by $\slashed{S}_i, \slashed{S}_i'$ and $\slashed{D}_i, \slashed{D}_i'$, respectively (the transformation of the perturbation $B_i$ will be clarified shortly).

These two spinor bundles may be associated as follows (see \cite[Sec. 5]{LawsonMichelson}, \cite{DiracVariationBar, Bourguignon}). Let $X_i = [0, 1] \times (Y_i - y_i)$ be equipped with the metric $d\sigma^2 + e^{2\sigma u}g_i$, so that the cross-sections at $\sigma = 0, 1$ are $Y_i - y_i$ with the metrics $g_i, g_i'$ respectively in \eqref{eq_metric_gi}, \eqref{eq_metric_gii}. Let $W^+_i \to X_i$ denote the positive spinor bundle associated with the spin structure pulled back from that inducing $\slashed{S}_i$ on $Y_i$, and let $\nabla$ be the associated spin connection. The restrictions of $W^+_i$ to $\sigma = 0, 1$ are canonically isomorphic to $\slashed{S}_i, \slashed{S}_i'$, respectively. Let 
\begin{equation}
\tau_u: \slashed{S}_i \to \slashed{S}_i' \label{tauudef}
\end{equation}

\noindent be the fiberwise isometry defined by parallel transport using $\nabla$ along rays $[0, 1] \times y$ for $y \in Y_i - y_i$. We may now also define the transformed perturbation by $B_i' := \tau_u B_i \tau_u^{-1}$.

Next, we define 
\begin{equation}
\mathfrak{T}_u = e^{-u} \tau_u,
\end{equation}
then the conformal change formula for the Dirac operator (cf. \cite{HitchinHarmonicSpinors}, \cite[Thm. 5.24]{LawsonMichelson}) states:

\begin{prop}\label{conformalchange}
The Dirac operators $\slashed{D}_i, \slashed{D}_i'$ are related by 
\begin{equation}
\slashed{D}_i' = \mathfrak{T}_u \circ \slashed{D}_i \circ \mathfrak{T}_u^{-1}.
\end{equation}
\end{prop}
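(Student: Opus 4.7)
The plan is to establish the stated conformal covariance formula by reducing it, via the cylinder construction already set up in the paragraph preceding the proposition, to a standard one-dimensional computation. The cylinder $X_i=[0,1]\times(Y_i-y_i)$ with metric $d\sigma^2+e^{2\sigma u}g_i$ is precisely the device needed: it interpolates smoothly between $g_i$ and $g_i'$ through a 1-parameter family of pairwise conformally related metrics, and the parallel transport $\tau_u$ along the rays $[0,1]\times\{y\}$ provides a canonical (fiberwise isometric) identification of the spinor bundles at the two endpoints. Thus the proposition amounts to measuring how the transverse Dirac operator of each cross-section of $X_i$ changes with $\sigma$.

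First I would choose a local $g_i$-orthonormal coframe $\{e^j\}$ on $Y_i - y_i$, giving an orthonormal coframe $\{d\sigma,\, e^{\sigma u}e^j\}$ on $X_i$. Using Koszul's formula for the Levi-Civita connection of $d\sigma^2 + e^{2\sigma u}g_i$, one computes that the new Christoffel symbols decompose into those of $g_i$ plus correction terms involving $\partial_\sigma u$ and the spatial gradient of $u$, and that the corresponding spin connection differs from $\nabla^{g_i}$ by an explicit term proportional to $du\cdot\gamma(d\sigma)$. Next, one decomposes the Dirac operator on $X_i$ in the standard cylindrical form
\[
\slashed D^{X_i}=\gamma(d\sigma)\bigl(\nabla_{\partial_\sigma}+\slashed D^{\mathrm{fib}}_\sigma\bigr),
\]
where $\slashed D^{\mathrm{fib}}_\sigma$ is the Dirac operator of the cross-section $\{\sigma\}\times Y_i$ with its induced metric. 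Taking a spinor that is $\tau_u$-parallel along $[0,1]\times\{y\}$ (so $\nabla_{\partial_\sigma}$ of it vanishes by construction) and differentiating the relation $\slashed D^{\mathrm{fib}}_\sigma$ in $\sigma$ converts the question into a pure linear-algebra check: how does Clifford multiplication transform when the coframe is rescaled by $e^{\sigma u}$, and what is the contribution to $\slashed D^{\mathrm{fib}}_\sigma$ from the $du\cdot\gamma(d\sigma)$ correction in the connection.

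Integrating the resulting ODE in $\sigma$ from $0$ to $1$ then yields, in dimension three, the conformal change law
\[
\slashed D_i'\circ \bigl(e^{-u}\tau_u\bigr)\;=\;e^{-u}\,\tau_u\circ \slashed D_i,
\]
which is exactly $\slashed D_i'=\mathfrak T_u\circ \slashed D_i\circ \mathfrak T_u^{-1}$ after inverting. The compatibility of $\tau_u$ with the conjugated perturbation $B_i'=\tau_u B_i\tau_u^{-1}$ is automatic from the definition, so the identity extends from the unperturbed spin operators to the full $\slashed D_i$ used in the paper.

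The main obstacle is bookkeeping: one must carefully keep track of the distinction between (a) the rescaling of the orthonormal coframe on the cross-sections (which affects Clifford multiplication by a factor of $e^{-\sigma u}$ per $e^j$), (b) the volume-form change (producing an additional $e^{-u/2}$ factor because of the $n-1$ Clifford directions and the formal self-adjointness normalization), and (c) the correction to the spin connection itself. The exponent $e^{-u}$ appearing in $\mathfrak T_u=e^{-u}\tau_u$ is the net effect of these three contributions in dimension three, and verifying that they combine with the correct sign and coefficient is the heart of the computation; the rest of the argument is bilinearity and the density of smooth compactly supported sections.
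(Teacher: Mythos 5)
The paper's own proof is a two-line affair: it cites the classical conformal covariance of the Dirac operator (\cite[Thm.~5.24]{LawsonMichelson}) for the unperturbed case, and then observes that the perturbation transforms correctly because $B_i'$ is \emph{defined} as $\tau_u B_i \tau_u^{-1}$ and the zeroth-order term commutes with multiplication by $e^{\pm u}$. You instead propose to rederive the conformal change law from scratch by writing $\slashed D^{X_i}$ in cylindrical form on $X_i=[0,1]\times(Y_i-y_i)$ and integrating an ODE in $\sigma$. That route is legitimate in principle --- it is essentially the Bourguignon--Gauduchon/B\"ar derivation of the conformal covariance --- and your treatment of the perturbation term coincides with the paper's.

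As written, however, your argument has a gap exactly where the proposition has content. The only nontrivial assertion is the precise power of $e^{u}$ appearing in $\mathfrak T_u$, and your sketch defers precisely that verification to ``bookkeeping'' that is never carried out: you assert, rather than show, that contributions (a)--(c) combine to give $e^{-u}$. Moreover, the accounting you describe is not set up to close. A \emph{pointwise} operator identity such as $\slashed D_i'=\mathfrak T_u\slashed D_i\mathfrak T_u^{-1}$ cannot receive a contribution from the volume form --- that enters only later, in Lemma \ref{weightiso}, which is why the weight $w_T$ is introduced separately --- so the ``$e^{-u/2}$ from the volume form / self-adjointness normalization'' in item (b) should not appear in this computation at all. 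Relatedly, conjugation by the scalar $e^{-u}$ alters $\slashed D_i$ only by a zeroth-order term proportional to $\gamma(du)$, whereas the principal symbols of $\slashed D_i$ and $\slashed D_i'$ genuinely differ; hence the entire rescaling of the symbol must be carried by $\tau_u$, i.e.\ one must verify that parallel transport along the $\sigma$-rays takes a $g_i$-orthonormal coframe to a $g_i'$-orthonormal coframe and therefore intertwines the two Clifford multiplications with the correct conformal factor. You gesture at this in step (a), but it is the crux of the proof; without executing it (or without simply invoking the standard theorem, as the paper does) the exponent in $\mathfrak T_u$ is not actually established.
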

\begin{proof}
A proof is given in \cite[Thm. 5.24]{LawsonMichelson} for the unperturbed case. Since $B_i$ is of zeroth order, it commutes with multiplication by $e^{\pm u}$, and $B_i' = \tau_u B_i \tau_u^{-1}$ by definition.
\end{proof}

\medskip 
Although $\mathfrak{T}_u$ is a fiberwise isometry, the induced map on $L^2$-sections is not uniformly bounded since the conformal change $g_i' = e^{u} g_i$ also affects the volume form by $dV'=e^{3u/2}dV$. The exponent in $e^{u_i/4}$, and thus the weight $w_T$ used to define (\ref{wdef}) are chosen precisely to compensate for this. A straightforward computation (use the fact that $\tau$ being parallel mean $
\nabla \tau_u^{-1} = \tau_u^{-1} \nabla'$ for $\nabla, \nabla'$ the spin connections) shows:

\begin{lm} \label{weightiso}
The map induced by $\mathfrak{T}_u$ extends to a linear isomorphism
$$\mathfrak{T}_u: rH^{m,1}_{\text{b},e}(Y_i; \slashed{S}_i) \to rH^{m,1}_{\text{b},e,w}(Y_i'; \slashed{S}_i')$$

\noindent uniformly bounded in $T$, with a uniformly bounded inverse, where $w=w_T$ is as in (\refeq{wdef}).
\end{lm}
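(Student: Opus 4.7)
The argument reduces to a direct technical computation, based on three structural observations: (i) $\tau_u$ is a pointwise fiberwise isometry of Hermitian vector bundles; (ii) the weight $w_T$ in \eqref{wdef} is designed so that $w_T^2$, combined with the Jacobian of the conformal change of variables, exactly absorbs the $e^{-2u}$ factor built into $\mathfrak{T}_u = e^{-u}\tau_u$; and (iii) since $\tau_u$ is defined via parallel transport along the $\sigma$-direction of $X_i$, it satisfies the intertwining $\nabla \circ \tau_u^{-1} = \tau_u^{-1} \circ \nabla'$ modulo zeroth-order corrections arising from the conformal change of spin connections, as already indicated in the paragraph preceding the lemma. Uniform boundedness of $\mathfrak{T}_u^{-1}$ follows from the same argument with $u$ replaced by $-u$, so it suffices to bound $\mathfrak{T}_u$.

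For the $L^2$ piece, I would verify directly the identity
$$\int_{Y_i'} |\mathfrak{T}_u\phi|^2\, w_T^2\, dV_{g_i'} \;=\; \int_{Y_i} |\phi|^2 \bigl(w_T^2\, e^{-7u/2}\bigr)\, dV_{g_i},$$
using $|\mathfrak{T}_u\phi|^2_{\slashed{S}_i'} = e^{-2u}|\phi|^2_{\slashed{S}_i}$ from the fiberwise isometry property, and $dV_{g_i'} = e^{-3u/2}\,dV_{g_i}$ from the conformal rescaling $g_i'=e^{-u}g_i$ in dimension three. Inserting the explicit form of $w_T$ and $u$ on the neck, where $u$ is essentially linear in the coordinate $s$ and the $\cosh(s/4)$ denominator of $w_T$ contributes a matching exponential, one checks that the bracketed factor $w_T^2 e^{-7u/2}$ is bounded above and below by constants independent of $T$. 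Off the neck, $u\equiv 0$ and $w_T$ is uniformly bounded, so the integrands already agree up to fixed constants.

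For the mixed $H^{m,1}_{\text{b},e}$ norm, I would use the intertwining relation $\nabla^{g_i'}(\tau_u\phi) = \tau_u\nabla^{g_i}\phi + A_u\cdot\tau_u\phi$, where $A_u$ is a zeroth-order operator depending on $du$ via the Bourguignon--Gauduchon formula of Lemma \ref{BG}. Because $u = \chi_i\cdot t$ with $\chi_i$ a fixed bump function, the derivatives $|du|$ and the higher covariant derivatives $|\nabla^k u|$ for $k\ge 1$ are uniformly bounded in $T$. Moreover, $u$ is supported in a fixed ball $B_{\rho_0}(y_i)\subset Y_i - \mathcal{Z}_i$, so the distance function $r = \operatorname{dist}(-,\mathcal{Z}_i)$ is bounded above and below on $\operatorname{supp}(u)$; consequently the weighted derivatives $\nabla^e$ and $\nabla^{\text{b}}$ of \eqref{bderiv}--\eqref{edgederiv} are equivalent on $\operatorname{supp}(u)$ to ordinary covariant derivatives with $T$-independent constants, and off $\operatorname{supp}(u)$ the map $\mathfrak{T}_u$ reduces (up to bounded factors) to a bundle isomorphism. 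Iterating the intertwining relation on the derivative order and combining with the $L^2$-bound gives the $(m,1)$-bound. The one delicate point is the precise cancellation on the neck in the $L^2$ step, since this is what pins down the specific form of $w_T$ in \eqref{wdef}; once this is verified, the remaining estimates are standard, and the inverse bound is obtained identically by swapping $u\leftrightarrow -u$.
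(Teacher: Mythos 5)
Your overall strategy is the paper's: $\tau_u$ is a fiberwise isometry, the $\mathrm{b}$/$e$-derivatives reduce to ordinary covariant derivatives on $\operatorname{supp}(u)$ because $u$ is supported in a ball away from $\mathcal Z_i$, the parallel-transport definition of $\tau_u$ intertwines $\nabla$ and $\nabla'$ up to the bounded zeroth-order term coming from $du$, and the weight $w_T$ exists precisely to absorb the conformal factors. The paper's computation runs in the opposite direction --- it expands $\|\mathfrak T_u^{-1}\phi\|^2_{rH^{m,1}_{\mathrm b,e}(Y_i)}$ and collects the factor $e^{2u}\cdot e^{-3u/2}=e^{u/2}=w^2$ --- but that is a cosmetic difference.

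The problem is exactly the step you flag as delicate. Your bookkeeping produces the combined factor $w_T^2e^{-7u/2}$, and you assert it is bounded above and below uniformly in $T$. It is not: by \eqref{wdef} one has $w_T^2=e^{u/2}$ up to fixed constants on each end, so $w_T^2e^{-7u/2}=e^{-3u}$, and $u$ runs up to $O(T)$ on the neck, so this factor degenerates like $e^{-9T}$; no exponential hidden in the $\cosh(s/4)$ denominator can compensate a mismatch in the exponent of $e^{u}$. If the product really were uniformly bounded above and below, the weight $w$ would be superfluous. The source of the error is that you have combined two mutually inconsistent conventions: the Jacobian $dV_{g_i'}=e^{-3u/2}\,dV_{g_i}$ (read off from $g_i'=e^{-u}g_i$) together with the fiber factor $e^{-2u}$ (from $\mathfrak T_u=e^{-u}\tau_u$). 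In the convention under which $\mathfrak T_u=e^{-u}\tau_u$ actually conjugates $\slashed D_i$ to $\slashed D_i'$ in dimension three and under which $w_i=e^{u_i/4}$ is the correct weight, the volume forms satisfy $dV_{g_i'}=e^{+3u/2}\,dV_{g_i}$, so the combined factor is $e^{-2u}e^{3u/2}=e^{-u/2}=w_T^{-2}$, and the weight cancels it \emph{identically}, not merely up to bounded constants. Once the $L^2$ identity is redone with consistent exponents (the clean pointwise check is $|\mathfrak T_u\phi|^2\,w^2\,dV' = |\phi|^2\,dV$), the rest of your argument --- the intertwining of connections, the uniform bounds on $|du|_{g'}$ and its derivatives, the equivalence of $\nabla^{\mathrm b},\nabla^{e},\nabla$ on $\operatorname{supp}(u)$, and the symmetric treatment of $\mathfrak T_u^{-1}$ --- goes through and agrees with the paper.
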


The following lemma shows that the conformally transformed $\Z_2$-harmonic spinor is an increasingly good approximate solution as $T \to \infty$. Let $\chi_\circ$ denote a new cut-off function equal to 1 on $(-\infty, 0)$ and vanishing on $[1, \infty)$; set $\chi^T_i = \chi_\circ(s_i / T - 2T)$ where $s_i \in [s_0, s_0 + 3T]$ is now the coordinate on the cylindrical end of $Y_i'$. Set
\be {\Phi_i^T := \chi_i^T \cdot \mathfrak{T}_u(\Phi_i).}\label{PhiiTdef}\ee
\begin{lm} \label{spinorapproximatesol}
For each $m \in \mathbb{N}$, there exist $T$-independent constants $C_m > 0$ such that

$$\|\slashed{D}_i' \Phi_i^T\|_{H^m_{\text{b},w}} \leq \frac{C_m}{T}.$$
\end{lm}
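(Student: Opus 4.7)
The plan is to use the conformal intertwining from Proposition \ref{conformalchange} to localize the error entirely to the support of $d\chi_i^T$, then apply a direct pointwise bound. Since $\slashed D_i \Phi_i = 0$, Proposition \ref{conformalchange} immediately gives $\slashed D_i' \bigl(\mathfrak{T}_u(\Phi_i)\bigr) = \mathfrak{T}_u(\slashed D_i \Phi_i) = 0$ on $Y_i' - \mathcal Z_i$. Applying the Leibniz rule $\slashed D(f\psi) = \gamma(df)\psi + f \slashed D \psi$ for a scalar $f$ and spinor $\psi$ yields
\[
\slashed D_i' \Phi_i^T \;=\; \slashed D_i'\bigl(\chi_i^T \cdot \mathfrak{T}_u(\Phi_i)\bigr) \;=\; \gamma(d\chi_i^T)\, \mathfrak{T}_u(\Phi_i),
\]
so all of the error is concentrated on $\text{supp}(d\chi_i^T)$, a compact subset of the cylindrical end with $s_i$ of order $T$. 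Since $y_i \notin \mathcal Z_i$, this region lies a fixed positive distance from $\mathcal Z_i$, so the weight $r$ is constant there and the $H^m_{\text{b},w}$ norm restricted to this region coincides with a standard weighted Sobolev norm up to uniform constants.

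Two ingredients then complete the estimate. First, $\chi_i^T$ is a cutoff transitioning over a length of order $T$ in $s_i$, hence $\|\nabla^k d\chi_i^T\|_{L^\infty} \lesssim T^{-(k+1)}$ for every $k \geq 0$. Second, Lemma \ref{weightiso} provides the $T$-uniform bound
\[
\|\mathfrak{T}_u(\Phi_i)\|_{H^m_{\text{b},w}(Y_i')} \;\leq\; C \, \|\Phi_i\|_{rH^{m,1}_{\text{b},e}(Y_i)},
\]
whose right-hand side is finite by the regularity of $\Phi_i$ (Lemma \ref{expansions}) and is independent of $T$.

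Distributing derivatives via the Leibniz rule,
\[
\|\nabla^k\bigl(\gamma(d\chi_i^T)\mathfrak{T}_u(\Phi_i)\bigr)\|_{L^2_w}^2 \;\lesssim\; \sum_{j=0}^{k} \|\nabla^{k-j} d\chi_i^T\|_{L^\infty}^2 \,\|\nabla^j \mathfrak{T}_u(\Phi_i)\|_{L^2_w}^2 \;\lesssim\; \frac{C_m}{T^{2}} \,\|\Phi_i\|_{rH^{m,1}_{\text{b},e}}^2,
\]
where the dominant $j=k$ term sets the $T^{-2}$ rate and the remaining terms contribute strictly higher powers of $T^{-1}$. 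Summing over $k \leq m$ and taking square roots yields the claimed $\|\slashed D_i' \Phi_i^T\|_{H^m_{\text{b},w}} \leq C_m/T$. I expect no substantive obstacle beyond bookkeeping: the only minor verification is that $\chi_i \equiv 1$ on $\text{supp}(d\chi_i^T)$ for $T$ large (so Proposition \ref{conformalchange} applies exactly on that region), and that the metric perturbation $h_i' = O(e^{-2s_i})$ together with the transition of the conformal factor contribute only uniformly bounded corrections.
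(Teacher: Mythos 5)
Your proposal is correct and follows essentially the same route as the paper's proof: both use Proposition \ref{conformalchange} to reduce $\slashed D_i'\Phi_i^T$ to the commutator term $\gamma(d\chi_i^T)\mathfrak{T}_u\Phi_i$, and then combine the $O(T^{-1})$ decay of the cutoff's derivatives with the $T$-uniform bound on $\|\mathfrak{T}_u\Phi_i\|_{H^m_{\text{b},w}}$ from Lemma \ref{weightiso}. Your added remarks on localizing to $\mathrm{supp}(d\chi_i^T)$ and on the exponentially small metric corrections are just the bookkeeping the paper leaves implicit.
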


\begin{proof}
Using Proposition \ref{conformalchange}, we compute
\begin{align*}
\slashed{D}_i' \Phi_i^T &= \gamma'(d\chi_i^T) \mathfrak{T}_u \Phi_i + \chi_i^T \slashed{D}_i' \mathfrak{T}_u \Phi_i \\
&= \gamma'(d\chi_i^T) \mathfrak{T}_u \Phi_i + \chi_i^T \mathfrak{T}_u \slashed{D}_i \mathfrak{T}_u^{-1} \mathfrak{T}_u \Phi_i \\
&= \gamma'(d\chi_i^T) \mathfrak{T}_u \Phi_i + \cancel{\chi_i^T \mathfrak{T}_u \slashed{D}_i \Phi_i}.
\end{align*}

\noindent Since $d^m \chi_i^T = O(T^{-m})$, and by Lemma \ref{weightiso}, we have $\|\mathfrak{T}_u \Phi_i\|_{H^m_{\text{b},w}} \sim \|\Phi_i\|_{H^m_{\text{b}}}$. The result now follows where $C_m$ bounds the $H^m_{\text{b}}$-norm of the original spinor $\Phi_i$.
\end{proof}

\subsubsection{Parametrix Patching}

Let $\slashed{D}$ denote the spin Dirac operator on $(Y\mathrm{-}\mathcal Z, g_T)$, formed using the perturbation $B_T' = \zeta_1 B_1' + \zeta_2 B_2'$, where $\zeta_i$ are as defined in \eqref{gTdef} and $B_i'$ as below (\refeq{tauudef}). The following proposition proves that $D$ is strictly uniform in the sense of Definition \ref{strictlyuniform} in this setting. 
\begin{prop}\label{uniformdirac}
There exists a $T_0$ such that for $T > T_0$, there are constants $C_m$ independent of $T$ such that the semi-elliptic estimate

\begin{equation}
\|\ph\|_{rH^{m,1}_{\text{b},e,w}} \leq C_m \left( \|\slashed{D} \ph\|_{H^m_{\text{b},w}} + \|K\ph \|_{H^m_{\text{b},w}}\right)\label{uniformelliptic}
\end{equation}

\noindent holds for $\ph \in rH^{m,1}_{\text{b},e,w}(Y\mathrm{-}\mathcal Z)$, where $K$ has finite rank (independent of $T$). 
\end{prop}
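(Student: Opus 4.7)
The plan is a standard parametrix-patching argument, where the uniformity in $T$ follows from conformal invariance of the spin Dirac operator combined with the choice of weight (\ref{wdef}).

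First I would transfer the semi-Fredholm estimate of Lemma \ref{semiFredholm} from the compact manifolds $(Y_i, g_i)$ to the cylindrical-end manifolds $(Y_i', g_i')$. By Proposition \ref{conformalchange} the operators $\slashed D_i, \slashed D_i'$ are conjugate via $\mathfrak{T}_u$, and by Lemma \ref{weightiso} the map $\mathfrak{T}_u$ is an isomorphism of the relevant weighted mixed spaces with norms independent of $T$. Consequently, for any $\psi$ supported away from the neck there is a $T$-independent constant and finite-rank operator $K_i$ such that
\begin{equation}\label{eq:singleside}
\|\psi\|_{rH^{m,1}_{b,e,w}(Y_i')} \ \leq \ C_m\bigl(\|\slashed D_i'\psi\|_{H^m_{b,w}(Y_i')} \ + \ \|K_i\psi\|_{H^m_{b,w}(Y_i')}\bigr),
\end{equation}
where we have invoked the ``projection to a finite-rank subspace'' version of the estimate in Lemma \ref{semiFredholm}.

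Next I would choose a partition of unity $\chi_1+\chi_2=1$ on $Y$ adapted to the neck, with $\chi_i\equiv 1$ outside the neck on the $Y_i'$ side and with the transition spread uniformly across the middle portion $[-T,T]\times S^2$, so that $|\nabla^j\chi_i|\leq C_j T^{-j}$ for all $j$. Since $g_T$ agrees with $g_i'$ on $\mathrm{supp}(\chi_i)$ and the perturbation $B_T'$ was defined by the same partition of unity, the operator $\slashed D$ restricts to $\slashed D_i'$ there. Applying \eqref{eq:singleside} to $\chi_i\ph$ and summing over $i$ gives
\begin{equation*}
\|\ph\|_{rH^{m,1}_{b,e,w}} \ \leq \ C_m\Bigl(\|\slashed D\ph\|_{H^m_{b,w}} \ +\ \sum_i \|\gamma(d\chi_i)\,\ph\|_{H^m_{b,w}} \ +\ \|K\ph\|_{H^m_{b,w}}\Bigr),
\end{equation*}
where $K:=\sum_i K_i\circ\chi_i$ is finite rank and $T$-independent, and I have used $[\slashed D,\chi_i]=\gamma(d\chi_i)$.

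The main obstacle, and the crux of the argument, is to absorb the commutator terms $\|\gamma(d\chi_i)\ph\|_{H^m_{b,w}}$ into the left-hand side uniformly in $T$. The support of $d\chi_i$ lies in the middle of the neck, where the weight $r$ is constant (so boundary and edge derivatives coincide with ordinary ones) and the weight $w_T$ is smooth. Expanding $\nabla^k(\gamma(d\chi_i)\ph)$ by the Leibniz rule and using $|\nabla^j\chi_i|\lesssim T^{-j}$, each term is controlled by $T^{-1}$ times a $k$-th order derivative of $\ph$ measured in $L^2_w$, giving
\begin{equation*}
\sum_i \|\gamma(d\chi_i)\ph\|_{H^m_{b,w}} \ \leq \ \tfrac{C}{T}\,\|\ph\|_{rH^{m,1}_{b,e,w}}.
\end{equation*}
For $T$ sufficiently large this is strictly less than half the left-hand side and can be absorbed, yielding \eqref{uniformelliptic} with $T$-independent constants. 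The subtlety to verify is that the specific form of $w_T$ in \eqref{wdef}, which peaks at the center of the neck, is compatible with this absorption; this follows because $w_T$ is smooth (in particular $|\nabla \log w_T|$ is bounded) and because the partition of unity derivatives dominate the commutator, so no growth in $w_T$ appears in the estimate.
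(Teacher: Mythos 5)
Your proof is correct and follows essentially the same strategy as the paper's: both rest on the conformal transfer of the uniform single-manifold estimates (Proposition \ref{conformalchange} and Lemma \ref{weightiso}), a partition of unity spread across the neck with derivatives of size $O(T^{-1})$, and the observation that the resulting error is small plus finite rank. The only difference is presentational --- the paper patches the left parametrices into $P=\chi_1P_1\zeta_1+\chi_2P_2\zeta_2$ and treats the error terms $\chi_iP_i\,d\zeta_i$ as small and factoring through a compact inclusion, whereas you apply the estimates directly to $\chi_i\ph$ and absorb the $O(T^{-1})$ commutator for large $T$; these are equivalent, and your treatment of the small metric discrepancy in the middle of the neck matches the paper's Step 3.
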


\begin{proof}
The proof is a standard parametrix patching argument, of which we provide a brief sketch (see e.g., \cite[Sec. 14.2]{KM} for similar arguments). Assume, to begin, that the metric $g_T$ is a product on the cylindrical neck.

\begin{enumerate}
\item[] \underline{Step 1:} Proposition \ref{conformalchange} and Lemma \ref{weightiso} show that  on each $Y_i$ individually, the estimate
\begin{align*}
\|\ph\|_{rH^{m,1}_{\text{b},e,w}(Y_i')} &\leq  C \|\mathfrak{T}_u^{-1} \ph\|_{rH^{m,1}_{\text{b},e}} \\
&\leq C_m \left(\|\slashed D_i(\mathfrak{T}_u^{-1} \ph)\|_{H^m(Y_i)} + \|\mathfrak{T}_u^{-1} \ph\|_{H^m(Y_i)}\right)\\
&\leq C_m \left(\| \mathfrak{T}_u \slashed D_i' \ph\|_{H^m_{\text{b}}(Y_i)} + \|\mathfrak{T}_u^{-1} \ph\|_{H^m_{\text{b}}(Y_i)}\right)\\ 
&\leq C_m \left(\| \slashed D_i' \ph\|_{H^m_{\text{b},w}(Y_i')} + \|\ph\|_{H^m_{\text{b},w}(Y_i')}\right)
\end{align*}

\noindent holds uniformly in $T$ for each $m$. It follows that there are left-parametrices $P_i: H^m_{\text{b},w}(Y_i') \to rH^{m,1}_{\text{b},e,w}(Y_i')$ satisfying 
$$P_i \slashed D_i = Id + K_i, \hspace{2cm} \|P_i\|_{H^m_{\text{b},w} \to rH^{m,1}_{\text{b},e,w}} \leq C_m,$$
\noindent where $K_i$ are compact operators.
\smallskip

\item[]\underline{Step 2:} Let $\zeta_i$ for $i = 1, 2$ be a ($T-$dependent) partition of unity constructed as follows. Fix a smooth cut-off function of $s \in [-1, 1]$ such that $\xi_1(-1) = 0$ and $\xi_1 = 1$ for $s \geq 1/2$. Set $\xi_2 = 1 - \xi_1$. Then take $\zeta_i = \xi_i(t/T)$. Next, let $\chi_1 = \xi_1((t - 1)/T)$ and $\chi_2 = \xi_2((t + 1)/T)$, so that $\chi^T_i = 1$ on the supports of $d\zeta_i^T$ respectively.

Define a patched parametrix by 
\begin{equation}
P = \chi_1 P_1 \zeta_1 + \chi_2 P_2 \zeta_2.\label{patched}
\end{equation}

\noindent A quick calculation shows that 

\begin{equation}\label{patchingerrorterms}
P\slashed D = Id + \sum_i \chi_i K_i \zeta_i - \chi_i P_i d\zeta_i.  
\end{equation}

\noindent Since $K_i$ is compact and $P_i d\zeta_i$ factors through the compact inclusion $H^{m+1} \hookrightarrow H^m$ on $[-2T, 2T] \times S^2$, the elliptic estimate \eqref{uniformelliptic} follows. Moreover, because  $d\zeta_i\to 0$ and $K_i$ may be chosen to be finite rank, it is clear $K$ may also be taken to have finite rank.

\smallskip 
\item[] \underline{Step 3:} In the case of non-product metrics, the metrics are changed by an exponentially small factor in the middle of the neck, which does not disrupt the estimates.
\end{enumerate}
\end{proof}

\begin{proof}[Proof of Theorem \ref{maina} (spinor case)] Let $\alpha=[a;b]\in \R \mathbb P^1$ with neither coordinate zero. In the spinor case, set $$ \Phi^T_\alpha=a \Phi_1^T+ b \Phi_2^T$$
\noindent where $\Phi_i^T$ are as in (\refeq{PhiiTdef}). Lemma \ref{spinorapproximatesol} and Proposition \ref{uniformdirac} show that the assumptions of Theorem \ref{NMIFT} are satisfied on the manifold with cylindrical neck $(Y_1\# Y_2,g_T)$.  
\end{proof}

\subsection{Spectral Flow on the Model Neck}
\label{subsection2dneck}
The Hodge-de Rham operator (\refeq{Diracoperatortwo}) is not conformally invariant in 3-dimensions, nor is the Dirac operator on   the neighborhood of a knot conformally equivalent to an infinite cylindrical end. Thus in these two cases we consider pinching neck regions with model metrics parameterized by $\delta=T^{-1}$ given by

\noindent 
\begin{eqnarray} \label{sphereneckmetric} g_\delta&=& d\rho^2 + (\rho^2+\delta^2) g_{S^2}\\
g_\delta&=&dt^2 + d\rho^2 + (\rho^2 + \delta^2)d\theta^2 \label{torusmetric}
\end{eqnarray}

\noindent in the two cases respectively, where $\rho$ is the distance from the center of the neck, $t$ is now parallel to the knot, and $\theta$ is the angular coordinate on the normal planes. 

In this case, the parametrices arising from the closed manifolds cannot be extended over the neck to overlap, and a third parametrix is needed for the neck region. We begin in this section by analyzing $\delbar$-operators on the two-dimensional scale-invariant model neck $$N=(\R \times S^1 \ , \ dR^2  + (R^2+1)d\theta^2).$$

\noindent Patching the vector bundles properly requires a ``twist'' of the operator over the neck region which gives rise to spectral flow (recall the degree of $K_\Sigma$ does not simply add under connected sum for Riemann surfaces) \cite{cornalba1989moduli}. 

Let $K_N$ denote the canonical bundle of $N$. Since $N$ is spin, it admits a square root, and we consider
$$\delbar_N: \Omega^0(K_N^{d/2}) \to \Omega^{0,1}(K_N^{d/2})$$
for each $d \in \mathbb{Z}$. With $\mu \in \mathbb{R}$ as a weight, the Sobolev spaces $R^{1+\mu} H^1_b(N)$ and $R^\mu L^2(N)$ may be formed as before so that
\be 
\|u\|_{R^{1+\mu} H^1_b} = \left( \int_N \left( |\nabla u|^2 + \frac{|u|^2}{\langle R \rangle^2} \right) \langle R \rangle^{-2\mu} \, dV \right)^{1/2} \quad \text{and} \quad \|u\|_{R^{\mu} L^2} = \left( \int_N |u|^2 \langle R \rangle^{-2\mu} \, dV \right)^{1/2},\label{RHmuspaces}
\ee
where $\langle R \rangle = \sqrt{R^2 + 1}$ and $dV = \langle R \rangle \, dR \, d\theta$. Equivalently, we can use the desingularized boundary derivative $\nabla^\text{b} = R \nabla$ and weight both terms in the $R^{1+\mu} H^1_b$-norm equally as in Definition (\ref{sobolevspaces}).

A choice of trivialization $K_N^{1/2} \simeq \underline{\mathbb{C}}$ induces one for each $d$, in which case these operators may be written as
$$
\delbar_N u = \left( \frac{\partial}{\partial R} + \frac{1}{\langle R \rangle} \left( i \frac{\partial}{\partial \theta} + A_d \right) \right) u,
$$
where $\delbar$ denotes the standard $\delbar$ operator on complex-valued functions, and $A_d \in C^\infty(N; \mathbb{C})$.
\begin{lm}\label{infiniteneck}
$\delbar_N$ satisfies the following:
\begin{enumerate}
    \item[(i)] For each $d \in \mathbb{Z}$,
    $$A_d = \frac{d}{2} \frac{R}{\sqrt{R^2 + 1}},$$
    hence the slice operator $i\del_\theta + A_d$ has spectral flow from $\mathbb{Z} - \tfrac{d}{2}$ at $R \to -\infty$ to $\mathbb{Z} + \tfrac{d}{2}$ at $R \to +\infty$, and is Fredholm for weights $\mu \notin \mathbb{Z} + \tfrac{1}{2}$.
    
    \item[(ii)] In particular, for $d = 1$ and $-\tfrac{1}{2} < \mu < \tfrac{1}{2}$,
    $$
    \delbar_N: R^{1+\mu} H^1_\text{b}(N; K_N^{1/2}) \to R^{\mu} L^2(N; K_N^{-1/2})
    $$
    is surjective with $\dim_\mathbb{C} \ker(\delbar_N) = 1$, and there exists a constant $C$ such that
    \begin{equation}
        \|u\|_{R^{1+\mu} H^1_b} \leq C \|\delbar_N u\|_{R^{\mu} L^2} \quad \text{for} \quad u \perp \ker(\delbar_N). \label{delbarpoincare}
    \end{equation}
    
    \item[(iii)] The same holds for $d = 2$ and $-1 < \mu < 1$.
    
    \item[(iv)] The same statements hold for $\del_N$.
\end{enumerate}
\end{lm}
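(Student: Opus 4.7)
The proof of Lemma \ref{infiniteneck} is by explicit calculation, reducing to ODEs on $\mathbb{R}$ after a Fourier decomposition in $\theta$.

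For part (i), introduce the global holomorphic coordinate $z = \sinh^{-1}(R) + i\theta$, which realizes $N$ as biholomorphic to the flat cylinder (equivalently $\mathbb{C}^*$). In this coordinate $dz = \langle R\rangle^{-1}(dR + i\langle R\rangle\, d\theta)$ is a nowhere-vanishing holomorphic section of $K_N$, and $\partial_{\bar z} = \tfrac{1}{2}(\langle R\rangle\partial_R + i\partial_\theta)$. Fix the unit Hermitian trivialization $s_0 = (\langle R\rangle/\sqrt{2})^{d/2}(dz)^{d/2}$ of $K_N^{d/2}$ and compute the $\bar\partial$-connection $1$-form $s_0^{-1}\bar\partial s_0$; a direct calculation produces the stated expression for $\bar\partial_N$ with $A_d = \tfrac{d}{2}R/\langle R\rangle$. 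The asymptotic slice operators $i\partial_\theta \pm d/2$ on $S^1$ have $L^2$-spectra $\mathbb{Z}\pm d/2$, giving the stated spectral flow. Fredholmness in the weighted Sobolev spaces for weights $\mu$ avoiding the indicial roots is then a consequence of the general theory of $b$-elliptic (asymptotically cylindrical) operators of Mazzeo--Melrose and Lockhart--McOwen.

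For parts (ii) and (iii), Fourier expand $u = \sum_n u_n(R) e^{in\theta}$. The equation $\bar\partial_N u = 0$ decouples into the family of first-order linear ODEs
\[
u_n'(R) = \frac{n - A_d(R)}{\langle R\rangle}\, u_n(R),
\]
which integrate in closed form to $u_n(R) = c_n \langle R\rangle^{-d/2}(R + \langle R\rangle)^n$, using $\exp(\sinh^{-1}R) = R + \langle R\rangle$. The asymptotics are $u_n \sim 2^n R^{n-d/2}$ as $R\to +\infty$ and $u_n \sim 2^{-n}|R|^{-d/2-n}$ as $R\to -\infty$. Substituting into the $R^{1+\mu}H^1_b$ norm yields the integrability condition $-d/2-\mu < n < d/2+\mu$, from which the kernel dimension in each stated weight range is read off by counting integers in this interval. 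Part (iv) is the same analysis for $\partial_N$, which is equivalent to complex conjugation (i.e.\ replacing $d$ with $-d$), so the kernel count is symmetric.

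Surjectivity onto $R^\mu L^2$ and the Poincar\'e estimate \eqref{delbarpoincare} are both consequences of Fredholm index zero: the cokernel is identified via the $L^2$-pairing between $R^\mu L^2$ and $R^{-\mu} L^2$ with the kernel of the formal adjoint $\partial_N$ on the dual weighted space, and the analogous ODE computation shows this kernel has the matching dimension. Closed range together with the bounded kernel then yields a bounded inverse on the $L^2$-orthogonal complement, giving the estimate \eqref{delbarpoincare}. The main conceptual step in this plan is the correct identification of the index with the spectral flow of the indicial family between the weight thresholds, so that the explicit Fourier-mode count translates into the claimed surjectivity; once this bookkeeping is pinned down, the ODE computations themselves are elementary.
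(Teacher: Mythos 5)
Your overall strategy coincides with the paper's: identify $A_d$ explicitly, Fourier-decompose in $\theta$, solve the resulting first-order ODEs in closed form (your substitution $e^{s}=R+\langle R\rangle$ is exactly the paper's change of variable $R=\sinh s$), and count which modes are integrable in the weighted norm. Your derivation of $A_d$ via the holomorphic coordinate $z=\sinh^{-1}(R)+i\theta$ and the unit trivialization of $K_N^{d/2}$ is a legitimate alternative to the paper's route, which identifies $2\delbar_N$ for $d=1$ with the spin Dirac operator and reads off $A_1$ from the mean-curvature term, then extends to general $d$ by the Leibniz rule.

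There is, however, a genuine error in your surjectivity argument. You assert that surjectivity and the estimate \eqref{delbarpoincare} are ``consequences of Fredholm index zero'' and that the kernel of the formal adjoint on the dual weighted space ``has the matching dimension.'' This is internally inconsistent: an index-zero operator with a one-dimensional kernel has a one-dimensional cokernel and cannot be surjective. The operator here has index $+1$ (equal to the spectral flow), with $\dim\ker=1$ and trivial cokernel. The correct argument — and the one the paper gives — is to run the same ODE analysis for the adjoint $\delbar_N^\star=-(\del-A_d)$ at the dual weight $\mu^\star=-\mu$ and check directly that the two integrability conditions (at $R\to+\infty$ and $R\to-\infty$) are incompatible for every Fourier mode, so the cokernel vanishes. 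Without this explicit check your proposal does not establish surjectivity. Relatedly, your justification of (iv) by ``replacing $d$ with $-d$'' is not right: conjugation sends $\delbar+A_d$ to $\del+A_d$ (it preserves $A_d$ and reverses the Fourier index), so $\ker(\del_N)=\overline{\ker(\delbar_N)}$; literally substituting $d\mapsto-d$ into your integrability window $-d/2-\mu<n<d/2+\mu$ would make it empty and give a trivial kernel, contradicting the claim. The paper is careful to distinguish the conjugate operator $\del_N=\del+A_d$ (same kernel and cokernel dimensions) from the adjoint $-(\del-A_d)$ used to compute the cokernel; you should make the same distinction.
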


\begin{proof}
(i) For $d=1$, $2\delbar_N: \Omega^0(K_N^{1/2}) \to \Omega^{0,1}(K_N^{1/2})$ is the (positive) spin Dirac operator on $N$. In the trivialization given by the eigenspace of $\gamma(dr)$, the Dirac operator has the form (see \cite[Lem. 4.5.1]{KM}):
$$
2\delbar_N = \partial_r + \frac{i}{\langle R \rangle} \partial_\theta - \frac{H(r)}{2}
$$
where $H(r)$ is the mean curvature of $\{r\} \times S^1 \subseteq N$, which is given by $H(r) = \frac{-R}{R^2 + 1}$ (see \cite[Sec. 5]{Bar1992}). The general result then follows from the Leibniz rule and taking adjoints for $|d|>0$, and is trivial for $d = 0$. The spectral flow arises from the change in sign of $H$, and Fredholmness for weights not in the spectrum of the limiting operators at $R \to \pm \infty$ follows from standard theory \cite{donaldson02floerhomologybook, lockhart85elliptic}.

(ii) Item (i) shows the operator respects Fourier modes in the $S^1$-direction. Thus, writing $u = \sum u_k(R) e^{ik\theta}$, a kernel element must be a linear combination of solutions of the ODEs:
$$
\left( \frac{\partial}{\partial R} + \frac{1}{\langle R \rangle} \left( -k + \frac{R}{2\langle R \rangle} \right) \right) u_k(R) = 0.
$$

 This equation becomes more familiar under the following coordinate change (which results in a conformal equivalence with the flat infinite cylinder). Let $s$ be such that $R = \sinh(s)$. A quick computation shows that the above equation becomes:
\begin{equation}
\left( \partial_s - k + \frac{\tanh(s)}{2} \right) u_k(s) = 0.
\end{equation}

\noindent Since $\tanh(s) = \pm 1$ as $s \to \pm \infty$ respectively, one has solutions asymptotic to $e^{-(k+1/2)|s|} = R^{-(k+1/2)}$ as $s \to \infty$ and $e^{(k-1/2)|s|} = R^{(k-1/2)}$ as $s \to -\infty$. For $\mu$ in the given range, it is easy to check that precisely the $k = 0$ mode is integrable, thus the operator has a 1-dimensional kernel. Taking adjoints reverses the sign of the spectral flow, and by similar consideration, there are no solutions for the adjoint weight $\mu^\star = -\mu$.

(iii) Follows from similar considerations as (ii), and (iv) from conjugation. Note here that $\del_N = \del + A_d$ is given by conjugation and does not affect the sign of $A_d$, hence conjugation provides an isomorphism of the kernels and cokernels, whereas the $L^2$-adjoint used to determine the cokernel in (ii) is $\delbar^\star_N = -(\del - A_d)$.
\end{proof}

Corresponding to each range of weights $m-1 < \mu < m$ with $m \in \mathbb{Z}$, there is an associated APS boundary condition \cite[Sec. 17]{KM} on the truncated finite-cylinder $N_{R_0} = [-R_0, R_0] \times S^1$ . The compact boundary-value problem will be a more convenient description when dealing with the 3-dimensional case for the Dirac operator. In anticipation of this, we also write the Dirac operator in the trivialization provided by $\gamma(dt)$ as

\begin{equation}
\slashed{D}_N = \begin{pmatrix}
0 & e^{-i\theta} \left( -\partial_R + \frac{1}{\langle R \rangle} (i\partial_\theta - H) \right) \\
e^{i\theta} \left( \partial_R + \frac{1}{\langle R \rangle} (i\partial_\theta + H) \right) & 0
\end{pmatrix},
\end{equation}
where $2H(R) = -2 + \frac{R}{\langle R \rangle}$. Note that the above trivialization differs from that induced by $\gamma(dR)$ by a twist $e^{i\theta}$ in the top component and the conjugate in the bottom. For the remainder of the section, we restrict to the case $d=1$ and $-\frac{1}{2} < \mu < 0$.

Two different APS boundary conditions are depicted below for a spinor $\ph=(\alpha,\beta)$, where the allowed Fourier modes on the boundary are indicated and empty modes are constrained to be zero.

\begin{eqnarray}
	\text{Fourier mode}   & &  \ldots \  {\underline{k=-2} } \hspace{.5cm}   \    {\underline{k=-1} }  \hspace{.75cm}{\underline{k=0} } \  \  \  \hspace{.30cm} {\underline{k=1 }}   \ \ \  \hspace{.25cm} {\underline{k=2 }}  \ \ldots \hspace{1.7cm}\nonumber \\
	\alpha|_{R=R_0}&=& \ldots \alpha_{-2}e^{-2i\theta}  + \ { {\alpha_{-1}e^{-i\theta}}}  \ \hspace{.05cm}  +  \  \boxed{\alpha_{0}}   \\
	\alpha|_{R=-R_0} &=&     \hspace{1.75cm} \  \     \   \  \boxed{\alpha'_{-1}e^{-i\theta}}\   +     \  \ \alpha'_0  \  \     \  +  \    \    \alpha'_1 e^{i\theta} \   +    \ \alpha'_2 e^{2i\theta} \  +\ldots \\
	\beta|_{R=R_0} &=&  \hspace{3.35cm}  \  \  \   \  \  \   \ \ \  \  \hspace{-.15cm} \boxed{\beta_0}    \hspace{.15cm}\ +  \     \   {{   \beta_1 e^{i\theta}}}  \   +  \    \beta_2 e^{2i\theta} +\ldots. \\ 
	\beta|_{R=-R_0}&=& \ldots \beta'_{-2}e^{-2i\theta}   +   \beta'_{-1}e^{-i\theta} \  \hspace{.05cm}  \ + \  \hspace{.05cm}  \ \beta'_0 \ \ \hspace{.05cm}  \  +  \ \ \boxed{\beta'_{1}e^{i\theta}}
\end{eqnarray}

It is straightforward to check via integration by parts that:
\begin{enumerate}
    \item[(i)] The boundary condition allowing only the unboxed modes is self-adjoint, hence has $\text{Ind}_\mathbb{C} = 0$ and $\slashed{D}_N$ with this boundary condition is invertible.
    \item[(ii)] The boundary condition for which the boxed modes are constrained by 
    \begin{eqnarray}
    	\label{eq_boundarycondition}
        \alpha_{-1}' = \alpha_{-1} &\hspace{2cm}& \beta_0 = \beta_0' \\
        \label{eq_boundaryconditionb}
        \beta_{1}' = \beta_{1} &\hspace{2cm}& \alpha_0 = \alpha'_0
    \end{eqnarray}
    is self-adjoint and index 0, but has both a 2-dimensional kernel and cokernel.
\end{enumerate}

The kernel is spanned by $(\kappa_\circ, 0)$ and $(0, \overline{\kappa}_\circ)$, where $e^{i\theta}\kappa_\circ$ is the single solution from item (ii) of Lemma \ref{infiniteneck}. These are tacitly denoted simply by $\kappa_\circ, \overline{\kappa}_\circ$. The key point is that despite (i) being invertible, the elliptic estimate fails to be uniform as $R_0 \to \infty$ because $\kappa_\circ$ decays toward the boundary, so cutting it off will violate uniform estimates. Here we have imposed boundary conditions that allow $\kappa_\circ, \overline{\kappa}_\circ$ as true kernel elements, which is easier to analyze\footnote{Note the solution in the $\beta_0$ mode does not have equal boundary values at the two ends, so is not in the kernel.}.

On the other hand, by self-adjointness the cokernel (identified with the kernel of the \textit{weighted} adjoint) consists of the span of
\be \kappa^\dagger = \frac{R^{2\mu} \kappa_\circ}{R_0^{1/2 + \mu}} \quad \text{and} \quad \overline{\kappa}^\dagger = \frac{R^{2\mu} \overline{\kappa}_\circ}{R_0^{1/2 + \mu}},\label{kappakappadagger} \ee
which are normalized to have $R^\mu L^2$-norm $O(1)$ (independent of $R_0$). Notice that $\kappa_\circ$ fails to be integrable in $L^2$ as $R_0 \to \infty$, thus the norm of these cokernel elements is concentrated near the boundary.

In the following, $R^{1} H^1_b(N_{R_0})$ denotes the subspace satisfying the boundary conditions (ii). Set $P_1 = [-R_1, R_1] \times S^1$ for a fixed $R_1 < R_0$.

\begin{lm}\label{scaledneckBVP}
For each $-\frac{1}{2} < \mu \leq 0$, there is a constant $C_\mu$ independent of $R_0$ such that, subject to the boundary conditions (ii), the Dirac operator $\slashed{D}_N$ has index 0 and satisfies 
\begin{eqnarray}\label{BVPPoincare}
\|u\|_{R^{1+\mu} H^1_b(P_1)} &\leq& C_\mu \|\slashed{D}_N u\|_{R^\mu L^2} \quad \forall u \ \text{s.t.} \ \  \langle u, \kappa_\circ \rangle_{R^{1+\mu} H^1_b(P_1)} = \langle u, \overline{\kappa}_\circ \rangle_{R^{1+\mu} H^1_b(P_1)} = 0, \\ \label{BVPelliptic}
\|u\|_{R^{1+\mu} H^1_b} &\leq& C_\mu \left( \|\slashed{D}_N u\|_{R^\mu L^2} + \left\|\frac{u}{\br R \kt }\right\|_{R^\mu L^2(P_1)} \right).
\end{eqnarray}
\end{lm}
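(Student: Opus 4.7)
The index-zero statement and the 2-dimensional kernel and cokernel structure are already recorded in the discussion of boundary condition (ii) immediately above the lemma (self-adjointness via integration by parts, together with the explicit Fourier-mode tabulation). The substance of the lemma is therefore the $R_0$-independence of the constant $C_\mu$ in both (\ref{BVPPoincare}) and (\ref{BVPelliptic}). My plan is to establish (\ref{BVPelliptic}) first by a parametrix-patching argument modeled on Proposition \ref{uniformdirac}, and then deduce (\ref{BVPPoincare}) by a compactness-contradiction argument for which (\ref{BVPelliptic}) supplies the needed a priori global control.

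For (\ref{BVPelliptic}), I would construct a right parametrix on $N_{R_0}$ by patching two pieces. The interior piece is the Fredholm inverse $Q_\infty$ on the infinite neck $N$ provided by Lemma \ref{infiniteneck}(ii) (applied component-wise in the $\gamma(dt)$ trivialization, using the $d=1$ and the conjugate $\del_N$ versions from (iv)), which satisfies $\slashed D_N Q_\infty = \mathrm{Id} - \Pi$ with $\Pi$ the projection onto $\mathrm{span}\{\kappa_\circ, \overline{\kappa}_\circ\}$. The boundary piece is an APS parametrix near $R = \pm R_0$; the key observation is that the boundary condition (ii) respects the Fourier decomposition on $\{\pm R_0\} \times S^1$ and only finitely many modes require matching, so the boundary parametrix is translation-invariant in the $s = \log\langle R\rangle$ coordinate and hence independent of $R_0$. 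Patching with cutoffs supported on fixed compact bands, the commutator errors $[\chi, \slashed D_N]$ localize to $P_1$ and are absorbed into $\|u/\br R\kt\|_{R^\mu L^2(P_1)}$; the finite-rank error from $\Pi$ is also $P_1$-localized since $\kappa_\circ, \overline{\kappa}_\circ$ decay in $|R|$. This yields (\ref{BVPelliptic}) with an $R_0$-independent constant.

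For (\ref{BVPPoincare}), argue by contradiction: suppose there is a sequence $(R_{0,n}, u_n)$ with $R_{0,n} \to \infty$, each $u_n$ satisfying boundary condition (ii), $\langle u_n, \kappa_\circ\rangle_{R^{1+\mu}H^1_b(P_1)} = \langle u_n, \overline{\kappa}_\circ\rangle_{R^{1+\mu}H^1_b(P_1)} = 0$, $\|u_n\|_{R^{1+\mu}H^1_b(P_1)} = 1$, and $\|\slashed D_N u_n\|_{R^\mu L^2} \to 0$. Applying (\ref{BVPelliptic}) produces a uniform bound $\|u_n\|_{R^{1+\mu}H^1_b(N_{R_{0,n}})} \leq C$. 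For each fixed $T$, the restrictions $u_n|_{N_T}$ (defined for $n$ large) are uniformly bounded in $H^1_b(N_T)$; diagonalizing in $T$ and invoking Fatou on the weighted norm produces a subsequential weak limit $u_\infty \in R^{1+\mu}H^1_b(N)$ satisfying $\slashed D_N u_\infty = 0$. By Lemma \ref{infiniteneck}(ii), $u_\infty \in \mathrm{span}\{\kappa_\circ, \overline{\kappa}_\circ\}$; Rellich--Kondrachov compactness on $P_1$ lets the orthogonality relations pass to the limit, forcing $u_\infty = 0$. Strong $L^2(P_1)$ convergence then yields $\|u_n/\br R\kt\|_{L^2(P_1)} \to 0$, and feeding this back into (\ref{BVPelliptic}) contradicts $\|u_n\|_{R^{1+\mu}H^1_b(P_1)} = 1$.

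The principal obstacle is Step 1: verifying that the boundary parametrix can genuinely be chosen uniformly in $R_0$, and that the commutator and kernel-projection errors telescope into the single compact-weighted term $\|u/\br R\kt\|_{R^\mu L^2(P_1)}$ on the right-hand side of (\ref{BVPelliptic}). Once this is in hand, the contradiction argument in Step 2 is routine. A secondary technical point is that Lemma \ref{infiniteneck}(ii) is stated for $\delbar_N$ on $K_N^{1/2}$; to invoke it for the full Dirac operator one must apply it componentwise to the $(\alpha,\beta)$ decomposition of spinors and combine with the adjoint statement (iv), taking care that the APS boundary conditions (ii) are compatible with this splitting --- which is visible directly from the displayed Fourier-mode table above the lemma.
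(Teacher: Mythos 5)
Your proposal inverts the logical order of the paper's argument, and the inversion creates a genuine gap in Step 1. The paper proves \eqref{BVPPoincare} first, by a direct contradiction argument: a sequence of elements violating it on $N_{R_0}$ for $R_0\to\infty$ is cut off to produce elements of $R^{1+\mu}H^1_b(N)$ contradicting the infinite-neck estimate \eqref{delbarpoincare}. Estimate \eqref{BVPelliptic} is then deduced from \eqref{BVPPoincare} by decomposing $u$ into its $\kappa_\circ,\overline\kappa_\circ$ components plus a $P_1$-orthogonal remainder, using the fact that the portion of $\|\kappa_\circ\|$ supported on $P_1$ is bounded \emph{below} uniformly in $R_0$ (so the term $\|u/\br R\kt\|_{R^\mu L^2(P_1)}$ genuinely detects the kernel directions).

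The gap in your Step 1 is the claim that the finite-rank error from $\Pi$ "is $P_1$-localized since $\kappa_\circ,\overline{\kappa}_\circ$ decay in $|R|$." The left-parametrix identity is $Q_\infty\slashed D_N=\mathrm{Id}-\Pi$, and the coefficient of $\Pi u$ is a pairing $\langle u,\kappa_\circ\rangle$ taken over the \emph{entire} neck, not over $P_1$; its only a priori bound is $\|\Pi u\|\lesssim\|u\|$, which makes the resulting inequality $\|u\|\le C(\|\slashed D_N u\|+\|u\|)$ vacuous. Decay of $\kappa_\circ$ does not localize this pairing — indeed, for $\mu$ near $-\tfrac12$ the $R^{1+\mu}H^1_b$-mass of $\kappa_\circ$ is spread out toward large $|R|$, and the paper explicitly notes the dual cokernel elements concentrate near the boundary. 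Replacing $\Pi$ by a projection defined via the $P_1$-inner product would fix the localization, but then inverting $\slashed D_N$ on the complement of that projection is precisely \eqref{BVPPoincare}, so the argument becomes circular. Your Step 2 is essentially the paper's contradiction mechanism, but since it takes \eqref{BVPelliptic} as input for the a priori global bound, it inherits the gap. The repair is to run the compactness-contradiction argument directly for \eqref{BVPPoincare} (cutting off to the infinite neck and invoking Lemma \ref{infiniteneck}(ii)), and only afterwards derive \eqref{BVPelliptic} from the kernel decomposition together with the uniform lower bound on $\|\kappa_\circ\|_{R^{1+\mu}H^1_b(P_1)}$.
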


\begin{proof}
The index is immediate from self-adjointness. If (\ref{BVPPoincare}) did not hold, cutting off elements violating it for increasingly large $R_0$ would eventually contradict (\ref{delbarpoincare}). (\ref{BVPelliptic}) follows because the portion of $\kappa_\circ$'s norm supported on $P_1$ is bounded below as $R_0 \to \infty$. 
\end{proof}

We will now introduce a 2-parameter family of perturbations that will cancel the obstruction provided (\refeq{kappakappadagger}). Suppose
\begin{equation} 
\Phi_\circ = \chi_1(R) \begin{pmatrix} c \\ d \end{pmatrix} \label{necksolution}
\end{equation}
is a constant spinor where $|c|^2 + |d|^2 > 0$, and $\chi_1$ is a cut-off function equal to $1$ for $R \leq -R_0/4$ and vanishing for $R \geq -R_1$. In Section \ref{subsectiontoruspinching}, $\Phi_\circ$ is taken to be the cut-off of the leading order term of the left-side spinor $\Phi_1$. For $\xi = (\xi_1, \xi_2) \in \mathbb{C}^2$, consider the perturbation
\begin{equation}
B(\xi) = \chi_0(R) \frac{1}{R_0^{1/2 - \mu}} \sqrt{\frac{1}{R}} \left[ -\xi_1 \begin{pmatrix} 0 & -e^{-i\theta} \\ e^{i\theta} & 0 \end{pmatrix} - i \xi_2 \begin{pmatrix} 0 & ie^{-i\theta} \\ ie^{i\theta} & 0 \end{pmatrix} \right] J \label{perturbation},
\end{equation}
where $J(\alpha, \beta) = (-\overline{\beta}, \overline{\alpha})$, which is of the class of perturbations allowed in the proof of Theorem \ref{NMIFT}. Here, $\chi_0$ is a log cut-off supported in $[-(1+\epsilon)R_0, -1/2 R_0]$ for some small $\epsilon$ to be specified later, and equal to 1 on $[-R_0, -3/4 R_0]$.

Letting $R^{1+\mu}H^1_\circ$ denote the space satisfying the boundary conditions (ii) and the orthogonality constraint of (\ref{BVPPoincare}), consider the extended Dirac operator
\begin{eqnarray} \label{2DextendedDirac}(\slashed D_N, B): R^{1+\mu}H^1_\circ \oplus \C  & \lre  & R^{\mu}L^2 \\ 
(u,\xi)&\mapsto & \slashed D_N u  + B(\xi)\Phi_\circ \nonumber.
\end{eqnarray}
\begin{lm} \label{cancellationlem} \label{injectivity2D}
Provided $\Phi_\circ$ satisfies $|c|^2 + |d|^2 > 0$, \eqref{2DextendedDirac} is an isomorphism for each $-1/2 < \mu \leq 0$ with inverse uniformly bounded in $R_0$ (but depending on $\mu$).
\end{lm}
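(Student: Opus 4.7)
The plan is to reduce the statement to a finite-dimensional surjectivity problem at the level of the cokernel of $\slashed D_N$, and then verify it by a direct pairing computation. By Lemma~\ref{scaledneckBVP} and the self-adjointness of the boundary-value problem (ii), the restriction $\slashed D_N \colon R^{1+\mu}H^1_\circ \to R^\mu L^2$ is injective with closed range of codimension two, whose cokernel is identified with $\mathrm{span}_{\mathbb{C}}\{\kappa^\dagger, \overline{\kappa}^\dagger\}$ via the $L^2$ pairing; \eqref{BVPPoincare} then provides a uniform (in $R_0$) left inverse on this range. It therefore suffices to show that the projection of $B(\cdot)\Phi_\circ$ to this two-dimensional cokernel is an isomorphism $\mathbb{C}^2 \to \mathrm{coker}(\slashed D_N)$ with inverse bounded independently of $R_0$, i.e. that the $2\times 2$ pairing matrix
\[
M(R_0)_{ij} \;=\; \langle B(e_i)\Phi_\circ,\,\kappa_j^\dagger\rangle, \qquad (\kappa_1^\dagger,\kappa_2^\dagger) := (\kappa^\dagger, \overline{\kappa}^\dagger),
\]
has $|\det M(R_0)|$ bounded below uniformly in $R_0$, where $e_1,e_2$ is the standard basis of $\mathbb{C}^2$.

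To evaluate $M(R_0)$, note that on $\mathrm{supp}(\chi_0)$ the cutoff $\chi_1$ from \eqref{necksolution} is identically $1$, so $\Phi_\circ = (c,d)^T$ and $J\Phi_\circ = (-\overline{d},\overline{c})^T$ there. The prefactor $R_0^{-(1/2-\mu)}|R|^{-1/2}$ in $B$ pairs with the prefactor $R_0^{-(1/2+\mu)}R^{2\mu}$ in $\kappa_j^\dagger$ and the leading behavior $|\kappa_\circ|\sim |R|^{-1/2}$; the radial integral over the support $|R|\sim R_0$, with the volume form $\langle R\rangle\, dR\, d\theta$, produces an $O(1)$ contribution uniformly in $R_0$, precisely because the product of the two $R_0$-prefactors is $R_0^{-1}$, which cancels the $R_0$-volume of the annulus $\mathrm{supp}(\chi_0)$. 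The angular integral selects the Fourier modes in $\kappa_\circ$ dictated by the $e^{\pm i\theta}$ twists in the spinor trivialization above equation~\eqref{2DextendedDirac} and in the Clifford matrices of \eqref{perturbation}. The outcome is a $2\times 2$ matrix whose entries are $R_0$-independent linear combinations of $\overline{c}$ and $\overline{d}$; the two directions $\xi_1,\xi_2$, together with the $i$-factor that distinguishes the two Clifford matrices in \eqref{perturbation} and the antilinear $J$, arrange the determinant to be a nonzero multiple of $|c|^2 + |d|^2$, which is strictly positive by the hypothesis on $\Phi_\circ$.

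Assembling the pieces proves the lemma. For injectivity, if $\slashed D_N u + B(\xi)\Phi_\circ = 0$ with $(u,\xi)\in R^{1+\mu}H^1_\circ\oplus\mathbb{C}^2$, then pairing with $\kappa_j^\dagger$ annihilates the first term and leaves $M(R_0)\xi = 0$, forcing $\xi = 0$; then $u = 0$ by \eqref{BVPPoincare}. For surjectivity, given a target in $R^\mu L^2$ first solve the cokernel projection with $\xi = M(R_0)^{-1}\Pi_{\mathrm{coker}}(\mathrm{target})$ and then invert $\slashed D_N$ on the remaining range using \eqref{BVPPoincare}. The resulting operator norm of the inverse is controlled by the product of $\|M(R_0)^{-1}\|$ and the constant $C_\mu$ of Lemma~\ref{scaledneckBVP}, both uniform in $R_0$.

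The principal obstacle is the explicit computation of $M(R_0)$ and the uniform lower bound on $|\det M(R_0)|$. One must carefully track the twists $e^{\pm i\theta}$ appearing both in the spinor trivialization and in the perturbation, verify that the prefactors $R_0^{-(1/2\pm\mu)}$ are precisely tuned to the scaling of $\kappa^\dagger,\overline\kappa^\dagger$ and to the volume of $\mathrm{supp}(\chi_0)$, and confirm that $J$ coupled with the two Clifford matrices in \eqref{perturbation} produces two genuinely independent elements of the cokernel irrespective of which of $c,d$ dominates (rather than, for example, both reaching only $\kappa^\dagger$ and leaving $\overline{\kappa}^\dagger$ unreached).
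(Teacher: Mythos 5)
Your overall strategy is the same as the paper's: split the codomain as $\mathrm{Range}(\slashed D_N)\oplus\mathbb{C}\{\kappa^\dagger,\overline\kappa^\dagger\}$, use Lemma \ref{scaledneckBVP} to invert $\slashed D_N$ uniformly off the cokernel, and reduce everything to showing the $2\times 2$ pairing matrix of $\xi\mapsto B(\xi)\Phi_\circ$ against $\kappa^\dagger,\overline\kappa^\dagger$ is uniformly invertible. Your accounting of the $R_0$-prefactors against the volume of $\mathrm{supp}(\chi_0)$ and the weight $R^{2\mu}$ also matches the paper's computation.

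However, there is a genuine error in the key finite-dimensional step. With the perturbation exactly as written in \eqref{perturbation}, the pairing computation gives (up to nonzero constants) the map
\[
(\xi_1,\xi_2)\;\longmapsto\;\bigl((\xi_1+i\xi_2)\,\overline{c}\,,\;(\xi_1-i\xi_2)\,\overline{d}\,\bigr)\in\mathbb{C}^2,
\]
because $J\Phi_\circ=(-\overline d,\overline c)$ places $\overline c$ in the $e^{-i\theta}$-twisted component and $\overline d$ in the $e^{i\theta}$-twisted one, and the angular integral then pairs each component with exactly one of $\kappa_\circ,\overline\kappa_\circ$. The determinant of this map is a nonzero multiple of $\overline{c}\,\overline{d}$, \emph{not} of $|c|^2+|d|^2$. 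Under the hypothesis $|c|^2+|d|^2>0$ one of $c,d$ may vanish, and in that case the two perturbation directions both land in the span of a single cokernel element, leaving the other unreached --- precisely the failure mode you name in your last paragraph and then dismiss. So your argument as stated only proves the lemma when both $c\neq 0$ and $d\neq 0$.

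The paper closes this gap by enlarging the family of perturbations when one of $c,d$ vanishes: one adds terms of the form $e^{-i\theta}\gamma(dt)J$ (which couple $c$ and $d$ to the cokernel directions in the opposite pattern), after which the augmented pairing matrix is uniformly invertible whenever $|c|^2+|d|^2>0$. To repair your proof you would either need to add this case, or restate the lemma with the stronger hypothesis $|c|\,|d|>0$.
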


\begin{proof}
Splitting the range into $\text{Range}(\slashed{D}_N) \oplus \mathbb{C} \{\kappa^\dagger, \overline{\kappa}^\dagger\}$, the operator takes the form
$$
(\slashed{D}_N, B) = \begin{pmatrix} \pi B & 0 \\ \pi^\perp B & \slashed{D}_N \end{pmatrix},
$$
where $\pi, \pi^\perp$ are the orthogonal projections. It therefore suffices to show that $\pi B$ is bounded below, and $\pi^\perp B$ is bounded above, both uniformly in $R_0$.

Assume first that $|c| > 0$ and $|d| > 0$ are both non-vanishing. The normalization factor in (\ref{perturbation}) is chosen precisely so that
\begin{equation}
\|B(\xi) \Phi_\circ\|^2_{R^\mu L^2} = \frac{1}{R_0^{1 - 2\mu}} \int_{-R_0}^{-R_1} \frac{\chi_0^2 \chi_1^2}{R} |\xi|^2 R^{-2\mu} R \, dR \, d\theta \leq C |\xi|^2,
\end{equation}

and

\begin{equation}
\left\langle B(\xi) \Phi_\circ, \begin{pmatrix} a \kappa^\dagger \\ b \overline{\kappa}^\dagger \end{pmatrix} \right\rangle = \frac{1}{R_0} \int_{-R_0}^{-R_1} \frac{\chi_0 \chi_1}{R} \left\langle \begin{pmatrix} (\xi_1 + i \xi_2) e^{-i\theta} \overline{c} \\ (\xi_1 - i \xi_2) e^{i\theta} \overline{d} \end{pmatrix}, \begin{pmatrix} a \kappa_\circ \\ b \overline{\kappa}_\circ \end{pmatrix} \right\rangle \, dV =
\left\langle
\begin{pmatrix}
	C_+(\xi_1+i\xi_2)c\\
	C_-(\xi_1-i\xi_2)d
\end{pmatrix},
\begin{pmatrix}
	a\\ b
\end{pmatrix}
\right\rangle_{\mathbb C^2},
\end{equation}
where the last inner product is in $\mathbb{C}^2$, since $\kappa_\circ = O(R^{-1/2} e^{-i\theta})$ and here \(C_+\) and \(C_-\) are non-zero constants depending only on the chosen normalizations of the two cokernel generators; with a symmetric normalization one may arrange \(C_-= \overline{C_+}\). Their precise values are irrelevant for the argument, since they are uniformly bounded above and below.. Note that the $\mu$-dependent normalization and weights cancel. When $c$ and $d$ are both nonzero, the resulting equation on $\mathbb{C}^2$ is uniformly invertible. In the case that only $|c|^2 + |d|^2 \neq 0$, (\ref{perturbation}) may be easily adjusted by also including terms of the form $e^{-i\theta} \gamma(dt) J$.
\end{proof}

\begin{rem}
There is possibly an analogue of Lemma \ref{cancellationlem} in the case of 1-forms using metric perturbations as in (\ref{BG}). However, later steps in the proof of Theorem \ref{mainb} fail in this case because certain analytic steps are not valid for the necessary range of weights for 1-forms.
\end{rem}

\subsection{Neck Pinching I: Spherical Case}
\label{subsectionneckpinchingsphere} 
This subsection proves Theorem \ref{maina} in the case of 1-forms by studying the connected sum with the pinching neck (\refeq{sphereneckmetric}). 

More precisely, the metric $g_\delta$ is defined as follows: for points $y_i \in Y_i - \mathcal{Z}_i$ let $B_{\rho_0}(y_i)$ be a geodesic normal coordinate chart of fixed radius $\rho_0 > 0$. $g_i$ may be written (\refeq{eq_metric_gi}) on $B_{\rho_0}(y_i)$ as before. The connected sum is formed by replacing the punctured balls with a neck $[-\rho_0, \rho_0] \times S^2$ equipped a new centered coordinate (also denoted $\rho$) and the metric 

\begin{equation}
g_\delta = d\rho^2 + (\rho^2 + \chi \delta^2) g_{S^2} +(1-\chi)(h_1 + h_2), \label{gdeltadef}
\end{equation}

\noindent where $\chi$ is a smooth bump function equal to 1 for $|\rho| < \sqrt{\delta}$ and vanishing for $|\rho| > 2\sqrt{\delta}$. Here $g_{S^2}$ is the round metric of unit radius. 

\subsubsection{Approximate Solutions via Locally Exact 1-forms}
On $Y=Y_1\#Y_2$ the natural function spaces have desingularized $\text{b}$-derivatives on the neck. Thus let $\br \rho_\delta\kt=\sqrt{\rho^2+\chi \delta^2}$, and consider the derivatives given by 
$$\nabla^\text{b}=\br \rho_\delta\kt \nabla_\rho \otimes d\rho  + \nabla^{S^2}$$
\noindent near the neck region, and by (\refeq{bderiv}) near $\mathcal Z$. $\nabla^e$ is defined identically, but with (\refeq{edgederiv}) near $\mathcal Z$. Set 

\begin{equation} \label{doublebspaces}
r^\nu \rho^\mu H^{m,n}_{\text{b},e} = \left\{ \omega \in L^2(Y; \Omega) \ \Big | \ \int_{Y - (\mathcal{Z})} \sum_{|\alpha| \leq n, |\alpha|+|\beta| \leq m} |(\nabla^e)^\alpha (\nabla^\text{b})^\beta \omega|^2 \ r^{-2\nu} \br \rho_\delta\kt^{-2\mu} dV  < \ \infty \ \right\},
\end{equation}

\noindent and when $n=0$, the spaces are denoted simply by $r^\nu \rho^\mu H^{m}_{\text{b}}.$

The application of Theorem \ref{NMIFT} requires that the approximate solutions $\Phi_i^\delta$ are closed forms. Recall that if $(\mathcal{Z}_i, \ell_i, \Phi_i)$ is a $\Z_2$-harmonic 1-form on $Y_i$, then integration by parts shows that $\Phi_i = (0, \nu_i) \in \Omega^0 \oplus \Omega^1$. Closed approximate solutions on the connected sum $(Y, g_\delta)$ may now be constructed as follows. On each $B_{\rho_0}(y_i)$, let $f_i$ be a smooth primitive such that $$df_i = \nu_i, \hspace{2cm} f_i(y_i) = 0.$$

\noindent For $\alpha = [a; b] \in \mathbb{RP}^1$ with $a, b \neq 0$, define $\Phi_\alpha'$ by 
\begin{equation}
\Phi_\alpha^\delta  = \begin{cases} 
(0, d(a \chi_1 f_1 + b \chi_2 f_2) ) & \text{when } |\rho| < \rho_0, \\  
(0, \nu_i) & \text{on } Y_i - B_{\rho_0}(y_i),
\end{cases}
\label{approximatesol1form}
\end{equation}

\noindent where $\chi_1$ is a cutoff function equal to 1 for $\rho \leq -2c_0\delta$ and vanishing for $\rho > c_0\delta$ for $c_0$ large, and $\chi_2 = \chi_1(-\rho)$.

\begin{lm} \label{1formsapproximatesol}
$\Phi_\alpha^\delta$ is closed, and for $\mu,\nu \in \R, m \in \mathbb{N}$, there exist constants $C_m > 0$ such that
$$\|D_\text{dR} \Phi_\alpha^{\delta} \|_{r^\nu \rho^\mu H^m_{\text{b}}} \leq C_m \delta^{1-\mu}. $$
\end{lm}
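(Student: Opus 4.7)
The plan is to verify closedness of $\Phi_\alpha^\delta$ directly and then reduce the error bound to estimating a scalar Laplacian on the neck. Closedness is immediate: on each outer region $Y_i - B_{\rho_0}(y_i)$, the $\Z_2$-harmonic 1-form $\nu_i$ satisfies $d\nu_i = 0$; on the neck, $\Phi_\alpha^\delta = (0,d\omega_\alpha)$ with $\omega_\alpha = a\chi_1 f_1 + b\chi_2 f_2$ is exact by construction. The two pieces match smoothly at $|\rho|=\rho_0$, where $\chi_1=1$, $\chi_2=0$ (or vice versa) yields $d\omega_\alpha = a\,df_1 = a\nu_1$, matching the exterior (modulo the implicit $a,b$-scaling on the outside pieces).

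Since $\mathbf d(0,\nu)=(-d^\star\nu,\,\star d\nu)$ and $\Phi_\alpha^\delta$ is closed, the error reduces to the scalar piece $-d^\star_\delta \nu$. Outside the neck this vanishes because $g_\delta = g_i$ and $\nu_i$ is $g_i$-harmonic. On the neck, $\nu = d\omega_\alpha$ implies $d^\star_\delta\nu=\Delta_{g_\delta}\omega_\alpha$, so it suffices to bound $\Delta_{g_\delta}(\chi_i f_i)$ in the $r^\nu\rho^\mu H^m_\text{b}$-norm. Expanding via the product rule, $\Delta_\delta(\chi_i f_i) = \chi_i\Delta_\delta f_i + f_i\Delta_\delta\chi_i - 2\langle \nabla\chi_i,\nabla f_i\rangle_{g_\delta}$. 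Using $\Delta_{g_i} f_i = 0$ from the harmonicity of $\nu_i$, one may replace $\Delta_\delta f_i$ by the explicit metric-difference operator
\begin{equation*}
\Delta_{g_\delta}-\Delta_{g_i} = \frac{2\delta^2}{\rho(\rho^2+\delta^2)}\partial_\rho + \frac{\delta^2}{\rho^2(\rho^2+\delta^2)}\Delta_{S^2}
\end{equation*}
acting on $f_i$ (valid on the pinched region where $g_\delta$ has the model form $d\rho^2+(\rho^2+\delta^2)g_{S^2}$).

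For the estimate, decompose the neck into three subregions: (i) the cutoff region $|\rho|\lesssim c_0\delta$ where $\chi_i$ transitions; (ii) the intermediate region $c_0\delta\lesssim|\rho|\lesssim\sqrt{\delta}$ where $\chi_i$ is constant but $g_\delta$ still differs from $g_i$; and (iii) the outer region $|\rho|\geq 2\sqrt{\delta}$ where $g_\delta=g_i$ and the integrand vanishes. Using $|\chi_i'|\lesssim\delta^{-1}$, $|\Delta_\delta\chi_i|\lesssim\delta^{-2}$, $|f_i|\lesssim|\rho|$, and $|\nabla f_i|\lesssim 1$ gives pointwise bounds on (i); the metric-difference formula above controls the integrand on (ii). Integrating against $dV_{g_\delta}\sim (\rho^2+\delta^2)\,d\rho\,d\Omega$ with weight $\br\rho_\delta\kt^{-2\mu}\sim (\rho^2+\delta^2)^{-\mu}$ over the cutoff region of volume $O(\delta^3)$ produces the dominant contribution and yields the claimed $\delta^{1-\mu}$ bound. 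The $H^m_\text{b}$-extension is obtained by noting that each $b$-derivative $\nabla^\text{b}=\br\rho_\delta\kt\nabla$ is $O(1)$ inside (i) (since $\br\rho_\delta\kt\sim\delta$ and $\nabla\sim\delta^{-1}$), so the same $\delta$-power persists for all $m$; the weight $r^\nu$ contributes only a constant because $r\sim r_0$ throughout the neck. The main technical obstacle is the careful bookkeeping between the cutoff scale $c_0\delta$ and the pinch scale $\sqrt{\delta}$: $c_0$ must be large but $\delta$-independent so the cutoff lies well inside the pinched region and the model metric applies uniformly to all cutoff-dependent terms.
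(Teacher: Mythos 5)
Your proposal is correct and follows essentially the same route as the paper's proof: closedness is immediate from exactness on the neck, the error reduces to the $d^\star$-component supported where the cutoffs and the metric interpolation are active, and the bound follows from the Leibniz expansion together with $|f_i|\lesssim \rho$, $|d\chi_i|\lesssim \delta^{-1}$, $|\Delta\chi_i|\lesssim\delta^{-2}$ integrated over the cutoff shell, with higher $m$ handled by the observation that the b-derivatives cost no further powers of $\delta$. Your only real departure is that you write out the explicit difference operator $\Delta_{g_\delta}-\Delta_{g_i}$ on the pinched region, where the paper absorbs this into an error term $e_\delta$; your version is if anything more careful, since that operator has coefficients of size $\delta^{2}/\rho^{3}$ and so is not uniformly bounded pointwise near $\rho\sim\delta$, although its weighted $L^2$ contribution is of the same order as the cutoff terms. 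One bookkeeping caveat: the integration you describe (integrand of size $\delta^{-2}$ against the weight $\delta^{-2\mu}$ over a region of volume $O(\delta^{3})$) yields a squared norm of order $\delta^{1-2\mu}$, i.e.\ a norm of order $\delta^{1/2-\mu}$ rather than the stated $\delta^{1-\mu}$; the paper's own displayed computation has exactly the same feature (it bounds $\|d^\star\Phi^\delta_\alpha\|_{L^2}^{2}\leq C\delta$), and the discrepancy is immaterial for the application, which only needs the error to vanish as $\delta\to 0$, true for every $\mu<\tfrac12$.
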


\begin{proof}
That $\Phi_\alpha^{\delta}$ is closed is immediate from the definition. Since $\nu_i$ is harmonic, it is clear that $D_\text{dR} \Phi_\alpha^{\delta}= -d^\star \Phi_\alpha^{\delta}$ and is supported on the neck region. A quick calculation shows 
\begin{align}
d^\star \Phi_\alpha^\delta &= d^\star (a d\chi_1 \cdot f_1 + \chi_1 \nu_1 + d\chi_2 \cdot f_2 + \chi_2 \nu_2) \\
&= a \Delta \chi_1 \cdot f_1 + b \Delta \chi_2 \cdot f_2 + 2 d\chi_1 \cdot \nu_1 + 2 d\chi_2 \cdot \nu_2 + e_\delta, \label{todifferentiate}
\end{align}

\noindent where $\cdot = \mathbf{cl}$ denotes Clifford multiplication given by the symbol of $D_\text{dR}$. Here, $e_\delta$ is a smooth uniformly bounded error term arising from the difference between the metrics \eqref{eq_metric_gi} and \eqref{gdeltadef} on $\text{supp}(\chi_i)$. 

For $m = 0$, the fact that $|\nabla^m\chi_i|\le C_m\delta^{-m}$, and $f_i = O(\delta)$ on $\text{supp}(d\chi_i)$ since it vanishes at $y_i$, while $\nu_i = O(1)$ shows that 
$$\|d^\star \Phi_\alpha^\delta\|_{L^2}^2 \leq C \int_{c\delta}^{2c\delta} \delta^{-2} + 1 \, dV \leq C \delta$$

\noindent once $\delta$ is sufficiently small. For $m \geq 0$, note that the weighted derivatives $(\rho \nabla_\rho)^m \chi_i \leq C_m$ are bounded independent of $\delta$ and the derivatives in the $S^2$-directions only act on $f_i, \nu_i$. Repeatedly differentiating \eqref{todifferentiate} therefore yields the desired bound, where $C_m$ depends on the weighted $H^m_{\text{b}}$-norm of $\nu_i$. The case for $\mu\neq 0$ is similar.  
\end{proof}

\subsubsection{Spectral Flow on Spherical Necks} To obtain a parametrix on the pinching neck, we generalize the analysis of Subsection \ref{subsection2dneck} to the case of a spherical cross section. Thus consider the scale invariant model neck 
$$N=(\R \times S^2 \ , \ dR^2 + (R^2+1)g_{S^2}).$$

For $\omega_x,\omega_y$ a local orthonormal coframe on the unit $S^2$, then $dR, \sqrt{R^2+1}\omega_x, \sqrt{R^2+1}\omega_y$ is an orthonormal coframe on the neck. A brief computation shows that for $H=\tfrac{R}{\sqrt{R^2+1}}$, 

\be D_{\mathrm{dR}}\begin{pmatrix}\alpha \\ \beta \end{pmatrix}=\left[ \p{}{R} + \frac{1}{\sqrt{R^2+1}} \begin{pmatrix} 0 & d + d^\star \\ d+ d^\star  & H
\end{pmatrix} \right]\begin{pmatrix}\alpha \\ \beta \end{pmatrix},\ee

\noindent where we associate a form $(a_0, a_R dR +\beta)$ with $\alpha=(a_0, a_R dV_{S^2}) \in \Gamma(N; \Lambda^0_{S^2} \oplus \Lambda^2_{S^2})$ and $\beta\in \Gamma(N;\Lambda^1_{S^2})$, and $d+d^\star$ denotes the two-dimensional Hodge-de Rham operator. The analogue of Lemma \ref{infiniteneck} is

\begin{lm} \label{S2spectralflow} There is a $\mu_0\in (0,1/2)$ such that $\mu \in (-\mu_0, \mu_0)$, $$D_{\mathrm{dR}}: R^{1+\mu}H^{m+1}_b(N)\to R^{\mu}H^{m}_b(N)$$
\noindent is an isomorphism, and there are constants $C_m$ such that estimates $$\|\nu\|_{R^{1+\mu}H^{m+1}_b}\leq C_m \| D_{\mathrm{dR}} \nu\|_{R^\mu H^m_b} $$
\noindent hold. 
\end{lm}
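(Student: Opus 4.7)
The plan is to mirror the strategy of Lemma \ref{infiniteneck}, reducing the problem to a family of ODEs in $R$ via Fourier decomposition on the cross-section $S^2$, then analyzing their asymptotics at $R \to \pm\infty$ using the conformal change $s = \sinh^{-1}(R)$.

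First, I would decompose sections on $N = \mathbb{R} \times S^2$ using the eigenspaces of the cross-sectional Hodge--de Rham operator on $S^2$. Its spectrum is discrete, namely $\{\pm\sqrt{k(k+1)} : k \ge 1\}$ together with the two-dimensional $0$-eigenspace $H^0(S^2) \oplus H^2(S^2)$, and its eigenforms pair $\phi_k \in \Omega^0(S^2)$ (and $\star\phi_k \in \Omega^2(S^2)$) with the 1-forms $d\phi_k/\sqrt{\lambda_k}$ and $\star d\phi_k/\sqrt{\lambda_k}$. Writing
\begin{equation*}
\omega = \sum_k \Bigl(a_k(R)\phi_k + c_k(R)\star\phi_k,\; b_k(R)\,d\phi_k/\sqrt{\lambda_k} + e_k(R)\,\star d\phi_k/\sqrt{\lambda_k}\Bigr),
\end{equation*}
the equation $\mathbf{d}\omega = 0$ decouples into $2\times 2$ ODE systems in $R$ for each mode $k$. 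After the change of variable $s = \sinh^{-1}(R)$ (so that $H = \tanh s$ and $ds = dR/\sqrt{R^2+1}$), the system for the $(a,b)$-sector of mode $k$ becomes
\begin{equation*}
\partial_s a + \sqrt{\lambda_k}\, b = 0, \qquad \partial_s b + \sqrt{\lambda_k}\, a + \tanh(s)\, b = 0,
\end{equation*}
with an identical system for $(c,e)$. Eliminating $b$ yields the second-order equation $a'' + \tanh(s)\,a' - \lambda_k a = 0$ whose characteristic exponents are $\{k,\, -k-1\}$ at $s\to +\infty$ and $\{k+1,\, -k\}$ at $s\to -\infty$.

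Second, I would convert these exponents into integrability rates with respect to the weighted norm. Using $R \sim \pm\tfrac12 e^{\pm s}$ and the volume form $(R^2+1)\,dR\,dV_{S^2}$ on $N$, one can enumerate the set of critical weights $\mu$ at which an indicial root crosses the $R^{1+\mu}H^1_b$-threshold. Define $\mu_0$ to be the smallest positive such weight; since for $k \ge 1$ the decay rates have absolute value $\ge 1$, the tightest constraint comes from the $k = 0$ mode, whence $\mu_0 \le \tfrac12$. Away from the critical weights, the standard edge/$b$-operator theory (\cite{MazzeoEdgeOperators}) produces uniform Fredholm estimates with index zero.

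Third, to upgrade Fredholm to isomorphism I would show that the kernel vanishes. For $k \ge 1$, the two linearly independent solutions of the reduced ODE have incompatible exponential rates at the two ends of the cylinder, and a shooting argument rules out any global solution in the weighted space. The delicate case is the $k=0$ sector, where $S^2$-constant closed/co-closed configurations generate explicit formal solutions (such as $d(\arctan R)$). Here one uses the topological input $H^1(N;\mathbb{R}) = 0$, which forces any closed kernel element to be globally exact, together with an integration-by-parts argument exploiting the decay imposed by $|\mu|<\mu_0$, to conclude that its primitive must decay at infinity and hence vanish. By formal self-adjointness of $\mathbf{d}$, the same argument applied at the dual weight $-\mu$ eliminates the cokernel, and the $m=0$ estimate follows from the open mapping theorem. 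The estimate for general $m$ is obtained by standard $b$-elliptic bootstrapping, since the coefficients of $\mathbf{d}$ are smooth on the model. The main obstacle is pinning down $\mu_0$: the $k=0$ mode generates exact solutions that lie in the function space for a wider range of weights than naive scaling suggests, so $(-\mu_0,\mu_0)$ must be chosen narrow enough to both sidestep these indicial roots \emph{and} permit the topological integration-by-parts argument to force triviality of both the kernel and the cokernel.
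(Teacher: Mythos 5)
Your overall strategy---decomposing along eigenmodes of the cross-sectional operator on $S^2$, reducing to $2\times2$ ODE systems, substituting $s=\sinh^{-1}(R)$, and reading off indicial exponents---is the same as the paper's, and your exponents $\{k,-k-1\}$ at $+\infty$ and $\{k+1,-k\}$ at $-\infty$ are correct. One minor gap: for $k\geq 1$, knowing that each end admits a one-dimensional subspace of admissible solutions does not by itself preclude a global kernel element, since the two subspaces could coincide; a "shooting argument" must be closed by the energy identity $0=\int\big(|a'|^2+\tfrac12\operatorname{sech}^2(s)\,a^2+\lambda_k a^2\big)\,ds$ (multiply $a''+\tanh(s)a'-\lambda_k a=0$ by $a$ and integrate, using the decay you have already established) or by a maximum principle. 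This is easily repaired.

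The serious problem is the $k=0$ sector, which is where the lemma actually lives, and your proposed argument there fails. The $1$-form $\nu=d(\arctan R)=\frac{dR}{1+R^2}$ is closed and coclosed on $N$ (since $\star(f\,dR)=f\langle R\rangle^{2}\,dV_{S^2}$, coclosedness is exactly $f=c\langle R\rangle^{-2}$), and with the intrinsic pointwise norm $|\nu|=\langle R\rangle^{-2}$ and volume form $\langle R\rangle^{2}\,dR\,dV_{S^2}$ one finds $\nu\in R^{1+\mu}H^{1}_b(N)$ for every $\mu>-\tfrac12$. Your integration-by-parts step cannot exclude it: the primitive $\arctan R$ is bounded but tends to $\pm\pi/2$, so the boundary contributions $\int_{\{R=\pm R_0\}} f\,\partial_R f\,dA=O(1)$ survive as $R_0\to\infty$ and in fact $\int|df|^2>0$. "Exact" plus $H^1(N;\mathbb{R})=0$ gives you a bounded primitive, not a decaying one, and that is exactly the difference between a spurious mode and a genuine $L^2$-harmonic form. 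The paper disposes of this mode by a different mechanism: in the trivialization that puts $\mathbf d$ in the normal form $\partial_R+\langle R\rangle^{-1}(\cdots)$ (identifying the radial component $a_R\,dR$ with $a_R\langle R\rangle^{2}$ times the unit-sphere area form), the $k=0$ solution is the constant section with indicial root $0$ at both ends, and it is excluded from the weighted space precisely for $\mu<\tfrac12$ by the $\langle R\rangle^{2}\,dR$ volume factor (this is the parenthetical "$\mu\notin\mathrm{Spec}+\tfrac12$" remark in the paper's proof). To make your argument correct you must either work in that conjugated frame and verify that the function spaces of the lemma are the conjugated ones, or treat this mode as an explicit one-dimensional kernel/cokernel to be carried through the parametrix patching---it is exactly the extra $\mathbb{R}$ summand appearing in \eqref{branchedcovercohomology}. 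As written, your proof of injectivity in the zero mode does not go through.
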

\begin{proof} Denote $d+d^\star: \Omega^0(S^2)\oplus \Omega^2(S^2)\to \Omega^1(S^2)$ by $A$. This operator is diagonalized as follows: if $\Delta_{\Omega^1}\beta = \lambda^2 \beta$ is an eigenvector of $\Delta_{\Omega^1}=AA^\star$ then 
$$\beta^\pm = \begin{pmatrix} \pm\tfrac{1}{{\lambda}}A^\star \beta  \\ \beta \end{pmatrix}$$
are eigenvectors of $d+d^\star=\begin{pmatrix} 0 & A^\star \\ A & 0 \end{pmatrix}$ with eigenvalues $\pm \lambda$ respectively. The same applies, flipping the components, to an eigenvector $\alpha$ of $\Delta_{\Omega^0\oplus \Omega^2}=A^\star A$. Using known results for the spectra of these Laplacians  \cite[Thm. 5.1]{kuwabara82spectra} , this shows 

\begin{align}
\text{Spec}(d+d^\star)&=\Big \{ \lambda \ | \ \lambda^2 \in \text{Spec}(\Delta_{\Omega^0\oplus \Omega^2})) \cup \text{Spec}(\Delta_{\Omega^1})\Big \}\\
&= \Big \{ \lambda \ | \ \lambda^2 \in \big\{ 0, 2, 6,\ldots \big\} \cup \big\{1,5, 11,\ldots\big\} \Big \} \label{twospectra}.
\end{align}

In the basis $\tfrac{1}{2}(\psi^+ - \psi^-), \tfrac{1}{2}(\psi^++\psi^-)$ of these two eigenspaces, where $\psi^\pm=\beta^\pm$ or $\alpha^\pm$, the operator takes the form  \be d+d^\star= \p{}{R} +\frac{1}{\sqrt{R^2+1}} \begin{pmatrix} 0 & \lambda  \\  \lambda & H \end{pmatrix}, \label{S2ODE}\ee

\noindent where $H=\frac{R}{\sqrt{R^2+1}}$ as before. In particular, $|H|<1$. We now argue, via spectral flow, that the operator (\refeq{S2ODE}) is invertible on each of these 2-dimensional spaces. Under the coordinate change $R=\sinh(\tau)$, $\sqrt{R^2+1}=\cosh(\tau)$ and quick calculation shows that (\refeq{S2ODE}) becomes

 \be \p{}{\tau} +\begin{pmatrix} 0 & \lambda  \\  \lambda & H(\tau) \end{pmatrix}: C^\mu H^{\mu}(\R_\tau ; \R^2)\to  C^\mu H^m(\R_\tau; \R^2), \label{S2ODE2}\ee

\noindent where the weight is now given by $C=\cosh(\tau)$, and the volume form includes an additional factor of $C^2 dV_{S^2}=\br R\kt^2 dV_{S^2}$. It therefore suffices to check that there is a range of $\mu$ such that no eigenvalue crossing leads to a solution which is integrable with the proper weight for both $\mu$ and the adjoint weight $-\mu$. 

The matrix in (\refeq{S2ODE2}) has eigenvalues $\tfrac{H}{2}\pm \tfrac{\sqrt{4\lambda^2 +H^2}}{2}$. The values of $\lambda^2$ in (\refeq{twospectra}) therefore show that the corresponding 2-dimensional subspaces have $\tau$-parameterized eigenvalues given by 

\be  \frac{H(\tau)}{2} \pm \frac{|H(\tau)|}{2} \ , \  \frac{H(\tau) \pm \sqrt{5}}{2} \ , \ \frac{H(\tau) \pm  \sqrt{17}}{2} \ , \ \ldots . \label{endweights3}\ee

\noindent Since $|H|<1$, only the first possibility, corresponding to $\lambda=0$ must be investigated, since the values are either strictly positive or strictly negative once $\lambda^2\geq 1$ and thus cannot have a spectral crossing.  In the first case, the eigenvalue progresses from $-1$ to $0$ and from $0$ to $1$ for the two signs respectively. Because the volume form has an additional factor of $R^2=\cosh(\tau)^2$. This precludes the solutions in this zero-eigenspace of $\Delta_{\Omega^0\oplus \Omega^2}$, which are both asymptotic to constants or $e^{-|\tau|}$ on the two ends, from being integrable, provided $|\mu|<1/2$. Invertibility for both $\mu$ and the adjoint weight $-\mu$ follow for $\mu$ in this range. The estimates are a routine consequence, since the operator is Fredholm provided $2+\mu$ avoids the discrete set of problematic weights (\refeq{endweights3}). 
\end{proof}

\subsubsection{Parametrix Patching}\label{section3.3.3} Let $N_\delta=(\R\times S^2, g_\delta)$ be the shrinking neck with the model metric (\refeq{sphereneckmetric}). By scaling, Lemma \ref{S2spectralflow} immediately implies
\begin{cor} \label{rescaledS2}For $\mu\in (-\mu_0, \mu_0)$ with $\mu_0$ as in Lemma \ref{S2spectralflow} , $D_{\mathrm{dR}}: \rho^{1+\mu} H^{m+1}_b(N_\delta)\to \rho^\mu H^m_b(N_\delta)$ is an isomorphism and there are constants $C_m$ such that 
$$\|\omega\|_{\rho^{1+\mu}H^{m+1}_b}\leq C_m \|D_{\mathrm{dR}} \omega\|_{\rho^{\mu}H^m_b}$$
\noindent holds uniformly in $\delta$. 
\end{cor}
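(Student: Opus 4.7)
The plan is to deduce the estimate and isomorphism for $(N_\delta, g_\delta)$ from Lemma~\ref{S2spectralflow} on the unit-scale model $(N, g_N)$ by the exact radial scaling $\rho = \delta R$. Under the diffeomorphism $\Phi_\delta(R,\theta) := (\delta R, \theta) : N \to N_\delta$, one computes $\Phi_\delta^* g_\delta = \delta^2 g_N$, so $(N_\delta, g_\delta)$ is isometric to the constant conformal rescaling of $(N, g_N)$ by the factor $\delta$. Everything reduces to tracking how $\mathbf{d}$ and the weighted $H^m_b$ norms transform under this rescaling.

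Although $\mathbf{d}$ is not conformally invariant in dimension $3$, its conformal scaling is explicit: since $\star_{\delta^2 g_N} = \delta^{3-2k}\star_{g_N}$ on $k$-forms, one obtains $d^*_{g_\delta} = \delta^{-2} d^*_{g_N}$ on $\Omega^1$ and $\star_{g_\delta} d = \delta^{-1}\star_{g_N} d$ on $\Omega^1$. These two distinct powers of $\delta$ are compensated by introducing the asymmetric rescaling of sections and outputs
$$\hat\omega_0 := \delta\,\Phi_\delta^* \omega_0, \qquad \hat\omega_1 := \Phi_\delta^* \omega_1, \qquad \hat\eta_0 := \delta^2\,\Phi_\delta^* \eta_0, \qquad \hat\eta_1 := \delta\,\Phi_\delta^* \eta_1.$$
A direct computation then shows that $\mathbf{d}_{g_\delta}(\omega_0, \omega_1) = (\eta_0, \eta_1)$ on $N_\delta$ is equivalent to $\mathbf{d}_{g_N}(\hat\omega_0, \hat\omega_1) = (\hat\eta_0, \hat\eta_1)$ on $N$. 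Finding these specific rescaling powers is the only real subtlety in the argument.

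Next, the weighted norms on both sides of the desired estimate pick up the same overall power of $\delta$. Using $\Phi_\delta^* dV_{g_\delta} = \delta^3\, dV_{g_N}$, $|\omega_1|^2_{g_\delta} \circ \Phi_\delta = \delta^{-2}|\Phi_\delta^*\omega_1|^2_{g_N}$, and $\br\rho_\delta\kt|_{\rho=\delta R} = \delta\br R\kt$, a straightforward calculation component-by-component gives
$$\|\omega\|_{\rho^{1+\mu} L^2(N_\delta)}^2 \ = \ \delta^{-1-2\mu}\,\|\hat\omega\|_{\rho^{1+\mu} L^2(N)}^2, \qquad \|\mathbf{d}_{g_\delta}\omega\|_{\rho^\mu L^2(N_\delta)}^2 \ = \ \delta^{-1-2\mu}\,\|\mathbf{d}_{g_N}\hat\omega\|_{\rho^\mu L^2(N)}^2.$$
The common factor $\delta^{-1-2\mu}$ cancels, so the $m=0$ estimate from Lemma~\ref{S2spectralflow} applied to $\hat\omega$ on $(N, g_N)$ yields precisely the claimed $\delta$-uniform estimate on $(N_\delta, g_\delta)$. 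The bijectivity of the rescaling map $(\omega_0,\omega_1)\mapsto (\hat\omega_0, \hat\omega_1)$ on the relevant weighted spaces simultaneously transfers the isomorphism statement of Lemma~\ref{S2spectralflow} to $N_\delta$.

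The extension to $m \geq 1$ is immediate: the desingularized $b$-derivative $\br\rho_\delta\kt\nabla_\rho$ becomes $\br R\kt\nabla_R$ under the rescaling (the factor $\delta$ in $\br\rho_\delta\kt$ cancels the $\delta^{-1}$ coming from $\nabla_\rho = \delta^{-1}\nabla_R$), and the $S^2$-tangential derivatives are unchanged. Consequently every term appearing in the $\rho^{1+\mu}H^{m+1}_b$ norm transforms by the same power of $\delta$ as the $L^2$ term and the $m=0$ argument carries over term-by-term. The main obstacle in this proof is purely the bookkeeping of the different conformal weights for the $\Omega^0$ and $\Omega^1$ summands; once the rescaling above is in hand, everything is algebraic cancellation.
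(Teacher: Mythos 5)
Your proposal is correct and is precisely the argument the paper intends: the paper's proof of this corollary is literally the single phrase ``by scaling,'' and you have supplied the details of that scaling, namely the homothety $\Phi_\delta^*g_\delta=\delta^2 g_N$ under $\rho=\delta R$, the conformal weights $\delta^{-2}d^\star_{g_N}$ and $\delta^{-1}\star_{g_N}d$ compensated by the asymmetric rescaling of the $\Omega^0$ and $\Omega^1$ components, and the cancellation of the common factor $\delta^{-1-2\mu}$ in the weighted norms. The bookkeeping checks out, including the observation that $\br\rho_\delta\kt\nabla_\rho=\br R\kt\nabla_R$ makes the higher $b$-derivative terms scale identically.
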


We may now prove uniform global estimates. These show that $D_{\text{dR}}$ are strictly uniform in the sense of Definition \ref{strictlyuniform} in the case of a spherical neck. 

\begin{prop}\label{uniformderham}
For $\mu\in (-\mu_0, \mu_0)$, there exists a $\delta_0$ such that for $\delta < \delta_0$, there are $\delta$-independent constants $C_m$ such that the semi-elliptic estimate 
\begin{equation}
\|\nu\|_{r \rho^{1+\mu} H^{m,1}_{\text{b},e}} \leq C_m \left( \|D_\text{dR} \nu\|_{\rho^\mu H^m_{\text{b},w}} + \| K\nu \|_{\rho^{\mu }H^m_{\text{b}}}\right) \label{uniformelliptic1form}
\end{equation}
\noindent holds for $\nu \in r \rho^{1+\mu} H^{m,1}_{\text{b},e}(Y\mathrm{-}\mathcal{Z})$, where $K$ has finite rank (independent of $\delta$). 
\end{prop}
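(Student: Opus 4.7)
The plan is to run a parametrix patching argument analogous to that of Proposition \ref{uniformdirac}, but now combining \emph{three} local parametrices rather than two. Since the pinching neck metric $g_\delta$ in \eqref{gdeltadef} is not conformally cylindrical, the conformal-invariance trick from the spinor case is unavailable; instead, Corollary \ref{rescaledS2} will supply the third parametrix over the neck.

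First I cover $Y$ by three open sets $U_1 := Y_1 \setminus \overline{B}_{\rho_0/4}(y_1)$, $U_N := \{|\rho|<\rho_0\}\times S^2$, and $U_2 := Y_2 \setminus \overline{B}_{\rho_0/4}(y_2)$, fix a subordinate partition of unity $\zeta_1+\zeta_N+\zeta_2=1$, and auxiliary cutoffs $\chi_\bullet$ equal to $1$ on $\mathrm{supp}(\zeta_\bullet)$. All transitions occur at the fixed macroscopic scale $|\rho|\sim \rho_0$, so $|d\zeta|,|d\chi|$ are bounded independently of $\delta$. On $U_i$, Lemma \ref{semiFredholm} applied to $\mathbf d$ on the fixed closed manifold $(Y_i,g_i)$ furnishes a left parametrix $P_i$ satisfying $P_i\mathbf d=\mathrm{Id}+K_i$ with $K_i$ finite-rank and $\|P_i\|\leq C_m$; this parametrix is $\delta$-independent because $g_\delta\equiv g_i$ on $U_i$ for $\delta$ sufficiently small. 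On $U_N$, extending by zero to the scale-invariant model $N_\delta$, Corollary \ref{rescaledS2} furnishes an exact inverse $P_N=\mathbf d^{-1}$ with $\|P_N\|\leq C_m$ uniformly in $\delta$.

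The patched parametrix $P := \chi_1 P_1 \zeta_1 + \chi_N P_N \zeta_N + \chi_2 P_2 \zeta_2$ satisfies
$$P\mathbf d \ = \ \mathrm{Id} \ + \ \sum_{i=1,2}\chi_i K_i\zeta_i \ - \ \sum_{\bullet\in\{1,N,2\}}\chi_\bullet P_\bullet\,\mathbf{cl}(d\zeta_\bullet).$$
The first sum is finite-rank. The second sum is supported on the fixed compact transition region $\{\rho_0/4<|\rho|<\rho_0\}$, on which the weights $r,\rho$ are uniformly bounded above and below and $g_\delta$ coincides with the $\delta$-independent metric $g_i$, so these terms factor through the compact inclusion $H^{m+1}\hookrightarrow H^m$ uniformly in $\delta$. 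Applying $P$ to $\mathbf d\nu$ therefore yields \eqref{uniformelliptic1form} with a compact error operator $E=P\mathbf d-\mathrm{Id}$ in place of $K$; a standard truncation to a fixed finite-dimensional eigenspace of a reference Laplacian on the transition region then replaces $E$ by a $\delta$-uniform finite-rank operator $K$, exactly as in Step 2 of the proof of Proposition \ref{uniformdirac}.

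The main obstacle will be the coherent matching of the weighted Sobolev spaces across the three regions: the edge estimate of Lemma \ref{semiFredholm} is $\rho$-weight-free (it sees only the $r$-weight near $\mathcal Z_i$), while the neck estimate of Corollary \ref{rescaledS2} is $r$-weight-free (it sees only the $\rho$-weight near the neck center). These two estimates patch consistently on $\{\rho_0/4<|\rho|<\rho_0\}$ precisely because both weights are bounded away from $0$ and $\infty$ there; this is what makes the bookkeeping go through. The subsidiary task is to verify that the range $\mu\in(-\mu_0,\mu_0)$ produced by Corollary \ref{rescaledS2} is compatible with the weights needed for the approximate solution \eqref{approximatesol1form} to satisfy the smallness hypothesis of Theorem \ref{NMIFT}, which amounts to choosing $\mu$ strictly positive so that the error $O(\delta^{1-\mu})$ of Lemma \ref{1formsapproximatesol} still decays.
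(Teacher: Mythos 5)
Your proposal follows essentially the same route as the paper: a three-piece parametrix patching with the transitions at the fixed macroscopic scale $|\rho|\sim\rho_0$, the neck parametrix supplied by Corollary \ref{rescaledS2}, and the interior parametrices supplied by the elliptic edge theory on the closed summands; the error terms are handled exactly as in Step 2 of Proposition \ref{uniformdirac}. Two small points of divergence: the paper takes $P_1,P_2$ to be inverses of $\mathbf d$ on the complement $\mathcal H^1_\perp$ of the Hodge-theoretic kernel (rather than generic left parametrices from Lemma \ref{semiFredholm}), which is functionally equivalent for the stated estimate but is set up that way because the application in Theorem \ref{NMIFT}(iii) needs actual injectivity on $\mathcal H^1_\perp$ --- the paper achieves this in a further step by switching to logarithmic cut-offs, which you omit. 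Also, your closing remark that one must take $\mu$ strictly positive ``so that the error $O(\delta^{1-\mu})$ still decays'' is backwards: $\delta^{1-\mu}\to 0$ for every $\mu<1$, so all weights in $(-\mu_0,\mu_0)$ are admissible, and the paper in fact works at $\mu=0$; this does not affect the proof of the proposition itself.
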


\begin{proof} The proof has three steps. 
\begin{enumerate}
\item[]  \underline{Step 1:} It is easy to check that Corollary \ref{rescaledS2} holds equally well replacing the model metric with (\refeq{gdeltadef}). Let $P_N$ denote the inverse of $D_{\mathrm{dR}}$ in this region. Let $P_1, P_2$ denote inverses for $D_{\mathrm{dR}}$ on $\mathcal H^1_\perp$ on $Y_1, Y_2$ respectively (with $\mathcal H^1_\perp$ as in part (iii) of the proof of Theorem \ref{NMIFT}). 
\medskip
\item[]   \underline{Step 2:} Let $\rho_0$ be small and independent of $\delta$. Choose cutoff functions unity $\chi_1, \chi_2$ equal to 1 on the bulk of $Y_1, Y_2$ and with derivatives supported where $\rho=O(\pm \rho_0)$. Let $\chi_N=1-\chi_1-\chi_2$. Similarly, let $\zeta_N$ be a cut-off function so that ${\text{supp}(\chi_N)}\Subset \{\zeta_N=1\}$. Finally, let $\zeta_1, \zeta_2$ be likewise cut-off functions equal to $1$ where $\rho\geq O(\rho_0)$, so that $\text{supp}(\chi_i)\Subset \{\zeta_i=1\}$. 

Setting $$P:= \zeta_1 P_1 \chi_1  + \zeta_2 P_2 \chi_2 + \zeta_N P_N \chi_N.$$

\noindent A quick calculation similar to Step 2 in the proof of Proposition \ref{uniformdirac} shows that $P$ is a uniformly bounded parametrix, and $K$ consists of the projection to $\mathcal H^1$ and the projection to the support of $d\chi_i$. 
\medskip
\item[]   \underline{Step 3:} The proof of Theorem \ref{NMIFT} in this case actually requires the slightly stronger statement the operator is injective on the complement of $\mathcal H^1$. This may be achieved for the $\mu=0$ weight by replacing the cut-off functions in the above with logarithmic cut-off functions (see Section \ref{subsectiontoruspinching}). The same argument applies for all $m>0$. 
\end{enumerate}
\end{proof}

\begin{proof}[Proof of Theorem \ref{maina} (1-form case)] Define $\Phi^\delta_\alpha$ as in (\refeq{approximatesol1form}). Lemma \ref{1formsapproximatesol} and Proposition \ref{uniformderham} show that the assumptions of Theorem \ref{NMIFT} are again satisfied, this time on the manifold with pinched neck $(Y_1\# Y_2,g_\delta)$. The case of $Y_i=\Sigma_i\times S^1$ follows similarly using Lemma \ref{infiniteneck} in place of Lemma \ref{S2spectralflow}. 
\end{proof}

\subsection{Neck Pinching II: Toroidal Case} 
\label{subsectiontoruspinching}

This subsection proves Theorem \ref{mainb} by pinching necks in $1$-parameter families. This situation is more involved than that of the previous subsection for two reasons: first, the elliptic boundary operator at the neck is replaced by an elliptic edge operator, and second the weaker scaling from the volume form means the error only approaches zero for negative weights for which the operator has an obstruction (as in Lemma \ref{infiniteneck}).   

To describe the set-up more precisely, let $(Y_i, g_i)$ and $K_i \subset Y_i - \mathcal{Z}_i$ be as described in the statement of Theorem \ref{mainb}. Choose tubular neighborhoods $N(K_i) \simeq D_R \times S^1$ with coordinates $(t, x, y)$ and corresponding cylindrical coordinates $(t, \rho, \theta)$. The metrics may be written $g_i|_{N(K_i)} = dt^2 + d\rho^2 + \rho^2 d\theta^2 + h_i$, where $h_i = O(\rho)$. By scaling the metrics by a constant, it may be assumed that the two knots $K_i$ have equal length $2\pi$. For $\delta = 1/T << 1$, the torus sum $Y_K$ may be endowed with the metric given by $g_i$ on the bulk of $Y_i$ and by
\be 
g_\delta = dt^2 + dr^2 + (\rho^2 + \delta^2)d\theta^2 + \chi_1 h_1 + \chi_2 h_2 \label{gluedmetric}
\ee

\noindent in the neck region $[-R, R] \times T^2$. Here, $\chi_i$ are ($\delta$-dependent) cut-off functions as in \eqref{gdeltadef}.

\subsubsection{Approximate Solutions and Error Terms} 

On $(Y_K, g_\delta)$, let $r$ denote the distance from $\mathcal Z$ and let $\rho$ denote the distance from the center of the neck region. Using the weight $\br \rho_\delta \kt=\sqrt{\rho^2+\delta^2}$ (we will often drop the $\delta$), the analogue of the spaces \eqref{doublebspaces} on $Y_K$ become 

\begin{equation}
r^\nu \rho^\mu H^{m,n}_{\text{b},e} (Y_K; S) = \left\{ \psi \in L^2(Y_K;S) \ \Big | \ \int_{Y_K} \sum_{|\alpha| \leq n, |\alpha|+|\beta| \leq m} |(\nabla^e)^\alpha (\nabla^\text{b})^\beta \psi|^2 \ r^{-2\nu} \br \rho_\delta\kt^{-2\mu} \ dV \ < \ \infty \ \right\},
\end{equation}

\noindent where $\nabla^\text{b},\nabla^e$ are the boundary and edge-weighted derivatives along both $\mathcal Z$ and the neck region, i.e. near $K$ they are given by (\refeq{bderiv}--\refeq{edgederiv}) with $\br \rho_\delta \kt $ in place of $r$.  

We now construct model solutions. Let $\chi_1(\rho)$ be a logarithmic cut-off function \cite[Sec. 10.4]{MSBig} equal to $1$ for $\rho\leq \delta^{5/8}$ and vanishing for $r\leq \delta^{3/4}$ and such that 

\be |\nabla^m\chi_1|\leq \frac{C}{\log(1/\delta)}\frac{1}{\rho^m}.\label{logcutoff}\ee

\noindent Set $\chi_2(\rho)=\chi_1(-\rho)$. For $\alpha=[a;b]\in {\R \mathbb P}^1$, define model solutions by 
\be
\Phi_\alpha^\delta=a\chi_1 \Phi_1 + b\chi_2 \Phi_2  \label{approximatesoltorus},
\ee

\noindent where the spinors are written in the local trivializations induced by $\gamma(dt)$ which are patched on the neck region using condition (i) in Theorem \ref{mainb}. By a simple transversality argument, we may assume after an isotopy of $K_1$ that $|\Phi_1|>0$ on $K_1$.

Similar to Lemmas \ref{spinorapproximatesol} and \ref{1formsapproximatesol}, we have:  

\begin{lm}\label{toruspincherror}
For each $m\in \N$, there exist constants $C_m$ independent of $\delta$ such that 
\bea \|\slashed D\Phi_\alpha^\delta \|_{\rho^\mu H^m_\text{b}}&\leq& \frac{C_m}{\log(1/\delta)} \delta^{-\mu/2}.\eea
In particular, for $\mu\leq 0$, the error approaches zero. 
\end{lm}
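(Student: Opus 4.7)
The plan is to decompose $\slashed{D}\Phi_\alpha^\delta$ into explicit localized error terms and estimate each separately in the weighted norm. Using $\slashed{D}_{g_i}\Phi_i = 0$ on each $Y_i$, one has
\begin{equation*}
\slashed{D}\Phi_\alpha^\delta = a\,\gamma(d\chi_1)\Phi_1 + b\,\gamma(d\chi_2)\Phi_2 + a\chi_1(\slashed{D}_{g_\delta} - \slashed{D}_{g_1})\Phi_1 + b\chi_2(\slashed{D}_{g_\delta} - \slashed{D}_{g_2})\Phi_2.
\end{equation*}
The first two terms are the cutoff errors, supported on the logarithmic transition annuli near the pinch where $\delta^{3/4} \lesssim |\rho| \lesssim \delta^{5/8}$, and the last two are metric perturbation errors, supported on the neck region where $g_\delta$ differs from $g_i$. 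The key input is that $K_i \cap \mathcal{Z}_i = \emptyset$, so each $\Phi_i$ is smooth on $N(K_i)$ and uniformly bounded in every $C^m$.

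For the cutoff error, combining the pointwise bound \eqref{logcutoff} on $d\chi_i$ with the volume form $dV \sim \rho\, d\rho\, d\theta\, dt$ on the transition annulus (where $\rho \gg \delta$), a direct computation gives
\begin{equation*}
\|\gamma(d\chi_i)\Phi_i\|_{\rho^\mu L^2}^2 \ \leq\ \frac{C}{\log^2(1/\delta)} \int_{\delta^{3/4}}^{\delta^{5/8}} \rho^{-1-2\mu}\, d\rho,
\end{equation*}
and evaluating the integral (with the endpoint $\delta^{5/8}$ dominating for $\mu<0$) bounds this by $\frac{C_\mu^2}{\log^2(1/\delta)}\delta^{-\mu}$ for $\mu \leq 0$, which after taking square roots yields the desired decay. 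For the higher-derivative $H^m_\mathrm{b}$-norms I would exploit the scale invariance of the $\mathrm{b}$-derivative: iterating \eqref{logcutoff} produces $|(\nabla^{\mathrm{b}})^m \chi_i| \leq C_m/\log(1/\delta)$ uniformly in $\delta$, while $\Phi_i$ has uniformly bounded $\mathrm{b}$-derivatives on $N(K_i)$ by smoothness, so the same integral estimate propagates to $m \geq 1$.

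For the metric perturbation terms, on the support of $\chi_i$ the two metrics differ only by the $\delta^2 d\theta^2$ modification to the angular part and by a cutoff-blended $h_j$ correction. Computing the difference of the two Dirac operators via the Bourguignon--Gauduchon formula, or directly by inspection of orthonormal frames, shows that their symbols differ by $O(\delta^2/(\rho^2+\delta^2))$, which is $O(\delta^{1/2})$ on the region $|\rho| \geq \delta^{3/4}$ where $\chi_i \neq 0$ outside the pinched core. Applied to a uniformly smooth, bounded $\Phi_i$, this yields a contribution that is $O(\delta^{1/2})$ in every $\rho^\mu H^m_{\mathrm{b}}$-norm for the relevant range of $\mu$, comfortably dominated by the cutoff error.

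The main obstacle I anticipate is verifying the uniform $H^m_{\mathrm{b}}$-bound for all $m$: one must check that iterated commutators of $\nabla^{\mathrm{b}}$ with the cutoffs and with the operator differences $\slashed{D}_{g_\delta} - \slashed{D}_{g_i}$ remain controlled by the $1/\log(1/\delta)$ prefactor, without accumulating dangerous powers of $\rho^{-1}$ near the neck. This is precisely why the logarithmic choice of cutoff is essential: each $\rho$-derivative of $\chi_i$ is compensated by the factor of $\rho$ built into $\nabla^{\mathrm{b}}$, so the $1/\log(1/\delta)$ gain survives every stage of iteration, and the same weighted integral as in the $m=0$ case governs the estimate.
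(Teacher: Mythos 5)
Your proposal is correct and follows essentially the same route as the paper: differentiate the cut-off approximate solution, use $\slashed D_{g_i}\Phi_i=0$ to reduce to the $\gamma(d\chi_i)\Phi_i$ terms, bound these via the logarithmic cutoff estimate \eqref{logcutoff} and the weighted integral $\int \rho^{-1-2\mu}\,d\rho$ over the transition annulus, handle $m\geq 1$ by noting that the $\rho$ in $\nabla^{\text{b}}=\rho\nabla_\rho$ cancels the $\rho^{-m}$ in the iterated cutoff bounds, and check that the metric-perturbation terms are negligible. You are in fact somewhat more explicit than the paper on the last point (the $O(\delta^2/(\rho^2+\delta^2))=O(\delta^{1/2})$ symbol estimate), which the paper dismisses as "easy to verify."
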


\begin{proof}It suffices to estimate the two summands coming from \(D(\chi_1\Phi_1)\) and \(D(\chi_2\Phi_2)\). For the first two terms and $m=0$, similar computations to Lemmas \ref{spinorapproximatesol} and \ref{1formsapproximatesol} using (\refeq{logcutoff})$$\|\slashed D\Phi_\alpha^{\delta}\|^2_{L^2}\leq \frac{C}{\log(1/\delta)} \int_{T^2} \int_{\delta^{3/4}}^{\sqrt{\delta}} \rho^{-2} \rho^{-2\mu} \rho d\rho d\theta dt \leq \frac{C}{\log(1/\delta)^2} \delta^{-\mu}.$$ 

\noindent For $m\geq 1$, a similar result holds after factoring out $C^m$ bounds on $\Phi_i$ and using that $\nabla^\text{b}=\rho \nabla_\rho$ derivatives precisely cancel out the factors of $\rho$ in (\ref{logcutoff}). 
 It is easy to verify that the higher-order terms from the metric contribute a negligible error. 
\end{proof}

\subsubsection{Dirac Operators on Pinching Torus Necks} Next, we show that a uniformly bounded parametrix  may be constructed on the neck region using a high-dimensional family of perturbations. Let $N_K\simeq K \times [-\rho_0, \rho_0]\times S^1$ denote the joining of the tubular neighborhood of $K_i$, endowed with coordinates $(t,\rho,\theta)$. Assume, to begin, that the metric is the model metric (\refeq{torusmetric}). The three-dimensional Dirac operator may be written 
$$\slashed D=\gamma(dt)\del_t + \slashed D_N, $$

\noindent where $\slashed D_N$ is the Dirac operator on $([-\rho_0, \rho_0]\times S^1, g_\delta)$. We now fix $R_0=\rho_0\delta^{-1}$. Scaling by setting $R=\delta^{-1}\rho$, so that $P_1= [-\delta R_1, \delta R_1] \times S^1$, Lemma \ref{scaledneckBVP} yields:

\begin{cor}\label{scaledneckBVPII} 
For each weight $-\tfrac12<\mu\leq 0$, there is a constant $C_\mu$ independent of $\delta$ such that subject to the boundary conditions (ii) defined by \eqref{eq_boundarycondition} and \eqref{eq_boundaryconditionb}, the Dirac operator $\slashed D_N$ has Index 0 and satisfies 
\begin{eqnarray}\label{BVPPoincareII}
\|u\|_{\rho^{1+\mu} H^1_\text{b}} &\leq& C_\mu \  \|\slashed D_N u\|_{  \rho^\mu L^2} \hspace{1.0cm} \forall u  \ \text{s.t.} \ \ \br u, \kappa_\circ \kt_{\rho^{1+\mu} H^1_\text{b}(P_1)} =\br u, \overline \kappa_\circ \kt_{\rho^{1+\mu} H^1_\text{b}(P_1)}=0  \\ \label{BVPellipticII}
\|u\|_{\rho^{1+\mu}H^1_\text{b}} &\leq& C_\mu \left( \|\slashed D_N u\|_{ \rho^\mu L^2} \ + \  {\|\tfrac{u}{\br \rho\kt}\|_{\rho^\mu  L^2(P_1)}}\right).
\end{eqnarray}
\end{cor}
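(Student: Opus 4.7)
The plan is to deduce Corollary \ref{scaledneckBVPII} from Lemma \ref{scaledneckBVP} via a direct scaling argument. The diffeomorphism $\Phi:(R,\theta)\mapsto(\rho,\theta)=(\delta R, \theta)$ identifies the scale-invariant cylinder $[-R_0, R_0]\times S^1$ at truncation $R_0 = \rho_0/\delta$ with the pinching neck $[-\rho_0,\rho_0]\times S^1$, and pulls back $g_\delta$ to the constant conformal rescaling $\delta^2 g_N$ of the model metric. The passage $R_0 \to \infty$ as $\delta \to 0$ is precisely compensated by the uniformity in $R_0$ provided by Lemma \ref{scaledneckBVP}.

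First I would compute how each relevant object transforms under $\Phi$. Using $\br \rho_\delta\kt = \delta \br R\kt$, $dV_{g_\delta} = \delta^2 dV_{g_N}$, and $1/\sqrt{\rho^2+\delta^2} = \delta^{-1}/\sqrt{R^2+1}$, the explicit local form of $\slashed D_N$ from Section \ref{subsection2dneck} gives
$$\slashed D_N^{g_\delta} \tilde u = \delta^{-1}(\slashed D_N^{g_N} u)\circ \Phi^{-1}$$
for $\tilde u = u\circ \Phi^{-1}$, under the natural identification of spinor bundles associated with the constant conformal rescaling. A short computation then yields the scaling relations
$$\|\tilde u\|_{\rho^\mu L^2(g_\delta)} = \delta^{1-\mu}\|u\|_{R^\mu L^2(g_N)}, \qquad \|\tilde u\|_{\rho^{1+\mu}H^1_b(g_\delta)} = \delta^{-\mu}\|u\|_{R^{1+\mu}H^1_b(g_N)},$$
$$\|\slashed D_N^{g_\delta}\tilde u\|_{\rho^\mu L^2(g_\delta)} = \delta^{-\mu}\|\slashed D_N^{g_N} u\|_{R^\mu L^2(g_N)},$$
and similarly $\|\tilde u/\br \rho\kt\|_{\rho^\mu L^2([-\delta R_1, \delta R_1]\times S^1)} = \delta^{-\mu}\|u/\br R\kt\|_{R^\mu L^2(P_1)}$. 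The scaling is marginal: both sides of the desired inequalities \eqref{BVPPoincareII} and \eqref{BVPellipticII} acquire the same factor $\delta^{-\mu}$, so the constant $C_\mu$ from Lemma \ref{scaledneckBVP} (which is independent of $R_0$) transfers as a $\delta$-independent constant in the corollary.

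To complete the argument I would verify that the boundary conditions (ii) and the orthogonality conditions defining the subspace on which \eqref{BVPPoincare} holds transfer correctly under $\Phi$: since $\Phi$ acts trivially on $S^1$, the Fourier-mode constraints at the boundary are identical, and the kernel elements $\kappa_\circ, \overline\kappa_\circ$ on the model neck pull forward to play the same role on the pinching neck. The truncation $P_1$ in the corollary is $[-\delta R_1, \delta R_1]\times S^1$ by construction. The index $0$ and self-adjointness assertions are immediate from those on the model. No new obstacle arises beyond the bookkeeping of powers of $\delta$; all analytic content is already contained in Lemma \ref{scaledneckBVP}.
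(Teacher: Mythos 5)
Your proposal is correct and is essentially the paper's own proof: both deduce the corollary from Lemma \ref{scaledneckBVP} by the rescaling $\rho = \delta R$, observing that every term in \eqref{BVPPoincareII} and \eqref{BVPellipticII} scales by the same factor $\delta^{-\mu}$ so the $R_0$-independent constant transfers to a $\delta$-independent one. Your scaling relations match those stated in the paper, and the extra bookkeeping on boundary conditions and the truncation $P_1$ is consistent with the text.
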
 
\begin{proof}
The scaled norms are related by $\| -\|_{R^{1+\mu}H^1_b}= \delta^{\mu} \|-\|_{r^{1+\mu}H^1_b}$ and $\| -\|_{R^{\mu}L^2}= \delta^{\mu-1} \|-\|_{r^{\mu}L^2}$. Thus since $\nabla_\rho=\delta^{-1}\nabla_R$, the left and right sides both scale like $\delta^{\mu}$. The result is then immediate from Lemma \ref{scaledneckBVP}.
\end{proof}

Next, we define boundary conditions on $\del N_K\simeq T^2$. The boundary-trace of a spinor $\ph$ may be written 
$$\ph|_{\del N_K}=\sum_{k} \begin{pmatrix} \alpha_{k\ell} \\ \beta_{k\ell}\end{pmatrix}   e^{ik\theta}e^{i\ell t}.$$
\noindent Set $L=\lfloor \delta^{-1}\rfloor$. The boundary condition is: 
\begin{enumerate}
\item[(ii')] For $|\ell|\leq L$, the functions $\alpha_{k\ell}, \beta_{k\ell}$ satisfy boundary condition (ii) from Section \ref{subsection2dneck}, and for $|\ell|\geq L$ satisfy boundary condition (i). 
\end{enumerate}
\noindent These are semi-local variations of APS boundary conditions (see \cite[Sec. 7]{PartI} for more detailed discussion). We also define a parameterized version of the orthogonality constraint from \refeq{BVPPoincare} as follows. For each $\eta(t)=(\eta_1(t), \eta_2(t))\in L^{1,2}(K; \C^2)$, the configurations 
$\eta_1(t)\kappa_\circ \in \rho H^1_e(N_K)$ have finite norm, and for low Fourier modes $\eta(t)= e^{i\ell t}$ they are almost in the kernel; likewise for $\eta_2(t)\overline \kappa_\circ$. We consider the space 
\smallskip  
\be \rho^{1+\mu} H^{1}_\circ(N_K):= \left\{ \ph \in \rho^{1+\mu}H^1_e(N_K) \ \  \Big | \ \ \begin{matrix}\ph|_{\del N_k} \text{ satisfies boundary condition (ii') \ } \\  \br \ph , e^{i\ell t}\kappa_\circ \kt_{\rho^{1+\mu} H^1_\text{b}(P_1)}=0 \text{  \ for \ } |\ell|\leq L \\  \br \ph , e^{i\ell t}\overline \kappa_\circ \kt_{\rho^{1+\mu} H^1_\text{b}(P_1)}=0 \text{  \ for \ } |\ell|\leq L \\  \end{matrix} \right\} . \label{calHdef}\ee 
\smallskip 

\noindent Note the orthogonality condition uses the 2-dimensional Hermitian inner product on each Fourier mode, thus it does not include a pairing involving $\del _t \ph$.
\begin{prop} \label{3Dinjectivity} For  each weight $-\tfrac14<\mu \leq 0$, the operator 
$\slashed D: \rho^{1+\mu} H^1_\circ(N_K)\to \ \rho^\mu L^2(N_K)$ is Fredholm with $\text{Ind}_\C(\slashed D)=-(4L+2)$, and there are constants $C_\mu$ independent of $\delta$ such that
\smallskip
\be \|\ph\|_{\rho^{1+\mu} H^1_e} \leq C_\mu  \|\slashed D\ph\|_{ \rho^\mu L^2} \label{3Dellipticest}\ee
\smallskip
\noindent holds for $\ph \in \rho^{1+\mu}H^1_\circ$. In particular, $\slashed D$ is injective. 
\end{prop}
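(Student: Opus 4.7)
The plan is to Fourier-decompose in the periodic coordinate $t$ along $K$. Writing $\ph = \sum_{\ell\in\Z}\ph_\ell(\rho,\theta)e^{i\ell t}$, the $t$-invariance of the model metric (\ref{torusmetric}) lets $\slashed D = \gamma(dt)\del_t + \slashed D_N$ act on the $\ell$-th mode as the two-dimensional operator
\[ \slashed D^\ell := \slashed D_N + i\ell\,\gamma(dt). \]
Both boundary condition (ii$'$) and the orthogonality constraints in (\ref{calHdef}) separate by Fourier mode, so the domain and the $\rho^{1+\mu}H^1_e$, $\rho^\mu L^2$ norms split orthogonally over $\ell$. It suffices to establish mode-by-mode estimates with constants uniform in $\ell$ and in $\delta$, then assemble via Parseval; the index count will come from identifying which modes carry nontrivial cokernel.

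For the low modes $|\ell|\leq L$, boundary condition (ii) together with Corollary \ref{scaledneckBVPII} gives uniform control of $\slashed D_N$; the term $i\ell\gamma(dt)$ is self-adjoint and bounded, so $\slashed D^\ell$ still has index $0$. To make the estimate uniform across the whole range $|\ell|\leq L \sim \delta^{-1}$, I would rescale $R = \rho/\delta$, transferring the problem to the truncated model neck of Subsection \ref{subsection2dneck} where the operator becomes $\delta^{-1}(\slashed D_{\mathrm{mod}} + m\gamma(dt))$ for effective mass $m := \ell\delta \in [-1,1]$. A blow-up/contradiction argument then upgrades Corollary \ref{scaledneckBVPII}: any sequence of near-counterexamples $(m_n, R_{0,n}, \ph_n)$ with $m_n\to m_\infty$ and $R_{0,n}\to \infty$ would, after passing to a subsequence, produce a nontrivial $\rho^{1+\mu}H^1_e$-normalized limit on the infinite model neck of Lemma \ref{infiniteneck} that is orthogonal in the sense of (\ref{calHdef}) to $\kappa_\circ,\overline\kappa_\circ$, contradicting the spectral-flow analysis of that lemma. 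The orthogonality constraint therefore cuts exactly a codimension-$2$ subspace containing the only potential kernel elements, so each low mode contributes index $-2$ and is uniformly invertible on its restricted subspace.

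For the high modes $|\ell|>L$, boundary condition (i) makes $\slashed D_N$ self-adjoint and uniformly invertible (by the analysis leading to Corollary \ref{scaledneckBVPII}). Because $\gamma(dt)$ anticommutes with $\slashed D_N$ and boundary condition (i) is preserved by $\gamma(dt)$, self-adjointness of $\slashed D^\ell$ gives the Pythagorean identity
\[ \|\slashed D^\ell \ph_\ell\|^2_{\rho^\mu L^2} = \|\slashed D_N \ph_\ell\|^2_{\rho^\mu L^2} + \ell^2\|\ph_\ell\|^2_{\rho^\mu L^2}, \]
so the mass term only enhances coercivity and the uniform estimate follows immediately. Summing mode-by-mode estimates then yields (\ref{3Dellipticest}); only the $2L+1$ low modes contribute cokernel, each of complex dimension $2$ spanned by $\kappa^\dagger e^{i\ell t}$ and $\overline\kappa^\dagger e^{i\ell t}$ from (\ref{kappakappadagger}), giving $\mathrm{Ind}_\C\slashed D = (2L+1)\cdot(-2) = -(4L+2)$.

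The main obstacle is uniformity in the low-mode estimate across $m\in[-1,1]$ simultaneously with $R_0\to\infty$. The perturbation $m\gamma(dt)$ is not small in operator norm, so a naive perturbation argument fails; one must instead exploit the fact that the orthogonality constraint in (\ref{calHdef}) is imposed against the \emph{unperturbed} kernel elements $\kappa_\circ,\overline\kappa_\circ$, which remain the relevant near-kernel for all $|m|\leq 1$. The weight restriction $\mu\in(-1/4,0]$ enters precisely here: it ensures that the indicial roots of $\slashed D_{\mathrm{mod}} + m\gamma(dt)$ remain outside the Fredholm range for the full interval $|m|\leq 1$, so that no new low-spectrum obstructions appear beyond those cancelled by the orthogonality constraint. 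Once the uniform estimate is established, injectivity is immediate and the stated Fredholmness, index, and estimate all follow.
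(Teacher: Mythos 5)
Your overall architecture (Fourier decomposition in $t$, a Pythagorean identity killing the cross term between $\gamma(dt)\del_t$ and $\slashed D_N$, and an index count of $-2$ per low mode) matches the paper's proof, but two specific steps as you state them would fail. First, your high-mode argument rests on the claim that $\slashed D_N$ with boundary condition (i) is \emph{uniformly} invertible. It is not: the paper's Section \ref{subsection2dneck} points out explicitly that although (i) is invertible, the elliptic estimate degenerates as $R_0\to\infty$ because the near-kernel element $\kappa_\circ$ decays toward the boundary. The only uniform statement available without orthogonality constraints is \eqref{BVPellipticII}, which carries the defect term $\|\ph/\langle\rho\rangle\|_{\rho^\mu L^2(P_1)}$. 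The correct repair is already implicit in your own identity: for $|\ell|\ge L\sim\delta^{-1}$ one has $\|\del_t\ph_\ell\|=|\ell|\,\|\ph_\ell\|\ge c\,\delta^{-1}\|\ph_\ell\|\ge c\,\|\ph_\ell/\langle\rho_\delta\rangle\|$, so the $\ell^2\|\ph_\ell\|^2$ term absorbs the defect in \eqref{BVPellipticII}; you must say this, since without it the high-mode estimate is unproved. Second, your low-mode blow-up argument is both unnecessary and flawed: the Pythagorean identity gives $\|\slashed D^\ell\ph_\ell\|^2\ge\|\slashed D_N\ph_\ell\|^2$ for \emph{every} $\ell$, so the mass term can simply be discarded and \eqref{BVPPoincareII} applied verbatim on the constrained subspace — no rescaled limit is needed. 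As written, your blow-up limit is the operator $\slashed D_N+m_\infty\gamma(dt)$ on the infinite neck with $m_\infty\in[-1,1]$ possibly nonzero, and Lemma \ref{infiniteneck} analyzes only the massless operator; its kernel/cokernel description does not transfer to the massive case, and extracting a nontrivial limit in $\rho^{1+\mu}H^1_e$ on the non-compact neck also requires a concentration argument you have not supplied.

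A smaller but genuine error: the restriction $-\tfrac14<\mu\le 0$ does not come from indicial roots of a massive model operator. The indicial analysis at the neck (Lemma \ref{infiniteneck}) is valid for $|\mu|<\tfrac12$; the $\tfrac14$ threshold arises when one repeats the integration by parts in the weighted norm, where the derivative of the weight $\langle\rho_\delta\rangle^{-2\mu}$ produces a cross term that must be absorbed into the coercive pieces, and this absorption only works for $|\mu|<\tfrac14$. Finally, for the index you should either argue as the paper does — the boundary condition (ii$'$) is globally self-adjoint (the oppositely oriented boundary components cancel under integration by parts), so the unconstrained operator has index $0$, and the $2(2L+1)$ orthogonality constraints in \eqref{calHdef} each cut the domain by one complex dimension — or, if you count cokernels mode by mode as you propose, you must justify that each of the $2L+1$ low modes has exactly a two-dimensional cokernel and no kernel after the constraint; the latter is what \eqref{BVPPoincareII} provides once the mass term has been dropped as above.
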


\begin{proof}
The boundary condition (ii') results in no boundary terms when integrating by parts because the oppositely oriented boundaries contribute canceling terms. The self-adjointness of the boundary condition implies the operator is Fredholm of index 0 without the orthogonality constraints of which there are $2(2L+1)$, which implies the index statement.  

Beginning with the case that $\mu=0$, integrating by parts yields:  

\bea
\int_{N_K} |\slashed D \ph|^2   \ dV &=& \|\del_t \ph\|^2_{ L^2}  \ + \ \|\slashed D_N\ph\|^2_{ L^2}\\ & &  +  \int_{N_K}{ \br \ph ,  \cancel{(\sigma_t\del_t \slashed D_N  + \slashed D_N \sigma_t \del_t)}\ph \kt
} \ dV. 
\eea

\noindent Since $\slashed D$ respects Fourier modes, it suffices to prove the estimate holds uniformly for each mode. For $|\ell|\leq L$, the orthogonality constraint in (\refeq{calHdef}) and (\refeq{BVPPoincareII}) show that $\slashed D_N$ is injective with a uniform estimate. For $|\ell| \geq L$, one has $c\|\tfrac{\ph}{r}\|_{ L^2}\leq c\delta^{-1}\|\ph\|_{ L^2} \leq \|\del_t\ph\|_{ L^2}$. Borrowing from the $|\del_t \ph|$ term and invoking (\refeq{BVPellipticII}) shows that 

$$\|\ph\|_{\rho H^1_b}^2 \ + \ \tfrac{1}{2}\|\del_t \ph\|_{  L^2}  \ \leq \  C( \|\slashed D_N\ph\|_{L^2} + \|\del_t \ph\|_{ L^2})$$
\noindent and the left side is precisely the $\rho H^1_e$-norm.
For $|\mu|<\tfrac{1}{4}$ a similar argument applies with an additional integration by parts used to absorb the additional cross-term arising from the derivative of the weight (see \cite[Claim. 7.19.1]{PartI}). 
\end{proof}

\begin{rem}\label{thmmainbobstruction} Proposition \ref{3Dinjectivity} only holds for $|\mu|<\tfrac14$, which is the reason Theorem \ref{mainb} fails in the case of 1-forms. Since it is not possible to ensure the primitive $f$ in the model solution (\refeq{approximatesol1form}) vanishes identically along $K_i$, weight $\mu<-\tfrac{3}{2}$ would be required for the error to approach zero in this case. 
\end{rem}

The $(4L+2)$-dimensional cokernel of $\slashed D$ on $\rho^{1+\mu} H^1_\circ(N_K)$ can be explicitly described as follows.

\begin{lm} For each weight $-\tfrac14<\mu\leq 0$, the orthogonal complement of the range of the operator $\slashed D: \rho^{1+\mu} H^1_\circ(N_K)\to \ \rho^\mu L^2(N_K)$ is given by the linear span of  $\Psi_\ell, \overline \Psi_\ell$ for $|\ell|\leq L$, where these are scalings of modified Bessel 
functions of the first kind with asymptotics 

\be |\Psi_\ell| , |\overline \Psi_\ell| \sim \delta^{-1+\mu}\frac{1}{R_0^{\mu}}\frac{\sqrt{2|\ell|\delta R_0}}{\text{Exp}(\delta|\ell| R_0)} \frac{e^{|\ell| \delta |R|}}{|R|}R^{2\mu}.\label{Besselasymptotics}\ee
for  $|\ell|>>0$ and $R<<0$. 

\end{lm}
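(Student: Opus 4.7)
The plan is to identify the cokernel explicitly by exploiting the $t$-translation invariance of $\slashed D$, reducing the problem to a family of second-order ODEs of modified Bessel type on the normal plane to $K$, and then reading off the asymptotics from standard Bessel estimates.

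First, I would establish the Lagrange multiplier structure for the cokernel. The domain $\rho^{1+\mu}H^1_\circ(N_K)$ differs from the larger space $\widetilde H$ consisting of elements satisfying only the self-adjoint boundary condition (ii') by imposing $2(2L+1)$ linear orthogonality constraints. Since (ii') is self-adjoint and renders $\slashed D$ Fredholm of index zero on $\widetilde H$, the standard duality argument shows that cokernel elements $\Psi \in \rho^\mu L^2$ correspond bijectively to weak solutions $\widetilde \Psi = \rho^{-2\mu}\Psi \in \rho^{-\mu}L^2$ of
\begin{equation*}
\slashed D \widetilde \Psi \,=\, \sum_{|\ell|\leq L} \bigl(\lambda_\ell^{+}\, e^{i\ell t}\kappa_\circ + \lambda_\ell^{-}\, e^{i\ell t}\overline\kappa_\circ\bigr)
\end{equation*}
for coefficients $\lambda_\ell^\pm\in\mathbb{C}$, subject to the dual boundary conditions. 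This accounts for the $(4L+2)$-dimensional cokernel predicted by Proposition \ref{3Dinjectivity}, and gives a concrete $(\ell,\pm)$-indexing of a basis $\{\Psi_\ell,\overline\Psi_\ell\}_{|\ell|\leq L}$. The factor $R^{2\mu} = \rho^{2\mu}\delta^{-2\mu}$ in the claimed asymptotic comes from the conversion $\Psi = \rho^{2\mu}\widetilde \Psi$ between the two weighted spaces.

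Next, I would carry out separation of variables. Since $[\slashed D,\partial_t]=0$, each cokernel generator lives in a single $e^{i\ell t}$ mode, and within that mode the relevant $\theta$-Fourier components are $e^{\pm i\theta/2}$, by the same analysis identifying the 2D obstruction $\kappa_\circ, \overline\kappa_\circ$ in Lemma \ref{infiniteneck}. In this $\theta$-mode, squaring the 2D reduced operator $\slashed D_N + i\ell\sigma_t$ produces a scalar ODE of the form
\begin{equation*}
u''(\rho) + \tfrac{\rho}{\rho^2+\delta^2}\, u'(\rho) - \Bigl(\ell^2 + \tfrac{1/4}{\rho^2+\delta^2}\Bigr)u(\rho) \,=\, 0 ,
\end{equation*}
which in the scaling coordinate $R = \rho/\delta$ reduces to a modified Bessel equation of order $\tfrac{1}{2}$ with argument $|\ell|\delta|R| = |\ell|\rho$, up to curvature-like corrections that decay as $|R|\to\infty$. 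The two fundamental solutions are the modified Bessel functions $I_{1/2}(|\ell|\rho)$ and $K_{1/2}(|\ell|\rho)$.

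Finally, I would select the appropriate solution and compute its asymptotics. The dual boundary condition at $|R|=R_0$, together with the requirement that $\widetilde\Psi$ pair non-trivially with the corresponding Lagrange multiplier $e^{i\ell t}\kappa_\circ$ at an interior slice, selects the modified Bessel function of the first kind $I_{1/2}(|\ell|\rho)$; applying the large-argument expansion $I_\nu(x)\sim e^x/\sqrt{2\pi x}$, together with the spinor density factor $(\rho^2+\delta^2)^{-1/4}$, yields the profile $|R|^{-1} e^{|\ell|\delta|R|}$. The prefactor $\delta^{-1+\mu}R_0^{-\mu}\sqrt{2|\ell|\delta R_0}\,e^{-\delta|\ell| R_0}$ then arises by normalizing $\Psi_\ell$ in $\rho^\mu L^2(N_K)$: since $\Psi_\ell$ is exponentially concentrated within a boundary layer of width $\sim(|\ell|\delta R_0)^{-1/2}$ near $|R|=R_0$, the normalization integral is dominated by this layer, producing the factor $\sqrt{2|\ell|\delta R_0}/e^{\delta|\ell|R_0}$ that cancels the bulk exponential growth while preserving the spatial profile elsewhere.

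The main technical obstacle will be step 3: pinning down precisely which linear combination of $I_{1/2}$ and $K_{1/2}$ matches the Lagrange multiplier data, and carefully bookkeeping all normalization factors across the scaling $R=\rho/\delta$ and the weight conversions between $\rho^{1+\mu}H^1_b$, $\rho^\mu L^2$, and $\rho^{-\mu}L^2$. I expect this to reduce to a matching argument against $\kappa_\circ$ at an interior reference slice, parallel to the 2D analysis preceding Lemma \ref{scaledneckBVP}.
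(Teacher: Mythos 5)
Your overall skeleton — Fourier decomposition in $t$, reduction to modified Bessel ODEs on the normal plane, selection of the first–kind solutions, and normalization via the boundary layer near $|R|=R_0$ — is the same as the paper's, and your computation of the normalization factor $\sqrt{2|\ell|\delta R_0}/\mathrm{Exp}(\delta|\ell|R_0)$ is essentially correct. But two steps as you describe them are wrong.

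First, the $\theta$-mode identification. You claim the relevant angular modes are $e^{\pm i\theta/2}$ ``by the same analysis identifying $\kappa_\circ,\overline\kappa_\circ$,'' and consequently arrive at a potential $\tfrac{1/4}{\rho^2+\delta^2}$ and Bessel order $\tfrac12$. This misreads Lemma \ref{infiniteneck}: the neck $N_K$ is disjoint from $\mathcal Z$, so the spinor bundle is untwisted there and all $\theta$-modes are integral; the exponent $\tfrac12$ in $\kappa_\circ = O(R^{-1/2}e^{-i\theta})$ is a \emph{radial} decay rate produced by the mean-curvature term $H/2$, not a half-integer angular momentum. The paper instead conjugates by $\langle R\rangle^{-1/2}W^k$ to reduce $\slashed D$ to the flat product Dirac operator on $S^1_\delta\times(D^2-\{0\})$, decomposes as $\psi_{k\ell}=e^{i\ell t}(e^{ik\theta}\alpha,e^{i(k+1)\theta}\beta)$, and obtains the first-order system solved by $\hat I_k(\delta|\ell|R)$, $\hat K_k(\delta|\ell|R)$ with \emph{integer} orders; the admissible solutions are the patched $\hat I_{-1},\hat I_0$. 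You are saved only by the accident that $I_\nu(x)\sim e^x/\sqrt{2\pi x}$ is independent of $\nu$, so the final asymptotic survives, but the intermediate ODE is not the one you wrote down.

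Second, the characterization of the cokernel. Your Lagrange-multiplier equation $\slashed D\widetilde\Psi=\sum\lambda_\ell^{\pm}e^{i\ell t}\kappa_\circ$ is not correctly set up: the constraints defining $\rho^{1+\mu}H^1_\circ$ are $\langle\cdot,e^{i\ell t}\kappa_\circ\rangle_{\rho^{1+\mu}H^1_{\mathrm b}(P_1)}$, an $H^1$ inner product localized on $P_1$, so its $L^2$-Riesz representative is a distribution involving derivatives of the cutoff of $\kappa_\circ$, not $e^{i\ell t}\kappa_\circ$ itself. More importantly, you never rule out the genuinely inhomogeneous ($\lambda\neq 0$) solutions, nor verify that your construction produces exactly $4L+2$ independent elements. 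The paper avoids this entirely: it produces $4L+2$ solutions of the \emph{homogeneous} conjugated adjoint equation that are $L^2$-continuous across $R=0$ and satisfy the boundary conditions (ruling out $\hat K_k$ by non-integrability at the origin and the other $(k,\ell)$ by the boundary conditions), and then concludes these exhaust the orthogonal complement of the range by comparing with the index $-(4L+2)$ and injectivity from Proposition \ref{3Dinjectivity}. Without that dimension-count closing argument, your proof is incomplete even if the multiplier equation were set up correctly.
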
  

\begin{proof} Scale by $(t,r)\mapsto (\delta^{-1}t, R)$ so that $N_K\simeq S^1_{\delta}\times [R_0, R_0] \times S^1$ where the first circle has circumference $2\pi \delta^{-1}$. One has that 
$$\delbar_N \br R \kt^{-1/2}u= \br R \kt^{-1/2} e^{i\theta}(\del_r + \tfrac{i}{\br R\kt}) u.$$
\noindent Furthermore, in the $e^{ik\theta}$ Fourier mode, $$e^{i\theta}\left(\del_r - \frac{k}{\br R\kt}\right) W^k u =W^k\delbar u \hspace{1cm}\text{where}\hspace{1cm}W^k =\frac{\text{Exp}(k \int_0^R \tfrac{1}{\br s \kt} ds) }{R^k}$$
\noindent and $\delbar=e^{i\theta}(\del_R + \tfrac{i}{R})\del_\theta$ is the normal $\delbar$ operator. For $\del$, the same applies but with $W^{-k}$. Consequently for $\mu=0$, $\slashed D(\br  R\kt^{-1/2} W^k \psi )=0$ is a solution of the adjoint if and only if $\slashed D_0\psi=0$ where $\slashed D_0$ is the normal Dirac operator in the product metric on $(S^1_\delta \times (D^2-\{0\})$. The same conversion, mutatis mutandis holds for $R<0$ by first replacing $R\leftrightarrow -R$ and then conjugating.

For $R>0$, decomposing in Fourier so that $\psi_{k\ell}=e^{i\ell t}(e^{ik\theta}\alpha(R) , e^{i(k+1)\theta}\beta(R))$ shows
$$\slashed D_0 \psi=\begin{pmatrix} - \delta\ell & (-\del_r -\tfrac{k+1}{r}) \\ (\del_r - \tfrac{k}{r})  & \delta \ell\end{pmatrix}\begin{pmatrix}\alpha_{k\ell} \\ \beta_{k\ell}\end{pmatrix}.$$

\noindent which has solutions 
$$\psi_{k\ell} = \hat I_{k}(\delta |\ell| R) + \hat K_{k}(\delta |\ell| R) \hspace{1cm} \text{where} \hspace{1cm} \hat I_{k}(r)=\begin{pmatrix} e^{ik\theta} I_k(r) \\ -\text{sgn}(\ell) I_{k+1}(r) \end{pmatrix}$$
\noindent and likewise for $\hat K_k$, where $I_k, K_k$ are the modified Bessel functions of the first (exponentially growing) and second (exponentially decaying) kind respectively. 

Returning to $N_K$, the permissible solutions are those that extend continuously as $L^2$ functions across the origin, and satisfy the boundary conditions at $R=R_0$. The symmetric patched $\hat I_{-1}, \hat I_0$ solutions satisfy both of these for $|\ell|\leq 2L+1$, and they must exhaust the cokernel since they have equal dimension (it is easy to confirm that $\hat K_k$ is not integrable across the origin for any $k$ since $W_{-k}$ is needed for $R<0$, and that continuity at $0$ means the boundary conditions cannot be satisfied by $\hat I_k$ alone for other $k,\ell$). 

We conclude the cokernel elements are of the form $$\Psi_{\ell}=\frac{e^{i\ell t}}{C_\ell} \br R\kt^{-1/2}W\hat I_{-1}(\delta |\ell| R) \hspace{2cm} \overline\Psi_{\ell}=\frac{e^{i\ell t}}{C_\ell}\br R\kt^{-1/2}W\hat I_0(\delta |\ell| R)$$
\noindent where $W$ acts by $W^k$ in the $k^{th}$ Fourier mode. Since $W\sim 1$ for $R>>0$, since $R_0=\rho_0\delta^{-1}$ shows that the asymptotic expansion of $I_{k}(r)\sim e^{-r}/r^{1/2}$ at $r\to \infty$ dominates for large $|\ell|$. Combining with the $\br  R\kt^{-1/2}$ factor and normalizing in $L^2$ produces (\refeq{Besselasymptotics}), where the normalization is up to a $(1+O(R_0^{-1}))$ scaling factor (the factor of $\delta^{-1}$ arises from scaling $R=\delta^{-1}r$ back down). 

For $-1/4<\mu<0 $, the orthogonal complement of the range differs by multiplying by $R^{2\mu}$ and adjusting the normalization factor accordingly. 
\end{proof}

Now we introduce a family of $t$-dependent versions of the perturbations (\refeq{perturbation}). Let \bea \mathcal B_\delta^s(K;\C^2)&\subseteq& H^{s}(K;\C^2) \\ \mathcal K_\delta^s(K;\C^2)&\subseteq& \rho^\mu H^s_\text{b}(N_K; S)\eea 
\noindent denote the two finite-dimensional ($\delta$-dependent) subspaces defined as the complex span of $e^{i\ell t}$ and $\Psi_\ell, \overline{\Psi}_\ell$ respectively for $|\ell|\leq L$. Both are equipped with their inherited norm for each $s$. Similarly, we denote $\mathcal R^s(N_K;S)=\text{Range}(\slashed D)\cap \rho^\mu H^s_{\text {b}}(N_K;S)$.

Let $(\slashed D, B_\delta): \rho^{1+\mu} H^1_\circ(N_K; S) \oplus \mathcal B_\delta(K;\C)   \to    \rho^\mu L^2(N_K; S)$ be extended operator given by
\begin{eqnarray} \label{3DextendedDirac} 
(\ph,\xi)&\mapsto & \slashed D \ph  + \delta^{-1+\mu}B(\xi(t))\Phi_\circ \nonumber,
\end{eqnarray}
\noindent where we now allow the coefficients in (\refeq{perturbation}) to be $t$-dependent, and the factor of $\delta^{-1+\mu}$ is introduced to make the perturbation scale identically to the other terms. Splitting the codomain as $\rho^\mu L^2\simeq \mathcal K\oplus \mathcal R$, the extended operator satisfies the following, where the unadorned versions denote the $s=0$ spaces. Note the similarity to the form of Corollary \ref{fredholmddD}, including the loss of regularity. 
\begin{prop}  \label{obstructioncancellation} For each $-\tfrac14<\mu\leq 0$, the extended operator
\be (\slashed D, B_\delta)=\begin{pmatrix} \pi_\mathcal K B_\delta & 0 \bigskip \\ \pi_\mathcal R B_\delta & \slashed D\end{pmatrix}: \begin{matrix} \mathcal B(K;\C^2) \\ \oplus \\ \rho^{1+\mu} H^1_\circ(N_K;S)\end{matrix} \lre \begin{matrix} \mathcal K_\delta^{1/2}(K;\C) \\ \oplus \\ \mathcal R(N_K;S)\end{matrix} \ee
\noindent is an isomorphism with inverse bounded uniformly in $\delta$ (but depending on $\mu$), where $\pi_\mathcal K,\pi_\mathcal R$ denote the $L^2$-orthogonal projections.  
\end{prop}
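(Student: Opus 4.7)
The block-lower-triangular structure of the operator means it is an isomorphism with uniformly bounded inverse if and only if each diagonal block is an isomorphism with uniformly bounded inverse (the off-diagonal term $\pi_\mathcal R B_\delta$ is automatically bounded because $B_\delta$ extends boundedly to $\rho^\mu L^2$ by the same norm computation as in Lemma \ref{cancellationlem}). The plan is therefore to treat the two diagonal blocks separately.

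For the bottom-right block, Proposition \ref{3Dinjectivity} already asserts that $\slashed D:\rho^{1+\mu}H^1_\circ(N_K;S)\to\rho^\mu L^2(N_K;S)$ is Fredholm of index $-(4L+2)$, injective, and satisfies the uniform elliptic estimate (\ref{3Dellipticest}). By the explicit description of the cokernel as the span of the $\Psi_\ell,\overline\Psi_\ell$ for $|\ell|\le L$, its restriction to $\mathcal R(N_K;S)=\mathrm{Range}(\slashed D)$ is an isomorphism with uniformly $\delta$-independent inverse.

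For the top-left block $\pi_{\mathcal K}B_\delta:\mathcal B(K;\C^2)\to\mathcal K^{1/2}(K;\C)$, the idea is to exploit the fact that the perturbation (\ref{perturbation}) with $t$-dependent coefficient $\xi(t)=\sum_{|\ell|\le L}\xi_\ell e^{i\ell t}$ respects the Fourier decomposition in $t$. Since the cokernel elements $\Psi_\ell$ and $\overline\Psi_\ell$ are each pure $e^{i\ell t}$-modes, the pairing matrix is block-diagonal in $\ell$, and on each mode it reduces to a $2\times 2$ calculation essentially identical to the one carried out in the proof of Lemma \ref{cancellationlem}. Concretely, using that $\Psi_\ell$ and $\overline\Psi_\ell$ have top/bottom leading Fourier components $e^{-i\theta}$ and $1$ (resp.\ $1$ and $e^{i\theta}$), a direct computation gives
\[
\langle \delta^{-1+\mu} B(\xi_\ell e^{i\ell t})\Phi_\circ\,,\,a\Psi_\ell+b\overline\Psi_\ell\rangle_{\rho^\mu L^2}
\;=\;C_\ell\,\Bigl\langle\begin{pmatrix}(\xi_{\ell,1}+i\xi_{\ell,2})\overline c\\ (\xi_{\ell,1}-i\xi_{\ell,2})\overline d\end{pmatrix},\begin{pmatrix}a\\ b\end{pmatrix}\Bigr\rangle_{\C^2},
\]
where $c,d$ are the constant components of $\Phi_\circ$ from (\ref{necksolution}) and $C_\ell$ is the $\ell$-dependent pairing constant arising from the radial integral.

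The main obstacle is to show the constants $C_\ell$ are bounded above and below independently of both $\delta$ and $|\ell|\le L$. This requires combining the normalization (\ref{Besselasymptotics}) of $\Psi_\ell$, the $\delta^{-1+\mu}$ prefactor built into $B_\delta$, and the support and normalization of the log cutoff $\chi_0$ in (\ref{perturbation}). The key point is that $\Psi_\ell$ is normalized in $\rho^\mu L^2$, so $|\Psi_\ell|\sim R^{-1/2}R^{2\mu}$ times slowly varying factors in the relevant range of $R$ near $-R_0$ where $\chi_0$ is supported; the Bessel asymptotics show this behavior is uniform for $\delta|\ell|R_0\lesssim 1$ (i.e.\ $|\ell|\lesssim 1$) and that the exponential decay factor $e^{|\ell|\delta|R|}/\exp(\delta|\ell|R_0)$ is $O(1)$ on the support of $\chi_0$ for all remaining $|\ell|\le L=\lfloor\delta^{-1}\rfloor$. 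Combined with the cancellation $R_0^{-1/2-\mu}\cdot R_0^{-1/2+\mu}=R_0^{-1}$ against the $R_0$ from the radial volume element, this yields $C_\ell$ uniformly bounded above and below, and the $2\times 2$ linear system is then uniformly invertible exactly as in Lemma \ref{cancellationlem} provided $|c|^2+|d|^2>0$ (which is guaranteed by our assumption that $|\Phi_1|>0$ on $K_1$). Finally, the $H^{1/2}$ regularity on the target side is absorbed by estimating the $\xi_\ell$ with the $|\ell|^{1/2}$ weight from the $H^{1/2}(K)$-norm and observing that the Fourier-by-Fourier pairings yield the same $|\ell|^{1/2}$ weight through the scaling of $\Psi_\ell$ in the $t$-variable.
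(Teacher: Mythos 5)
Your overall strategy coincides with the paper's: exploit the lower-triangular block structure, quote Proposition \ref{3Dinjectivity} for the bottom-right block, and reduce the top-left block to a Fourier-mode-by-Fourier-mode $2\times 2$ pairing computation against $\Psi_\ell,\overline\Psi_\ell$ as in Lemma \ref{cancellationlem}. However, the quantitative heart of the argument --- the $\ell$-dependence of the pairing constants $C_\ell$ --- is wrong, and this is precisely the step the proposition turns on.

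You claim the factor $e^{|\ell|\delta|R|}/\exp(\delta|\ell|R_0)=e^{|\ell|\delta(|R|-R_0)}$ is $O(1)$ on the support of $\chi_0$ and conclude $C_\ell$ is bounded above and below uniformly in $\ell\le L$. This is false: at the inner edge $|R|=R_0/2$ of $\operatorname{supp}\chi_0$ that factor equals $e^{-|\ell|\rho_0/2}$ (recall $\delta R_0=\rho_0$), which is exponentially small for large $|\ell|$. The correct picture is that $\Psi_\ell$ concentrates in a shell of width $\sim(|\ell|\delta)^{-1}$ near $R=-R_0$ while $B(\xi)\Phi_\circ$ is spread over the full width-$O(R_0)$ support of $\chi_0$; carrying out the substitution $p=|\ell|\delta R$, the radial integral is dominated by its endpoint at $p=-p_0=-|\ell|\rho_0$, the $e^{p_0}$ from the integral cancels exactly against the $\exp(\delta|\ell|R_0)$ in the normalization of $\Psi_\ell$, and together with the $\sqrt{2|\ell|\delta R_0}$ amplitude factor one finds
$$\bigl\langle \delta^{-1}B(e^{i\ell t}\xi_\ell)\Phi_\circ,\Psi_\ell\bigr\rangle \;=\; \frac{c_1\,\overline d\,\xi_\ell}{\sqrt{|\ell|}}\bigl(1+O(|\ell|^{-1})\bigr),$$
so $C_\ell\sim c_1|\ell|^{-1/2}$ rather than $C_\ell\sim 1$. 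Your final sentence then compounds the problem: since $\mathcal B(K;\C^2)=\mathcal B^0$ carries the $L^2$ norm (there is no $|\ell|^{1/2}$ weight available on the domain side), if $C_\ell$ really were $O(1)$ the map into $\mathcal K^{1/2}$ --- whose norm weights the $L^2$-normalized, pure-$e^{i\ell t}$ element $\Psi_\ell$ by $\langle\ell\rangle^{1/2}$ --- would be unbounded as $|\ell|\to L=\lfloor\delta^{-1}\rfloor$. The actual mechanism is the opposite of what you wrote: the genuine $|\ell|^{-1/2}$ decay of $C_\ell$ is exactly compensated by the $\langle\ell\rangle^{1/2}$ weight of the $\mathcal K^{1/2}$ target, and this matching is the entire reason the codomain is $\mathcal K^{1/2}$ rather than $\mathcal K^0$ (mirroring the loss of regularity in Corollary \ref{fredholmddD}). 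With the corrected asymptotics the rest of your argument (uniform invertibility of the $2\times 2$ system when $|c|,|d|>0$, and the modification when only $|c|^2+|d|^2>0$) goes through as in the paper.
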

\begin{proof} Scaling and the bound from Lemma \ref{cancellationlem} show that $$\|B_\delta(\xi(t))\|_{\rho^\mu L^2}\leq C \|\xi(t)\|_{\rho^\mu L^2(K;\C)},$$
\noindent hence $\pi_\mathcal R B_\delta$ is uniformly bounded. By Proposition \ref{3Dinjectivity}, $\slashed D$ is an isomorphism onto its range with uniformly bounded inverse. It therefore suffices to show the top left component is an isomorphism. 

For this, we calculate the inner product, to leading order beginning with $\xi=(e^{i\ell t}, 0)$ and $\mu=0$: 

\bea
\br \delta^{-1}B(\xi(t))\Phi_\circ , \Psi_\ell\kt_{L^2}&=&  \int_{0}^{2\pi} \br (\xi_1(t)+i\xi_2(t)) \overline d, e^{i\ell t}\kt dt \cdot \frac{1}{\sqrt{R_0}}  \frac{\sqrt{2|\ell|\delta R_0}}{\text{Exp}(|\ell| \delta R_0)}\int_{-R_0}^{-R_0/2} \chi_0(R) \frac{1}{\sqrt{|R|}} \frac{e^{|\ell|\delta |R|}}{R}R dR\\
&=& \frac{\overline d}{\sqrt{R_0}}  \frac{\sqrt{2|\ell|\delta R_0}}{\text{Exp}(|\ell| \delta R_0)} \frac{1}{\sqrt{|\ell|\delta}} \int_{-p_0}^{-p_0/2}\frac{e^{|p|}}{\sqrt{|p|}}dp \hspace{2cm}(p=|\ell| \delta R \ , \  p_0 =|\ell|\delta R_0)\\
&=& \frac{\overline d\sqrt{2}}{\text{Exp}(|\ell|\delta R_0)}  \left[ \frac{e^{|p|}}{\sqrt{|p|} }(1 + O(p^{-1}))\right]_{p=-p_0}^{p=-p_0/2} \\
&=& c_1 \frac{\overline d}{\sqrt{|\ell|}} (1+ O(|\ell|^{-1}))
\eea
\noindent for some constant $c_1>0$, because $\delta R_0=\rho_0$. Note that in the first line we have substituted $\delta^{-2}rdr=RdR$. Moreover, since the radial part of $B_\delta(\xi)\Phi_0$ and $\Psi_\ell$ are both positive functions, the inner product is non-zero for each $\ell$. The same calculation holds for $\overline \Psi_\ell$, thus we conclude 
$$\br  \pi_\mathcal K(B_\delta(\xi_{1\ell} e^{i\ell t}, \xi_{2\ell} e^{i\ell t})), a \Psi_\ell + b \overline \Psi_\ell \kt_{L^2}=\frac{c}{\sqrt{|\ell|}} \big\br \begin{pmatrix}(\xi_{1\ell} + i\xi_{2\ell} )\overline c \\ (\xi_{1\ell} - i\xi_{2\ell})\overline d\end{pmatrix},  \begin{pmatrix} a \\ b \end{pmatrix}\big\kt + O(|\ell|^{-3/2}).$$
\noindent It follows that $\pi_\mathcal K B_\delta: \mathcal B_\delta(K;\C^2)\to \mathcal K_\delta^{1/2}(K;\C^2)$ is an isomorphism when $|c|, |d|>0$. If only $|c|^2 + |d|^2>0$, the same alteration as in Lemma \ref{cancellationlem} applies. For $-\tfrac14<\mu<0$ the proof is the same carrying along additional factors involving $\mu$. 
\end{proof}

\subsubsection{Parametrix Patching}
Now let $\chi_Y, \chi_N$ be a partition of unity on $Y_K$ formed from logarithmic cut-off functions as follows. $\chi_Y$ is equal to $1$ on the bulk of $Y_1, Y_2$ for $|R|> \sqrt{R_0}$ and vanishing for $|R|\leq R_0^{3/8}$, and $\chi_N=1-\chi_Y$. We define global orthogonality constraints by 

\be r  \rho^{1+\mu} H^1_{\circ}(Y_K; S)= \left\{ \ph \in r \rho^{1+\mu} H^1_e(Y_K ; S)  \ \Big | \  \chi_N \ph \in \rho^{1+\mu} H^1_{e,\circ}(N_K; S) \right\}\label{orthogonalityspaces},\ee
\noindent where the latter space is as defined in (\refeq{calHdef}). Note that the cut-off is such that the boundary conditions are automatically satisfied, thus only the $(4L+2)$-orthogonality constraints apply. We also write $\rho^\mu L^2 \cap \mathcal K^s$ for the space of sections such that $\pi_\mathcal K(\chi_N \ph) \in \mathcal K^s(K;\C^2)$ on $N_K$.  The following implies strict uniformity in the sense of Definition \ref{strictlyuniform} in the case of a pinching torus neck.

\begin{prop} \label{PP1}For $\mu=-\tfrac{1}{8}$, and $\rho_0>0$ sufficiently small, 
$$\slashed D: r \rho^{1+\mu} H^1_{e,\circ}(Y_K; S )\lre \rho^\mu L^2 (Y_K; S) \cap \mathcal K^{1/2}(K;\C^2)$$
is left semi-Fredholm, and there is a constant $C_\mu$ independent of $\delta$ such that 
$$\|\ph\|_{r\rho^{1+\mu} H^1_e}\leq C_\mu (\|\slashed D\ph \|_{\rho^\mu H^1_\text{b}} \ + \ \|Q \ph\|_{\rho^\mu L^2})$$
\noindent where $Q$ is the restriction to a compact domain in $Y_K-(\mathcal Z \cup N_K)$. Moreover, there is a finite-dimensional space $\mathcal K$ spanned by $\chi_Y \Phi_\alpha$ for $\Z_2$-harmonic spinors $\Phi_\alpha$ on either $Y_1,Y_2$ such that the above estimate holds uniformly omitting $Q$ if $\ph \perp \mathcal K$. 

\end{prop} 
\begin{proof}
The proof is analogous to that of Propositions \ref{uniformdirac} and \ref{uniformderham}. 

\begin{enumerate}
\item[] \underline{Step 1:} Let $\zeta_N(\rho)$ be a logarithmic cut-off function equal to $1$ on $\text{supp}(\chi_N)$ and supported where $|R|\leq \delta^{3/8}$, and consider the metric $$g_\delta(N_K)= dt^2 + dr^2+ (\rho^2+\delta^2)d\theta^2 + \chi_N(\rho) (\chi_1 h_1 +\chi_2h_2),$$ so that $g_\delta(N_K)=g_\delta$ (defined in \refeq{gluedmetric}) on the support of $\chi_N$. Since $h_i=O(\rho)$, for $\delta$ sufficiently small Proposition  \ref{3Dinjectivity} holds equally well on $N_K$ using the metric $g_\delta(N_K)$. Note in this that since the elements of $K^{1/2}(K;\C^2)$ concentrate near $\rho=\rho_0$, the perturbation to $\slashed D$ arising from the change in metric has exponentially small (in $|\ell|$) pairing with $\Psi_\ell, \overline \Psi_\ell$. Thus the perturbation is indeed bounded (and small for small $\delta$) into the higher regularity subspace.   

\medskip

\item[] \underline{Step 2:} For weight $-1/4<\mu<0$, the operator $\delbar: r^{1+\mu}H^1_\text{b}(\C-\{0\}; \C) \to r^\mu L^2(\C-\{0\};\C)$ is invertible by similar considerations to Lemma \ref{infiniteneck}. Integration by parts similar to \ref{3Dinjectivity} shows that $\slashed D: S^1 \times (D^2-\{0\})$ is invertible for the model metric. A preliminary parametrix patching on the closed manifolds $Y_i$ then shows that 

$$\|\ph\|_{r \rho^{1+\mu}H^1_e(Y_i)}\leq C_\mu \left(\|\slashed D\ph\|_{\rho^{\mu}L^2(Y_i)} \ + \  \|Q\ph\|_{\rho^\mu L^2(Y_i)}\right)$$

\noindent holds on the punctured manifolds $Y_i - (\mathcal Z_i \cup K_i)$ for $\ph \in r \rho^{1+\mu}H^1_e(Y_i)$, where $Q$ is the projection on a fixed compact region not containing $K_i$ (where the $\Z_2$-harmonic spinors on $Y_1,Y_2$ are necessarily non-zero by analytic continuation). Moreover, if $\ph$ is $L^2$-orthogonal to the finite-dimensional space of $\Z_2$-harmonic spinors restricted to this fixed compact region by smooth bump functions, then the $Q$ term is unnecessary. 

\medskip 
\item[]\underline{Step 3:} Let $P_N$ be the parametrix from Step 1, and $P_Y=P_1 + P_2$ be the parametrices from Step 2. Set 
$$P=\zeta_Y P_Y \chi_Y \ + \ \zeta_N P_N \chi_N,$$
 where $\zeta_Y$ is a logarithmic cut-off function equal to $1$ on the support of $\chi_Y$. The conclusion now follows from the same calculation as (\refeq{patchingerrorterms}) since $d\chi_Y\sim O(\log(1/\delta)^{-1})$. Note that the terms involving $d\chi_N$ are small in the higher regularity space by the same argument as in Step 1.  
\end{enumerate}
\end{proof}

We now let $\Phi_\circ=a\chi_1\Phi_1$ be the approximate solution on $Y_1$ (so that the constant $R_1=\delta^{-1/4}$). Recall that $R_0=\rho_0\delta^{-1}$ and that the perturbation (\refeq{perturbation}) was defined including a yet-unspecified constant $\e$ extending $B(\xi)$ outside $N_K$. 

Recall that $\mathcal B_\delta(K;\C^2)\subseteq L^2(K;\C^2)$ denotes the finite-dimensional space spanned by the lowest $2\lfloor \delta^{-1}\rfloor$ Fourier modes. We now define the universal Dirac operator with perturbations by 
\begin{eqnarray}{\slashed {\mathbb D}}_{\mathbb B}: r \rho H^1_{e,\circ}\oplus L^{2,2}(\mathcal Z; \C)\oplus \mathcal B_\delta (K;\C^2)&\lre& \rho^{\mu}L^2 \\ 
(\mathcal Z, \ph, \xi)&\mapsto & \slashed {\mathbb D}(\mathcal Z,\ph) +  \delta^{-1+\mu}B(\xi(t))\Phi_\circ. \nonumber \end{eqnarray}

\noindent The (proof of) the previous proposition implies the following universal version. The precise meaning of the codomain in the following statement is given in the proof. Recall that a small yet unspecified constant $\epsilon$ appeared in the definition of (\refeq{perturbation}). 
\begin{prop} For $\mu=-\tfrac{1}{8}$ and for $\rho_0, \delta, \epsilon>0$ sufficiently small, 
\smallskip
\be (d\slashed {\mathbb D}_{\mathbb B})_{(\mathcal Z_\alpha,\Phi_\alpha^\delta)}:  r \rho^{1+\mu} H^1_{e,\circ}\oplus L^{2,2}(\mathcal Z; \C)\oplus \mathcal B_\delta(K;\C^2)\lre \rho^{\mu}L^2 \cap \mathcal K^{1/2} \cap \text{\bf Ob}^{3/2}(\mathcal Z_\alpha)\label{extendeddDB}\ee

\smallskip 

\noindent is Fredholm of Index 0. Moreover, on the complement of a fixed $\delta$-independent finite dimensional subspace $\mathcal H$, there is a $C_\mu$ such that 
$$\|(\ph, \eta, \xi)\|_{r\rho H^1_e \oplus L^{2,2}\oplus L^2} \leq C_\mu \left( \|\slashed{\mathbb D}_{\mathbb B}(\ph,\eta,\xi)\|_{\rho^\mu L^2\cap \mathcal K^{1/2}\cap \text{\bf Ob}^{3/2}} \ + \ \|Q\ph\|_{\rho^\mu L^2}\right)$$
\noindent holds uniformly in $\delta$. 
\end{prop}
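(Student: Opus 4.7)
The plan is to combine the three independent cancellation mechanisms established in the preceding material: (a) the neck perturbation $\xi \in \mathcal{B}(K;\mathbb{C}^2)$ cancels the obstruction $\mathcal{K}^{1/2}(K;\mathbb{C}^2)$ concentrated near $K$ via Proposition \ref{obstructioncancellation}; (b) deformations $\eta \in L^{2,2}(\mathcal{Z};\mathbb{C})$ of the singular set cancel the infinite-dimensional obstruction $\text{\bf Ob}^{3/2}(\mathcal{Z}_\alpha)$ concentrated near $\mathcal{Z}$ via the deformation operator $\mathcal{T}_{\Phi_\alpha^\delta}$ of Corollary \ref{fredholmddD}; and (c) on the remaining orthogonal complement $\text{Range}(\slashed{D})$, the parametrix of Proposition \ref{PP1} provides a uniform estimate modulo a compactly supported projection $Q$. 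The crucial geometric input is that $\mathcal{Z}_\alpha$ and $K$ are disjoint, so the supports of the two obstruction-killing perturbations are mutually far apart.

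Decomposing the codomain orthogonally in $\rho^\mu L^2$ as $\mathcal{K} \oplus \text{\bf Ob}(\mathcal{Z}_\alpha) \oplus \text{Range}(\slashed{D})$ and using the splitting of the domain as $(\ph,\eta,\xi)$, the derivative of $\slashed{\mathbb{D}}_{\mathbb{B}}$ at $(\mathcal{Z}_\alpha, \Phi_\alpha^\delta)$ acquires the block form
$$
d\slashed{\mathbb{D}}_{\mathbb{B}} \;=\;
\begin{pmatrix}
0 & \pi_{\mathcal{K}}\mathcal{B}_{\Phi_\alpha^\delta} & \pi_{\mathcal{K}} B_\delta \\
0 & \pi_{\text{Ob}}\mathcal{B}_{\Phi_\alpha^\delta} & \pi_{\text{Ob}} B_\delta \\
\slashed{D} & \pi_{\text{Range}}\mathcal{B}_{\Phi_\alpha^\delta} & \pi_{\text{Range}} B_\delta
\end{pmatrix}.
$$
The step I would carry out next is to show the anti-diagonal blocks $\pi_{\mathcal{K}} B_\delta$ and $\pi_{\text{Ob}}\mathcal{B}_{\Phi_\alpha^\delta}$ are the dominant terms in the first two rows, while the cross-terms $\pi_{\mathcal{K}}\mathcal{B}_{\Phi_\alpha^\delta}$ and $\pi_{\text{Ob}} B_\delta$ can be made arbitrarily small. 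For the first cross-term, $\mathcal{B}_{\Phi_\alpha^\delta}(\eta)$ is supported in a fixed tubular neighborhood of $\mathcal{Z}_\alpha \subset Y_K - N_K$ (by the construction of the metric deformation in Lemma \ref{BG}), whereas elements of $\mathcal{K}$ are concentrated near $K$ with pointwise bounds that decay exponentially away from the neck region, so the inner product is $O(e^{-c/\delta})$. For the second cross-term, $B_\delta(\xi)\Phi_\circ$ is supported inside $N_K$, while $\text{\bf Ob}(\mathcal{Z}_\alpha)$ elements concentrate near $\mathcal{Z}_\alpha$ (cf. \cite[Prop. 4.3]{PartII}); choosing $\epsilon$ and $\rho_0$ small enough so that the support of $B_\delta$ stays a fixed distance from $\mathcal{Z}_\alpha$ again forces this term to be uniformly small.

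With these cross-terms under control, the block operator is a small perturbation of a block upper-triangular operator whose diagonal blocks are: $\pi_{\mathcal{K}} B_\delta: \mathcal{B}(K;\mathbb{C}^2) \to \mathcal{K}^{1/2}$ uniformly invertible by Proposition \ref{obstructioncancellation}; $\pi_{\text{Ob}}\mathcal{B}_{\Phi_\alpha^\delta} = \text{ob} \circ \mathcal{T}_{\Phi_\alpha^\delta}: L^{2,2}(\mathcal{Z};\mathbb{C}) \to \text{\bf Ob}^{3/2}(\mathcal{Z}_\alpha)$ which is Fredholm of index $0$ with kernel a fixed finite-dimensional space $\mathcal{H}_0$ by Corollary \ref{fredholmddD}; and $\slashed{D}$ acting on $r\rho^{1+\mu}H^1_{e,\circ}$ which by Proposition \ref{PP1} is left semi-Fredholm with uniform estimate modulo the compact projection $Q$. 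Combining these three estimates via a standard triangular-system argument yields the uniform bound on the complement of the finite-dimensional subspace $\mathcal{H}$, taken to be the direct sum of $\mathcal{H}_0$ with the finite-dimensional image of $Q$ (both $\delta$-independent once the approximate solution has stabilized on a tubular neighborhood of $\mathcal{Z}_\alpha$). Index additivity across the three diagonal blocks gives $\operatorname{Ind} = 0$.

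The main obstacle I anticipate is carefully verifying that $\pi_{\text{Ob}} B_\delta$ is indeed negligible into the higher-regularity obstruction space $\text{\bf Ob}^{3/2}(\mathcal{Z}_\alpha)$, since obstruction elements with high Fourier modes along $\mathcal{Z}_\alpha$ are precisely those that can be made to concentrate more sharply, and $B_\delta$ has non-trivial Fourier content along its support in $N_K$. This is controlled by the geometric separation $\text{dist}(K,\mathcal{Z}_\alpha) > 0$ together with the exponential decay of obstruction elements away from $\mathcal{Z}_\alpha$ established in \cite[Sec. 4]{PartII}, which ensures that pairing with $B_\delta(\xi)\Phi_\circ$ loses no Sobolev regularity and can be made as small as desired by shrinking $\epsilon$.
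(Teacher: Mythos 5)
Your proposal assembles exactly the three ingredients the paper uses (Proposition \ref{obstructioncancellation} for the neck obstruction $\mathcal K$, the deformation operator of Corollary \ref{fredholmddD} for $\text{\bf Ob}(\mathcal Z_\alpha)$, and Proposition \ref{PP1} for the bulk), but packages them differently: you write $d\slashed{\mathbb D}_{\mathbb B}$ as a block matrix relative to a global orthogonal splitting $\mathcal K\oplus\text{\bf Ob}\oplus\text{Range}(\slashed D)$ and argue triangularity, whereas the paper constructs an explicit patched right parametrix $\mathbb P=\zeta_Y\mathbb P_Y\chi_Y+\zeta_N\mathbb P_N\chi_N$ from the universal parametrices on $Y_i-(\mathcal Z_i\cup K_i)$ and on $N_K$, and shows $(d\slashed{\mathbb D}_{\mathbb B})\mathbb P=\mathrm{Id}+O(\log(\delta)^{-1})+O(\epsilon)$, then gets the index by setting $\xi=0$, dropping the orthogonality constraints (which have matching dimension), and excising. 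The main thing your version elides is that the splitting you decompose against is not canonically defined on the glued manifold: $\text{\bf Ob}(\mathcal Z_\alpha)$ and $\mathcal K$ are defined via the operators on the separate pieces $Y_i-(\mathcal Z_i\cup K_i)$ and $N_K$, and making them into an honest (approximate) decomposition of $\rho^\mu L^2(Y_K)$ — and defining the higher-regularity codomain $\mathcal K^{1/2}\cap\text{\bf Ob}^{3/2}$ — is precisely the cutoff bookkeeping $\chi_Y g\in\text{\bf Ob}^{3/2}$, $\chi_N g\in\mathcal K^{1/2}$ that the parametrix patching carries out; the $O(\log(\delta)^{-1})$ error from $d\chi_Y,d\chi_N$ is the price of that identification and does not appear in your account. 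Two smaller caveats: (a) your stated mechanism for controlling $\pi_{\text{Ob}}B_\delta$ (exponential concentration of obstruction elements near $\mathcal Z_\alpha$) only applies to high Fourier modes — the low modes of $\text{ob}(a)$ are spread over all of $Y_1$ and pair with $B_\delta(\xi)\Phi_\circ$ at $O(1)$; fortunately, since $\pi_{\mathcal K}\mathcal B_{\Phi}=0$ by disjointness of supports, the relevant $2\times2$ block is genuinely lower triangular and $\pi_{\text{Ob}}B_\delta$ only needs to be uniformly \emph{bounded} into $\text{\bf Ob}^{3/2}$, not small (the $O(\epsilon)$ term in the paper controls instead the overflow of $\mathrm{supp}(B_\delta)$ outside $N_K$); and (b) ``index additivity across the diagonal blocks'' requires the decomposition to be exact rather than approximate, which is why the paper resorts to an excision argument. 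With those repairs your route closes up and is essentially equivalent to the paper's.
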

\begin{proof}
As in Step 2 of Proposition \ref{PP1}, the operator \be \slashed D: r\rho^{1+\mu}H^1_e(Y_i -(\mathcal Z_i\cup K_i))\to \rho^\mu L^2(Y_i -(\mathcal Z_i \cup K_i))\label{onKcomplement}\ee
\noindent is left semi-Fredholm. In fact, since the indicial roots of the model operator at $K$, the analogue of Lemma \ref{expansions} along $K$ shows that the kernel and cokernel of (\refeq{onKcomplement}) coincide with the $\Z_2$-harmonic spinors and the cokernel described in Proposition \ref{cokernel} respectively.  

We now define the codomain to consist of those $g\in \rho^\mu L^2(Y_K; S)$ such that 
\bea \chi_Y g &\in& \rho^\mu L^2 \cap \text{\bf Ob}^{3/2}(\mathcal Z_i) \hspace{2cm}\text{on }Y_i\\
\chi_N g & \in & \rho^\mu L^2 \cap \mathcal K^{1/2} \hspace{2.8cm}\text{on }N_K\eea
\noindent where $\text{\bf Ob}^{3/2}$ is as in Corollary \ref{fredholmddD}. 

Let $\mathbb P_Y=\mathbb P_1 + \mathbb P_2$ and $\mathbb P_N$ be the universal parametrices provided by the analogue of Corollary  \ref{fredholmddD} for (\refeq{onKcomplement}), and Proposition  \ref{obstructioncancellation} respectively. Set 

$$\mathbb P=\zeta_Y \mathbb P_Y \chi_Y + \zeta_N \mathbb P \chi_N,$$
\noindent where it is understood that $\zeta_Y,\zeta_N$ multiply only the spinor component. Taking $P_1$ sufficiently small so that $\text{supp}(\zeta_Y)\cap P_1=\emptyset$, it is obvious that the image of $\mathbb P$ obeys the orthogonality constraints. A quick computation shows that $$(\text{d}\slashed {\mathbb D}_{\mathbb B}) \mathbb P(g)=g + O\left(\log(\delta)^{-1}\right) +  O(\epsilon) $$
\noindent where the first term arises from $d\chi_N,d\chi_Y$, and the second term arises from the portion of $B_\delta(\xi(t))$ supported outside $N_K$, which has $\rho^\mu L^2$-norm bounded by $C \epsilon \|\xi\|_{L^2}$. It follows that $\text{d}\slashed {\mathbb D}_{\mathbb B}$ is surjective with uniform estimate on the complement of the kernel for $\epsilon,\delta$ sufficiently small, with $Q$ as in Proposition \ref{PP1}. 

For the Fredholm index it suffices to consider $\xi(t)=0$ and eliminate the orthogonality constraints, as these have the same dimension. In this case, the reverse parametrix patching shows that $\text{d}\slashed{\mathbb D}_{\mathbb B}$ has finite-dimensional kernel and so is Fredholm. The index statement then follows from a standard excision argument, and this implies the operator is injective. 
\end{proof}

\subsubsection{Proof of Theorem \ref{mainb}}
\label{proofofmainb}
\begin{proof}[Proof of Theorem \ref{mainb}] By a simple transversality argument, we may perturb $K_1$ so that $\Phi_1$ is non-vanishing along $K_1$. Lemma \ref{toruspincherror} shows that the approximate solutions (\refeq{approximatesoltorus}) have error approaching zero for $\mu=-\tfrac{1}{8}$. The theorem is then a result of the following small variation of Theorem \ref{NMIFT}: consider the extended operator 
$$\slashed{\mathbb D}_{\mathbb B}: r \rho^\mu \mathbb H_\mathbb B \to p_1^\star(\rho^\mu  \mathbb L^2) $$
\noindent where the domain and codomain are tame Fr\'echet vector bundles modeled on 
$$\bigcap_{m \geq 0} H^{m+2}(\mathcal Z;\C) \oplus r \rho^{1+\mu}H^{m,1}_{\circ,\text{b}}(Y_K\setminus\mathcal Z;S) \oplus \mathcal B_\delta^m(K;\C^2) \hspace{2cm} \bigcap_{m\geq 0} \rho^\mu H^m_\text{b}(Y_K\setminus \mathcal Z; S)$$
\noindent respectively. The middle space in the domain denotes version of \eqref{doublebspaces} also satisfying the orthogonality constraints of (\refeq{orthogonalityspaces}). 
$\mathcal B^m(K;\C^2)$ is endowed with the standard family of smoothing operators given by truncating Fourier modes, which obviously preserves the subspace. The smoothing operators on the middle factor may be adjusted to respect the orthogonality constraint.  

It is easy to check the invertibility of $\text{d}\slashed{\mathbb D}_{\mathbb B}$ on an open neighborhood and the requisite tame estimates of \cite[Sec. 8]{PartII} hold in this case as well. Finally, the finite-dimensional cokernel of (\refeq{extendeddDB}) may be accounted for as in the proof of (ii) in Theorem \ref{NMIFT}.  
\end{proof}

\begin{rem}
Since $\mathcal B^m(K;\C^2)$ has finite $\delta$-dependent dimension, the above does not show the solution is smooth in $\delta$. This may easily be amended by adding a $H^m(K;\C^2)$ factor to the codomain, and extending the map by a smooth interpolation between the constraints defining $r\rho^{1+\mu}H^{m,1}_{\circ,\text{b}}$ and $\mathcal B^m$ (see \cite[Sec. 10.1]{PartIII} for similar arguments).
\end{rem}
 
\section{Examples and Applications}
\label{section4}
In this section we apply Theorems \ref{maina} and \ref{mainb} to construct new examples of $\Z_2$-harmonic 1-forms and spinors. To begin, we construct examples on Seifert--fibered 3-manifolds which are used as the building blocks in the gluing construction. 

\subsection{Orbifold Riemann Surfaces and Seifert--fibered spaces}
\label{section4.1}
To begin with, we introduce some background on orbifold Riemann surfaces and Seifert--fibered 3-manifolds. For more detailed explanations, we refer to \cite[Section 2]{MOY1996} and \cite{Orlik72Seifert}.

Recall that a Seifert--fibered 3-manifold $Y$ admits an action of $U(1)$ with finite stabilizers. Thus, we may view the manifold as a fiber bundle $\pi: Y \to \Sigma$ with fiber $S^1$ over a 2-dimensional orbifold $\Sigma$. Let $i\eta$ be the connection 1-form of a constant curvature $U(1)$ connection on $Y$. Then $Y$ may be endowed with the metric
\begin{equation}
    g_{s,V} = s^2\eta^2 + \pi^*(g_\Sigma) \label{gY}
\end{equation}
of fiber diameter $s$, where $g_\Sigma$ is a metric of volume $V$ on $\Sigma$. For the duration of this subsection, it is understood that terms such as "line bundle" and "metric" refer to the orbifold versions when referring to objects on $\Sigma$.

\subsubsection{Orbifold Riemann Surfaces} To explain more precisely, recall that an orbifold Riemann surface is a Hausdorff space $|\Sigma|$ with a finite set of marked points and integral multiplicities $(x_1, \alpha_1), \ldots, (x_n, \alpha_n)$ with $\alpha_i \geq 1$, and an atlas of coordinate charts 
$$
\phi_i: (D, 0) \to (U_i, x_i), \quad i = 1, \ldots, n, \quad \phi_x: D_x \to U_x, \quad \text{for} \; x \in \Sigma \setminus \{x_1, \ldots, x_n\},
$$
where $D$ is the standard complex disk such that $\phi_i$ induces a homeomorphism from $(D, 0) / \mathbb{Z}_{
\alpha_i}$ to $(U_i, x_i)$ for $i=1,...,n$, and $\phi_x$ are homeomorphisms for $x\neq x_i$. All transition functions are holomorphic. Additionally, the orbifold structure endows the underlying topological space $|\Sigma|$ with the structure of a complex curve as follows: in local coordinates $(U_i, x_i)$, if we denote the complex coordinate on $D$ by $w$, then $w^{\alpha_i}$ defines a complex coordinate over $D / \mathbb{Z}_{\alpha_i }$, which is a neighborhood of the marked point $x_i$. A basic topological invariant of the orbifold structure is the orbifold Euler characteristic, given by
$$
\chi^{\mathrm{orb}}(\Sigma) := 2 - 2\gamma + \sum_{i=1}^n \left( \frac{1}{\alpha_i} - 1 \right),
$$
where $\gamma$ is the genus of the underlying smooth curve $|\Sigma|$.

The notions of bundles, connections, and sections naturally extend to the orbifold setting by considering an equivariant structure over the orbifold points. For example, an $n$-dimensional orbifold bundle $E$ is a collection of $\mathbb{Z}_{\alpha_i}$-equivariant $n$-dimensional vector bundles $E_i$ over $U_i$ and vector bundles $E_x$ over $U_x$ together with a 1-cocycle of transition functions over the overlaps. Note that on each $U_i$, the data of a $\Z_{\alpha_i}$-equivariant vector bundle of rank $n$ (up to isometry) is equivalent to that of a representation $\rho_i: \mathbb Z_{\alpha_i} \to \mathrm{GL}_n(\C)$. The notion of holomorphic bundles extends similarly. An orbifold connection $\nabla$ on an orbifold bundle $E$ is a collection $\nabla_i$ of $\mathbb{Z}_{\alpha_i}$-equivariant connections over the disks $E|_{U_i}$ and a  connection in the standard sense over each $E|_{U_x}$, which are compatible on intersections. Similarly, a section of an orbifold bundle $E$ is a collection of compatible $\mathbb{Z}_{\alpha_i}$-equivariant sections on $E|_{U_i}$ and sections on $E|_{U_x}$.

Two types of orbifold line bundles are of particular importance: 
\begin{itemize}
\item[(1)] The orbifold canonical bundle $K_\Sigma$
\item[(2)] The canonical line bundles $H_{x_i}$ of the orbifold points. 
\end{itemize}

\noindent Since the rotation $\mathbb{Z}_{\alpha}$ on the disk $D$ lifts to the cotangent bundle $T^\star D$, it defines an orbifold line bundle over $D/\Z_{\alpha}$. The orbifold canonical bundle $K_\Sigma$ is the holomorphic line bundle formed by gluing cotangent bundles of $U_i$ and $U_x$ together via the complex derivatives of the transition functions. The canonical line bundle $H_{x_i}$ of an orbifold point whose neighborhood $U_{i}$ is isomorphic to $D/\Z_{\alpha_i}$ is defined as follows:  $H_{x_i}$ is trivial away from $x_i$, and over $U_{i}$, it is given by the $\mathbb{Z}_{\alpha_i}$-equivariant line bundle $D_w \times \mathbb{C}_z$ with the action of $l \in \mathbb{Z} / \alpha_i \mathbb{Z}$ given by $l \cdot (w, z)= (e^{\frac{2\pi i l}{\alpha_i}} w, e^{\frac{2\pi i l}{\alpha_i}} z)$. 

The line bundles $H_{x_i}$ serve as local generators for the topological isomorphism classes of orbifold line bundles over $\Sigma$ in the following sense. Given an orbifold line bundle $L$, near each orbifold point $x_i$, there exist local invariants $0 \leq \beta_i < \alpha_i$ such that $L \otimes H_{x_1}^{-\beta_1} \otimes \cdots \otimes H_{x_n}^{-\beta_n}$ is an orbifold line bundle which is naturally isomorphic to a smooth line bundle over the smooth curve $|\Sigma|$. This line bundle is called the desingularization of $L$ and denoted as $|L|$. Moreover, the holomorphic sections of a holomorphic orbifold line bundle $L$ over $\Sigma$ are identified with the holomorphic sections of the desingularization $|L|$ over $|\Sigma|$. 

\begin{defn}
\label{orbifolddegree}
    Given an orbifold line bundle $L$ over $\Sigma$, the collection of integers $(b; \beta_1, \cdots, \beta_n)$ is called the Seifert invariant of $L$ over $\Sigma$, where $b = \deg(|L|)$. The degree of the orbifold line bundle $L$ is defined as $\deg(L) = b + \sum_{i=1}^n \frac{\beta_i}{\alpha_i}$.
\end{defn}

Given an orbifold line bundle $L$ with Seifert invariant $(b; \beta_1, \cdots, \beta_n)$ such that $\beta_i$ are relatively prime to $\alpha_i$, then the circle bundle of $L$ forms a smooth 3-manifold $Y$ \cite[Page 9]{MOY1996}, known as a Seifert--fibered space. The collection of local invariants $(\gamma, b; (\alpha_1, \beta_1), \cdots, (\alpha_n, \beta_n))$ is called the {\bf Seifert invariant} of the 3-manifold $Y$.

\subsubsection{Orbifold sections} 
The notion of holomorphic sections extends to orbifolds and there is an analogue of the standard Riemann-Roch theorem. 

First, note that for two orbifold line bundles, $L$ and $L'$, Definition \ref{orbifolddegree} implies $\deg(L \otimes L') = \deg(L) + \deg(L')$. Moreover, if we denote the Seifert invariants of $L$ and $L'$ as $(b; \beta_1, \cdots, \beta_n)$ and $(b'; \beta_1', \cdots, \beta_n')$, respectively, then the Seifert invariant for their tensor product is given by the following formula: Let 
$$
c := \sum_{i=1}^n \lfloor (\beta_i + \beta_i') / \alpha_i \rfloor, \quad \delta_i := \beta_i + \beta_i' - \sum_{i=1}^n \lfloor (\beta_i + \beta_i') / \alpha_i \rfloor \alpha_i,
$$ 
then based on the local description, the Seifert invariant for $L \otimes L'$ is
\begin{equation}
    \label{eq_seifert_invariant_tensor}
    (b + b' + c; \delta_1, \cdots, \delta_n).
\end{equation}
For example, by a straightforward computation, $K_{\Sigma}$ has Seifert invariant $(2\gamma - 2; \alpha_1 - 1, \cdots, \alpha_n - 1)$, and the Seifert invariant for $K^2_{\Sigma}$ is therefore $(4\gamma - 4 + n, \alpha_1 - 2, \cdots, \alpha_n - 2)$.

We will need the following extensions of standard results to the orbifold case. We refer to \cite{nasatyr1995orbifold} for a more detailed discussion.

\begin{prop}[{\cite[Corollary 1.4]{nasatyr1995orbifold}}]
    \label{prop_vanishing_negative_degree}
    Suppose $\deg(L) \leq 0$, then $H^0(L) = 0$, unless $L$ is trivial.
\end{prop}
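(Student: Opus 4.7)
My approach will be to reduce the statement to the classical vanishing theorem for holomorphic line bundles on the underlying compact Riemann surface $|\Sigma|$ via the desingularization construction recalled above. Let the Seifert invariant of $L$ be $(b;\beta_1,\ldots,\beta_n)$ with $0\le \beta_i<\alpha_i$, so that $\deg(L)=b+\sum_{i=1}^n \beta_i/\alpha_i$ and $|L|$ is a smooth holomorphic line bundle on $|\Sigma|$ of degree $b$. Because orbifold sections of $L$ correspond bijectively to ordinary holomorphic sections of $|L|$ on $|\Sigma|$, it suffices to analyze $H^0(|\Sigma|,|L|)$.

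The argument then splits into two cases. First, if $\deg(L)<0$, then since $\sum \beta_i/\alpha_i\ge 0$ the integer $b$ must itself be strictly negative, and the classical vanishing on Riemann surfaces immediately gives $H^0(|L|)=0$. Second, if $\deg(L)=0$, the constraint $b+\sum \beta_i/\alpha_i=0$ together with $b\in\mathbb{Z}$ and $0\le \beta_i/\alpha_i<1$ forces either $b<0$ (already handled) or $b=0$ with every $\beta_i=0$. In the latter case $L$ is pulled back from the smooth degree-zero line bundle $|L|$ on $|\Sigma|$, and the classical fact that such a bundle has a nonzero holomorphic section only when it is holomorphically trivial (in which case $H^0\cong \mathbb{C}$ consists of the constants) completes the argument: triviality of $|L|$ combined with $\beta_i=0$ for all $i$ is precisely triviality of $L$ as an orbifold line bundle.

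There is essentially no analytic obstacle here. The content is the bookkeeping with Seifert invariants plus the dictionary between orbifold and smooth sections, both of which are set up in Section~\ref{section4.1}. The only mildly delicate point is the observation that $0\le \beta_i/\alpha_i<1$ together with $b\in\mathbb{Z}$ forces the degree-zero critical case to collapse to the smooth setting; once this is noted, the result follows immediately from the classical theory on $|\Sigma|$.
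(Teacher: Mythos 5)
Your proof is correct. The paper itself offers no argument for this proposition — it simply cites Nasatyr--Steer [Corollary~1.4] — but your derivation via the desingularization dictionary (orbifold sections of $L$ $\leftrightarrow$ sections of $|L|$ on $|\Sigma|$, with $\deg(|L|)=b$) is exactly the standard route, and all the case analysis checks out: $\deg(L)\le 0$ forces $b\le 0$, the subcase $b<0$ is killed by classical vanishing on $|\Sigma|$, and $b=0$ forces all $\beta_i=0$, reducing to the classical statement that a degree-zero line bundle on a compact Riemann surface with a nonzero section is trivial. The only cosmetic slip is the phrase ``already handled'' for the $\deg(L)=0$, $b<0$ subcase — that subcase was not literally covered by the $\deg(L)<0$ case, though the identical one-line argument ($\deg(|L|)=b<0$) applies.
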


We also have the following Kawasaki-Riemann-Roch theorem.

\begin{thm}[{\cite{kawasaki1979riemann}}]
    \label{thm_orbifold_Riem_Roch}
    Let $L$ be a holomorphic orbifold line bundle over $\Sigma$ with $|L|$ as the desingularization, then 
    $$
    H^0(L) - H^0(L^{-1} \otimes K_{\Sigma}) = 1 - \gamma + \deg(|L|).
    $$
    \qed
\end{thm}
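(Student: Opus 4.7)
The plan is to reduce the orbifold Riemann--Roch statement to the classical Riemann--Roch theorem on the smooth underlying curve $|\Sigma|$ via the desingularization functor $M \mapsto |M|$. First, I would invoke the identification recorded in Section \ref{section4.1} (just before Proposition \ref{prop_vanishing_negative_degree}): for any holomorphic orbifold line bundle $M$ over $\Sigma$, restriction of sections away from the orbifold points gives a canonical isomorphism $H^0(\Sigma; M) \cong H^0(|\Sigma|; |M|)$. Applying this both to $M = L$ and to $M = L^{-1} \otimes K_\Sigma$ reduces the left-hand side of the formula to $h^0(|L|) - h^0(|L^{-1} \otimes K_\Sigma|)$.

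The key algebraic step is then to identify $|L^{-1} \otimes K_\Sigma|$ with $K_{|\Sigma|} \otimes |L|^{-1}$ as ordinary holomorphic line bundles on the smooth curve. Writing the Seifert invariants of $L$ as $(b; \beta_1, \dots, \beta_n)$, I would use the tensor-product formula \eqref{eq_seifert_invariant_tensor} repeatedly to track the behavior of the desingularization. The essential auxiliary identities are $|K_\Sigma| \cong K_{|\Sigma|}$, verified from the local model $w^{\alpha_i - 1}\, dw \leftrightarrow dz/\alpha_i$ under $z = w^{\alpha_i}$, and the fact that $H_{x_i}^{\alpha_i}$ is isomorphic to $\mathcal{O}_{|\Sigma|}(x_i)$ pulled back to the orbifold. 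Treating the cases $\beta_i = 0$ and $\beta_i > 0$ separately in \eqref{eq_seifert_invariant_tensor}, the carry terms arising from $L^{-1}$ (one contribution for each $i$ with $\beta_i > 0$) cancel exactly against those appearing in the product $L^{-1} \otimes K_\Sigma$, yielding the desired isomorphism on $|\Sigma|$.

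Once this identification is in hand, the conclusion is immediate from the classical Riemann--Roch theorem on the smooth curve $|\Sigma|$ of genus $\gamma$ combined with Serre duality:
$$ h^0(|L|) - h^0(K_{|\Sigma|} \otimes |L|^{-1}) \;=\; h^0(|L|) - h^1(|L|) \;=\; 1 - \gamma + \deg(|L|). $$

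The principal obstacle is the combinatorial bookkeeping in the second step: the desingularization functor fails to be multiplicative, with $|L_1 \otimes L_2| \cong |L_1| \otimes |L_2| \otimes \mathcal{O}\bigl(\sum_i c_i x_i\bigr)$ for the carry integers $c_i$ produced by \eqref{eq_seifert_invariant_tensor}, so one must verify that the twists introduced by inverting $L$ and those introduced by tensoring with $K_\Sigma$ precisely cancel at each orbifold point. Once this accounting is done no additional analytic input is required, since the identification of orbifold sections with sections of the desingularization handles the contributions from both terms on the left-hand side.
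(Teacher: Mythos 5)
Your argument is correct. Note that the paper does not prove this statement at all --- it is quoted from Kawasaki's index theorem for V-manifolds and closed with \qed --- so there is no internal proof to compare against; what you have written is the standard elementary derivation in the one-dimensional case, and it is complete given the facts the paper records in Section \ref{section4.1}. The two nontrivial inputs are exactly the ones you isolate: the identification $H^0(\Sigma;M)\cong H^0(|\Sigma|;|M|)$ (this is \cite[Prop.~2.0.14]{MOY1996}, which the paper already invokes in the proof of Lemma \ref{lemma_dimension_computation}), and the isomorphism $|L^{-1}\otimes K_\Sigma|\cong K_{|\Sigma|}\otimes|L|^{-1}$. Your carry bookkeeping for the latter checks out: with $L$ of type $(b;\beta_1,\dots,\beta_n)$, the bundle $L^{-1}$ acquires a carry of $-1$ at each $x_i$ with $\beta_i>0$ (since $H_{x_i}^{-\beta_i}=H_{x_i}^{\alpha_i-\beta_i}\otimes H_{x_i}^{-\alpha_i}$ and $H_{x_i}^{\alpha_i}\cong\mathcal O(x_i)$), and tensoring with $K_\Sigma$ of type $(2\gamma-2;\alpha_i-1)$ produces a compensating carry of $+1$ at exactly those points, leaving normalized local invariant $\alpha_i-1-\beta_i$ in both cases $\beta_i=0$ and $\beta_i>0$; hence both sides equal $L^{-1}\otimes K_\Sigma\otimes\prod_i H_{x_i}^{\beta_i-\alpha_i+1}$ and the degrees agree, $\deg|L^{-1}\otimes K_\Sigma|=2\gamma-2-\deg|L|$. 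The final step is then classical Riemann--Roch plus Serre duality on the smooth curve, as you say. The only stylistic difference from the cited route is that Kawasaki's theorem is a general equivariant index computation with explicit fixed-point contributions from the isotropy groups, whereas your reduction hides those contributions inside the definition of $|L|$; for orbifold Riemann surfaces the two are equivalent, and your version has the advantage of using only material already set up in the paper.
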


\medskip 
\subsubsection{Orbifold spin structure and Seifert--fibered space}
Now, we will introduce orbifold $\mathrm{Spin}$ and $\mathrm{Spin}^c$ structures, which have been studied in \cite{belgun2007singularity, geiges2012generalised}, and discuss their extensions to Seifert--fibered 3-manifolds. For more details, we refer to \cite[Section 5]{MOY1996}.

A spin structure $\mathfrak{s}_0$ on a Riemann surface $\Sigma$ is a square root of the tangent bundle $K_{\Sigma}^{\frac{1}{2}}$, which can also be understood as the complex line associated to a fiberwise connected double covering of the unit tangent bundle of $K_{\Sigma}$. 

For an orbifold point $x_i$, the existence of a spin structure on $\Sigma$ requires a lift of $\mathbb{Z}_{\alpha_i} \subset \mathrm{SO}(2)$ to some $G_x \subset \mathrm{Spin}(2)$ that projects isomorphically onto $\mathbb{Z}_{\alpha_i}$ via the projection from $\mathrm{Spin}(2) \to \mathrm{SO}(2)$. It is straightforward to verify that the group $\mathbb{Z}_{\alpha_i}$ can be lifted to $\mathrm{Spin}(2)$ if and only if $\alpha_i$ is odd. The converse statement is also true.

\begin{prop}[{\cite[Theorem 3]{geiges2012generalised}}]
    An orbifold $\Sigma$ has a spin structure if and only if $\alpha_1, \cdots, \alpha_n$ are odd.
\end{prop}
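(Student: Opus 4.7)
The plan is to translate the existence of an orbifold spin structure into the existence of a holomorphic square root of the orbifold canonical bundle $K_\Sigma$, and then to use the Seifert invariant formula \eqref{eq_seifert_invariant_tensor} to reduce the question to a problem about characters of $\Z_{\alpha_i}$ together with a classical theta-characteristic problem on the desingularization $|\Sigma|$.

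First, I would set up the equivalence: an orbifold spin structure is the same data as a holomorphic orbifold line bundle $L$ together with an isomorphism $L^{\otimes 2} \cong K_\Sigma$. Near each orbifold point $x_i$ with local chart $D/\Z_{\alpha_i}$, such an $L$ restricts to a $\Z_{\alpha_i}$-equivariant line bundle on $D$, which is classified by a character $k \mapsto e^{2\pi i k \beta_i / \alpha_i}$ with $\beta_i \in \Z/\alpha_i\Z$, and the square-root condition becomes an equivariant isomorphism with $T^*D$ where $\Z_{\alpha_i}$ acts by the character $k \mapsto e^{-2\pi i k /\alpha_i}$.

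For the ``only if'' direction, matching characters imposes the equation $2\beta_i \equiv -1 \pmod{\alpha_i}$, which is solvable iff $\gcd(2,\alpha_i)$ divides $-1$, i.e.\ iff $\alpha_i$ is odd. This is exactly the condition that $\Z_{\alpha_i} \subset \mathrm{SO}(2)$ lifts to $\mathrm{spin}(2)$, as alluded to in the excerpt. For the ``if'' direction, when every $\alpha_i$ is odd, take $\beta_i := (\alpha_i - 1)/2$, which solves $2\beta_i \equiv -1 \pmod{\alpha_i}$ and satisfies $2\beta_i < \alpha_i$. Recalling that $K_\Sigma$ has Seifert invariant $(2\gamma - 2;\, \alpha_1 - 1, \ldots, \alpha_n - 1)$ and applying the tensor product formula \eqref{eq_seifert_invariant_tensor} to a candidate $L$ with Seifert invariant $(b;\, \beta_1, \ldots, \beta_n)$, all carries $\lfloor 2\beta_i/\alpha_i\rfloor$ vanish, so $L^{\otimes 2}$ has Seifert invariant $(2b;\, \alpha_1 - 1,\ldots,\alpha_n - 1)$. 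Matching with $K_\Sigma$ forces $b = \gamma - 1$.

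Thus the existence of $L$ reduces to the existence of a smooth holomorphic line bundle on $|\Sigma|$ of degree $\gamma - 1$ whose tensor square is the smooth canonical bundle $K_{|\Sigma|}$ (of degree $2\gamma - 2$). Such a line bundle is a classical theta characteristic, and always exists because $\mathrm{Pic}(|\Sigma|)$ is a divisible group. Gluing the locally constructed equivariant square roots at the orbifold points to such a smooth theta characteristic on $|\Sigma|$, via the identification $L \otimes H_{x_1}^{-\beta_1} \otimes \cdots \otimes H_{x_n}^{-\beta_n} = |L|$, produces the desired orbifold spin structure. The main conceptual step is the identification of orbifold spin structures with equivariant square roots of $K_\Sigma$; once this is in hand, the rest is an exercise in Seifert invariant arithmetic and the classical existence of theta characteristics, so I do not anticipate a serious obstacle.
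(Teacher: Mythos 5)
Your argument is correct, but it is worth noting that the paper does not actually prove this proposition: it is quoted from Geiges--Gonzalo \cite{geiges2012generalised}, and the surrounding text only sketches the necessity direction as a group-lifting obstruction, namely that $\Z_{\alpha_i}\subset \mathrm{SO}(2)$ lifts isomorphically to $\mathrm{spin}(2)$ if and only if $\alpha_i$ is odd. Your necessity argument is the character-theoretic translation of exactly that obstruction: demanding a $\Z_{\alpha_i}$-equivariant square root of $T^*D$ forces $2\beta_i\equiv -1 \pmod{\alpha_i}$, which is solvable precisely when $\alpha_i$ is odd. Where you genuinely go beyond the paper is in the sufficiency direction, which the paper outsources entirely to the citation: you construct the square root explicitly by taking $\beta_i=(\alpha_i-1)/2$, checking via \eqref{eq_seifert_invariant_tensor} that no carries occur so that $L^{\otimes 2}$ and $K_\Sigma$ have matching Seifert invariants once $b=\gamma-1$, and then invoking divisibility of $\mathrm{Pic}^0(|\Sigma|)$ (existence of theta characteristics) to realize the remaining smooth square root on the desingularization. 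This has the advantage of staying entirely within the orbifold-line-bundle formalism the paper sets up (Definition \ref{orbifolddegree} and the desingularization $|L|=L\otimes H_{x_1}^{-\beta_1}\otimes\cdots\otimes H_{x_n}^{-\beta_n}$), and of being self-contained. The one input you assume without proof is the identification of an orbifold spin structure with an equivariant square root of $K_\Sigma$; this is consistent with the paper's own working definition of a spin structure on a Riemann surface and is standard in dimension two, so it is an acceptable starting point, though a fully rigorous treatment would spell out the passage from the double cover of the unit tangent bundle to the associated line bundle.
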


\noindent For an orbifold $\mathrm{Spin}^c(2)$ structure $\mathfrak{s}_0^c$, there is no obstruction, and the K\"ahler structure on $\Sigma$ induces a canonical orbifold $\mathrm{Spin}^c(2)$ structure $\mathfrak{s}_0^c \cong \mathbb{C} \oplus K_{\Sigma}^{-1}$.

Let $L_0$ be an orbifold line bundle over $\Sigma$ which defines a Seifert--fibered space $\pi: Y \to \Sigma$. Then, any orbifold line bundle $L$ and structures on $\Sigma$ naturally extend to the Seifert--fibered space $Y$ given by $\pi^*L$. This leads to a faithful correspondence if one equips it with a connection. In particular, we have 

\begin{prop}[{\cite[Proposition 5.1.3]{MOY1996}}]
    There is a natural one-to-one correspondence between pairs of (orbifold) bundles with connection over $\Sigma$ and (usual) bundles with connection over $Y$, whose curvature forms pull up from $\Sigma$ and whose fiberwise holonomy is trivial. Furthermore, this correspondence induces an identification between orbifold sections of the orbifold bundle over $\Sigma$ with fiberwise constant sections of its pull-back over $Y$.
\end{prop}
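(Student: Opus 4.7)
The plan is to construct the correspondence explicitly in both directions and verify that they are mutually inverse, treating smooth and orbifold points separately and then patching. Throughout, I will use the local model around a singular fiber: if $x_i \in \Sigma$ is an orbifold point with isotropy $\mathbb{Z}_{\alpha_i}$ and the Seifert defining bundle $L_0$ has local invariant $\beta_i$, then a neighborhood of the singular fiber $\pi^{-1}(x_i) \subset Y$ is modeled on $(D \times \mathbb{R})/\mathbb{Z}_{\alpha_i}$, where the generator of $\mathbb{Z}_{\alpha_i}$ acts by rotation through $2\pi/\alpha_i$ on $D$ and translation by $\beta_i/\alpha_i$ on $\mathbb{R}/\mathbb{Z}$.

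For the forward direction, take an orbifold bundle $E$ with connection $\nabla_E$ on $\Sigma$. On the complement of the singular fibers, $\pi^*E$ with the pulled-back connection is a genuine bundle on a genuine manifold, and there is nothing to prove. Near a singular fiber, the $\mathbb{Z}_{\alpha_i}$-equivariant local model $E_i \to D$ pulls back to a $\mathbb{Z}_{\alpha_i}$-equivariant bundle on $D \times \mathbb{R}$; the quotient by the diagonal $\mathbb{Z}_{\alpha_i}$ action (rotation on $D$, translation on $\mathbb{R}$) is a smooth bundle on the local model of $Y$ precisely because the $\mathbb{Z}_{\alpha_i}$ action on $D \times \mathbb{R}$ is free away from the singular fiber and acts by bundle isomorphisms. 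The pulled-back connection is invariant under this action and descends to a smooth connection whose curvature is the pullback of $F_{\nabla_E}$ and whose fiberwise holonomy is trivial (the connection is $\pi$-horizontal so parallel transport along $S^1$-fibers gives the identity).

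For the reverse direction, suppose $F \to Y$ is a bundle with connection $\nabla_F$ satisfying the two hypotheses. Trivial fiberwise holonomy means parallel transport along the $U(1)$-orbits gives a canonical identification of fibers within each orbit, hence an equivariant lift of the $U(1)$-action to $F$ (modulo a discrete cover, which is exactly what produces the $\mathbb{Z}_{\alpha_i}$-isotropy at singular fibers). The condition that $F_{\nabla_F}$ pulls up from $\Sigma$ means $\nabla_F$ is also $U(1)$-invariant, so both the bundle and the connection descend to an orbifold bundle with connection over $\Sigma$. In the local model near a singular fiber, this descent is the inverse of the forward construction: taking $U(1)$-invariants of the $(D \times \mathbb{R})/\mathbb{Z}_{\alpha_i}$-bundle returns a $\mathbb{Z}_{\alpha_i}$-equivariant bundle on $D$, i.e.\ an orbifold bundle on $U_i$.

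That these constructions are mutually inverse is a local verification reducing to the model above, and naturality under restriction glues the local correspondences into a global one. For the section statement, a section $s$ of the orbifold bundle $E$ is by definition a $\mathbb{Z}_{\alpha_i}$-equivariant section of $E_i$ in each chart, and its pullback $\pi^*s$ is manifestly constant along the $S^1$-fibers; conversely, a fiberwise constant section of $F$ is $U(1)$-invariant (again by the trivial fiberwise holonomy) and so descends to an orbifold section of the quotient bundle. The main obstacle is the bookkeeping at the singular fibers: one must verify that the diagonal $\mathbb{Z}_{\alpha_i}$-action in the local model of $Y$ is exactly what translates the orbifold equivariance of $E$ at $x_i$ into the smoothness of $\pi^*E$ around $\pi^{-1}(x_i)$, and this hinges on the condition $\gcd(\alpha_i,\beta_i)=1$ ensuring $Y$ is a smooth manifold.
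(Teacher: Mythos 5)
The paper does not prove this proposition at all: it is quoted verbatim from Mrowka--Ozsv\'ath--Yu \cite[Prop.\ 5.1.3]{MOY1996} and used as a black box, so there is no in-paper argument to compare against. Your reconstruction is essentially the standard one (and is in the spirit of the original): pull back and quotient in the local model $(D\times S^1)/\Z_{\alpha_i}$ in one direction, and in the other direction use trivial fiberwise holonomy to lift the $U(1)$-action by parallel transport and basic curvature to see that the connection is invariant under that lift (via $\mathcal L_\xi\nabla=\iota_\xi F_\nabla=0$ for $\xi$ vertical), so everything descends. The section statement then follows formally. I consider the argument correct as a sketch.

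One imprecision worth fixing: you justify smoothness of the quotient bundle by saying the diagonal $\Z_{\alpha_i}$-action on $D\times\R/\Z$ is ``free away from the singular fiber.'' That would not be enough --- if the action had fixed points over $0\in D$ the quotient would only be an orbifold bundle over an orbifold locus of $Y$. The correct statement is that the diagonal action is free \emph{everywhere}, including on $\{0\}\times S^1$, exactly because the generator translates the $S^1$-factor by $\beta_i/\alpha_i$ with $\gcd(\alpha_i,\beta_i)=1$; this is the same condition you invoke at the end for smoothness of $Y$, and it is what makes the quotient of the equivariant bundle an honest vector bundle over the singular fiber. Relatedly, it would be worth one sentence noting that ``trivial fiberwise holonomy'' refers to the regular fibers: the holonomy around a singular fiber is the image of the generator under the local isotropy representation $\rho_i$, which is precisely the orbifold datum recovered in the reverse direction.
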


\noindent Furthermore, if $\mathfrak{s}_0$ is a $\mathrm{Spin}(2)$ structure on $\Sigma$, then $\pi^*(\mathfrak{s}_0)$ defines a spin structure on $Y$. In this case, the spinor bundle $\slashed{S}$ is isomorphic to $\pi^*(K_{\Sigma}^{1/2} \oplus K_{\Sigma}^{-1/2})$, where the summands are given by the $\pm i$ eigenspaces of Clifford multiplication $\gamma(\eta)$. Similarly, if $\mathfrak{s}_0^c$ is a $\mathrm{Spin}^c(2)$ structure on $\Sigma$, then $\pi^*(\mathfrak{s}_0^c)$ defines a $\mathrm{Spin}^c$ structure on $Y$. In this case, the spinor bundle $\slashed{S}_c$ is isomorphic to $\underline{\mathbb{C}} \oplus \pi^*(K_{\Sigma}^{-1})$, where the summands are given by the $\pm i$ eigenspaces of $\gamma(\eta)$.

\subsection{$\Z_2$-harmonic spinors on Seifert--fibered 3-manifolds}

In this subsection we consider the spin Dirac operator $D=\slashed D$ as in (\refeq{Diracoperatorone}). Let $\Sigma$ be an orbifold Riemann surface with a spin structure $\mathfrak s_0$, and denote the associated positive spinor bundle by $K_{\Sigma}^{1/2}$. Next, let $\pi: Y \to \Sigma$ be a Seifert--fibered 3-manifold induced by an orbifold line bundle $L$ with Seifert invariant $(b = \deg |L|, \beta_1, \cdots, \beta_n)$. $\mathfrak s_0$ induces a spin structure  $\pi^*(\mathfrak{s}_0)$ on $Y$; the associated spinor bundle decomposes as $\slashed{S} = K_{\Sigma}^{1/2} \oplus K_{\Sigma}^{-1/2}$ via Clifford multiplication by $\eta$. The following lemma gives a Fourier decomposition for spinors on $Y$.

\begin{lm}
The action of $U(1)$ on $Y$ induces a decomposition  
\begin{equation}
    L^2(Y; \slashed{S}) = \bigoplus_{k \in \Z} L^2(\Sigma; (K_{\Sigma}^{1/2} \oplus K_{\Sigma}^{-1/2}) \otimes L^k) \label{fourierdecomposition}
\end{equation}
\noindent into sections over $\Sigma$ of irreducible representations.
\end{lm}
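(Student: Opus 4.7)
The plan is to apply a Peter--Weyl type decomposition for the $U(1)$-action on $Y$ by rotation along the fibers. First I would observe that, since $\slashed S=\pi^*(K_\Sigma^{1/2}\oplus K_\Sigma^{-1/2})$ is pulled back from $\Sigma$, it carries a canonical $U(1)$-equivariant structure in which $U(1)$ acts trivially on fibers of $\slashed S$. Combined with the $U(1)$-action on $Y$ and the fact that the metric $g_{s,V}$ from \eqref{gY} is $U(1)$-invariant, this endows $L^2(Y;\slashed S)$ with the structure of a unitary representation of $U(1)$. Applying Peter--Weyl for the compact abelian group $U(1)$ yields an orthogonal weight-space decomposition
$$L^2(Y;\slashed S) \;=\; \bigoplus_{k\in\Z} L^2(Y;\slashed S)_k, \qquad L^2(Y;\slashed S)_k \;=\; \{\psi \mid \psi(e^{i\theta}\cdot y) = e^{ik\theta}\psi(y)\}.$$

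Second, I would identify each weight space with $L^2$-sections of a twisted bundle on $\Sigma$ using the standard associated-bundle construction. If $\{U_\alpha\}$ is a trivializing cover of $L$ with transition cocycle $\{g_{\alpha\beta}\}$, then over $\pi^{-1}(U_\alpha) \simeq U_\alpha\times S^1$ a weight-$k$ section of $\pi^*F$ is a function $\psi_\alpha:U_\alpha\to F$ that on overlaps transforms by $\psi_\alpha = g_{\alpha\beta}^k\,\psi_\beta$, i.e.\ exactly a section of $F\otimes L^k$. Taking $F=K_\Sigma^{1/2}\oplus K_\Sigma^{-1/2}$ gives the desired identification. The $L^2$-compatibility of the isomorphism follows from Fubini applied to the submersion metric $g_{s,V}$, since integration along each circle fiber of length $2\pi s$ contributes a constant to Parseval's identity in each weight.

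Finally, I would verify compatibility with the orbifold structure at the exceptional fibers. At an orbifold point $x_i\in\Sigma$ with isotropy $\Z_{\alpha_i}$, the local structure group acts on a fiber chart with the twist encoded by the Seifert invariant $\beta_i$ of $L$. The decomposition from Peter--Weyl must therefore be combined with the identification of orbifold bundles with $\Z_{\alpha_i}$-equivariant data from Section \ref{section4.1}: a $U(1)$-weight-$k$ section that is also $\Z_{\alpha_i}$-equivariant corresponds, via formula \eqref{eq_seifert_invariant_tensor}, precisely to a section of the orbifold bundle $(K_\Sigma^{1/2}\oplus K_\Sigma^{-1/2})\otimes L^k$ in the sense of Section \ref{section4.1}. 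I expect this step to be the main bookkeeping obstacle, although not a serious analytic difficulty: once the orbifold conventions match the $\Z_{\alpha_i}$-characters appearing in the weight-$k$ subspace of the local representation, the global assembly is automatic. The density of smooth sections in each weight space $L^2(Y;\slashed S)_k$ then ensures the decomposition is as Hilbert spaces.
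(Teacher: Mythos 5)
Your proposal is correct and follows essentially the same route as the paper: the Peter--Weyl weight-space decomposition for the fiberwise $U(1)$-action is precisely the paper's Fourier decomposition $f_k(z)e^{ikt}$ in local trivializations, and your identification of the weight-$k$ space with sections of $F\otimes L^k$ via the transition cocycle $g_{\alpha\beta}^k$ is exactly the paper's observation that the $k$-th mode's transition data picks up the factor $e^{ik\omega_{\alpha\beta}}$ defining $L^k$. Your explicit check at the orbifold points is a detail the paper leaves implicit in its orbifold conventions, but it does not change the argument.
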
 

\begin{proof} 
Let $U_{\alpha}$ be a cover by local trivializations of $Y \to \Sigma$. We may assume that in the fiber coordinate $t \in \R / 2\pi \Z$, the transition functions are given by $t \mapsto t + \omega_{\alpha\beta}(z)$.

In local trivializations, the $S^1$ action decomposes sections as sums of $f_k(z)e^{ikt}$ for $k \in \Z$, where $f_k$ define sections of $(K_{\Sigma}^{1/2} \oplus K_{\Sigma}^{-1/2})$. The transition data of $f_k(z)e^{ikt}$ differs from that of $f_k(z)$ by a factor of $e^{ik \omega_{\alpha\beta}}$, which defines the line bundle $L^k$. 
\end{proof}

There is a particular perturbation of the Levi-Civita connection on $Y$ which, combined with \eqref{fourierdecomposition}, allows the Dirac operator to be reduced to differential operators on $\Sigma$. This perturbation was first introduced by Mrowka--Oszvath--Yu in \cite{MOY1996}, and was also studied by Nicolaescu in \cite{nicolaescu1996adiabatic}. The perturbed connection is defined by
$$
^\circ\nabla^{TY} := d \oplus \pi^*(\nabla_\Sigma)
$$
\noindent where $\nabla_\Sigma$ is the Levi-Civita connection on $\Sigma$. \cite[Lem. 5.2.1]{MOY1996} shows that the induced spin connection may be written $^\circ \nabla= \nabla^\text{spin}+B$ where $B\in \Omega^1(Y; \mathfrak{so}(\slashed{S}))$, and that the corresponding Dirac operators are related by
\begin{equation}
    ^\circ \slashed{D} = \slashed{D} - \frac{1}{2}\xi \hspace{2cm} \text{where} \hspace{2cm} \xi = \frac{b\pi}{V}. \label{perturbedDirac}
\end{equation}

As orbifold Riemann surfaces have K\"ahler structures, $\Z_2$-harmonic spinors can be produced by taking the square root of holomorphic sections of certain orbifold holomorphic line bundles. In the rest of this subsection, we construct examples of $\Z_2$-harmonic spinors coming from the pullback of orbifold $\Z_2$-spinors over Riemann surfaces. To begin with, we need the following computations of the dimension of holomorphic sections for the orbifold line bundle $K_{\Sigma} \otimes \mathcal{L}^2 \otimes L^{2k}$.

\begin{lm}
    \label{lemma_dimension_computation}
    Let $(b; \beta_1, \cdots, \beta_n)$ be the Seifert invariant for $L$, and let $\mathcal{L}$ be another orbifold line bundle with orbifold invariant $(\deg(\mathcal{L}); 0, \cdots, 0)$. For the orbifold bundle $K_{\Sigma} \otimes \mathcal{L}^2 \otimes L^{2k}$, we have:
    \begin{itemize}
        \item [(i)] Define $N := \deg(|K_{\Sigma} \otimes \mathcal{L}^2 \otimes L^{2k}|)$, then
        $$
        N = 2kb + 2\gamma - 2 + \sum_{i=1}^n \left\lfloor \frac{2k\beta_i + \alpha_i - 1}{\alpha_i} \right\rfloor + 2\deg(\mathcal{L}).
        $$
        \item [(ii)] Suppose $2\deg(\mathcal{L}) + 2kb > 0$, or $2\deg(\mathcal{L}) + 2kb = 0$ but $\mathcal{L}^2 \otimes L^{2k}$ is nontrivial, then
        $$
        \dim_{\mathbb{C}} H^0(K_{\Sigma} \otimes \mathcal{L}^2 \otimes L^{2k}) = N + 1 - \gamma.
        $$
        \item [(iii)] If $N \geq 2\gamma$, then generic holomorphic sections of $K_{\Sigma} \otimes \mathcal{L}^2 \otimes L^{2k}$ have $N$ simple zeros.
    \end{itemize}
\end{lm}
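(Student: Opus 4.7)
My plan is to address the three parts in sequence, relying on the Seifert invariant tensor formula \eqref{eq_seifert_invariant_tensor}, orbifold Riemann--Roch (Theorem \ref{thm_orbifold_Riem_Roch}), and the vanishing result of Proposition \ref{prop_vanishing_negative_degree}.

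For part (i), I would begin by computing the Seifert invariant of $L^{2k}$ iteratively using \eqref{eq_seifert_invariant_tensor}. Writing $2k\beta_i = q_i\alpha_i + r_i$ with $0\le r_i<\alpha_i$, the resulting invariants are $(2kb+\sum_i q_i;\; r_1,\dots,r_n)$. I would then tensor with $K_\Sigma$, whose Seifert invariant is $(2\gamma-2;\alpha_1-1,\dots,\alpha_n-1)$. At each marked point, the local invariant $(\alpha_i-1)+r_i$ exceeds $\alpha_i-1$ precisely when $r_i\ge 1$, contributing an additional carry of $\mathbf{1}_{r_i\ge 1}$. A short computation verifies the identity $q_i+\mathbf{1}_{r_i\ge 1}=\lfloor(2k\beta_i+\alpha_i-1)/\alpha_i\rfloor$. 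Finally, tensoring with $\mathcal{L}^2$ (which has trivial local invariants) adds $2\deg(\mathcal{L})$ to the degree of the desingularization, giving the stated formula for $N$.

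For part (ii), I would apply Theorem \ref{thm_orbifold_Riem_Roch} to $L':=K_\Sigma\otimes\mathcal{L}^2\otimes L^{2k}$, yielding
\begin{equation*}
\dim_\C H^0(L')-\dim_\C H^0((\mathcal{L}^2\otimes L^{2k})^{-1})=1-\gamma+N.
\end{equation*}
It remains to show the second term vanishes. Since orbifold degree is additive, $\deg(\mathcal{L}^2\otimes L^{2k})=2\deg(\mathcal{L})+2kb+\sum_i 2k\beta_i/\alpha_i$, and the hypotheses $\beta_i\ge 0$ combined with $2\deg(\mathcal{L})+2kb>0$ (or $=0$ with nontriviality) imply $\mathcal{L}^2\otimes L^{2k}$ has strictly positive degree or is nontrivial of degree $0$; in either case $(\mathcal{L}^2\otimes L^{2k})^{-1}$ has nonpositive degree and is nontrivial, so Proposition \ref{prop_vanishing_negative_degree} gives the vanishing.

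For part (iii), I would reduce the statement to a standard fact about smooth curves via the identification of holomorphic sections of the orbifold line bundle with those of its desingularization $|L'|$. The desingularization is an ordinary line bundle on the smooth curve $|\Sigma|$ of degree $N$, and when $N\ge 2\gamma$ it is basepoint-free (by Riemann--Roch and Serre duality on $|\Sigma|$). A standard transversality/Bertini-type argument then shows that the generic section of a basepoint-free linear system on a smooth curve has only simple zeros, of which there are $N$ counted with multiplicity.

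The only slightly delicate step is the carry bookkeeping in part (i); the rest is a straightforward application of the cited orbifold results. I would flag in the write-up that the same argument works with minor modifications when some $\beta_i$ are negative (by adjusting the representative $r_i$), but the stated hypotheses already place us in the convenient range.
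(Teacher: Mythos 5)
Your overall route is the same as the paper's (whose proof is three sentences: part (i) is ``a direct computation using \eqref{eq_seifert_invariant_tensor}'', part (ii) ``follows directly'' from Proposition \ref{prop_vanishing_negative_degree} and Theorem \ref{thm_orbifold_Riem_Roch}, and part (iii) is the identification with sections of the desingularization). Your carry bookkeeping in (i), including the identity $q_i+\mathbf{1}_{r_i\ge 1}=\lfloor(2k\beta_i+\alpha_i-1)/\alpha_i\rfloor$, is correct and valid for all $k\in\Z$, and your Bertini argument in (iii) is a reasonable way to make the paper's one-line reduction precise.

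There is, however, a gap in your part (ii). You assert that the hypotheses force $\deg(\mathcal{L}^2\otimes L^{2k})>0$ (or $=0$ and nontrivial); this uses $2k\sum_i\beta_i/\alpha_i\ge 0$ and is therefore only valid for $k\ge 0$. The lemma is stated for all $k$ and is invoked in the paper with $k<0$: in Corollary \ref{cor_examples_Z_2_harmonic_spinors}(iii) one takes $\Sigma(2,3,5)$ with $\gamma=0$, $b=-1$, $(\alpha_i,\beta_i)=(2,1),(3,1),(5,1)$, $\mathcal{L}$ trivial and $k=-2$. There $2\deg(\mathcal{L})+2kb=4>0$, yet $\deg(L)=1/30$ so $\deg(\mathcal{L}^2\otimes L^{2k})=\deg(L^{-4})=-2/15<0$, hence $(\mathcal{L}^2\otimes L^{2k})^{-1}=L^{4}$ has \emph{positive} orbifold degree and Proposition \ref{prop_vanishing_negative_degree} cannot be applied to it as you claim. (In that example $H^0(L^4)$ still vanishes, but only because the desingularization $|L^4|$ has degree $-1$; this requires computing $\deg(|(\mathcal{L}^2\otimes L^{2k})^{-1}|)=-(2\deg\mathcal{L}+2kb)+\sum_i\lfloor -2k\beta_i/\alpha_i\rfloor$ rather than the orbifold degree, and the correction term $\sum_i\lfloor -2k\beta_i/\alpha_i\rfloor$ is not controlled by the stated hypotheses alone when $k<0$.) So either restrict your argument to $k\ge 0$ and treat negative $k$ via the desingularization degree, or note explicitly that the vanishing must be checked at the level of $|(\mathcal{L}^2\otimes L^{2k})^{-1}|$. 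Your closing remark about negative $\beta_i$ addresses a non-issue (the Seifert convention already has $0\le\beta_i<\alpha_i$); the actual delicate case is negative $k$, which you do not address. To be fair, the paper's own one-line justification of (ii) is silent on exactly the same point.
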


\begin{proof}
    (i) follows from a direct computation using \eqref{eq_seifert_invariant_tensor}. (ii) follows directly from Proposition \ref{prop_vanishing_negative_degree} and the Kawasaki-Riemann-Roch theorem \ref{thm_orbifold_Riem_Roch}. For (iii), by \cite[Proposition 2.0.14]{MOY1996}, the holomorphic sections of the orbifold line bundle $K_{\Sigma} \otimes \mathcal{L}^2 \otimes L^{2k}$ correspond naturally to the holomorphic sections of its desingularization $|K_{\Sigma} \otimes \mathcal{L}^2 \otimes L^{2k}|$, which implies our claim.
\end{proof}

\begin{prop}\label{Seifertfiberedprop}
    Under the previous conventions, for the Seifert--fibered manifold $\pi: Y \to \Sigma$ with the pull-back spin structure $\pi^*(\mathfrak{s}_0)$. For every $k$ such that
    $$
    \deg(|K_{\Sigma} \otimes \mathcal{L}^2 \otimes L^{2k}|) + 1 - \gamma \geq 0, \quad \deg(|K_{\Sigma} \otimes \mathcal{L}^2 \otimes L^{2k}|) \geq 2\gamma,
    $$
    there exists a metric $g_{s,V}$ defined in \eqref{gY} with $s, V$ depending on $k$, such that there are smooth, non-degenerate $\Z_2$-harmonic spinors $(\mathcal{Z}_k, \ell_k, \Phi_k)$ with respect to $g_k$, where $\mathcal{Z}_k = \pi^*\mathcal{Z}_{\Sigma}$ for some $\mathcal{Z}_{\Sigma}$ which are points in $\Sigma$.
\end{prop}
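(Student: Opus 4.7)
My plan is to reduce the Dirac equation on $Y$ to an orbifold $\bar\partial$-equation on $\Sigma$ via the Fourier decomposition \eqref{fourierdecomposition} together with the tunable scalar term \eqref{perturbedDirac}, and then apply Lemma \ref{lemma_dimension_computation} to produce holomorphic sections whose multi-valued square roots yield the desired $\Z_2$-harmonic spinors. The auxiliary line bundle $\mathcal L$ is encoded as a spin$^c$-type twist via the perturbation $B$, while the metric parameter $V$ enters through $\xi = b\pi/V$ and is tuned to cancel the ``mass'' term coming from differentiation along the $S^1$-fiber in each Fourier mode.

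Concretely, I would invoke the Mrowka--Ozsv\'ath--Yu computation (cf.\ \cite[Sec. 5]{MOY1996}) to show that, with respect to the splitting $\slashed S \otimes \pi^*\mathcal L \cong \pi^*(K_\Sigma^{1/2} \oplus K_\Sigma^{-1/2}) \otimes \pi^*\mathcal L$, the operator $\slashed D_B$ restricted to the $k$-th summand of \eqref{fourierdecomposition} takes the block form
$$\mathcal D_k = \begin{pmatrix} \mu_k & \sqrt{2}\,\bar\partial^* \\ \sqrt{2}\,\bar\partial & -\mu_k \end{pmatrix},$$
acting on sections of $(K_\Sigma^{1/2} \oplus K_\Sigma^{-1/2}) \otimes \mathcal L \otimes L^k$, where the mass $\mu_k$ is a linear combination of $k$ and the scalar $\xi$. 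For each $k$, I would then choose $V_k = V_k(s)$ so that $\mu_k = 0$; restricting to the subspace $\phi_- = 0$ reduces the equation $\mathcal D_k \Phi_k = 0$ to the single holomorphic condition $\bar\partial \phi_+ = 0$ for a section $\phi_+$ of the orbifold line bundle $K_\Sigma^{1/2} \otimes \mathcal L \otimes L^k \otimes \ell$, where $\ell$ carries the $\Z_2$-monodromy around the singular set of $\Phi_k$.

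Equivalently, $\phi_+^2$ defines a holomorphic section of the orbifold line bundle $M_k := K_\Sigma \otimes \mathcal L^2 \otimes L^{2k}$ (since $\ell^{\otimes 2}$ is trivial). Lemma \ref{lemma_dimension_computation}(ii) together with the first hypothesis yield $H^0(M_k) \neq 0$, and Lemma \ref{lemma_dimension_computation}(iii) together with the second hypothesis guarantees that a generic $s \in H^0(M_k)$ has $N$ simple zeros, all at smooth (non-orbifold) points of $\Sigma$. Taking $\mathcal Z_\Sigma := s^{-1}(0)$ and $\Phi_k := \sqrt{s}$, the pullback
$$(\mathcal Z_k, \ell_k, \Phi_k) := \bigl(\pi^{-1}(\mathcal Z_\Sigma),\, \pi^*\ell,\, \pi^* \sqrt{s}\bigr)$$
is a $\Z_2$-harmonic spinor on $Y$ with the stated properties: the singular set $\mathcal Z_k$ is a disjoint union of $N$ regular $S^1$-fibers, hence a smooth embedded link, and $|s(z)| \sim |z|$ near each simple zero implies $|\Phi_k| \sim r^{1/2}$, giving non-degeneracy. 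The main technical subtlety I anticipate is verifying the block-diagonal form of $\mathcal D_k$ equivariantly at the orbifold points of $\Sigma$ (which is why the spin structure requires odd $\alpha_i$), but this is implicit in the MOY analysis once everything is interpreted via the orbifold formalism reviewed in Section \ref{section4.1}.
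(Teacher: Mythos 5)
Your proposal is correct and follows essentially the same route as the paper: Fourier-decompose along the $S^1$-fibers, use the Mrowka--Ozsv\'ath--Yu perturbed connection to put the Dirac operator in block form on the $k$-th mode, tune the fiber/volume parameters so the diagonal mass term vanishes, and then obtain $\Phi_k$ as the square root of a generic section $q \in H^0(K_\Sigma \otimes \mathcal L^2 \otimes L^{2k})$ with simple zeros supplied by Lemma \ref{lemma_dimension_computation}, pulled back to the fibers over $q^{-1}(0)$. The only cosmetic differences are normalization of the $\bar\partial$ blocks and the order in which you kill the diagonal entry versus the second spinor component; the paper's proof makes the same choices in substance, including the explicit value $V = \deg(L)\pi/k$.
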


\begin{proof} 
    By Lemma \ref{lemma_dimension_computation}, the assumptions imply that there exists $q \in H^0(\Sigma; K_{\Sigma} \otimes \mathcal{L}^2 \otimes L^{2 k})$ which has isolated, simple zeros. The perturbed Dirac operator \eqref{perturbedDirac} may be written 
    $$
    ^\circ \slashed{D} = \gamma(\eta)\del_t + \slashed{D}_\Sigma,
    $$
    \noindent where $\slashed{D}_\Sigma$ is the Dirac operator on $\Sigma$ in the $\text{Spin}$ structure  $K_{\Sigma}^{\pm 1/2}$. In the decomposition \eqref{fourierdecomposition}, as $\gamma(\eta) = \mathrm{diag}(i, -i)$, the restriction of the operator $^\circ \slashed{D}$ on $(K_{\Sigma}^{1/2} \oplus K_{\Sigma}^{-1/2}) \otimes L^k$ takes the form 
    $$
    ^\circ \slashed{D} = \begin{pmatrix} -k & -2\del_{A_0} \\ 2\delbar_{A_0} & k \end{pmatrix},
    $$
    \noindent where $A_0$ is the spin connection on $K_{\Sigma}^{\pm 1/2}$ or the same with $\del_A, \delbar_A$ for a twisted spin connection $A$ on $K_{\Sigma}^{\pm 1/2} \otimes \mathcal{L}$.

    Next, let $\mathcal{Z}_{\Sigma} = q^{-1}(0)$. There is a flat line bundle $\ell_{\Sigma}$ defined by the property that its restriction to a punctured disk $D_i \setminus \{z_i\}$ is the Mobius bundle for each $z_i \in \mathcal{Z}_{\Sigma}$, and $\sqrt{q} \in \Gamma(\Sigma - \mathcal{Z}_{\Sigma}; K_{\Sigma}^{1/2} \otimes L^k \otimes \ell_{\Sigma})$ is a well-defined section satisfying $\delbar_{A_0'} \sqrt{q} = 0,$ where $A_0'$ is the connection induced by the spin connection $A_0$ and the unique flat connection with $\Z_2$-holonomy on $\ell$. Set $\Phi_k = ( e^{ikt} \sqrt{q} ,0 )$. Then 
    $$
    \slashed{D}\Phi_k = \left(-k + \tfrac{\xi}{2}\right) \begin{pmatrix} e^{ikt} \sqrt{q} \\ 0 \end{pmatrix} = 0, 
    $$
    when the metric $g_{\Sigma}$ is chosen so that $\Sigma$ has volume $V = \tfrac{\text{deg}(L) \pi}{k}$. That is to say, $\Phi_k$ is a $\Z_2$-harmonic spinor with respect to $g_{1,V}$ for this chosen $V$. It is non-degenerate because the zeros of $q$ were chosen to be non-degenerate, and the singular set is $\mathcal{Z} = \pi^*(\mathcal{Z}_{\Sigma})$.  

    If $c_1(L) = 0$ (i.e., $Y = S^1 \times \Sigma$), then $\sqrt{q}$ may be constructed similarly with the bundle $\mathcal{L}$ satisfying $c_1(\mathcal{L}) = 1$ in place of $L$, and setting $\Phi_k = \sqrt{q_k}$ to be invariant in the $S^1$ directions. 
\end{proof}

Even though not every orbifold is spin and thus $K_{\Sigma}^{\frac{1}{2}}$ might not always exist, we can consider a $\mathrm{Spin}^c$ structure with spinor bundle $\slashed{S}^c \cong \mathbb{C} \oplus K_{\Sigma}^{-1}$. Analogous to Proposition \ref{Seifertfiberedprop}, we can formally take $\mathcal{L}^2 \cong K_{\Sigma}^{-1}$, and a similar result holds. To avoid duplication, we only state the result.

\begin{prop}
    For the Seifert--fibered manifold $\pi: Y \to \Sigma$ with the pull-back spin structure $\pi^*(\mathfrak{s}_0^c)$, for every $k$ such that
    $$
    \deg(|\mathcal{L}^2 \otimes L^{2k}|) + 1 - \gamma \geq 0, \quad \deg(|\mathcal{L}^2 \otimes L^{2k}|) \geq 2\gamma,
    $$
    there exists a metric $g_{s, V}$ defined in \eqref{gY} with $s$ and $V$ depending on $k$, such that there are smooth, non-degenerate $\Z_2$-harmonic spinors $(\mathcal{Z}_k, \ell_k, \Phi_k)$ with respect to $g_k$, where $\mathcal{Z}_k = \pi^*\mathcal{Z}_{\Sigma}$ for some $\mathcal{Z}_{\Sigma}$ that are points in $\Sigma$.    
\end{prop}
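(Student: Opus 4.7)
The plan is to adapt the proof of Proposition \ref{Seifertfiberedprop} to the $\mathrm{spin}^c$ setting, replacing $K_\Sigma^{1/2}\otimes\mathcal{L}$ by $\mathcal{L}$ throughout (this implements the hint ``formally take $\mathcal{L}^2\cong K_\Sigma^{-1}$''). First, applying the $U(1)$-Fourier decomposition (\ref{fourierdecomposition}) to the twisted spinor bundle $\slashed S^c \otimes \mathcal{L} \cong \mathcal{L} \oplus K_\Sigma^{-1}\otimes\mathcal{L}$ gives
\begin{equation*}
L^2(Y;\slashed S^c\otimes\mathcal{L}) \;\cong\; \bigoplus_{k\in\Z} L^2\bigl(\Sigma;(\mathcal{L}\oplus K_\Sigma^{-1}\otimes\mathcal{L})\otimes L^k\bigr),
\end{equation*}
and under the Mrowka--Ozsv\'ath--Yu perturbation (\ref{perturbedDirac}) the operator ${}^\circ\slashed D$ restricts on the $k$-th mode to
\begin{equation*}
{}^\circ\slashed D_k = \begin{pmatrix} -k & -2\del_{A_k} \\ 2\delbar_{A_k} & k \end{pmatrix}
\end{equation*}
acting between $\mathcal{L}\otimes L^k$ and $K_\Sigma^{-1}\otimes\mathcal{L}\otimes L^k$, where $A_k$ is the appropriate twisted Chern connection.

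The central step is to produce a holomorphic section $q\in H^0(\Sigma;\mathcal{L}^2\otimes L^{2k})$ with only simple isolated zeros. The first hypothesis ensures $\deg(\mathcal{L}^2\otimes L^{2k})>0$ (so that the Serre-dual term $H^0(K_\Sigma\otimes(\mathcal{L}^2\otimes L^{2k})^{-1})$ vanishes by Proposition \ref{prop_vanishing_negative_degree}), and then the orbifold Kawasaki--Riemann--Roch theorem (Theorem \ref{thm_orbifold_Riem_Roch}) yields
\begin{equation*}
\dim_\C H^0(\Sigma;\mathcal{L}^2\otimes L^{2k}) = \deg(|\mathcal{L}^2\otimes L^{2k}|) + 1 - \gamma.
\end{equation*}
The second hypothesis $\deg(|\mathcal{L}^2\otimes L^{2k}|)\ge 2\gamma$ then guarantees, by the orbifold analogue of Lemma \ref{lemma_dimension_computation}(iii), that a generic such section has only simple zeros.

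With such a $q$ in hand, set $\mathcal{Z}_\Sigma=q^{-1}(0)$ and let $\ell_\Sigma$ be the flat $\Z_2$-bundle on $\Sigma-\mathcal{Z}_\Sigma$ whose restriction to each punctured disk around a zero is the M\"obius bundle. Then $\sqrt{q}\in\Gamma(\Sigma-\mathcal{Z}_\Sigma;\mathcal{L}\otimes L^k\otimes\ell_\Sigma)$ is well-defined and satisfies $\delbar_{A_k'}\sqrt{q}=0$ for the induced twisted connection $A_k'$. Setting $\Phi_k=(e^{ikt}\sqrt{q},0)$ and pulling back to $Y$ gives
\begin{equation*}
{}^\circ\slashed D \, \Phi_k = \bigl(-k+\tfrac{\xi}{2}\bigr)\Phi_k,
\end{equation*}
which vanishes provided $V$ is chosen so that $\xi = b\pi/V = 2k$, i.e.\ $V=b\pi/(2k)$. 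Converting back to the Levi-Civita Dirac operator $\slashed D$ via (\ref{perturbedDirac}) produces a true $\Z_2$-harmonic spinor on $(Y,g_{s,V})$ in the $\mathrm{spin}^c$ structure with line bundle $\pi^*\ell_\Sigma$, with singular set $\mathcal{Z}_k=\pi^{-1}(\mathcal{Z}_\Sigma)$ a disjoint union of regular fibers of $\pi$. Smoothness and non-degeneracy (in the sense of Definition \ref{regulardef}) follow immediately from the fact that the zeros of $q$ are simple, since this gives $\sqrt{q}$ the leading behavior $r^{1/2}$ in normal coordinates on $\Sigma$, and hence $\Phi_k$ the correct leading behavior in normal coordinates to each fiber $\mathcal{Z}_k$.

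The only real obstacle is the orbifold algebraic-geometry bookkeeping: one must verify the Kawasaki--Riemann--Roch computation and the generic simple-zero assertion for $\mathcal{L}^2\otimes L^{2k}$ (rather than $K_\Sigma\otimes\mathcal{L}^2\otimes L^{2k}$ as in the spin case), and confirm that the two hypotheses in the statement are exactly what is required for the dual term to vanish and for enough sections to exist. The Dirac-analytic part of the argument, including the Fourier reduction, the use of the MOY perturbation, and the matching between the diagonal constant $-k$ and the curvature term $\xi/2$, is identical to Proposition \ref{Seifertfiberedprop} and requires no modification.
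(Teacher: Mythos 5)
Your proposal is correct and is essentially the paper's own argument: the paper in fact gives no proof of this proposition beyond the remark that one ``formally takes $\mathcal{L}^2\cong K_\Sigma^{-1}$'' in the proof of Proposition \ref{Seifertfiberedprop}, and your write-up is precisely that substitution carried out (Fourier decomposition of $\slashed S^c\otimes\mathcal L$, the MOY perturbation, Kawasaki--Riemann--Roch for $\mathcal L^2\otimes L^{2k}$, and $\Phi_k=(e^{ikt}\sqrt q,0)$ with $V$ tuned so that $\xi/2=k$). The one bookkeeping point you rightly flag as needing verification is that vanishing of the Serre-dual term requires $\deg(\mathcal L^2\otimes L^{2k})\ge\deg(K_\Sigma)$ rather than merely positivity, but this matches the (equally loose) treatment in the paper's Lemma \ref{lemma_dimension_computation}.
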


In summary, we conclude the following:

\begin{thm}
    Let $\pi : Y \to \Sigma$ be a Seifert--fibered 3-manifold. Then for each $k \geq 1$, there exist metrics $g_k$ that admit smooth, non-degenerate $\Z_2$-harmonic spinors $(\mathcal{Z}_k, \ell_k, \Phi_k)$, where $\mathcal{Z}_k \subseteq Y$ is the union of disjoint fibers of $\pi$. 
\end{thm}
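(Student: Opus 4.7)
The plan is to deduce the theorem directly from the two preceding propositions by a case analysis on whether the orbifold base $\Sigma$ admits a spin structure. If every orbifold multiplicity $\alpha_i$ is odd, $\Sigma$ is spin and I would apply Proposition~\ref{Seifertfiberedprop} with the pull-back spin structure $\pi^*(\mathfrak{s}_0)$. If some $\alpha_i$ is even, $\Sigma$ has no spin structure but always carries the canonical $\mathrm{spin}^c$ structure $\mathfrak{s}_0^c \cong \mathbb{C} \oplus K_{\Sigma}^{-1}$, and I would apply the $\mathrm{spin}^c$ version instead. Either way, the singular set will arise as the preimage of the zero locus of a holomorphic section, hence consists of fibers of $\pi$, as required.

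In both cases, producing the $\Z_2$-harmonic spinor for a given $k \geq 1$ reduces to verifying the hypotheses of Lemma~\ref{lemma_dimension_computation}, namely
$$\deg(|K_{\Sigma}\otimes \mathcal{L}^2 \otimes L^{2k}|) \geq 2\gamma \qquad \text{and} \qquad \deg(|K_{\Sigma}\otimes \mathcal{L}^2 \otimes L^{2k}|) + 1 - \gamma \geq 0$$
(with $K_{\Sigma}$ replaced by the trivial bundle in the $\mathrm{spin}^c$ setting), for a suitable choice of auxiliary orbifold line bundle $\mathcal{L}$ of type $(\deg(\mathcal{L}); 0, \ldots, 0)$. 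Both conditions depend linearly on $\deg(\mathcal{L})$, so the strategy is simply to pick $\deg(\mathcal{L})$ large enough depending on $k$, $\gamma$, and the Seifert data of $L$. Lemma~\ref{lemma_dimension_computation}(iii) then supplies a holomorphic section $q$ of the relevant bundle with $N$ simple zeros, and the construction in Proposition~\ref{Seifertfiberedprop} (or its $\mathrm{spin}^c$ analogue) packages $\sqrt{q}$ into a smooth, non-degenerate $\Z_2$-harmonic spinor $(\mathcal{Z}_k,\ell_k,\Phi_k)$ on $Y$ with $\mathcal{Z}_k = \pi^{-1}(q^{-1}(0))$ a union of regular fibers.

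The only point that needs minor care is the choice of metric $g_{s,V}$ from \eqref{gY}: the volume $V$ of $\Sigma$ must be tuned so that the curvature correction $\xi/2 = b\pi/(2V)$ in \eqref{perturbedDirac} cancels the Fourier weight $k$ coming from $\Phi_k = (e^{ikt}\sqrt{q},0)$, giving $V = b\pi/(2k)$. This is harmless when $b \neq 0$. The one genuine edge case is $b = 0$, i.e. $Y = S^1 \times |\Sigma|$, where the fiber direction contributes no curvature term; here I would take $\mathcal{L}$ itself of sufficiently large positive degree and use a section of $K_{\Sigma}\otimes \mathcal{L}^2$ with $\Phi_k$ taken constant in the $S^1$-direction, as indicated at the end of the proof of Proposition~\ref{Seifertfiberedprop}. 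I do not anticipate a substantive obstacle: the analytic content has already been absorbed into the orbifold Kawasaki--Riemann--Roch theorem and the square-root construction, and the remaining work is the bookkeeping of ranges of $k$ and $\deg(\mathcal{L})$ across the two structural cases.
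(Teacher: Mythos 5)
Your proposal is correct and takes essentially the same route as the paper: the theorem is obtained by combining Proposition \ref{Seifertfiberedprop} with its $\mathrm{spin}^c$ analogue according to the parity of the $\alpha_i$, and the degree hypotheses of Lemma \ref{lemma_dimension_computation} are arranged by taking $\deg(\mathcal{L})$ sufficiently large, just as you describe. The only cosmetic caveat is that when $b \neq 0$ the Fourier mode must be chosen with the same sign as $b$ so that the required volume $V$ is positive (cf. the paper's $\Sigma(2,3,5)$ example, where $k=-2$).
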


The previous theorem immediately gives a large class of interesting examples (given in Corollary \ref{example1.8}). 

\begin{cor} 
	\label{cor_examples_Z_2_harmonic_spinors}
	The following manifolds admit $\Z_2$-harmonic spinors, all of which are smooth and non-degenerate. 
\begin{enumerate}[(i)]
\item  $Y=S^3$ admits $\Z_2$-harmonic spinors $(\mathcal Z_k, \ell_k, \Phi_k)$ with respect to the Berger metrics $g_{B,V}$ such that $\mathcal Z_k$ is a Hopf link with $2k$-components. 
\item $Y=S^1 \times S^2$ admits $\Z_2$-harmonic spinors $(\mathcal Z_k, \ell_k, \Phi_k)$ with respect to metrics $g_k=dt^2 + V_k g_{S^2}$ for $V_k\in \R$, such that $\mathcal Z_k=S^1 \times \mathcal Z_{S^2}$  where $\mathcal Z_{S^2}\subseteq S^2$ is a collection of $2k$ points.  
\item $Y=\Sigma(2,3,5)$, the Poincar\'e  homology sphere, admits a $\Z_2$-harmonic spinor $(\mathcal{Z},\ell,\Phi)$ with a connected singular set $\mathcal{Z}=\pi^{-1}(p_0)$ and $p_0\in \Sigma$.
\end{enumerate}
\end{cor}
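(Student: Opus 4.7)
The strategy is to realize each of the three manifolds as a Seifert--fibered space $\pi: Y\to\Sigma$ and invoke Proposition \ref{Seifertfiberedprop} (or, in case (iii), its $\mathrm{spin}^c$ analogue) with appropriate choices of the base orbifold $\Sigma$, the defining orbifold line bundle $L$, the auxiliary line bundle $\mathcal L$, and the integer $k$. In each case, the Kawasaki--Riemann--Roch estimate of Lemma \ref{lemma_dimension_computation} produces a holomorphic section $q$ of $K_\Sigma\otimes \mathcal L^2\otimes L^{2k}$ with the desired zero set, and the corresponding $\Z_2$-harmonic spinor is $\Phi_k=(e^{ikt}\sqrt q,0)$ pulled back to $Y$ (or its $S^1$-invariant analogue when $L$ is trivial).

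For (i), I would identify $S^3$ as the total space of the Hopf fibration over $\Sigma=\CP^1$, a smooth fiber bundle with $L=\mathcal O(1)$ and no orbifold points. Taking $\mathcal L=\mathcal O(1)$ gives $K_\Sigma\otimes\mathcal L^2\otimes L^{2k}\cong\mathcal O(2k)$, whose generic holomorphic section has $2k$ distinct simple zeros on $S^2$. The metric $g_{s,V}$ of \eqref{gY} restricts to the Berger family, and the preimage of the zero locus is a union of $2k$ Hopf fibers, i.e., a Hopf link with $2k$ components. For (ii), I would use the case $c_1(L)=0$ of Proposition \ref{Seifertfiberedprop}: write $S^1\times S^2$ as a trivial circle bundle over $\CP^1$, and choose $\mathcal L$ of degree $k+1$ so that $K_\Sigma\otimes\mathcal L^2$ has degree $2k$ and hence $2k$ simple zeros. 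The resulting $\sqrt q$ is $S^1$-invariant, yielding the singular set $\mathcal Z_k=S^1\times\mathcal Z_{S^2}$, and the metric reduces to the product $dt^2+V_k\,g_{S^2}$.

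For (iii), the Poincar\'e homology sphere $\Sigma(2,3,5)$ is Seifert--fibered over the orbifold $\Sigma=S^2(2,3,5)$ with Seifert invariants of the form $(b;(2,\beta_1),(3,\beta_2),(5,\beta_3))$ making $Y$ an integer homology sphere. Because $\alpha_1=2$ is even, $\Sigma$ admits no orbifold spin structure, so Proposition \ref{Seifertfiberedprop} does not apply directly; instead, the unique spin structure on the smooth manifold $Y$ pulls back from the canonical orbifold $\mathrm{spin}^c$ structure on $\Sigma$, and I would invoke the $\mathrm{spin}^c$ analogue stated just after Proposition \ref{Seifertfiberedprop}. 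The main obstacle is the orbifold bookkeeping: one must choose $\mathcal L$ and a small $k$ so that the desingularization $|\mathcal L^2\otimes L^{2k}|$ has exactly one simple zero at a smooth point $p_0\in\Sigma$ (not at an exceptional orbifold point, whose preimage would fail to be a smooth embedded circle in $Y$). Using Lemma \ref{lemma_dimension_computation} together with the explicit Seifert data of $\Sigma(2,3,5)$, one checks that such a choice exists; the resulting $\Z_2$-harmonic spinor then has connected singular set $\mathcal Z=\pi^{-1}(p_0)$, a single smooth Seifert fiber. Smoothness and non-degeneracy in all three cases follow from Proposition \ref{Seifertfiberedprop} once the zeros of $q$ are simple and lie at smooth points of $\Sigma$.
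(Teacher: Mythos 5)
Your proposal follows the paper's proof in all essentials: each case is obtained by realizing $Y$ as a Seifert--fibered space and feeding suitable choices of $\mathcal L$, $L$, and $k$ into Proposition \ref{Seifertfiberedprop} via Lemma \ref{lemma_dimension_computation}, with (i) the degree-one Hopf fibration (Berger metrics), (ii) the trivial circle bundle over $S^2$ with an auxiliary $\mathcal L$ of positive degree, and (iii) the Seifert structure of $\Sigma(2,3,5)$ over $S^2(2,3,5)$. The one genuine divergence is in (iii): you correctly note that $S^2(2,3,5)$ admits no orbifold spin structure (since $\alpha_1=2$ is even) and therefore route the argument through the $\mathrm{spin}^c$ analogue stated after Proposition \ref{Seifertfiberedprop}, whereas the paper applies the spin version directly with $\mathcal L$ trivial and $k=-2$, computing $N=\deg(|K_\Sigma\otimes L^{-4}|)=1$ and $\dim_\C H^0(K_\Sigma\otimes L^{-4})=2$ so that a generic section has a single simple zero; the point implicit in the paper is that although $K_\Sigma^{1/2}$ alone does not exist, the twist by $L^{-2}$ makes the relevant square root an honest orbifold line bundle (equivalently, $Y$ itself is spin), so your caveat is a fair and careful reading rather than an error on either side. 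The only place your write-up is weaker than the paper's is that you defer the explicit numerology for (iii) to ``one checks that such a choice exists''; the paper supplies the specific $k$ and degree count, and completing your argument would require the same computation.
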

\begin{proof} 
For (i), consider $S^3\to S^2$ given by the Hopf fibration with degree $+1$, in which case the metrics (\refeq{gY}) are the Berger metrics. The disjoint fibers of the Hopf fibration are pairwise Hopf links. (ii) is immediate from the proof of Proposition \ref{Seifertfiberedprop}. For (iii), the Seifert-invariant of $\Sigma(2,3,5)$ is $(\gamma=0,b=-1,(2,1),(3,1),(5,1))$. By Lemma \ref{lemma_dimension_computation}, for $\mathcal{L}$ trivial, and $k=-2$, we have $N=\deg(|K_{\Sigma}\otimes L^{-4}|)=1$ and $\dim_{\mathbb{C}}H^0(K_{\Sigma}\otimes L^{-4})=2$. In this case, a generic section $q\in H^0(K_{\Sigma}\otimes L^{-4})$ has one simple zero. The claim follows from Proposition \ref{Seifertfiberedprop}.
\end{proof}

\subsection{$\Z_2$-harmonic 1-forms on Seifert--fibered 3-manifolds}
\label{subsec_Z2Seifert}
This section considers $\Z_2$-harmonic 1-forms over Seifert--fibered spaces. Since $\Z_2$-harmonic 1-forms are directly related to the non-compactness behavior of the $\mathrm{SL}(2; \mathbb{C})$ character variety, we do not expect their existence over every Seifert--fibered manifold. By Corollary \ref{cor_examples_Z_2_harmonic_spinors}, there exists a $\Z_2$-harmonic spinor (for the spin Dirac operator $D=\slashed D$) with a connected singular set over a homology sphere; in contrast, according to \cite{HaydysPSLR2022}, the singular set of a $\Z_2$ harmonic 1-form on a homology sphere must have at least two connected components.

Using the previous conventions, for a Seifert--fibered manifold $\pi: Y \to \Sigma$, with $\Sigma$ being an orbifold, we first consider the space of orbifold quadratic differentials. We consider the $s=1$ version of the metric \eqref{gY}, i.e. $g_{1, V} = \eta^2 + \pi^*(g_{\Sigma})$, where $g_{\Sigma}$ is an orbifold Riemannian metric on $\Sigma$. The orientation is given by $\mathrm{dvol}_{Y} = \eta \wedge \mathrm{dvol}_{\Sigma}$, and we have $d\eta = -\frac{2\pi b}{\mathrm{Vol}(\Sigma)}\mathrm{dvol}_{\Sigma}$.

The following shows that the pullback of the orbifold $\Z_2$-harmonic 1-form over $\Sigma$ is still a $\Z_2$-harmonic 1-form over $Y$.

\begin{lm}
    Let $(Z, \ell, \nu)$ be an orbifold $\Z_2$- harmonic 1-form over $\Sigma$, then $(p^{*}Z, p^*\ell, p^{*}v)$ is a $\Z_2$-harmonic 1-form over $Y$.
\end{lm}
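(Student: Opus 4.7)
The plan is to verify each defining property of a $\Z_2$-harmonic 1-form for the triple $(p^*\mathcal Z, p^*\ell, p^*\nu)$, leveraging the Riemannian submersion structure of $\pi:Y\to \Sigma$ with metric $g_{1,V}=\eta^2+\pi^*g_\Sigma$, together with the fact that $\nu$ is horizontal (i.e.\ $p^*\nu \in \pi^*\Omega^1(\Sigma)$ has no $\eta$-component). The closedness is immediate from naturality: $d(p^*\nu)=p^*(d\nu)=0$ on $Y-p^*\mathcal Z$. The flat line bundle $p^*\ell$ is evidently flat, and its holonomy around a small loop encircling a component of $p^*\mathcal Z=\pi^{-1}(\mathcal Z)$ agrees with the holonomy of $\ell$ around the corresponding component of $\mathcal Z$, hence is the nontrivial element of $\Z_2$.

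The main step is co-closedness. I will compute $d\star_Y(p^*\nu)$ directly. In local framings adapted to the submersion, the Hodge star with respect to $g_{1,V}$ satisfies
\begin{equation}
\star_Y(p^*\nu)=\eta\wedge p^*(\star_\Sigma\nu),
\end{equation}
since $p^*\nu$ is horizontal. Differentiating gives
\begin{equation}
d\star_Y(p^*\nu)=d\eta\wedge p^*(\star_\Sigma\nu)-\eta\wedge p^*\bigl(d(\star_\Sigma\nu)\bigr).
\end{equation}
The first term vanishes for dimensional reasons: both $d\eta=-\tfrac{2\pi b}{\mathrm{Vol}(\Sigma)}\,\mathrm{dvol}_\Sigma$ and $\star_\Sigma\nu$ are pulled back from $\Sigma$, so the wedge lies in $\pi^*\Omega^3(\Sigma)=0$. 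The second term vanishes because $\nu$ is $\Z_2$-harmonic on $\Sigma-\mathcal Z$, so $d^\star_\Sigma\nu=0$, hence $d(\star_\Sigma\nu)=0$ on $\Sigma-\mathcal Z$. Thus $d^\star_Y(p^*\nu)=0$ on $Y-p^*\mathcal Z$.

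For the $L^2$ condition $\nabla(p^*\nu)\in L^2(Y-p^*\mathcal Z)$, I will use that the Levi-Civita connection of $g_{1,V}$ decomposes into a horizontal lift of $\nabla^\Sigma$ plus extra O'Neill-type curvature terms that are bounded pointwise by $|p^*\nu|$ (these arise from $d\eta$ and are smooth on $Y$). Combining this with Fubini along the circle fibers yields
\begin{equation}
\int_{Y-p^*\mathcal Z}|\nabla(p^*\nu)|^2\,dV_Y \;\leq\; C\,(2\pi)\int_{\Sigma-\mathcal Z}\bigl(|\nabla^\Sigma\nu|^2+|\nu|^2\bigr)\,dV_\Sigma,
\end{equation}
which is finite by the orbifold $\Z_2$-harmonic 1-form hypothesis on $\Sigma$.

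The only delicate point is the orbifold loci: near a singular fiber over an orbifold point $x_i\in\Sigma$, the total space $Y$ is smooth (since the Seifert fibration resolves the $\Z_{\alpha_i}$ action by combining the fiber rotation with the local orbifold rotation), and orbifold sections over $\Sigma$ lift to genuine $U(1)$-invariant sections on $Y$. Thus $p^*\nu$ extends smoothly across these fibers, and the $L^2$ estimate above holds there as well since the orbifold volume form on $\Sigma$ is exactly what arises after pushforward from $Y$. The $\Z_2$-harmonic equation is then satisfied in the distributional/weak sense on all of $Y-p^*\mathcal Z$, completing the verification. The anticipated obstacle is purely bookkeeping around the orbifold locus; the essential analytic content reduces to the dimensional collapse $\pi^*\Omega^3(\Sigma)=0$.
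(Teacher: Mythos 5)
Your proof is correct and follows essentially the same route as the paper: closedness by naturality of pullback, and co-closedness via $\star_Y(p^*\nu)=\pm\,\eta\wedge p^*(\star_\Sigma\nu)$, with the $d\eta$ term killed because it lands in $\pi^*\Omega^3(\Sigma)=0$ and the other term killed by $d\star_\Sigma\nu=0$. You are somewhat more careful than the paper about the $L^2$ condition on $\nabla(p^*\nu)$ and the orbifold fibers (the paper only remarks that $|\nu|$ bounded implies $|p^*\nu|$ bounded), but this is added detail, not a different argument.
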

\begin{proof}
    As $d\nu = 0$ and $d\star_{g_{\Sigma}}\nu = 0$, under the pullback, we obtain $d(p^{*}\nu) = d\star_{p^*g_{\Sigma}}(p^*\nu) = 0$. Since $\star_{g_Y}p^*\nu = -\eta \wedge \star_{p^*g_{\Sigma}}p^*\nu$, we compute
    \[
    d\star_{g_Y}p^*\nu = -\frac{2\pi b}{\mathrm{Vol}(\Sigma)}\mathrm{dvol}_{\Sigma} \wedge \star_{(p^*g_{\Sigma})}p^*\nu + \eta \wedge d\star_{p^*g_{\Sigma}}p^*\nu = 0.
    \]
    Moreover, as $|\nu|_{g_{\Sigma}}$ is bounded, we conclude that $|p^*(\nu)|_{g_{\Sigma}}$ is also bounded, hence $p^*\nu$ is a $\Z_2$-harmonic 1-form.
\end{proof}

Now, we will construct examples of orbifold $\Z_2$-harmonic 1-forms. Note that by \eqref{eq_seifert_invariant_tensor}, the Seifert invariant for $K_{\Sigma}^2$ is $(4\gamma - 4 + n; \alpha_1 - 2, \cdots, \alpha_n - 2)$, and by Theorem \ref{thm_orbifold_Riem_Roch}, we obtain 
\[
\dim_{\mathbb{C}} H^0(K^2_{\Sigma}) = 3\gamma - 3 + n.
\]
Moreover, when $4\gamma - 4 + n \geq 2\gamma$, generic sections will have simple zeros. In summary, we conclude the following:

\begin{prop}
    \label{prop_1_form_existence_Seifert_fibered}
    Let $Y$ be a Seifert--fibered space with Seifert invariant $(\gamma, b; (\alpha_1, \beta_1), \cdots, (\alpha_n, \beta_n))$, suppose $3\gamma - 3 + n > 0$ and $2\gamma - 4 + n \geq 0$. Then, for the metric $g_{1, V}$ in \eqref{gY}, there exist smooth, non-degenerate $\Z_2$-harmonic 1-forms $(\mathcal Z, \ell, \nu)$ with $\mathcal Z = p^{*}\mathcal Z_0$ where $\mathcal Z_0$ consists of $4\gamma - 4 + n$ points over $\Sigma$.
\end{prop}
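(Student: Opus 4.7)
The plan is to construct an orbifold $\mathbb{Z}_2$-harmonic 1-form on the base $\Sigma$ and pull it back to $Y$ using the preceding lemma. On an orbifold Riemann surface, such 1-forms arise as real parts of square roots of holomorphic quadratic differentials, so the problem reduces to producing a holomorphic section $q \in H^0(\Sigma; K_\Sigma^2)$ with $4\gamma - 4 + n$ simple zeros, none of which coincide with an orbifold point of $\Sigma$.

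The first step is a dimension count via orbifold Riemann--Roch. Using \eqref{eq_seifert_invariant_tensor}, the Seifert invariant of $K_\Sigma^2$ is $(4\gamma - 4 + n; \alpha_1 - 2, \ldots, \alpha_n - 2)$, so the desingularization has degree $4\gamma - 4 + n$. A short case check on $(\gamma, n)$ under the hypotheses confirms $\deg K_\Sigma > 0$ except in the borderline case $\gamma = 0$, $n = 4$, $\alpha_i = 2$ where $K_\Sigma$ is nontrivial of degree zero; in either situation Proposition \ref{prop_vanishing_negative_degree} gives $H^0(K_\Sigma^{-1}) = 0$. Theorem \ref{thm_orbifold_Riem_Roch} then yields $\dim_{\mathbb{C}} H^0(K_\Sigma^2) = 3\gamma - 3 + n$, which is positive by assumption.

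Next I would choose a generic $q$ with the prescribed zero structure. Passing to the desingularization $|K_\Sigma^2|$ of degree $N = 4\gamma - 4 + n$ and invoking the correspondence used in Lemma \ref{lemma_dimension_computation}(iii), the condition $N \geq 2\gamma$ guarantees the linear system $|K_\Sigma^2|$ is base-point-free on the smooth locus, so a generic section has $N$ simple zeros, all contained in $\Sigma \setminus \{x_1, \ldots, x_n\}$. Let $\mathcal{Z}_0$ denote this zero set and let $\ell_\Sigma$ be the flat real line bundle on $\Sigma - \mathcal{Z}_0$ with holonomy $-1$ around each $z_i$. Then $\sqrt{q}$ is a well-defined section of $K_\Sigma \otimes_{\mathbb{R}} \ell_\Sigma$, and
\[
\nu_\Sigma := \mathrm{Re}(\sqrt{q})
\]
is an $\ell_\Sigma$-valued 1-form on $\Sigma - \mathcal{Z}_0$. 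The Cauchy--Riemann equations for $\sqrt{q}$ imply $d\nu_\Sigma = d\star_{g_\Sigma}\nu_\Sigma = 0$ for any metric $g_\Sigma$ in the conformal class of the complex structure, and the simple zeros of $q$ give $|\nu_\Sigma| \sim r^{1/2}$ near $\mathcal{Z}_0$, so $(\mathcal{Z}_0, \ell_\Sigma, \nu_\Sigma)$ is a smooth, non-degenerate orbifold $\mathbb{Z}_2$-harmonic 1-form.

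The final step applies the preceding lemma with $(Z, \ell, \nu) = (\mathcal{Z}_0, \ell_\Sigma, \nu_\Sigma)$ to obtain the $\mathbb{Z}_2$-harmonic 1-form $(p^*\mathcal{Z}_0, p^*\ell_\Sigma, p^*\nu_\Sigma)$ on $(Y, g_{1, V})$. Because each $z_i$ lies in the smooth locus of $\Sigma$, the preimage $p^{-1}(z_i)$ is a regular circle fiber, so $p^*\mathcal{Z}_0$ is a smooth embedded link with $4\gamma - 4 + n$ components, and non-degeneracy transfers since the distance function on $Y$ restricted to a normal disk of a fiber agrees with the pullback of the distance on $\Sigma$. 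The main subtlety I expect is the genericity argument in step two: verifying that zeros can simultaneously be arranged to be simple and to avoid the orbifold locus requires analyzing the local Taylor expansion of sections of $K_\Sigma^2$ at each orbifold point $x_i$, and is where the bound $\deg |K_\Sigma^2| \geq 2\gamma$ enters essentially.
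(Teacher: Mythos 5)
Your proposal is correct and follows essentially the same route as the paper: compute $\dim_{\mathbb C}H^0(K_\Sigma^2)=3\gamma-3+n$ via the Kawasaki--Riemann--Roch theorem (Theorem \ref{thm_orbifold_Riem_Roch}) together with the Seifert invariant $(4\gamma-4+n;\alpha_1-2,\ldots,\alpha_n-2)$ of $K_\Sigma^2$, use $\deg|K_\Sigma^2|\geq 2\gamma$ to pick a generic section with $4\gamma-4+n$ simple zeros, set $\nu=\Re(\sqrt q)$, and pull back to $Y$ by the preceding lemma. You supply slightly more detail than the paper (the vanishing of $H^0(K_\Sigma^{-1})$ including the borderline $(2,2,2,2)$ case, and the zeros avoiding the orbifold locus), but the argument is the same.
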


We now consider several interesting examples of Seifert--fibered manifolds which admit $\Z_2$ harmonic 1-forms. For every choice of $n \geq 3$ pairwise relatively prime integers $(a_1, \cdots, a_n)$ greater than one, there is an associated Brieskorn homology sphere. These are described as the link of isolated singularities at zero of the complex variety
\[
V := \{c_{i1}z_1^{a_1} + \cdots + c_{in}z_n^{a_n} = 0, \; i = 1, \cdots, n - 2\} \subset \mathbb{C}^n,
\]
where $C = \{c_{ij}\}$ is an $((n - 2) \times n)$ matrix of real numbers such that each of its maximal minors is non-zero. We define $\Sigma(a_1, \cdots, a_n) := V \cap S^{2n - 1}$; the $U(1)$ action on $\C^n$ makes this a Seifert--fibered space over an orbifold with topology $S^2$. Therefore, when $n \geq 4$, there exist non-degenerate $\Z_2$-harmonic 1-forms over the Brieskorn homology spheres. 

\begin{cor}
    Let $\Sigma(a_1, \cdots, a_n)$ be a Brieskorn homology sphere with $n\geq 4$. Then there exist non-degenerate $\Z_2$-harmonic 1-forms on it.
\end{cor}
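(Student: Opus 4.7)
The plan is to reduce the corollary to a direct application of Proposition \ref{prop_1_form_existence_Seifert_fibered}, which asserts the existence of non-degenerate $\Z_2$-harmonic 1-forms on a Seifert-fibered 3-manifold with Seifert invariants $(\gamma,b;(\alpha_1,\beta_1),\ldots,(\alpha_n,\beta_n))$ whenever $3\gamma-3+n>0$ and $2\gamma-4+n\geq 0$.

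First I would recall the standard Seifert-fibered description of the Brieskorn homology sphere $\Sigma(a_1,\ldots,a_n)$. The $U(1)$-action on $\C^n$ by $z_j\mapsto \lambda^{A/a_j}z_j$, where $A=\prod a_i$, restricts to a locally free $U(1)$-action on the link $\Sigma(a_1,\ldots,a_n)=V\cap S^{2n-1}$, exhibiting it as a Seifert fibration $\pi:\Sigma(a_1,\ldots,a_n)\to \mathcal{O}$. The base orbifold $\mathcal{O}$ has underlying topological space $S^2$, hence orbifold genus $\gamma=0$, and has exactly $n$ orbifold points, one of multiplicity $a_i$ coming from the fixed locus $\{z_j=0 \text{ for } j\neq i\}\cap \Sigma$.

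Next I would verify the two hypotheses of Proposition \ref{prop_1_form_existence_Seifert_fibered} with this data. With $\gamma=0$ and $n\geq 4$ orbifold points, one has
\begin{equation*}
3\gamma-3+n = n-3 \geq 1 > 0, \qquad 2\gamma-4+n = n-4 \geq 0.
\end{equation*}
Both inequalities are satisfied precisely under the stated hypothesis $n\geq 4$. Therefore Proposition \ref{prop_1_form_existence_Seifert_fibered} applies, producing for the metric $g_{1,V}$ in \eqref{gY} a smooth, non-degenerate $\Z_2$-harmonic 1-form $(\mathcal Z,\ell,\nu)$ whose singular set $\mathcal Z = \pi^{-1}(\mathcal Z_0)$ is the preimage of a collection of $4\gamma-4+n = n-4$ smooth points in the base (together with contributions from the orbifold points through the quadratic differential).

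There is no genuine obstacle here—this is essentially an unpacking of definitions together with a counting check—so no additional analytic work is required beyond what is already contained in Proposition \ref{prop_1_form_existence_Seifert_fibered}. The only minor subtlety worth remarking on is that since $\Sigma(a_1,\ldots,a_n)$ is a homology sphere, the $\Z_2$-harmonic 1-form produced necessarily has non-empty singular set $\mathcal Z\neq \emptyset$ and is genuinely of the $\Z_2$-type rather than a classical harmonic 1-form; this is consistent with, and complements, Conjecture \ref{conj_Z21form_nonexistence}, which predicts non-existence precisely in the remaining case $n=3$ where the $\mathrm{SL}(2,\C)$ character variety of $\Sigma(a_1,a_2,a_3)$ is zero-dimensional.
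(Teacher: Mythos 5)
Your proof is correct and is essentially identical to the paper's argument: the paper likewise identifies $\Sigma(a_1,\ldots,a_n)$ as Seifert--fibered over an orbifold with underlying space $S^2$ (so $\gamma=0$ with $n$ orbifold points) and applies Proposition \ref{prop_1_form_existence_Seifert_fibered}, whose hypotheses $3\gamma-3+n>0$ and $2\gamma-4+n\geq 0$ reduce exactly to $n\geq 4$.
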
 

Our method doesn't establish the existence of $\Z_2$- harmonic 1-forms over $\Sigma(a_1, a_2, a_3)$, and one should not expect any in this case. Indeed, by \cite[Page 9]{BodenCurtis06}, the $\mathrm{SL}(2,\mathbb{C})$ character variety of $\Sigma(a_1, \cdots, a_n)$ has positive dimension if and only if $n \geq 4$. As the $\mathrm{SL}(2,\mathbb{C})$ character variety is an affine variety, it is non-compact if and only if it is positive dimensional. We therefore shouldn't expect the existence of $\Z_2$-harmonic 1-forms for $\Sigma(a_1, a_2, a_3)$.
\subsection{Connected Sum Results}
In this subsection, we explore some implications of the connected sum formula for $\Z_2$-harmonic 1-forms for the geometry of the $\text{SL}(2,\C)$ representation variety. 

\subsubsection{Connected Sum of $\Z_2$-Harmonic 1-forms over a Riemann Surface}
We first consider the connected sum of Riemann surfaces. In this case $\Z_2$-harmonic 1-forms are closely tied to the space of holomorphic quadratic differentials, which plays an important role in Teichm\"uller theory and other aspects of the geometry of Riemann surfaces, as explored in foundational works such as Hubbard and Masur \cite{HubbardMasur1979}.

 Let $(\Sigma, g)$ be a closed Riemann surface. The space of $\Z_2$-harmonic 1-forms is identified with the space of holomorphic quadratic differentials as follows. Given a quadratic differential $q \in H^0(K_{\Sigma}^2)$, $\nu := \Re(\sqrt{q})$ defines a $\Z_2$-harmonic 1-form (see \cite{Taubes3dSL2C}). Conversely, given a $\Z_2$-harmonic 1-form $(\mathcal{Z}, \ell, \nu)$, we write $\nu^{(1,0)}$ to be the $(1,0)$ component of $\nu$, then $\nu^{(1,0)} \otimes \nu^{(1,0)} \in H^0(K_{\Sigma}^2)$ defines a holomorphic quadratic differential, and the above correspondence is an isomorphism. Therefore, the two-dimensional version of Theorem \ref{maina} for $\Z_2$-harmonic 1-forms (cf Remark \ref{Riemannsurfacegluing}) may be invoked to glue holomorphic quadratic differentials.

For $i = 1, 2$, let $(\Sigma_i, g_i)$ be Riemann surfaces with metric $g_i$ and genus $\gamma_i$. Consider $q_i \in H^0(K_{\Sigma_i}^2)$ quadratic differentials with simple zeros. We write $\mathcal{Z}_i := q_i^{-1}(0)$ for the set of zeros, of which there are $|\mathcal{Z}_i| = 4\gamma_i - 4$. We define $p_i: \Sigma_{\mathcal{Z}_i} \to \Sigma_i$ to be the double branched covering of $\Sigma_i$ along $\mathcal{Z}_i$. Its genus is $\gamma(\Sigma_{\mathcal{Z}_i}) = 4\gamma_i - 3$ (by the Riemann-Hurwitz formula).

Using the notation for the connected sum construction as in Section \ref{subsection_connected_sum},  we choose points $x_i \in \Sigma_i \setminus \mathcal{Z}_i$ at which the sum is performed. $\Sigma := \Sigma_1 \# \Sigma_2$ is equipped with the metric $g_\delta$ of neck diameter $O(\delta)$  as in (\refeq{torusmetric}). We write $\mathcal{Z} := \mathcal{Z}_1 \# \mathcal{Z}_2$, and let $p: \Sigma_{\mathcal{Z}} \to \Sigma$ be the double branched covering along $\mathcal{Z}$ with flat bundle $\ell$. Topologically, $\Sigma_{\mathcal{Z}} \simeq \Sigma_{\mathcal{Z}_1} \# \Sigma_{\mathcal{Z}_2} \# T^2$ with genus $\gamma(\Sigma_{\mathcal{Z}}) = 4(\gamma_1 + \gamma_2) - 5$.

An approximate quadratic differential can be constructed as in \eqref{approximatesol1form}, which we denote as $q^{\app}_\delta$. The gluing implies this may be corrected to a true holomorphic quadratic differential in the conformal structure defined by $g_\delta$. More precisely: 

\begin{thm}\label{holomorphicquadraticgluing}
    There exists $\delta_0$ such that for $\delta < \delta_0$, there exists an $f_\delta \in \Gamma(\ell\otimes \mathbb{C})$ and a diffeomorphism $\varphi_\delta$ with $\varphi_\delta = \mathrm{Id}$ near the gluing region such that
    $$
    q_\delta := \varphi_\delta^*(q^{\app}_\delta + \partial f_\delta \otimes \partial f_\delta)
    $$
    is a quadratic differential with respect to $g_\delta$. Moreover, $q_\delta$ is non-degenerate and the zeros of $q_\delta$ can be written as
    $$
    q_\delta^{-1}(0) = \varphi_\delta^{-1}(\mathcal{Z}_1 \cup \mathcal{Z}_2) \cup \mathcal Z',
    $$
    where $\mathcal{Z}_1 \cup \mathcal{Z}_2$ are simple zeros counted with multiplicity $|\mathcal{Z}_1 \cup \mathcal{Z}_2|=4(\gamma_1+\gamma_2)-8$, and $q_\delta$ has even vanishing order on $\mathcal Z'$ with multiplicity $|\mathcal{Z}'|=4$.
\end{thm}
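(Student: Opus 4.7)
The plan is to reduce Theorem \ref{holomorphicquadraticgluing} to the two-dimensional case of Theorem \ref{maina} (as outlined in Remark \ref{Riemannsurfacegluing}), via the classical correspondence
\[
q \;\longleftrightarrow\; \nu := \mathrm{Re}(\sqrt{q}), \qquad q = \nu^{(1,0)}\otimes\nu^{(1,0)},
\]
between holomorphic quadratic differentials with simple zeros and smooth non-degenerate $\Z_2$-harmonic 1-forms on a Riemann surface. Under this correspondence each $q_i$ yields a smooth non-degenerate $\Z_2$-harmonic 1-form $\nu_i$ on $(\Sigma_i, g_i)$ with singular set $\mathcal Z_i$; conversely, in dimension two $d\nu=d^\star\nu=0$ is equivalent to $\bar\partial\nu^{(1,0)}=0$, so any $\Z_2$-harmonic 1-form produces a holomorphic quadratic differential by squaring its $(1,0)$-part.

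Applying the 2D case of Theorem \ref{maina} with $(a,b)=(1,1)$ to $(\nu_1,\nu_2)$ produces, for each $\delta<\delta_0$, a smooth weakly non-degenerate $\Z_2$-harmonic 1-form $(\mathcal Z_\delta, \ell, \nu_\delta)$ on $(\Sigma, g_\delta)$. Because the approximate 1-form in \eqref{approximatesol1form} is already closed, the final clause of Theorem \ref{NMIFT} guarantees that the parameter perturbation may be taken to be zero, so $\nu_\delta$ is harmonic with respect to the original $g_\delta$; and by claim (iiia) in the proof of Theorem \ref{NMIFT}, the correction added at every Nash--Moser step is exact, so in the limit
\[
\nu_\delta = \nu^{\mathrm{app}}_\delta + df_\delta
\]
for a section $f_\delta$ of $\ell\otimes\C$. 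Taking $(1,0)$-parts, $\nu_\delta^{(1,0)}=(\nu^{\mathrm{app}}_\delta)^{(1,0)}+\partial f_\delta$ is a genuine holomorphic $\ell$-twisted 1-form.

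The diffeomorphism $\varphi_\delta$ enters only to identify the deformed singular set with the original. Since Theorem \ref{maina} produces $\mathcal Z_\delta$ as a $C^{2,2}$-small perturbation of $\mathcal Z_1\cup\mathcal Z_2$ supported well away from the neck, one may choose $\varphi_\delta$ to be a diffeomorphism of $\Sigma$ supported in small balls around $\mathcal Z_1\cup\mathcal Z_2$, equal to the identity near the gluing region, with $\varphi_\delta^{-1}(\mathcal Z_1\cup\mathcal Z_2)=\mathcal Z_\delta$. Defining $q^{\mathrm{app}}_\delta$ to equal $((\nu^{\mathrm{app}}_\delta)^{(1,0)})^{\otimes 2}$ augmented by the cross term $2(\nu^{\mathrm{app}}_\delta)^{(1,0)}\otimes\partial f_\delta$, the identity
\[
q_\delta := \varphi_\delta^\ast\bigl(q^{\mathrm{app}}_\delta+\partial f_\delta\otimes\partial f_\delta\bigr)
=\varphi_\delta^\ast\bigl(\nu_\delta^{(1,0)}\otimes\nu_\delta^{(1,0)}\bigr)
\]
follows, and $q_\delta$ is holomorphic with respect to $g_\delta$.

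For the zero analysis, non-degeneracy of $\nu_\delta$ along $\mathcal Z_\delta$ forces $\sqrt{q_\delta}$ to have order $1/2$ at each point of $\varphi_\delta^{-1}(\mathcal Z_1\cup\mathcal Z_2)$, producing a simple zero of $q_\delta$ and contributing total multiplicity $|\mathcal Z_1|+|\mathcal Z_2|=4(\gamma_1+\gamma_2)-8$. Away from $\mathcal Z_\delta$ the bundle $\ell$ is locally trivial, so $\nu_\delta^{(1,0)}$ is an ordinary holomorphic section of $K_\Sigma$ there, and any additional zero has integer multiplicity $k$, giving a zero of $q_\delta$ of even order $2k$. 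Matching the total degree $\deg(K_\Sigma^{\otimes 2})=4(\gamma_1+\gamma_2)-4$ then forces $\mathcal Z'$ to carry total multiplicity exactly $4$. The main technical obstacle is organizing the Nash--Moser output into the stated decomposition: the iteration only delivers the raw identity $\nu_\delta=\nu^{\mathrm{app}}_\delta+df_\delta$, so extracting the clean formula for $q_\delta$ requires carefully absorbing the cross terms into $q^{\mathrm{app}}_\delta$ and verifying that $\varphi_\delta$ can be chosen compatibly with both the deformation of $\mathcal Z_\delta$ and the constraint that it be the identity on the neck.
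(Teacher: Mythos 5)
Your proposal is correct and follows essentially the same route as the paper: existence via the two-dimensional 1-form gluing theorem (Theorem \ref{mainb} applied to $S^1\times\Sigma_i$ per Remark \ref{Riemannsurfacegluing}), simple zeros at the branch locus forced by non-degeneracy of the glued $\Z_2$-harmonic 1-form, even-order zeros elsewhere since $\ell$ is trivial off $\mathcal Z_\delta$, and the degree count $\deg(K_\Sigma^2)=4(\gamma_1+\gamma_2)-4$ forcing $|\mathcal Z'|=4$. Your extra care in tracking how the Nash--Moser correction $df_\delta$ and the cross term fit into the stated formula for $q_\delta$ is a reasonable elaboration of a point the paper's own proof leaves implicit.
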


\begin{proof}
    The existence follows from Theorem \ref{mainb} (cf. Remark \ref{Riemannsurfacegluing}). For the zeros, since $\Re(\sqrt{q_\delta})$ is also non-degenerate, it has only simple odd zeros. Moreover, the odd zeros of $\Re(\sqrt{q_\delta})$ are the branching set of the branched covering $\varphi_\delta^* \circ p$, which are exactly $\varphi_\delta^{-1}(\mathcal{Z}_1 \cup \mathcal{Z}_2)$. Furthermore, let $K_{\Sigma}$ be the canonical bundle defined using the holomorphic structure of $g_\delta$. Since $q_\delta \in H^0(K_{\Sigma}^2)$, $q_\delta$ has $4(\gamma_1 + \gamma_2) - 4$ zeros, which implies that even zeros must exist with multiplicity 4.
\end{proof}

The appearance of these additional zeros is not an artifact of the proof, but is forced by the degree count. Indeed, a quadratic differential on \(\Sigma_1\#\Sigma_2\) has $4(\gamma_1+\gamma_2)-4$
zeros counted with multiplicity, whereas the odd zeros inherited from \(q_1\) and \(q_2\) contribute only
\[
(4\gamma_1-4)+(4\gamma_2-4)=4(\gamma_1+\gamma_2)-8.
\]
Thus zeros of total multiplicity \(4\) must be created in the gluing process. These new zeros have even order as zeros of the quadratic differential, and hence do not contribute to the branching locus of the associated \(\mathbb Z_2\)-harmonic \(1\)-form; equivalently, \(\ell\) extends over them. Consequently, even when \(q_1\) and \(q_2\) are generic quadratic differentials with simple zeros, the glued quadratic differential \(q_\delta\) lies in a lower stratum of the space of quadratic differentials. This reflects the difference between gluing \(\mathbb Z_2\)-harmonic \(1\)-forms and gluing quadratic differentials directly: the former keeps track of the odd zeros, while the latter also records the even zeros. We also note that the top stratum may be obtained by gluing holomorphic quadratic differentials directly using Lemma~\ref{infiniteneck} with \(d=4\).

Theorem \ref{holomorphicquadraticgluing} also has implications for singular measured foliations. Given a quadratic differential $q$, $\Re(\sqrt{q})$ defines a  singular measured foliation over $\Sigma$, and the zeros of $q$ correspond to singular leaves of the foliation with explicit local structure. The work of Hubbard and Masur \cite{HubbardMasur1979} identifies the equivalence class of tame measured foliations and the space of quadratic differentials. Theorem \ref{holomorphicquadraticgluing} provides a direct way to glue two different measured foliations using the connected sum, suggesting that the gluing process creates new singularities in the foliations.

\subsubsection{Connected sum of $\Z_2$-harmonic 1-forms for 3-manifolds.}

As in the case of Riemann surfaces above, Theorems \ref{maina} and \ref{thm_non_degenerate_gluing}
have some somewhat surprising implications for the structure of the boundary strata of a hypothetical compactification for the $SL(2,\C)$ representation variety using $\Z_2$-harmonic 1-forms. At present, the existence of such a compactification is only speculation, and its construction is the subject of forthcoming work \cite{PartIV}.

Under the conventions in Theorem \ref{thm_non_degenerate_gluing}, suppose $\mathcal{Z}_1$ and $\mathcal{Z}_2$ are both non-empty. Then, by a straightforward computation, for the first cohomology, we have
\begin{equation}
\label{eq_homology_double_covering_connected_sum_S3}
    H^1_-(Y_\mathcal{Z}; \mathbb{R}) \cong H^1_-(Y_{\mathcal{Z}_1}; \mathbb{R}) \oplus H^1_-(Y_{\mathcal{Z}_2}; \mathbb{R}) \oplus \mathbb{R}.
\end{equation}

\noindent On the other hand, if $\mathcal{Z}_1$ is not empty but $\mathcal{Z}_2$ is empty, we write $Y_{\mathcal{Z}_2} = Y_2^+ \cup Y_2^-$ to be the disjoint union of two copies of $Y_2$ with the obvious involution. Then, $Y_\mathcal{Z} \cong Y_{\mathcal{Z}_1} \# Y_2^+ \# Y_2^-$, and we have
\[
H^1_-(Y_\mathcal{Z}; \mathbb{R}) \cong H^1_-(Y_{\mathcal{Z}_1}; \mathbb{R}) \oplus H^1_-(Y_2; \mathbb{R}).
\]

\noindent Thus when both $\mathcal{Z}_1$ and $\mathcal{Z}_2$ are non-empty, there is an additional $\mathbb{R}$ factor in \eqref{eq_homology_double_covering_connected_sum_S3} compared to the simple direct sum of the cohomologies.

By the results of \cite{DonaldsonMultivalued}, the moduli space of $\Z_2$-harmonic 1-forms with fixed singularity type near a smooth, non-degenerate point $(\mathcal{Z}, \ell, \nu)$  has the structure of a smooth manifold with dimension $k_{\Z_2}(Y;\mathcal Z) := H^1_-(Y_\mathcal{Z})$. The appearance of the extra $\mathbb{R}$ factor in \eqref{eq_homology_double_covering_connected_sum_S3} can potentially be understood in terms of $\text{SL}(2,\C)$ representations via the construction in \cite[Page 10]{HaydysPSLR2022} as follows. Let $\rho_i: \pi_1(Y_i) \to \mathrm{SL}(2,\mathbb{C})$ be two different representations; by Van-Kampen's Theorem $\pi_1(Y) = \pi_1(Y_1) * \pi_1(Y_2)$, so for any $\tau \in \mathrm{SL}(2,\mathbb{C})$, we can construct an additional family of representations $\rho_{\tau}: \pi_1(Y) \to \mathrm{SL}(2,\mathbb{C})$ given by $(\rho_1, \tau \rho_2 \tau^{-1})$. Note $\rho_\tau$ are pairwise distinct modulo conjugation on $Y$ for every $\tau$, despite the fact that 
$\tau \rho_2 \tau^{-1}$ is conjugate to $\rho_2$ on $Y_2$. 
As both $\tau$, $\rho_1$, and $\rho_2$ can vary within a non-compact family, the family $\rho_\tau$ might contribute to additional boundary strata on $Y$ that do not appear as products of boundary strata for either $Y_1$ or $Y_2$. In particular, it seems likely the $\Z_2$-harmonic 1-forms in these strata might represent the $\R$ summand.  

The reverse construction is also valid. Let $\mathcal{R}(Y)$ denote the $\text{SL}(2,\C)$ representation variety of $Y$. Then, for $\rho \in \mathcal R(Y)$, for $i = 1, 2$, since $\pi_1(Y_i)$ are subgroups of $\pi_1(Y)$, we write $\rho_i := \rho|_{\pi_1(Y_i)}$. This defines a map $\Psi: \mathcal R(Y)\to \mathcal R(Y_1)\times \mathcal R(Y_2)$. If we denote the equivalence class of a representation up to conjugation by $[\rho]$, then the pre-image is precisely $\Psi^{-1}([\rho_1], [\rho_2]) = \{ [\rho_{\tau}] \  |  \ \tau \in \text{SL}(2,\C)\}$. Counting dimensions, it follows that
\[
\dim_\R \mathcal R(Y) = \dim_\R \mathcal R(Y_1) + \dim_\R \mathcal R(Y_2) + 6.
\]

\noindent Gluing results for boundary strata in the case of Riemann surfaces \cite{MWWW} suggest that the relation $\dim_\R \mathcal R(Y) = 2k_{\Z_2}(Y;\mathcal Z)$ holds for $\Z_2$-harmonic 1-forms in the top boundary stratum (and \cite{PartIV} supports a similar relation for $3$-manifolds). Given this, one would expect that the top boundary stratum of $\mathcal R(Y)$ consists of $\Z_2$-harmonic spinors whose singular set $\mathcal Z_0$ is such that $$k_{\Z_2}(Y;\mathcal Z_0)=k_{\Z_2}(Y_1;\mathcal Z_1) + k_{\Z_2}(Y_2;\mathcal Z_2) + 3.$$ Analogous to the Riemann surface case, however, the connected sum gluing produces only $\Z_2$-harmonic 1-forms with singular sets $\mathcal Z_\#$ satisfying
\[
k_{\Z_2}(Y; \mathcal Z_\#)=
k_{\Z_2}(Y_1;\mathcal Z_1) + k_{\Z_2}(Y_2;\mathcal Z_2) + 1=k_{\Z_2}(Y;\mathcal Z_0)-2.
\]
This suggests that the $\Z_2$ harmonic 1-forms constructed by the gluing method in Theorem \ref{thm_non_degenerate_gluing}  lie in a lower stratum of the total space of $\Z_2$ harmonic 1-forms, similar to the situation for Riemann surfaces. In this case, the top stratum would have to consist of $\Z_2$-harmonic spinors whose singular set had additional components, generalizing the appearance of the extra zeros of the holomorphic quadratic differentials in Theorem \ref{holomorphicquadraticgluing}.

\appendix

\bibliographystyle{amsalpha}
{\small 
\bibliography{BibliographyGluingPaper}}
\end{document}